\newcommand\no[1]{ }
\def\printname#1{
        \if\draft y
                \smash{\makebox[0pt]{\hspace{-0.5in}
                        \raisebox{8pt}{\tt\tiny #1}}}
        \fi }
\theoremstyle{plain}
\newtheorem{theorem}{Theorem}[section]
\newtheorem{thm}{Theorem}
\newtheorem{lemma}[theorem]{Lemma}
\newtheorem{corollary}[theorem]{Corollary}
\newtheorem{proposition}[theorem]{Proposition}
\newtheorem{definition}{Definition}
\theoremstyle{definition}
\newtheorem{remark}[theorem]{Remark}
\newtheorem{example}[theorem]{Example}
\def\BC{\mathbb C}
\def\BN{\mathbb N}
\def\BZ{\mathbb Z}
\def\BR{\mathbb R}
\def\BQ{\mathbb Q}
\DeclareMathOperator*{\foo}{\scalerel*{+}{\sum}}
\def\CI{\mathcal I}
\def\Cob{\mathrm{Cob}}
\def\M{{\mathcal M}}
\def\2Mor{\mathrm{2Mor}}
\def\Ssr{\overline{\cS}(\fS)}
\def\bcS{\overline{\cS}}
\def\OSL{{\mathcal O}_{q^2}(\mathrm{SL}(2))}
\newcommand\eDD[1]{{}_{#1}\Delta}
\def\la{\langle}
\def\ra{\rangle}
\def\rk{\mathrm{rk}}
\def\cM{\mathcal M}
\def\cS{\mathscr S}
\def\ot{\otimes}
\def\cE{\mathcal E}
\def\cF{\mathcal F}
\def\bk{\mathbf k}
\def\cP{\mathcal P}
\def\tD{\tilde D}
\def\Id{\mathrm{Id}}
\def\fS{\mathfrak S}
\def\bfS{\overline{\fS}}
\def\D{\Delta}
\def\cY{\mathcal Y}
\def\ev{{\mathrm{ev}}}
\def\embed{\hookrightarrow}
\def\bbS{\overline \Sigma}
\def\cY{\mathcal Y}
\def\ev{{\mathrm{ev}}}
\def\embed{\hookrightarrow}
\newcommand{\red}[1]{{#1}}
\def\ooS{\mathring {\cS}}
\def\Ss{\cSs}
\def\Tr{\mathrm{Tr}}
\DeclareMathOperator{\tr}{\mathrm tr}
\def\al{\alpha}
\def\ve{\varepsilon}
\def\be { \begin{equation} }
\def\ee { \end{equation} }
\def\bD{\underline{ \Delta} }
\def\btheta{\widetilde \theta  }
\def\B{\fB}
\def\P{\mathcal P}
\def\bT{\mathbb T}
\def\fl{\mathrm{fl}}
\def\nc{\newcommand}
\nc\FIGc[3]{\begin{figure}[htpb]
    \includegraphics[height=#3]{#1-eps-converted-to.pdf}
    \caption{#2}
    \label{fig:#1}
    \end{figure}}
    \nc\FIGceps[3]{\begin{figure}[htpb]
    \includegraphics[height=#3]{#1.eps}
    \caption{#2}
    \label{fig:#1}
    \end{figure}}
\newcommand\incl[2]{{\includegraphics[height=#1]{#2-eps-converted-to.pdf}}}
\def\leftve{\raisebox{-8pt}{\incl{.8 cm}{leftve}}}
\def\leftup{\raisebox{-8pt}{\incl{.8 cm}{left_up}}}
\def\leftupPP{\raisebox{-8pt}{\incl{.8 cm}{left_upPP}}}
\def\leftupNN{\raisebox{-8pt}{\incl{.8 cm}{left_upNN}}}
\def\leftved{\raisebox{-8pt}{\incl{.8 cm}{left_ve_d}}}
\def\leftvedl{\raisebox{-8pt}{\incl{.8 cm}{left_ve_dl}}}
\def\emptyr{\raisebox{-8pt}{\incl{.8 cm}{empty}}}
\def\emptys{\raisebox{-8pt}{\incl{.8 cm}{emptys}}}
\def\cross{  \raisebox{-8pt}{\incl{.8 cm}{cross}} }
\def\resoP{  \raisebox{-8pt}{\incl{.8 cm}{resoP}} }
\def\resoN{  \raisebox{-8pt}{\incl{.8 cm}{resoN}} }
\def\kinkp{  \raisebox{-8pt}{\incl{.8 cm}{kinkp}} }
\def\kinkn{  \raisebox{-8pt}{\incl{.8 cm}{kinkn}} }
\def\kinkzero{  \raisebox{-8pt}{\incl{.8 cm}{kinkzero}} }
\def\reordoneall{  \raisebox{-8pt}{\incl{.8 cm}{reord1all}} }
\def\reordonepn{  \raisebox{-8pt}{\incl{.8 cm}{reord1pn}} }
\def\reordtwopn{  \raisebox{-8pt}{\incl{.8 cm}{reord2pn}} }
\def\reordoneallp{  \raisebox{-8pt}{\incl{.8 cm}{reord1allp}} }
\def\reordnp{  \raisebox{-8pt}{\incl{.8 cm}{reordpn}} }
\def\leftmp{  \raisebox{-8pt}{\incl{.8 cm}{leftmp}} }
\def\reordpn{  \raisebox{-8pt}{\incl{.8 cm}{reord1}} }
\def\reordone{  \raisebox{-8pt}{\incl{.8 cm}{reord1}} }
\def\reordtwo{  \raisebox{-8pt}{\incl{.8 cm}{reord2}} }
\def\reordthree{  \raisebox{-8pt}{\incl{.8 cm}{reord3}} }
\def\trivloop{  \raisebox{-8pt}{\incl{.8 cm}{trivloop}} }
\def\reordonez{  \raisebox{-8pt}{\incl{.8 cm}{reord1z}} }
\def\reordonea{  \raisebox{-8pt}{\incl{.8 cm}{reord1a}} }
\def\reordsixz{  \raisebox{-8pt}{\incl{.8 cm}{reord6z}} }
\def\reordsixa{  \raisebox{-8pt}{\incl{.8 cm}{reord6a}} }
\def\corone{ \raisebox{-12pt}{\incl{1.3 cm}{corner}}}
\def\cortwo{ \raisebox{-12pt}{\incl{1.3 cm}{corner2}}}
\def\corthree{ \raisebox{-12pt}{\incl{1.3 cm}{corner3}}}
\def\coronep{ \raisebox{-12pt}{\incl{1.3 cm}{cornerp}}}
\def\cortwop{ \raisebox{-12pt}{\incl{1.3 cm}{corner2p}}}
\def\corthreep{ \raisebox{-12pt}{\incl{1.3 cm}{corner3p}}}
\def\reordonegra{\raisebox{-12pt}{\incl{1.3 cm}{reord1gra}}}
\def\NSGraph{\mathsf{TopMultiCyc}}
\def\NSForest{\mathsf{TopForest}}
\def\Edge{\mathsf{Edges}}
\def\cut{\mathsf{cut}}
\def\Qq{{\BQ(q^{1/2})}}
\begin{document}

\title{Stated skein algebras of surfaces}
\author[Francesco Costantino]{Francesco Costantino}
\address{Institut de Math\'ematiques de Toulouse, 118 route de Narbonne,
F-31062 Toulouse, France}
\email{francesco.costantino@math.univ-toulouse.fr}

\author[Thang  T. Q. L\^e]{Thang  T. Q. L\^e}
\address{School of Mathematics, 686 Cherry Street,
 Georgia Tech, Atlanta, GA 30332, USA}
\email{letu@math.gatech.edu}

\date{\today}

\thanks{ Partially supported by ANR-11-LABX-0040-CIMI within the program ANR-11-IDEX-0002-02. \\
2010 {\em Mathematics Classification:} Primary 57N10. Secondary 57M25.\\
{\em Key words and phrases: Kauffman bracket skein module, TQFT.}}

\begin{abstract}{We study the algebraic and geometric properties of stated skein algebras of surfaces with punctured boundary. We prove that the skein algebra of the bigon is isomorphic to the quantum group ${\mathcal O}_{q^2}(\mathrm{SL}(2))$ \red{that provides} a topological interpretation for its structure morphisms. 
We also show that its stated skein algebra lifts in a suitable sense the Reshetikhin-Turaev functor and in particular we recover the dual $R$-matrix for ${\mathcal O}_{q^2}(\mathrm{SL}(2))$ in a topological way. 
We deduce that the skein algebra of a surface with $n$ boundary components is a \red{comodule-algebra} over ${\mathcal O}_{q^2}(\mathrm{SL}(2))^{\otimes{n}}$ and prove that cutting along an ideal arc corresponds to Hochshild cohomology of bicomodules. We give a topological interpretation of braided tensor product of stated skein algebras of surfaces as ``glueing on a triangle''; then we recover topologically some bialgebras in the category of ${\mathcal O}_{q^2}(\mathrm{SL}(2))$-comodules, among which the ``transmutation'' of ${\mathcal O}_{q^2}(\mathrm{SL}(2))$.
We also provide an operadic interpretation of stated skein algebras as an example of a ``geometric non symmetric modular operad''.   
In the last part of the paper we define a reduced version of stated skein algebras and prove that it allows to recover Bonahon-Wong's quantum trace map and interpret skein algebras in the classical limit when $q\to 1$ as regular functions over a suitable version of moduli spaces of twisted bundles.  }\end{abstract}

\maketitle

\def\pbbS{\partial \bbS}
\def\bSP{(\bbS,\cP)}
\def\cA{\mathcal A}
\def\cB{\mathcal B}
\def\Si{\Sigma}
\def\fB{\mathfrak B}
\def\pr{\mathrm{pr}}
\def\cO{\mathcal O}

\def\poS{\partial_0\Sigma}
\def\cSs{\cS}
\def\RcSs{\cS^{rel}_{\mathrm s}}
\def\zRcSs{\cS^{rel}_{\mathrm s,0}}

\def\cR{\mathcal R}
\def\basics{basic skein}
\def\YD{\cY(\D)}
\def\tYD{\tilde \cY(\D)}
\def\YtD{\cY^{(2)}(\D)}
\def\tYtD{\tilde \cY^{(2)}(\D)}
\def\bve{{\boldsymbol{\ve}}}
\def\bm{{\mathbf m}}
\def\hYeD{\tilde \cY^\ev(\D)}
\def\YeD{\cY^\ev(\D)}
\def\pfS{\partial \fS}
\def\upfS{\partial \underline{\fS}}

\def\pbfS{\partial \bfS}
\def\bove{\boldsymbol{\ve}}
\def\bomu{{\vec \mu}}
\def\bonu{{\vec \nu}}
\def\Gr{\mathrm{Gr}}
\def\bl{\mathbf{l}}
\def\onto{\twoheadrightarrow}
\def\Stink{\mathrm{St}^\uparrow(\bk)}
\def\Stinkp{\mathrm{St}^\uparrow(\bk')}
\def\sincr{s^\uparrow}
\def\St{\mathrm{St}}

\def\tF{\tilde {\cF}}
\def\tE{\tilde {\cE}}
\def\tfT{\tilde {\fT}}
\def\bcSs{\overline{\cS}_s}
\def\cN{\mathcal N}
\def\MN{(M,\cN)}
\def\pM{\partial M}
 \def\cSsp{\cS_{+}}
   \def\TrD{\Tr_\D}
   \def\hTrD{\widehat{\Tr}_\D}
   \def\hcSs{\widehat{\cS}_s}
   \def\SS{\cS(\fS)}
\tableofcontents

\section{Introduction}
This paper is devoted to study the notion of stated skein algebra of surfaces introduced by the second author in \cite{Le:TDEC} in order to reinterpret in skein theoretical terms the construction of the quantum trace by Bonahon and Wong \cite{BW} as well as incorporating Muller's version of skein algebra \cite{Muller}. 
Although the definition of the stated skein module applies to $3$-manifolds, this paper is entirely devoted to the case of surfaces: a forthcoming paper will describe how this fits in the framework of an extended topological field theory in dimensions $1,2,3$. 
Indeed the case of surfaces is sufficiently rich in algebraic and geometrical terms to deserve a separate treatment and we will now outline the results of this paper.

 \subsection{Skein algebras} Let $\cR= \BZ[q^{\pm 1/2}]$ be the ring of Laurent polynomials in a variable $q^{1/2}$.
 Suppose $\fS$ is the result of removing a finite number of points, called {\em punctures}, from a compact oriented 2-dimensional manifold with possibly non-empty boundary. 
 The ordinary skein algebra $\ooS(\fS)$, introduced by Przytycki~\cite{Prz} and Turaev~\cite{Turaev}, is defined to be the $\cR$-module generated by isotopy classes of framed unoriented links in $\fS \times (0,1)$  modulo the Kauffman relations \cite{Kauffman}
\begin{align}
\label{eq.skein0} \cross \ &= \ q\resoP + q^{-1} \resoN\\
\label{eq.loop0}  \trivloop\  &=\  (-q^2 -q^{-2})\emptyr.
 \end{align}
The product of two links $\al_1$ and $\al_2$ is the result of stacking $\al_1$ above $\al_2$.  The skein algebra  has played an important role in low-dimensional topology and quantum topology and it serves as a bridge between classical topology and quantum topology. The skein module has connections to the $\mathrm{SL}_2(\BC)$-character variety \cite{Bullock,PS1}, the quantum group of $\mathrm{SL}_2(\BC)$, the Witten-Reshetikhin-Turaev topological quantum field theory \cite{BHMV}, the quantum Teichm\"uller spaces \cite{CF,Kashaev,BW,Le:QT}, and the quantum cluster algebra theory \cite{Muller}.
 
 In the definition of the skein algebra $\ooS(\fS)$ the boundary $\pfS$ does not play any role, and we have $\ooS(\fS) = \ooS(\mathring {\fS})$, where $\mathring {\fS}$ is the interior of $\fS$. In an attempt to introduce excision into the study of the skein algebra, the second author  \cite{Le:TDEC} introduce the notion of {\em stated skein algebra}, denoted in this paper by  $\SS$, whose definition involves tangles properly embedded into $\fS \times (0,1)$. These tangles can have end-points only  on {\em boundary edges} of $\fS$, which are open intervals connected components of the boundary.
 For details see Section \ref{sec.def}.

A key result about stated skein algebras is that they behave well under cutting along an ideal arc. Here an {\em ideal arc} is a proper embedding $c: (0,1) \embed \fS$ (so that its end points are the punctures). 
Cutting $\fS$ along $c$ one gets a 2-manifold $\fS'$ whose boundary contains two open intervals $a$ and $b$ so that one can recover $\fS$ from $\fS'$ by gluing $a$ and $b$ together, see Figure \ref{fig:cut}.

\FIGc{cut}{Cutting $\fS$ along ideal arc $c$ to get $\fS'$, which might be disconnected}{3cm}
Then \cite[Theorem 1]{Le:TDEC} (see 
the splitting Theorem \ref{thm.1a} below)  says that  there is a natural injection of algebras 
\be  \theta_c: \cS(\fS)\hookrightarrow \cS(\fS'),  \label{eq.rho0}
\ee
given by a simple state sum. The extension from $\ooS(\fS)$ to $\SS$ is unique (or canonical) if one wants the splitting theorem and a consistency requirement to hold.
 
 \def\SB{\cS(\cB)}
 \def\USL{U_{q^2}(\mathfrak{sl}_2)}
 The paper is a systematic study of the stated skein algebra $\SS$. Let us now list the main results of the paper.
 
 \subsection{Bigon and quantum $\mathrm{SL}_2(\BC)$ coordinate ring} The quantized enveloping algebra $\USL$ and its Hopf dual $\OSL$, known as the quantum coordinate ring of the Lie group $\mathrm{SL}_2(\BC)$, play an important role in many branches of mathematics, see \cite{Kassel,MajidFoundation}. These algebras are usually defined by rather complicated presentations which are hard to comprehend.
 
A first consequence of the splitting theorem is that the quantum coordinate ring $\OSL$ can be described by simple geometric terms, namely, it is naturally isomorphic to the stated skein algebra of the bigon $\cB$, which is the standard disk without two points on its boundary, see Figure \ref{fig:bigon10}.

\FIGc{bigon10}{Left: bigon. Right: splitting the bigon along the dashed ideal arc}{3cm}
By splitting the bigon along an ideal arc $c$ (which is the dashed arc in Figure \ref{fig:bigon10}) we get a homomorphism $\Delta:=\theta_c$,
$$ \Delta: \SB \to \SB\otimes \SB,$$
which turns out to be compatible with the product and makes $\SB$ a bialgebra. Moreover, we will define using topological terms the counit, antipode and co-$R$-matrix which turns $\SB$ into a \red{``dual quasitriangular'' (see  \cite{MajidFoundation} Section 2.2) a.k.a. ``cobraided'' (see \cite{Kassel}, Section VIII.5)} Hopf algebra, and will prove the following.

\begin{thm}[Theorems \ref{teo:SL2} and \ref{teo:cobraid}] The \red{dual quasitriangular} Hopf algebra $\SB$ is isomorphic in a natural way to the quantum coordinate ring $\OSL$.
\label{thm.1}
\end{thm}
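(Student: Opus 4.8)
The plan is to produce an explicit isomorphism $\SB \xrightarrow{\sim} \OSL$ by exhibiting generators of $\SB$ that satisfy the defining relations of $\OSL$, and then checking bijectivity together with compatibility with all the Hopf-algebraic structure. First I would fix a concrete model of the bigon $\cB$ with its two boundary edges, say a ``left'' edge and a ``right'' edge, each an open interval. The basic building block is the single arc connecting the two edges; states on its two endpoints take values in $\{+,-\}$, so there are four such stated arcs, which I will call $a=a_{++}$, $b=a_{+-}$, $c=a_{-+}$, $d=a_{--}$. These are the candidate generators corresponding to the matrix entries $\left(\begin{smallmatrix} a & b \\ c & d\end{smallmatrix}\right)$ of $\OSL$. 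The first main step is to show that $\SB$ is generated as an algebra by $a,b,c,d$: any stated tangle in $\cB\times(0,1)$ has no closed components that cannot be removed (the bigon is a disk, so loops are trivial and die by the Kauffman relation \eqref{eq.loop0}), and using isotopy and the height-exchange/reordering relations one can push all strands to be parallel arcs running between the two edges, stacked in order; the resulting skein is a monomial in $a,b,c,d$. This requires care with the boundary relations that let us reorder endpoints along an edge.

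The second step is to verify the $\OSL$-relations among $a,b,c,d$. The $q$-commutation relations ($ab=q^{-1}ba$, etc., in the normalization appropriate to $q^2$) come from resolving the crossing via \eqref{eq.skein0} when one arc is slid past another and comparing the two height orders; the ``quantum determinant'' relation $ad-q^{-2}bc=1$ (again up to the exact power of $q$ dictated by the conventions) comes from the fundamental defining relation of the stated skein algebra that evaluates a ``cup''/``cap'' at a boundary edge — i.e. the relation in \cite{Le:TDEC} that says a returning arc with states $(+,-)$ minus $q^{\pm}$ times the one with $(-,+)$ equals the empty skein, which is exactly the relation encoding $SL_2$ rather than $GL_2$. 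Having checked these, we get a surjective algebra homomorphism $\OSL \twoheadrightarrow \SB$; for injectivity I would use a basis/PBW-type argument: order monomials in $a,b,c,d$ into a standard form and show the images of a PBW basis of $\OSL$ are $\cR$-linearly independent in $\SB$, for instance by pairing against a known basis of the ordinary skein algebra $\ooS(\cB)$ plus states, or by using the splitting/filtration structure. Alternatively, one can compare with the known presentation and Hilbert series of both sides.

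The third step is to match the Hopf / cobraided structure. The comultiplication $\Delta=\theta_c$ on $\SB$ obtained by splitting along the ideal arc sends the arc $a_{\ve\ve'}$ to $\sum_{\mu} a_{\ve\mu}\otimes a_{\mu\ve'}$ — this is precisely the state-sum formula for $\theta_c$ applied to a single arc cut by $c$ — which is exactly the matrix-coalgebra coproduct of $\OSL$. The counit, antipode, and co-$R$-matrix defined topologically in the paper must then be checked to coincide with the standard ones on generators; since both sides are determined by their values on $a,b,c,d$ and the maps are algebra (anti)homomorphisms, this is a finite check. The co-$R$-matrix in particular should be read off from the skein value of a single positive crossing of two arcs, each running between the two edges, after resolving via \eqref{eq.skein0} and expanding in the generators — this is the ``topological recovery of the dual $R$-matrix'' promised in the abstract.

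The main obstacle I expect is \emph{injectivity} of the natural surjection $\OSL\to\SB$, equivalently showing $\SB$ is ``no bigger than'' $\OSL$: one must be sure that the reordering and height-exchange relations in the stated skein algebra impose \emph{exactly} the $q$-commutation and quantum-determinant relations and nothing less, so that the standard monomials $a^i b^j c^k$ (and $d$, eliminated via the determinant relation) remain linearly independent. Getting the normalization of $q$ (here $q^2$ versus $q$, and the signs from framing) consistent throughout the commutation relations, the determinant relation, and the co-$R$-matrix is the delicate bookkeeping, but it is routine once the geometric dictionary is set up; the conceptual heart is the generation statement plus the linear independence, which together pin down $\SB\cong\OSL$.
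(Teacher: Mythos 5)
Your proposal is correct and follows essentially the same route as the paper: the four stated arcs $\al_{\pm\pm}$ generate $\SB$, the $\OSL$-relations are verified by skein computations (the paper packages them as the two matrix relations obtained from the coaction on the monogon together with the $180^\circ$ rotation), linear independence of the standard monomials is supplied by the basis theorem for stated skein algebras, and the coproduct, counit, antipode and co-$R$-matrix are then matched on generators, with the splitting homomorphism giving exactly the matrix coproduct. The only differences are cosmetic — the direction of the map ($\OSL\to\SB$ versus $\phi:\SB\to\OSL$) and the precise powers of $q$ you leave as bookkeeping, which the paper pins down explicitly.
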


This result allows to use skein theoretical techniques to study $\OSL$. We will show that many complicated algebraic objects and facts  concerning the quantum groups $\OSL$ and $\USL$ have  simple transparent picture interpretations. For example, the above mentioned co-$R$-matrix has a very simple geometric picture description, see Theorem~\ref{teo:cobraid}. 
Another example is given by the reconstruction of Kashiwara's crystal basis, see Proposition~\ref{prop:kashiwara}. One can even ``import'' in $\OSL$ natural skein theoretical objects: in Subsection \ref{sub:joneswenzl} we define and provide some properties of the Jones-Wenzl idempotents in $\OSL$. 

\subsection{Lift of the Reshetikhin-Turaev invariant}
Suppose $T$ is a tangle diagram in the bigon whose boundary $\partial T$ is in $\partial \cB$ and the boundary points are labeled by signs $\pm$. The Reshetikhin-Turaev  operator invariant theory \cite{RT} assigns to $T$ a scalar $Z(T)\in \BQ(q^{1/2})$, see Section \ref{sec.WT}. On the other hand, such a labeled tangle $T$  defines an element in our skein algebra $\SB$. We have the following result which shows that our ``invariant'', which is $T$ considered as an element of $\cS(\cB)$, is a lift of the Reshetikhin-Turaev invariant.
\begin{thm} [Theorem \ref{teo:cat}] One has $\epsilon(T) = Z(T)$, where $\epsilon: \SB\to \BQ[q^{\pm 1/2}]$ is the counit.
\end{thm}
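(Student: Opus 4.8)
The plan is to realize both $\epsilon$ and $Z$ as \emph{algebra} homomorphisms $\cS(\cB)\to\BQ(q^{1/2})$ and then to check that they agree on a set of algebra generators. The generators will be the four basic through-arcs $\alpha_{\mu\nu}$ ($\mu,\nu\in\{+,-\}$) of the bigon, which under the isomorphism $\cS(\cB)\cong\OSL$ of Theorem~\ref{teo:SL2} correspond to the matrix elements $a,b,c,d$; since $\OSL$ is generated as an algebra by $a,b,c,d$, every $[T]\in\cS(\cB)$ is, via the skein relations \eqref{eq.skein0}--\eqref{eq.loop0} and the boundary relations, a polynomial in the $\alpha_{\mu\nu}$, and it suffices to control the two maps on these.

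First I would verify that $Z$ descends to a well-defined map on $\cS(\cB)$, i.e. that the assignment sending a $\pm$-labelled bigon tangle diagram to its Reshetikhin--Turaev matrix entry is constant on the equivalence classes defining $\cS(\cB)$. Compatibility with the Kauffman relations \eqref{eq.skein0}--\eqref{eq.loop0} is the classical computation behind the existence of the $\mathrm{SL}_2$ Reshetikhin--Turaev functor (the $R$-matrix of the fundamental representation $V$ satisfies the bracket relation once $q$ is normalised as in Section~\ref{sec.WT}, and $V$ has quantum dimension $-q^2-q^{-2}$). Compatibility with the remaining ``boundary'' relations of the stated skein algebra of the bigon is obtained by identifying the images under the Reshetikhin--Turaev functor of the elementary boundary tangles (a cup or cap abutting an edge, a crossing pushed onto an edge), together with the states selecting the corresponding matrix entries, and checking the identities numerically; equivalently this well-definedness is part of the ``lift of the Reshetikhin--Turaev functor'' set up in Section~\ref{sec.WT}.

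Next I would observe multiplicativity. Regarding the bigon as a rectangle with its two boundary edges as the left and right sides, Section~\ref{sec.WT} reads a bigon tangle $T$ as a morphism $F(T)\colon V^{\otimes k}\to V^{\otimes l}$, where $k,l$ count the endpoints on the left, resp. right, edge, and $Z(T)$ is the entry of $F(T)$ in the standard basis selected by the states. The product of $\cS(\cB)$ is stacking in the $(0,1)$--direction, which in this rectangular model juxtaposes two tangles so that the endpoints of the first sit, in the height order of each edge, above those of the second; this juxtaposition is exactly the tensor product of operators, so $F(T_1\cdot T_2)=F(T_1)\otimes F(T_2)$ as maps $V^{\otimes(k_1+k_2)}\to V^{\otimes(l_1+l_2)}$. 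Because the endpoint states of $T_1\cdot T_2$ concatenate as $[\text{states of }T_1\,|\,\text{states of }T_2]$, the selected matrix entry factors, giving $Z(T_1\cdot T_2)=Z(T_1)Z(T_2)$; thus $Z\colon\cS(\cB)\to\BQ(q^{1/2})$ is an algebra map. Since a basic through-arc $\alpha_{\mu\nu}$ goes to $\Id_V$, one gets $Z(\alpha_{\mu\nu})=\delta_{\mu\nu}$, while $\epsilon(\alpha_{\mu\nu})=\delta_{\mu\nu}$ by the topological definition of the counit (equivalently $\epsilon$ sends the quantum matrix $\left(\begin{smallmatrix}a&b\\c&d\end{smallmatrix}\right)$ to the identity matrix). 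As $\epsilon$ is an algebra homomorphism as well, $Z=\epsilon$ on all of $\cS(\cB)$, which is the assertion.

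The main obstacle is the first step: confirming that the Reshetikhin--Turaev matrix-entry assignment really respects the relations special to the stated skein algebra of the bigon --- the height-exchange relations among the $\alpha_{\mu\nu}$ and the relation forcing the quantum determinant to equal $1$ --- rather than only the Kauffman relations; once that compatibility is secured, multiplicativity and the check on generators are routine. An alternative route that sidesteps the relation-by-relation verification is to use the isomorphism of Theorem~\ref{teo:SL2} to identify the class $[T]\in\OSL$ with the matrix coefficient $g\mapsto\langle\,\mathrm{states}_{e_2}\,|\,F_g(T)\,|\,\mathrm{states}_{e_1}\,\rangle$ of the fundamental corepresentation, after which $\epsilon([T])=Z(T)$ is immediate from $\epsilon$ being evaluation at the identity and $Z$ being the value at $g=1$, i.e. the undecorated Reshetikhin--Turaev functor.
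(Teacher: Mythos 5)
Your overall strategy is sound and genuinely different from the paper's. The paper never tries to show directly that the Reshetikhin--Turaev matrix-entry assignment descends to $\cS(\cB)$; instead it proves the identity $\epsilon(T)=Z(T)$ by a two-step closure argument — it is preserved under stacking (since $\overline{\gamma_1\gamma_2}=\bar\gamma_1\otimes\bar\gamma_2$ and $Z$ is monoidal) \emph{and} under horizontal composition (via the splitting homomorphism: $\Delta({}_{\vec\nu}\beta_{\vec\mu})=\sum_{\vec\eta}{}_{\vec\nu}(\gamma_1)_{\vec\eta}\,{}_{\vec\eta}(\gamma_2)_{\vec\mu}$ together with the counit axiom turns $\epsilon$ into a matrix-entry functor) — and then checks it only on the five elementary tangles using \eqref{eq.cap}--\eqref{eq.Xn}. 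The well-definedness of $T\mapsto Z(T)$ on $\cS(\cB)$ then falls out as a corollary. Your route inverts this: you front-load the well-definedness of $Z$ on the quotient, after which multiplicativity under stacking plus agreement on the generators $\alpha_{\mu\nu}$ (both maps give $\delta_{\mu\nu}$) finishes the argument, with no need for the coproduct/splitting machinery. That is a legitimate trade: your composition-free argument is more algebraic, but the entire content of the theorem gets concentrated in the verification that the RT entries satisfy the boundary relations \eqref{eq.arcs} and \eqref{eq.order}, which is comparable in labor to the paper's base case.

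One caution: the two shortcuts you offer for that pivotal step are circular as stated. Section \ref{sec.WT} only defines $Z$ on $\partial S$-tangles, not on the stated skein quotient, so "this well-definedness is part of the lift set up in Section \ref{sec.WT}" assumes what Theorem \ref{teo:cat} is proving; likewise, identifying the class $[T]\in\OSL$ with the matrix coefficient of the fundamental corepresentation via Theorem \ref{teo:SL2} is valid on the generators but for a general $T$ is exactly the statement at issue. So you must actually do the numerical checks you mention: using monoidality of $Z$ to localize, relation \eqref{eq.arcs} reduces to the cap/cup values in \eqref{eq.cap}--\eqref{eq.cup} (which match $C^{\nu}_{\nu'}$ and $-q^{3}C^{\nu'}_{\nu}$), and relation \eqref{eq.order} reduces to entries of \eqref{eq.Xp}, after noting that exchanging the heights of two consecutive boundary points corresponds, under the conversion convention of Figure \ref{fig:tangle2}, to inserting a crossing next to the edge — a translation step you should make explicit. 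With those finite checks written out, your argument closes; also note that you need not separately verify the quantum-determinant relation, since by Lemma \ref{lem:relations} it is a consequence of the local relations you have already checked.
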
  
It would be interesting to understand this lift of the Reshetikhin-Turaev invariant in terms of categorification.

\subsection{Skein algebras as comodule over $\OSL$. Hochshild cohomology} One important consequence of the identification of the bigon algebra with $\OSL$  is that for every boundary edge $e$ of a surface $\fS$, the skein algebra $\SS$ has  a  right $\OSL$-comodule structure
$$ \Delta_e : \SS \to \SS \ot \SB.$$
This map $\Delta_e$ is the splitting homomorphism \eqref{eq.rho0} applied to the an ideal arc parallel to $e$ which cuts off an ideal bigon from $\fS$ whose right edge is $e$, see Figure \ref{fig:copro}. 
Similarly identifying the left edge of $\cB$ to $e$  we get a left $\OSL$-comodule structure on $\SS$.

\FIGc{copro}{Geometric definition of the coaction: splitting the bigon along the dashed ideal arc}{3cm}

Using the comodule structure one can refine the splitting theorem by identifying the image of the splitting homomorphism, as follows. Let us cut $\fS$ along an ideal arc $c$ to get $\fS'$ as in Figure \ref{fig:cut}. Then $\cS(\fS')$ has a right $\OSL$-module structure coming from edge $a$ and a left $\OSL$-module structure coming from edge $b$. Thus $\cS(\fS')$ is a $\OSL$-bicomodule, and hence there is defined the Hochshild cohomology $HH^0(\cS(\fS'))$, for details see Section \ref{sec.comodule}.

\begin{thm}[Theorem \ref{teo:cotensor}] \label{thm.Hoch}
Under the splitting homomorphism the skein algebra $\SS$ embeds isomorphically into the Hochshild cohomology $HH^0(\cS(\fS'))$. In particular, when $c$ cuts $\fS$ into two surfaces $\fS_1$ and $\fS_2$, the splitting homomorphism  maps $\SS$ isomorphically onto the cotensor product of $\cS(\fS_1)$ and $\cS(\fS_2)$.   
\end{thm}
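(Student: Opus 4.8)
The plan is to deduce this from the splitting theorem (Theorem~\ref{thm.1a}) together with the comodule structure coming from the bigon identification (Theorem~\ref{thm.1}). Recall that cutting $\fS$ along $c$ produces $\fS'$ with two distinguished boundary edges $a,b$, and that gluing $a$ to $b$ recovers $\fS$. The splitting homomorphism $\theta_c\colon \cS(\fS)\hookrightarrow \cS(\fS')$ is already known to be an injective algebra map, so the content is to identify its image. I would first set up the two coactions $\Delta_a\colon \cS(\fS')\to \cS(\fS')\ot\SB$ and $\Delta_b\colon \cS(\fS')\to \SB\ot\cS(\fS')$ attached to the edges $a$ and $b$, and check they commute in the obvious sense so that $\cS(\fS')$ is a genuine $\OSL$-bicomodule; the Hochschild $0$-cohomology $HH^0(\cS(\fS'))$ is by definition the equalizer of the two induced maps $\cS(\fS')\to \SB\ot\cS(\fS')$, i.e.\ $\{x : (\mathrm{id}\ot\Delta_a)\text{-style compatibility}\}$ — equivalently the cotensor product $\cS(\fS')\,\square_{\OSL}\,(-)$ in the two-surface case. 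The first key step is therefore to write $HH^0$ explicitly in skein-theoretic terms.

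Second, I would show $\theta_c$ lands in $HH^0(\cS(\fS'))$. Geometrically a skein in $\fS$, after cutting along $c$, produces a state sum over the points where it meets $c$; the two coactions $\Delta_a,\Delta_b$ record how these endpoints on $a$ and on $b$ transform, and the ``matching'' condition that a skein on $\fS'$ actually descends to $\fS$ is precisely that the $a$-states and $b$-states are glued by the bilinear form realizing duality in $\SB\cong\OSL$ (the cup/cap of the skein calculus). This is exactly the equalizer condition defining $HH^0$; concretely one checks $\Delta_a\circ\theta_c$ and $\Delta_b\circ\theta_c$ agree after the canonical identifications, using the coassociativity/naturality of splitting along parallel arcs (this is a diagrammatic isotopy argument: splitting along $c$ then along a parallel copy of $a$ equals splitting along two parallel arcs, in either order).

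Third, and this is where the real work is, I would prove surjectivity onto $HH^0$. The strategy is to use a basis: stated skeins in $\fS'$ with prescribed, ``reduced'' endpoint configurations on $a$ and $b$ form a basis of $\cS(\fS')$ (as in \cite{Le:TDEC}), compatible with the coactions. One then analyzes the equalizer condition on such a basis and shows every element of $HH^0(\cS(\fS'))$ is a linear combination of classes of the form $\theta_c(\beta)$ for $\beta$ a basis skein of $\fS$. Concretely: given $x\in HH^0$, expand in the $\fS'$-basis, use the coaction-equivariance to constrain the endpoint labels on $a$ versus $b$ to be ``dual'' to each other, and then reconstruct a skein on $\fS$ by re-gluing; one must check this reconstruction is well defined (independent of the basis choices on the cut) and inverse to $\theta_c$. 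The two-surface statement then follows because when $\fS'=\fS_1\sqcup\fS_2$ one has $\cS(\fS')=\cS(\fS_1)\ot\cS(\fS_2)$ and the equalizer of the coactions on the two halves is by definition the cotensor product $\cS(\fS_1)\,\square_{\OSL}\,\cS(\fS_2)$.

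The main obstacle I anticipate is the surjectivity/reconstruction step: one needs the right normalization of the basis of stated skeins adapted to the cutting arc so that the bicomodule structure is ``diagonal'' enough for the equalizer to be computed term by term, and one needs to know that the pairing used to glue states on $a$ and $b$ is nondegenerate in exactly the way that makes $HH^0$ collapse onto the image of $\theta_c$ — essentially a Tannakian/duality bookkeeping argument grafted onto the combinatorics of state sums. Injectivity and the containment $\im\theta_c\subseteq HH^0$ should be comparatively routine given the splitting theorem and Theorem~\ref{thm.1}.
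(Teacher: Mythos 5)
You have the right architecture---injectivity is Theorem \ref{thm.1a}, the containment of the image in $HH^0$ follows from the splitting formula and the commutativity of splittings along disjoint arcs, and everything reduces to surjectivity---but the surjectivity step, which is the entire content of the theorem, is not actually argued, and the mechanism you sketch for it is flawed. The splitting formula \eqref{eq.slit} puts the \emph{same} state on the two preimages of each point of $\al\cap c$; no cup/cap bilinear form pairs the states on the two cut edges, so ``constrain the endpoint labels on $a$ versus $b$ to be dual to each other'' is not what the equalizer condition says. More importantly, the equalizer condition is not diagonal in the basis of stated diagrams: $\theta_c(\beta)$ is a sum of $2^k$ lifts, no single one of which lies in $HH^0$, and conversely a general element of $HH^0$ cannot be treated term by term and ``re-glued''; the condition couples the coefficients of many basis elements at once. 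That termwise reasoning is genuinely unreliable here is illustrated by the reduced skein algebra (end of Section \ref{sec:reduced}), where a single ``balanced'' basis element satisfies the equalizer condition but is \emph{not} in the image of the splitting map---so surjectivity is a delicate fact about the unreduced algebra, not a formal consequence of matching states plus re-gluing.

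The paper's proof of Theorem \ref{teo:cotensor} fills exactly this gap: expand $v\in HH^0(\cS(\fS'))$ in the basis $B(\fS';\ori')$, induct on the maximal number $m$ of intersections of the supporting diagrams with the cut edges $c_1,c_2$, subtract $\theta_c$ of the basis elements of $\fS$ corresponding to diagrams meeting both edges $m$ times (using that the all-plus lift occurs there with coefficient $1$), and then rule out any leftover top-degree terms by applying the projections onto the graded pieces $G^{c_2}_m \ot G^{e_r}_m$ to the identity $\Delta_{c_1}(v') = \mathrm{fl}\bigl({}_{c_2}\Delta(v')\bigr)$: one side is annihilated while the other isolates the remaining top coefficients, forcing them to vanish and yielding a contradiction. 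Some argument of this kind---exploiting the grading by total state along an edge and the filtration by intersection number with $c_1,c_2$---is what your ``analyze the equalizer condition on the basis'' step needs to become before the proof is complete; as written, that step merely restates the claim.
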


  \subsection{Skein algebra $\SS$ as module over $\USL$} 
Since the Hopf algebra $\USL$ is the Hopf dual of $\OSL$, then after tensoring with $\Qq$ each right $\OSL$-comodule is automatically a left $\USL$-module. Thus each boundary edge $e$ of $\fS$ gives $\SS$ a left $\USL$-module structure. Note that finite-dimensional $\USL$-modules are well-understood as they are quantum deformations of   modules over the Lie algebra $\mathfrak{sl}_2(\BC)$.

\begin{thm} [Part of Theorem \ref{teo:module}] Over the field $\BQ(q^{1/2})$, \red{for every boundary edge} the $\USL$-module $\SS$ is integrable, i.e. it is the direct sum of finite-dimensional irreducible $\USL$-modules.
\end{thm}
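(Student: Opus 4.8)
The plan is to exploit the right $\SB$-comodule structure $\Delta_e\colon\SS\to\SS\ot\SB$ together with the semisimplicity of $\USL$ at a generic parameter; throughout we base-change to $\Qq$ and write $\SS$ for $\SS\ot_\cR\Qq$. As recalled just before the theorem, the Hopf pairing between $\OSL\cong\SB$ (Theorem~\ref{thm.1}) and $\USL$ turns this comodule into a left $\USL$-module, with $u\cdot\alpha=\sum\la\alpha_{(1)},u\ra\,\alpha_{(0)}$ when $\Delta_e(\alpha)=\sum\alpha_{(0)}\ot\alpha_{(1)}$. The first step is to observe that $\SS$ is a union of small $\USL$-submodules: if $\alpha$ is represented by a stated tangle diagram with exactly $k$ endpoints on $e$, then by the explicit state-sum description of the splitting homomorphism the span $V_\alpha\subseteq\SS$ of the (at most $2^k$) diagrams obtained from $\alpha$ by re-stating its endpoints on $e$ is a finite-dimensional $\SB$-subcomodule, hence a finite-dimensional $\USL$-submodule, and $\SS=\sum_\alpha V_\alpha$.

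It therefore suffices to prove that each $V_\alpha$ is a direct sum of finite-dimensional irreducible $\USL$-modules with only the $V(n)$, $n\ge 0$, occurring as isomorphism types; indeed a module generated by semisimple submodules is semisimple, and a quotient of a finite-dimensional module is finite-dimensional, so the same conclusion then passes to $\sum_\alpha V_\alpha=\SS$. Write the coaction on $V_\alpha$ as $\alpha_j\mapsto\sum_\epsilon\alpha_\epsilon\ot m^{\epsilon}_{j}$, where $j,\epsilon$ run over sign-tuples of length $k$. By the same state sum, each $m^{\epsilon}_{j}\in\SB$ is a $k$-fold product of "single strand through the bigon" elements, i.e.\ a matrix coefficient of the $k$-th tensor power of the fundamental $2$-dimensional corepresentation of $\SB$. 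Under the isomorphism $\SB\cong\OSL$ this fundamental corepresentation pairs with the standard $2$-dimensional type-$\mathbf 1$ module $V(1)$ over $\USL$; consequently the linear map $V(1)^{\ot k}\to V_\alpha$ sending the basis vector indexed by $j$ to $\alpha_j$ is a surjection of $\USL$-modules, so $V_\alpha$ is a homomorphic image of $V(1)^{\ot k}$.

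Finally, over $\Qq$ the quantum parameter is not a root of unity, so by the complete reducibility theorem for quantum $\mathfrak{sl}_2$ the finite-dimensional module $V(1)^{\ot k}$ is a direct sum of the simple modules $V(n)$; hence so is its quotient $V_\alpha$, and each $V(n)$ is finite-dimensional. By the reduction above, $\SS$ is a direct sum of copies of the $V(n)$, and running the argument for every boundary edge $e$ of $\fS$ gives the statement.

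The step requiring the most care is the identification in the second paragraph: that the $\SB$-component of $\Delta_e(\alpha)$ is genuinely a $k$-fold product of fundamental matrix coefficients, and that under the isomorphism of Theorem~\ref{thm.1} the relevant $2$-dimensional corepresentation pairs with the honest $V(1)$ rather than with a sign-twisted $2$-dimensional module (which would spoil the claim that only the $V(n)$ appear). Once the explicit form of $\SB\cong\OSL$ and of the splitting state sum are in hand this is a short verification on states and coefficients; the remaining ingredients — the passage from $\SB$-comodules to $\USL$-modules via the Hopf pairing, and semisimplicity of finite-dimensional $\USL$-modules at generic $q$ — are standard (see, e.g., Jantzen, Kassel, Lusztig).
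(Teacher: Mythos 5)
Your proof is correct, and it reaches the integrability statement by a genuinely softer route than the paper's. You observe that $\SSQ$ is locally finite for the coaction: by the state-sum formula \eqref{eq.coact}, each stated diagram $\al$ with $k$ endpoints on $e$ lies in the span $V_\al$ of its $2^k$ re-statings, a finite-dimensional $\SB$-subcomodule whose structure matrix is the $k$-fold tensor power of the fundamental corepresentation matrix $(\al_{\nu\mu})$; dualizing through the Hopf pairing (whose values $\la K,a\ra=q^{2}$, $\la E,b\ra=\la F,c\ra=1$ show this fundamental corepresentation pairs to the standard two-dimensional module with $K$-eigenvalues $q^{\pm2}$, consistent with the weights $q^{2(d-2j)}$ of Lemma \ref{lem:uqmodule}, and not a sign-twisted one) exhibits $V_\al$ as a quotient of $V(1)^{\ot k}$, and generic complete reducibility plus the fact that a sum of semisimple submodules is semisimple finishes the argument. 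The paper instead proves the finer statement of Theorem \ref{teo:module}: using the basis of Theorem \ref{thm.basis1a}, inserting Jones--Wenzl idempotents near the edges (Example \ref{ex:joneswenzl}) and invoking the explicit action formulas of Lemma \ref{lem:uqmodule} (which hold only modulo the filtration $F^e_{d-1}$, which is exactly why the projectors are needed to kill self-returns), it shows that each basis element with all $+$ states on the chosen edges is a highest-weight vector whose orbit is an explicit irreducible summand, yielding a concrete direct-sum decomposition indexed by the sets $B_{\vec m,\vec n}(\fS;\ori)$, together with the bimodule statement and the integral (Lusztig-form) refinement of part (c). Your argument buys economy --- no Jones--Wenzl projectors, no weight computations, nothing beyond the fact that diagrams span $\SS$ --- and it cleanly isolates semisimplicity, showing in passing that only the untwisted simples $V(n)$ occur; what it does not provide is the explicit decomposition, the multiplicities, and the integral statement, which the paper needs later (e.g.\ in Theorems \ref{r.Hoch2} and \ref{teo:skeintqft}). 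The one point you flag as delicate --- that the $k$-strand bigon element is the ordered stacking product of single-strand elements, so that $(\al_{\boeta\bomu})$ really is the tensor-power matrix --- is indeed the only verification required, and it follows from isotoping stacked parallel strands in the bigon together with \eqref{eq.coproduct1}.
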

Actually Theorem \ref{teo:module} is much stronger: it provides an explicit decomposition and contains much more information about the decompositions as it deals also with the decomposition over Lusztig's integral version of $\USL$.

Using this result we also prove a dual version of Theorem \ref{thm.Hoch} which, with the notation of the theorem, shows that $HH_0(\Qq\otimes_{\cR}\cSs(\fS'))=\Qq\otimes_{\cR}\cSs(\fS)$ (see Theorem \ref{r.Hoch2}). 

\subsection{Braided tensor product} The co-$R$-matrix makes the category of $\OSL$-co\-mo\-du\-les a braided category  and  in general given two algebras in that category (which are then $\OSL$-co\-mo\-dule algebras)  their tensor product can be endowed with the structure of an algebra  by using appropriately the braiding: this is  the {\em braided tensor product} of the algebras, see \cite{MajidFoundation}. 
 \red{In Section \ref{sec:braidedtensor}, we generalise this notion to that of ``self-braided tensor product'' which applies to a comodule algebra having two commuting comodule structures.

Suppose $\underline{\fS}$ is obtained by identifying two edges of a (possibly disconnected) surface $\fS$ with two distinct edges of an ideal triangle as in Figure \ref{fig:brtens-new}. 
\FIGc{brtens-new}{Gluing $\fS$ to two distinct edges of an ideal triangle to get $\underline{\fS}$.} {3cm}
Then $\cS(\fS)$  has two natural commuting structures of $\OSL$-comodule algebra, and we have the following:
\begin{thm}[Theorem \ref{teo:braidedtensor}] As a $\OSL$-comodule algebra $\cSs(\underline{\fS})$ is canonically isomorphic to the self-braided tensor product of $\cS(\fS)$. In particular, if $\fS=\fS'\sqcup \fS''$ and the two edges belong to $\fS'$ and $\fS''$ respectively then $\cSs(\underline{\fS})$ is canonically isomorphic to the braided tensor product of $\cS(\fS_1)$ and  $\cS(\fS_2)$.
\end{thm}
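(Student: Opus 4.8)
The plan is to reduce the statement to the splitting theorem and the explicit formula for the co-$R$-matrix obtained earlier. First I would set up notation carefully: let $T$ denote the ideal triangle with edges $e_1,e_2,e_3$, and suppose $\underline{\fS}$ is obtained by gluing the edge $a$ of $\fS$ to $e_1$ and the edge $b$ of $\fS$ to $e_2$. The surface $\underline{\fS}$ carries a distinguished boundary edge, namely (a subinterval of) $e_3$, which gives it an $\OSL$-comodule structure $\Delta_{e_3}$; I would first check that this is the intended comodule structure in the statement. On the other side, $\cS(\fS)$ has commuting right/left comodule structures $\Delta_a,\Delta_b$ coming from edges $a$ and $b$, and the self-braided tensor product is, by the definition in Section \ref{sec:braidedtensor}, the $\cR$-module $\cS(\fS)$ with a twisted product built from the original product together with the braiding supplied by the co-$R$-matrix acting through $\Delta_a\otimes\Delta_b$; I would recall that definition in a sentence.

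The core step is the following. Cutting $\underline{\fS}$ along the two ideal arcs $c_1$ (parallel to $e_1$) and $c_2$ (parallel to $e_2$) recovers $\fS$ together with two copies of a bigon, i.e. $\cS(\cB)\otimes\cS(\cB)\cong\OSL\otimes\OSL$, glued back onto $\fS$ along $a$ and $b$; but cutting the ideal triangle $T$ off from $\underline{\fS}$ along the arc $c$ that separates $T$ from $\fS$ gives a cleaner decomposition $\underline{\fS}=\fS\cup_{a,b}T$ where $T$ is glued along two of its edges. Applying the splitting Theorem \ref{thm.1a} along $c$ I get an embedding $\cS(\underline{\fS})\hookrightarrow \cS(\fS)\otimes\cS(T)$, and by Theorem \ref{thm.Hoch} the image is the relevant cotensor/Hochschild-degree-zero submodule. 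Now I use the key computational input: the stated skein algebra of the triangle, with its two incoming comodule structures, is isomorphic as an $\OSL$-bicomodule algebra to $\OSL$ itself equipped with the regular coactions — essentially Theorem \ref{thm.1} applied twice, since cutting the triangle along an arc into two bigons identifies $\cS(T)$ with a subalgebra of $\OSL\otimes\OSL$. The multiplication on $\cS(\underline{\fS})$ is, tautologically, stacking of tangles; when one transports a tangle in $\fS$ past the triangle $T$ using an isotopy that drags endpoints across $e_3$, the resulting reordering picks up exactly the skein relations \eqref{eq.skein0}–\eqref{eq.loop0} computed in $\cS(T)=\OSL$, which by Theorem \ref{teo:cobraid} is precisely the co-$R$-matrix. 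So the induced product on $\cS(\fS)$, read off through the embedding, is the braided-tensor-product multiplication.

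Concretely the steps are: (1) fix the arcs $c,c_1,c_2$ and identify $\cS(T)\cong\OSL$ with its regular bicomodule structure via the splitting theorem and Theorem \ref{thm.1}; (2) apply Theorem \ref{thm.Hoch} to the arc $c$ to get $\cS(\underline{\fS})\cong HH^0(\cS(\fS)\otimes\cS(T))$, i.e. the cotensor product $\cS(\fS)\,\square_{\OSL\otimes\OSL}\,\cS(T)$; (3) observe that $\cS(\fS)\,\square_{\OSL\otimes\OSL}\,\OSL$, with $\OSL$ a bicomodule over itself via $\Delta$, is canonically isomorphic as an $\cR$-module to $\cS(\fS)$ (the cotensor product of a bicomodule with the coalgebra is the bicomodule back, up to the diagonal); (4) compute the transported multiplication and match it, term by term, with the braided product, where the matching of the crossing term with the co-$R$-matrix is exactly Theorem \ref{teo:cobraid}; (5) check $\OSL$-colinearity with respect to $\Delta_{e_3}$, which follows because $e_3$ is untouched by the gluing. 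The disconnected case $\fS=\fS'\sqcup\fS''$ is then immediate: $\Delta_a$ and $\Delta_b$ act on the two tensor factors separately, so the self-braided product degenerates to the ordinary braided tensor product of $\cS(\fS_1)$ and $\cS(\fS_2)$.

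The main obstacle I anticipate is step (4): verifying that the reordering of tangle endpoints along $e_3$ produces precisely the co-$R$-matrix of $\OSL$ and not, say, its inverse or a transpose, and that all the various sign/orientation and height-ordering conventions (which edge is ``left'' versus ``right'', whether the braiding or its inverse appears) are consistent between the topological side and the algebraic definition of the (self-)braided tensor product in Section \ref{sec:braidedtensor}. This is bookkeeping rather than conceptual difficulty, but it is exactly the kind of place where a sign error hides; I would pin it down by testing on the generating tangles (a single $\pm$-labeled arc) where both sides can be written out explicitly using Theorem \ref{teo:cobraid}.
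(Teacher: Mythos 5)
Your route has a genuine gap at its central step. The identification you lean on — ``$\cS(\P_3)$, with its two incoming comodule structures, is isomorphic as an $\OSL$-bicomodule algebra to $\OSL$ itself with the regular coactions'' — is false: by the basis theorem (Theorem \ref{thm.basis1a}) the stated skein algebra of the ideal triangle is a free $\cR$-module of the size of $\OSL\otimes\OSL$, not of $\OSL$, and the topological justification you give cannot work either, since every ideal arc in an ideal triangle is parallel to one of its three edges, so cutting along a single arc yields a bigon \emph{plus a triangle}, never two bigons; likewise there is no single arc separating the attached triangle from $\fS$ (it is glued along two edges), so the splitting must be performed along two arcs and Theorem \ref{teo:cotensor} applied twice. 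The statement you actually need — that $\cS(\P_3)$, viewed as a bicomodule via the two glued edges, is the coregular bicomodule $\OSL\otimes\OSL$, with product the braided tensor product — is in this paper a \emph{consequence} of Theorem \ref{teo:braidedtensor} (Example \ref{polygons}, Corollary \ref{cor:polygons}), so invoking it makes the argument circular unless you supply an independent computation of $\cS(\P_3)$ together with its edge coactions (e.g.\ from the explicit presentation of $\cS(\P_3)$ in [Le2], as quoted in the proof of Theorem \ref{teo:redtri}). Even after that repair, your steps (3)--(4) still require transporting the product of the double cotensor $\cS(\fS)\,\square_{\OSL^{\otimes 2}}\cS(\P_3)$ back to $\cS(\fS)$ and matching it with $\uast$, which amounts to the same skein computation you were trying to avoid.

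For comparison, the paper's proof does not use the cotensor/Hochschild machinery at all: it defines an explicit $\cR$-linear map $f:\cS(\fS)\to\cS(\ufS)$ by prolonging the strands ending on the two glued edges until they reach the free edge $e$ (with positive height order and no crossings in the triangle), verifies $f(x\uast y)=f(x)f(y)$ by one splitting along an arc near $e$ together with the counit property and the formula \eqref{eq.coR2} for $\rho'$ (this is where Theorem \ref{teo:cobraid} enters, as you anticipated), and then proves bijectivity by a filtration argument: filtering by geometric intersection with the two glued edges, the graded pieces $V(x)$ and $W(x)$ are free of the same rank $(m+1)(n+1)$, and surjectivity of $\Gr(f)$ follows from the reordering relation (Figure \ref{fig:brten6-new}). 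If you want to keep your cotensor-style strategy, you must (i) replace the false identification of $\cS(\P_3)$ by an independently proved coregularity statement for the two glued edges, and (ii) carry out the product comparison concretely on generators; otherwise the direct construction of $f$ is the shorter path.
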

}

Through this theorem we easily compute the skein algebra of all ``polygons'', ``punctured bigons'', and ``punctured monogons'' in Subsection \ref{sub:monogons}. It is remarkable that the skein algebras of the latter turn out to be bialgebra objects in the category of $\OSL$-comodules and that their structure morphism have natural topological interpretation. In particular the punctured monogon yields the ``transmutation'' of $\OSL$. 

\subsection{Modular operad} 

The splitting homomorphism allows to put the theory of stated skein algebras of surfaces in the framework  of operad theory. We define  the notion of geometric non-symmetric modular operad in Section \ref{sub:cooperad} and prove the following.

\begin{thm}[Precise statement given by  Theorem \ref{teo:NSmodoperad}] The stated skein algebra of surfaces gives rise to a non-symmetric modular operad in a category of bimodules over $\USL$.
\end{thm}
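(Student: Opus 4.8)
The plan is to take the assignment $\fS \mapsto \Qq\otimes_\cR\cS(\fS)$ as the underlying collection and to derive every structure map from the splitting homomorphism \eqref{eq.rho0}. First I would fix the indexing data: to a compact oriented surface with punctured boundary attach its topological type (genus, number of boundary components, number of punctures) together with, for each boundary component, the cyclically ordered set of boundary edges it carries. These edges are the ``legs'', grouped into cyclically ordered families, and it is precisely the presence of these orders — rather than a single symmetric set of legs — that makes the operad \emph{non-symmetric}. Disjoint union is the monoidal operation on the index category, and additivity of genus and Euler characteristic under gluing is the ``modular'' grading. By Theorem~\ref{teo:module} each boundary edge $e$ endows $\Qq\otimes_\cR\cS(\fS)$ with a left $\USL$-module structure, and the actions attached to distinct edges commute; hence $\Qq\otimes_\cR\cS(\fS)$ is canonically an object of the category of modules over $\USL^{\otimes n}$, where $n$ is the number of edges — the category of ``$\USL$-bimodules'' used in Section~\ref{sub:cooperad}, with one action per leg.

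Next I would define the gluing (composition) maps. Given two surfaces with a chosen boundary edge in each — or a single surface with two chosen distinct boundary edges — let $\underline{\fS}$ be the surface obtained by identifying those two edges. There is a distinguished ideal arc $c$ in $\underline{\fS}$, namely the image of the glued edges, and cutting $\underline{\fS}$ along $c$ recovers the original (possibly disconnected) surface; writing $\cS(\fS_1\sqcup\fS_2)\cong\cS(\fS_1)\otimes\cS(\fS_2)$, the splitting homomorphism of \eqref{eq.rho0} thus embeds $\cS(\underline{\fS})$ into the tensor product of the pieces. By Theorem~\ref{teo:cotensor} the image is exactly the cotensor product $\cS(\fS_1)\,\square_{\OSL}\,\cS(\fS_2)$ in the disconnecting case and the Hochschild cohomology $HH^0$ of the $\OSL$-bicomodule $\cS(\fS)$ in general. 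Passing to $\Qq\otimes_\cR(-)$ and using the comodule-to-module dictionary together with Theorem~\ref{r.Hoch2}, this says that the value of our collection on the glued surface is the canonical identification of $\cS(\underline{\fS})$ with the balanced product over $\USL$ of the values on the pieces, respectively with the degree-$0$ Hochschild homology when the two edges lie on the same connected component. These are precisely the composition isomorphisms required of a geometric non-symmetric modular operad; the self-gluing case is where the genus increases or two components merge. The unit is the bigon $\cB\cong\OSL$ with its two edges: gluing $\cB$ along a leg of any $\fS$ is the identity, because splitting along an ideal arc parallel to that edge followed by the counit $\epsilon\colon\SB\to\cR$ recovers $\cS(\fS)$. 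The triangle-gluing Theorem~\ref{teo:braidedtensor} is used to realize the composition through insertion of a standard operadic generator and to make the compatibility with the braided structure on $\OSL$-comodules explicit.

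It then remains to verify the operad axioms, and the point is that they all descend from known properties of $\theta_c$. Associativity of iterated compositions and commutation of compositions along disjoint legs reduce to the identities $\theta_{c_1}\theta_{c_2}=\theta_{c_2}\theta_{c_1}$ for disjoint ideal arcs and to the naturality of $\theta_c$ under further cutting, both proved in \cite{Le:TDEC}: since any finite system of gluings of a fixed surface is realized by a collection of pairwise disjoint seams $c_1,\dots,c_k$, the resulting identification is independent of the order in which the gluings are performed, which is modular-operad associativity. Equivariance is compatibility with cyclic relabelling of the edges on each boundary component — automatic since $\cS(\fS)$ depends only on $\fS$ — and with the $\USL^{\otimes n}$-actions, which follows because the module action of an unglued edge commutes with the coaction of a glued edge, again a disjoint-arc splitting compatibility. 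The unit axioms are the ones recorded above.

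The main obstacle, I expect, is not any individual topological identity — these are all available from \cite{Le:TDEC} and Theorems~\ref{teo:cotensor}, \ref{teo:module} and \ref{teo:braidedtensor} — but the bookkeeping needed to fit this data into the precise definition of geometric non-symmetric modular operad of Section~\ref{sub:cooperad}: one must check that the balanced product over $\USL$ (equivalently $HH^0$ over $\OSL$) is the correct monoidal-type operation on the enriching category, that it is suitably associative and unital there, and that the integrability and flatness furnished by Theorem~\ref{teo:module} make cotensor products and $HH^0$ commute with the base change $-\otimes_\cR\Qq$ and with iterated gluings. A secondary point to handle carefully is the distinction between self-gluings that disconnect a surface and those that raise its genus, and checking that the genus/Euler-characteristic grading is additive under all compositions, so that the ``modular'' structure is genuinely present and not merely a cyclic-operad structure.
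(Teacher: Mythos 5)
Your proposal follows essentially the same route as the paper: gluing is realized as a balanced tensor product over $\USL$ justified by Theorem \ref{r.Hoch2}, the bigon $\cS(\B)\cong\OSL=\bigoplus_i V^L_i\otimes V^R_i$ serves as the unit, and order-independence of iterated gluings comes from the commutation of the splitting homomorphisms. The only real difference is packaging: the paper encodes each cutting system $\alpha$ as the isomorphism class of the bimodule $\Qq\otimes\cS(\B)^{\otimes\Edge(\fS)}\otimes\cS(\B^R)^{\otimes\alpha}$ in $\NSCatBim$, using the bigon $\B^R$ with both edges of right type (via the antimorphism $r^*$ of Lemma \ref{lem:LRfunctors}) to convert one of the two same-sided edge actions before self-gluing, and it needs neither a genus grading nor Theorem \ref{teo:braidedtensor}.
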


To be more specific while leaving the details for Section \ref{sub:cooperad}, let us recall that, according to Markl (\cite{Mar}) a ``Non-symmetric modular operad in a monoidal category $Cat$'' is a monoidal functor $NSO:\mathsf{MultiCyc}\to Cat$, where $\mathsf{MultiCyc}$ is a suitable category of ``multicyclic sets''.  
In Section  \ref{sub:cooperad} we re-cast Markl's definition, by defining a category $\NSGraph$ whose objects are punctured surfaces $\fS$ and whose morphisms are finite sets of ideal arcs (describing a way of cutting the surfaces). From this point of view, we then show in Theorem \ref{teo:NSmodoperad} that stated skein algebras provide a symmetric monoidal functor from this category into a suitable category of modules and bimodules over copies of $\USL$, thus providing a topological example of a NS modular operad.

\def\bSS{\overline {\cS}(\fS)}
\def\Ibad{{\mathcal I}^{\mathrm{bad}}}
\subsection{Reduced stated skein algebra, quantum torus, and quantum trace map} The stated skein algebra $\SS$ has a quotient $\bSS= \SS/\Ibad$, called the {\em reduced stated skein algebra}, whose algebraic structure is much simpler as it can be embedded into the so called quantum tori. Here $\Ibad$ is the ideal generated by elements, called bad arcs, described in Figure \ref{fig:badarc0} and is explained in Section \ref{sec:reduced}.

\FIGc{badarc0}{A bad arc.} {2cm}

We will show  that the ordinary skein algebra $\ooS(\fS)$ still embeds into $\bSS$ and hence we can use $\bSS$ to study $\ooS(\fS)$. Most importantly, the splitting theorem still holds for $\bSS$.
 
\begin{thm}[Theorem  \ref{teo:rsplit}] If $\fS'$ is the result of cutting $\fS$ along an ideal arc $c$, then the splitting homomorphism $\theta_c$ descends to an algebra embedding
 $$ \bar \theta_c: \bSS \embed \overline {\cS}(\fS').$$
\end{thm}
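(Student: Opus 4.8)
The plan is to check two things: that $\theta_c$ carries the bad ideal $\Ibad$ of $\fS$ into the bad ideal $\Ibad(\fS')$ of $\fS'$, so that it descends to an algebra map $\bar\theta_c\colon\bSS\to\bcS(\fS')$, and that this descended map is injective. For the first point, since $\theta_c$ is an algebra homomorphism and both bad ideals are two-sided, it is enough to prove $\theta_c(\alpha)\in\Ibad(\fS')$ for a single bad arc $\alpha$, say lying near a boundary edge $e$. A bad arc is boundary parallel and supported in a half-disk neighborhood of a subarc of $e$; sliding it into the interior of $e$ and pushing it into a thin collar, one can isotope it off a chosen representative of $c$, since $c$, being an ideal arc with endpoints at punctures, meets such a collar only near the punctures of $\overline e$ that happen to be endpoints of $c$. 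Then $\theta_c(\alpha)$ is literally $\alpha$ regarded in $\fS'$, still a bad arc near (a subedge of) $e$, hence in $\Ibad(\fS')$. (If the local model of a bad arc is pinned to such a puncture, so that $\alpha$ is forced to meet $c$, one instead evaluates the short state sum $\theta_c(\alpha)=\sum_s\alpha_s$ and checks, using the relations \eqref{eq.skein0}--\eqref{eq.loop0} near the cut, that its image in $\bcS(\fS')$ vanishes.)

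The injectivity is the substantive part: one must show $\theta_c^{-1}(\Ibad(\fS'))=\Ibad$, where the inclusion $\supseteq$ is exactly what was just proved, so the content is $\subseteq$. I would argue via a basis of the reduced skein algebra — the classes of stated tangles in reduced position, i.e. containing no bad arc, which form a basis of $\bSS$ (and, for $\fS'$, of $\bcS(\fS')$), as established in Section \ref{sec:reduced}. Putting such a basis element $b$ transverse to $c$ one has $\theta_c(b)=\sum_s b_s$, a state sum over the points of $b\cap c$, hence $\bar\theta_c(b)=\sum_s\pi'(b_s)$ with $\pi'\colon\cS(\fS')\to\bcS(\fS')$ the projection. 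Ordering the states as in the proof of the splitting Theorem \ref{thm.1a} turns this into a filtered sum; that proof shows that the leading term $b_{s_0}$, after resolving any trivial loops near the cut by \eqref{eq.skein0}--\eqref{eq.loop0}, is a single basis tangle of $\cS(\fS')$ and that $b\mapsto b_{s_0}$ is injective. One then verifies that $b_{s_0}$ is again in reduced position — the configuration of a bad arc near the new edges being incompatible with the states prescribed by the extreme resolution $s_0$ — so $\pi'(b_{s_0})$ is a basis element of $\bcS(\fS')$ and $b\mapsto\pi'(b_{s_0})$ is still injective. A standard triangularity argument then shows $\{\bar\theta_c(b)\}$ is linearly independent, i.e. $\bar\theta_c$ is injective.

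I expect the main obstacle to be precisely this last bookkeeping: ensuring that passing to the reduced quotient neither kills a leading term nor makes two of them collide — that the extreme resolution of $b$ along $c$ never creates a bad arc near the new edges, and that distinct $b$ keep distinct leading terms modulo $\Ibad(\fS')$. This is the analysis behind the unreduced splitting theorem rerun in the presence of $\Ibad(\fS')$, together with the classification of bad arcs from Figure \ref{fig:badarc0}. Heuristically the statement is also visible through the embeddings of reduced skein algebras into quantum tori: under them $\bar\theta_c$ should become the evident inclusion of quantum tori sending the generator of the edge $c$ to (a power of $q$ times) the product of the generators of its two copies $c_1,c_2$, which is injective because $c_1+c_2$ spans a primitive sublattice — but making this rigorous would presuppose those embeddings, which in the present development rely on the splitting theorem itself, so the leading-term argument seems to be the appropriate route.
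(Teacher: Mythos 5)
Your overall plan coincides with the paper's (descend by checking that bad arcs are sent into $\Ibad(\fS')$, then prove injectivity with the reduced basis and the leading term of $\theta_c$ in the filtration by intersection with the two copies of $c$, via Proposition \ref{r.grbinom}), but the step you set aside as ``bookkeeping'' is precisely where the argument fails: it is \emph{not} true that the extreme resolution $b_{s_0}$ of a reduced basis element is again reduced. Concretely, a reduced $b$ in $\fS$ may contain a strand crossing $c$ near one of its endpoints and ending with state $-$ on the adjacent boundary edge, together with another strand crossing $c$ and ending with state $+$ at the other corner on the same copy of the cut edge; neither piece is a corner arc in $\fS$, but after cutting, the all-$+$ resolution creates a bad arc at the first corner, while any resolution carrying a $-$ on that edge creates a bad arc at the second corner. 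Hence \emph{every} term of $\theta_c(b)$ in the top graded piece $\Gr^{\{c_1,c_2\}}_{2k}$ dies in $\bcS(\fS')$, no choice of ``leading'' resolution survives, and a triangularity argument on leading terms alone cannot establish injectivity. This is exactly Case 2 of the paper's proof: Lemma \ref{r.lu77} isolates the corner-arc patterns for which this happens, and the remedy uses a mechanism absent from your proposal, namely the corner elements $u,v$ with $uv=vu=1$ in the reduced algebra (Proposition \ref{r.corner}); one multiplies $\bar\theta_c(x)$ by $u^d$, with $d$ the minimal number of offending negative corner arcs, and applies a tailored projection $P_+$ to extract distinct nonzero basis elements (Lemmas \ref{r.lu9} and \ref{r.lu1}). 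Without some substitute for this step your injectivity argument is incomplete.

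A smaller point on the descent: when the bad arc cannot be isotoped off $c$ (i.e.\ $c$ ends at the surrounded puncture and enters the cut-off triangle, so the geometric intersection is $1$), the reason $\theta_c$ of the bad arc lies in $\Ibad(\fS')$ is not a computation with the skein and loop relations -- no crossings or trivial loops appear -- but simply that each of the two lifts contains a bad arc of $\fS'$ on one side or the other (Figure \ref{fig:badarc3}), so the state sum lies in the bad ideal termwise. That part of your argument is fixable in one line; the injectivity gap is the substantive one.
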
 
The non-trivial fact here is that $\bar\theta _c$ is injective.
  
 Except for a few simple surfaces, we can always cut $\fS$ along ideal arcs so that the result is a collection of ideal triangles $T_1, \dots, T_k$. It follows that there is an embedding
 \be
 \Theta: \bSS \embed \bigotimes_{i=1}^k \overline {\cS} (T_i). \label{eq.rhoall} 
 \ee
 
The important thing with the reduced version is that for an ideal triangle $T$, unlike the full fledged $\cS(T)$, the reduced stated skein algebra $\overline {\cS}(T)$ is a {\em quantum torus} in three variables:

\begin{thm}[Theorem \ref{teo:redtri}] \label{thm.tri}
The reduced stated skein algebra $\overline {\cS}(T)$ of an ideal triangle has presentation
$$ \overline {\cS}(T) = \cR\la \al^{\pm 1}, \beta ^{\pm 1}, \gamma^{\pm 1}\ra  / (\beta\alpha = q\al\beta, \gamma\beta = q \beta\gamma, \al\gamma=q \gamma\al).$$
\end{thm}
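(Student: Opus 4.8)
The plan is to exhibit three invertible generators of $\overline{\cS}(T)$, one near each corner of the triangle, to verify the displayed $q$-commutation relations by short diagrammatic computations, and finally to prove that the resulting surjection from the quantum torus is injective; this last step is where I expect the real difficulty to lie. Order the boundary edges of $T$ cyclically as $e_1,e_2,e_3$, and for $k\in\BZ/3$ let $c_k$ be the corner between $e_k$ and $e_{k+1}$. For signs $\varepsilon,\delta\in\{+,-\}$ write $\al^{(k)}_{\varepsilon\delta}$ for the corner arc running near $c_k$ from $e_k$ to $e_{k+1}$ with the state $\varepsilon$ on its endpoint on $e_k$ and $\delta$ on its endpoint on $e_{k+1}$. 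From the presentation of $\cS(T)$ obtained earlier, these twelve corner arcs generate $\cS(T)$ as an $\cR$-algebra (an arc with both endpoints on a single edge, or a loop around a puncture, is rewritten as a product of corner arcs using the Kauffman and boundary relations), and with the conventions of Figure \ref{fig:badarc0} the bad arc at $c_k$ is $\al^{(k)}_{-+}$; hence $\Ibad$ is the two-sided ideal generated by $\al^{(1)}_{-+},\al^{(2)}_{-+},\al^{(3)}_{-+}$, and $\overline{\cS}(T)=\cS(T)/\Ibad$. I put $\al=[\al^{(1)}_{++}]$, $\beta=[\al^{(2)}_{++}]$, $\gamma=[\al^{(3)}_{++}]$ in $\overline{\cS}(T)$.

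\emph{Invertibility and spanning.} The first diagrammatic input is a ``quantum determinant'' identity at each corner, proved by stacking $\al^{(k)}_{++}$ over $\al^{(k)}_{--}$ near $c_k$ and applying the defining boundary (state-sum) relations in a disk around $c_k$: in $\cS(T)$ one obtains $\al^{(k)}_{++}\,\al^{(k)}_{--}=q^{m_k}\emptyset+u_k\,\al^{(k)}_{+-}\al^{(k)}_{-+}$ for some integer $m_k$ and unit $u_k$, where $\emptyset$ denotes the empty skein. Since $\al^{(k)}_{-+}\in\Ibad$, reducing modulo $\Ibad$ gives $\al^{(k)}_{++}\al^{(k)}_{--}\equiv q^{m_k}\emptyset$, so $\al,\beta,\gamma$ are invertible in $\overline{\cS}(T)$, with $\al^{-1}=q^{-m_1}[\al^{(1)}_{--}]$ and likewise for $\beta,\gamma$. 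The second diagrammatic input is a further boundary relation showing that each mixed corner arc $\al^{(k)}_{+-}$ reduces, modulo $\Ibad$, to a Laurent monomial in $\al,\beta,\gamma$. Together with $[\al^{(k)}_{-+}]=0$, this shows that every corner arc becomes, modulo $\Ibad$, a unit times a Laurent monomial in $\al,\beta,\gamma$; hence the monomials $\al^a\beta^b\gamma^c$ ($a,b,c\in\BZ$) span $\overline{\cS}(T)$ over $\cR$.

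\emph{Relations.} The arcs $\al^{(1)}_{++}$ and $\al^{(2)}_{++}$ are disjoint in the interior of $T$ and both meet the common edge $e_2$ --- one near $c_1$, the other near $c_2$ --- so the products $\al^{(1)}_{++}\al^{(2)}_{++}$ and $\al^{(2)}_{++}\al^{(1)}_{++}$ differ only by exchanging the heights of two equally-stated endpoints on $e_2$, and the height-exchange relation of the stated skein algebra contributes exactly one factor of $q$; this gives $\beta\al=q\al\beta$. The orientation-preserving self-homeomorphism of $T$ cyclically rotating $e_1\to e_2\to e_3\to e_1$ induces an automorphism of $\cS(T)$ that preserves $\Ibad$ and carries $(\al,\beta,\gamma)$ to $(\beta,\gamma,\al)$ up to units, which yields $\gamma\beta=q\beta\gamma$ and $\al\gamma=q\gamma\al$. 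Combined with the spanning statement, this gives a surjective $\cR$-algebra homomorphism
\[
\Phi\colon\ \cR\la\al^{\pm1},\beta^{\pm1},\gamma^{\pm1}\ra/(\beta\al=q\al\beta,\ \gamma\beta=q\beta\gamma,\ \al\gamma=q\gamma\al)\ \onto\ \overline{\cS}(T).
\]

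\emph{Completeness of the presentation.} It remains to prove that $\Phi$ is injective, i.e. that the $\al^a\beta^b\gamma^c$ are $\cR$-linearly independent, and this is the main obstacle. It cannot be deduced from a grading on $\cS(T)$: the obvious $\BZ^3$-grading by the number of endpoints on each edge does not distinguish $\al$ from $\al^{-1}=q^{-m_1}[\al^{(1)}_{--}]$, and a signed refinement is not respected by the skein relations. Instead I would produce an explicit $\cR$-basis of $\overline{\cS}(T)$: the three corner arcs $\al^{(1)},\al^{(2)},\al^{(3)}$ can be realized pairwise disjointly in $T$, and starting from the known multicurve basis of $\cS(T)$ and applying the reduction relations above (a Bergman diamond-lemma argument) one shows that $\{\al^a\beta^b\gamma^c\}_{(a,b,c)\in\BZ^3}$ is a basis of $\overline{\cS}(T)$, whence $\Phi$ is an isomorphism. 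A shortcut, once it becomes available, is to compose $\Phi$ with the quantum trace embedding of $\overline{\cS}(T)$ into the Chekhov--Fock quantum torus of the triangle --- whose three generators are $\al,\beta,\gamma$ up to units --- which is manifestly injective. The two diagrammatic inputs (the corner determinant identity and the reduction of the mixed corner arcs) are elementary but should be carried out with care; everything else is bookkeeping.
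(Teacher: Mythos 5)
Your construction of the generators, the invertibility argument at each corner, the reduction of the mixed corner arcs, the height-exchange derivation of $\beta\al=q\al\beta$, and the use of the $\BZ/3$ rotation are all essentially the paper's (second, geometric) proof: Proposition \ref{r.corner} gives $\al(++)\al(--)=\al(--)\al(++)=1$ exactly in $\bcS(\P_3)$, the relation \eqref{eq.reor1} gives the $q$-commutation, and the relation of Figure \ref{fig:brten6-new} expresses the $(+-)$ corner arcs as products of $(++)$ and $(--)$ corner arcs, yielding surjectivity of your map $\Phi$. The genuine gap is in the step you yourself flag as the crux: you never actually prove injectivity. You defer it either to an unexecuted Bergman diamond-lemma argument (``one shows that $\{\al^a\beta^b\gamma^c\}$ is a basis'') or to a ``shortcut'' through the quantum trace embedding. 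The second option is circular in this paper's logic: the embedding $\bcS(\P_3)\hookrightarrow\bT'$ of Section \ref{sec.rTrace}, and hence the quantum trace of Theorem \ref{thm.qtrace}, is \emph{constructed from} the isomorphism $\bcS(\P_3)\cong\bT$ you are trying to prove. The first option would in effect re-prove, by hand, the linear-independence statement whose delicate coefficient bookkeeping is exactly what makes such arguments hard, and you give no indication of how the overlaps/ambiguities would be resolved.

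What closes the argument in the paper is a tool you already have available but did not invoke: Theorem \ref{teo:redbasis}, which exhibits $\bB(\P_3)$ (positively ordered, increasingly stated simple diagrams with no bad arc) as a free $\cR$-basis of $\bcS(\P_3)$. Given a vanishing combination $\sum c_{k,m,n}\,\Phi(\al^k\beta^m\gamma^n)=0$, one first multiplies on the left by $\Phi(\al^{k'}\beta^{m'}\gamma^{n'})$ with $k',m',n'$ large (using the $q$-commutation relations and invertibility) to reduce to exponents $k,m,n>0$; then, by repeated use of the positive-state height exchange \eqref{eq.reor1}, $\Phi(\al^k\beta^m\gamma^n)\bue z(k,m,n)$, where $z(k,m,n)$ is the diagram of $k$, $m$, $n$ parallel copies of the three corner arcs with all states $+$. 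These $z(k,m,n)$ are pairwise distinct elements of the basis $\bB(\P_3)$, so all $c_{k,m,n}$ vanish and $\Phi$ is injective. Without this (or an equally concrete substitute, e.g.\ a direct verification on the finite presentation of $\cS(\P_3)$ from \cite{Le:TDEC} that the explicit assignment \eqref{eq.iso66} admits a two-sided inverse), your proposal establishes only that $\bcS(\P_3)$ is a quotient of the quantum torus, not that it equals it.
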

Moreover, the reduced stated skein algebra of the bigon is naturally isomorphic to the algebra $\cR[x^{\pm 1}]$ of Laurent polynomial in one variable, see Proposition \ref{r.rbigon}.

Consequently, the map $\Theta$ of \eqref{eq.rhoall} embeds the reduced stated skein algebra $\bSS$ into a quantum torus in $3k$ variables.
Geometrically the  variables $\al,\beta,\gamma$ in Theorem \ref{thm.tri} come from the corner arcs  of the ideal triangle. There is a similar quantum torus $\bT'(T)$ in 3 variables corresponding to the edges of $T$, and a simple change of variables gives us an embedding $\overline {\cS}(T) \embed \bT'(T)$. Combining with $\Theta$ of \eqref{eq.rhoall} we get an algebra embedding 
$$
\tr_q: \bSS \overset {\Theta}\embed \bigotimes_{i=1}^k \bcS(T_i)  \embed \bigotimes_{i=1}^k \bT(T_i).
$$

There is a subalgebra $\cY$ of $\bigotimes_{i=1}^k \bT(T_i)$, known as the Chekhov-Fock algebra associated to the triangulation. 
\red{ The famous quantum trace map of Bonahon and Wong \cite{BW} is an algebra homomorphism $\widehat \tr_q: \widehat\cS(\fS) \to \cY$, where $\widehat \cS(\fS)$ is a coarser version of the stated skein algebra which surjects onto 
 $\cS(\fS)$, see Section \ref{sub:statedskeinalgebra}. 

\begin{thm} [See Theorem \ref{thm.qtrace}] The image of $\tr_q$ is in $\cY$. Thus $\tr_q$ restricts to an algebra embedding  $\tr_q: \bSS \embed \cY$, and   the quantum trace map of Bonahon and Wong is the composition $\widehat \cS(\fS)\onto  \bSS\overset {\tr_q} {\embed} \cY.$
\end{thm}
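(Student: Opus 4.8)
The plan is to fix an ideal triangulation $\D=\{T_1,\dots,T_k\}$ of $\fS$ (the finitely many surfaces admitting no such triangulation are handled separately by direct computation) and to use that $\tr_q$ is, by construction, the composition of the iterated splitting embedding $\Theta=\Theta_\D\colon\bSS\embed\bigotimes_{i=1}^k\bcS(T_i)$ from Theorem~\ref{teo:rsplit} with the change of variables $\bcS(T_i)\embed\bT(T_i)$ rewriting corner coordinates as monomials in edge coordinates. In particular $\tr_q$ is already a well-defined \emph{injective} algebra homomorphism into the quantum torus $\bigotimes_{i=1}^k\bT(T_i)$, so only two points remain: (i) its image lies in the Chekhov--Fock subalgebra $\cY$; and (ii) $\widehat\tr_q=\tr_q\circ\pi$, where $\pi\colon\widehat\cS(\fS)\onto\bSS$ is the canonical surjection. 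Granting these, injectivity of $\tr_q$ together with (i) yields $\tr_q\colon\bSS\embed\cY$, and (ii) identifies $\widehat\tr_q$ with the composition $\widehat\cS(\fS)\onto\bSS\overset{\tr_q}{\embed}\cY$.

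To establish (i) and (ii) at once I would reduce to a spanning family. Call a stated framed tangle $\al$ in $\fS\times(0,1)$ \emph{$\D$-taut} if it is transverse to the walls $T_i\cap T_j$ of $\D$ and meets each $T_j\times(0,1)$ in a disjoint union of corner arcs and edge-to-edge arcs at distinct heights. A standard taut-position argument shows that the classes $[\al]$ of $\D$-taut tangles span $\bSS$ over $\cR$; the same family generates $\widehat\cS(\fS)$. Since $\tr_q\circ\pi$ and $\widehat\tr_q$ are $\cR$-linear, it suffices to compare them on such classes.

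For a $\D$-taut tangle $\al$, applying Theorem~\ref{teo:rsplit} once per wall expresses $\Theta([\al])$ as the state sum, over all assignments of states $\pm$ to the points of $\al$ on the walls, of $\bigotimes_{i=1}^k[\al\cap T_i]$, up to an explicit power of $q$ recording the reordering of strand heights along each wall; rewriting each factor in $\bT(T_i)$ is then immediate. The key step is a local computation at each edge: if $e=T_i\cap T_j$ is an internal edge meeting $\al$ in $m$ points, then summing over the states at those $m$ points collapses the part of the expression ``supported at $e$'' (the tensor factor of $\bT(T_i)\ot\bT(T_j)$ carrying the corner contributions of $\al$ at $e$) to a Laurent polynomial in the single variable $Z_e:=Z_e^{(i)}Z_e^{(j)}$; this uses the quantum-torus relations of Theorem~\ref{teo:redtri} and the vanishing of bad arcs in $\bSS$ (an arc crossing $e$ with summed state becomes $Z_e^{\pm1}$, an arc returning to $e$ becomes a scalar). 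Running this at all internal edges, and noting that states on boundary edges yield powers of the corresponding boundary generator $Z_e$, gives $\tr_q([\al])\in\cY$, hence (i).

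The remaining point (ii) is where the real work lies. Bonahon--Wong's map $\widehat\tr_q$ on $\widehat\cS(\fS)$ is defined by precisely such a state sum over $\D$: to a $\D$-taut $\al$ it assigns $\sum\prod_i\langle\text{triangle element of }\al\cap T_i\rangle$ with the triangle element living in the Chekhov--Fock triangle algebra $\bT(T_i)$. By the previous paragraph $\tr_q(\pi([\al]))$ has the same shape, with each triangle element replaced by the image in $\bT(T_i)$ of the corresponding element of $\bcS(T_i)$. Thus (ii) comes down to a finite, triangle-by-triangle verification: for each of the few elementary stated corner and edge-to-edge arcs of a single ideal triangle $T$, its image in $\bT(T)$ under the corner-to-edge change of variables must match Bonahon--Wong's left/right matrices, and the height-reordering $q$-powers in the two state sums must agree. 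I expect this convention-matching to be the main obstacle: it is not conceptually deep but requires carefully lining up the two normalizations --- the Kauffman variable versus $q$, the dictionary between states $\pm$ and Bonahon--Wong's $\uparrow/\downarrow$, the corner-arc normalizations, and the Weyl-ordering conventions --- together with double-checking that the iterated splitting homomorphism really realizes the geometric cutting of $\al$ into its triangle pieces with exactly the right powers of $q$. Once this is in place one has $\widehat\tr_q=\tr_q\circ\pi$ as maps into $\bigotimes_i\bT(T_i)$; since $\pi$ is surjective and (by Bonahon--Wong) $\widehat\tr_q$ lands in $\cY$, this reproves (i), and with the injectivity of $\tr_q$ it delivers the embedding $\tr_q\colon\bSS\embed\cY$ and the claimed factorization of $\widehat\tr_q$.
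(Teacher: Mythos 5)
Your construction of $\tr_q$ and your treatment of point (i) match the paper: injectivity is already supplied by Theorem \ref{teo:rsplit} together with Theorem \ref{teo:redtri} and the embedding $\bT\embed\bT'$, and the fact that the image lies in $\cY(\cE)$ is a routine check on a spanning set of taut diagrams (indeed each individual lift in the state sum already gives a monomial in which the two copies $a',a''$ of every interior edge occur with equal exponents, so no summation over states is even needed for this point). Where you diverge from the paper is point (ii). The paper does not re-derive Bonahon--Wong's state sum at all: it recalls that \cite{Le:TDEC} already constructed the homomorphism $\varkappa_\cE:\cS(\fS)\to\cY(\cE)$ as the composition $\cS(\fS)\embed\bigotimes_{\faces}\cS(\P_3)\embed\bigotimes_{\faces}\bT'$ and proved there that $\widehat\tr_q$ equals $\varkappa_\cE$ precomposed with $\widehat\cS(\fS)\onto\cS(\fS)$; the only new observations needed are that the map $\cS(\P_3)\to\bT'$ factors through $\bcS(\P_3)$, so $\varkappa_\cE$ factors through $\bSS$, and then the factorization $\widehat\cS(\fS)\onto\bSS\overset{\tr_q}\embed\cY$ is immediate. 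So the paper's proof is a short reduction to prior work, while yours is a self-contained re-verification against Bonahon--Wong's original local definition.

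The price of your route is that the triangle-by-triangle comparison --- matching the images of the elementary corner arcs in $\bT'$ with Bonahon--Wong's left/right local matrices, the state dictionary, and the height-reordering powers of $q$ --- is exactly the nontrivial content of the theorem (it is what \cite{Le:TDEC} and \cite{BW} actually establish), and you leave it as an announced finite verification rather than carrying it out. As written, the argument therefore stops short of a complete proof of the identity $\widehat\tr_q=\tr_q\circ\pi$; the reduction to that local check is sound, but the check itself, with all normalizations pinned down, must be done (or the result of \cite{Le:TDEC} cited, as the paper does). Two small further remarks: the theorem is only stated for triangulable $\fS$, so no separate treatment of exceptional surfaces is required; and your claim that summing over states at an interior edge is what forces the image into $\cY$ slightly misplaces the mechanism, since each summand is already balanced in the paired edge variables.
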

}

The existence of the quantum trace map (for $\ooS(\fS)$) was conjectured by Chekhov and Fock \cite{CF2}, and was established  by Bonahon and Wong \cite{BW}. It is called the quantum trace map since when $q=1$ it becomes a formula expressing the trace of a curve under the holonomy representation of the hyperbolic metric in terms of the shear coordinates of the Teichm\"uller space.
The second author \cite{Le:QT} gave another proof of the existence of the quantum trace map based on the Muller skein algebra, which is actually a subspace of the state skein algebra $\SS$. The above approach using the reduced stated  skein algebra  offers another proof, and also determines the kernel of the original quantum trace map $\widehat \tr_q$.

\subsection{Classical limit}
The last section explores the natural question of ``what is the classical limit of $\SS$?'' In the case of the standard skein algebra $\ooS(\fS)$ it is known \cite{Bullock,PS1} (see also \cite{CM}) that when the quantum parameter $q$ is $-1$ and the ground ring is $\BC$ then $\ooS(\fS)$ is isomorphic as an algebra to the coordinate ring the $\mathrm{SL}_2(\mathbb{C})$-character variety of $\fS$ and that in general the algebras at $q$ and $-q$ are isomorphic via the choice of a spin structure on $\fS$ (\cite{Barrett}). Note that our stated skein algebra is not commutative when $q=-1$ though it is commutative when $q=1$. 

We introduce the variety $tw(\fS)$ of  ``twisted  $\mathrm{SL}_2(\mathbb{C})$-bundles'' over $\fS$, which, roughly speaking, are flat $\mathrm{SL}_2(\mathbb{C})$-bundles over the unit tangent bundle $U\fS$  of $\fS$ with holonomy $-Id$ around the fibers of $\pi:U\fS\to \fS$ and \red{are equipped} with trivializations along the edges of $\fS$, but which we reformulate in terms of groupoid representations. To deal with the non-oriented nature of the  arcs of the stated skein algebra we have to use a trick smoothing the arcs at their end-points so that one can compose arcs. 
\begin{thm}[Theorem \ref{teo:classicaliso}] \label{thm.classic}
 When $q=1$ and the ground ring is $\BC$ the stated skein algebra $\SS$ is naturally isomorphic to the coordinate ring of $tw(\fS)$.
\end{thm}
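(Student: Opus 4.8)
The isomorphism of Theorem \ref{thm.classic} should be assembled from the pieces already developed in the paper: the splitting theorem, the identification of the bigon with $\OSL$, the comodule structures along edges, and the quantum-torus/triangle picture. I would set up the target geometrically: define $tw(\fS)$ as (isomorphism classes of) flat $\mathrm{SL}_2(\BC)$-bundles on the unit tangent bundle $U\fS$ with monodromy $-\Id$ around the $S^1$-fibres of $\pi\colon U\fS\to\fS$, together with chosen trivializations over the boundary edges; then reformulate this as representations of a groupoid $\Pi$ generated by homotopy classes of paths between edge-points (and between a base frame and the fibre), modulo the loop-around-a-fibre relation $=-\Id$. This groupoid reformulation is the technical backbone, and it is where the ``trick'' of smoothing the endpoints of arcs enters: an unoriented framed arc with endpoints on edges, once its ends are bent tangent to the surface, becomes a composable path in $\Pi$, and its matrix entries (with respect to the chosen edge trivializations, indexed by the states $\pm$) define a regular function on $tw(\fS)$. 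So the first real step is to construct the algebra homomorphism $\Phi\colon\cS(\fS)|_{q=1}\to\BC[tw(\fS)]$ by sending a stated arc/loop to the corresponding matrix-entry function, and checking that the $q=1$ Kauffman relations \eqref{eq.skein0}–\eqref{eq.loop0} (now with $q=1$, so $\cross=\resoP+\resoN$ and $\trivloop=-2$) together with the boundary/state relations of $\SS$ go over to genuine identities of functions on the representation variety — the crossing relation becoming the classical skein identity $\tr(AB)+\tr(AB^{-1})=\tr A\,\tr B$ in $\mathrm{SL}_2$ and the $-2$ being $\tr(-\Id)$, which is exactly where the twisting by $-\Id$ around the fibre is needed.

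**Reduction to building blocks.** Rather than verify surjectivity and injectivity of $\Phi$ by hand on a general surface, I would exploit functoriality under cutting. Both sides are (lax) monoidal with respect to splitting along an ideal arc: $\cS$ by the splitting homomorphism $\theta_c$ of \eqref{eq.rho0}, and $\BC[tw(-)]$ because cutting $\fS$ along $c$ and regluing corresponds, on the groupoid side, to the pushout/amalgamation of path groupoids, i.e. to a fibre-product (cotensor) of coordinate rings. Concretely, I would show that $\Phi$ is natural: $\theta_c$ followed by $\Phi_{\fS'}$ equals $\Phi_\fS$ followed by the restriction map $\BC[tw(\fS)]\to\BC[tw(\fS')]$, and that on $tw$ the gluing edge carries exactly a ``co-$\mathrm{SL}_2$'' coaction matching, at $q=1$, the $\OSL$-comodule structure $\Delta_e$ (at $q=1$, $\OSL=\BC[\mathrm{SL}_2]$ and $\SS$ becomes a comodule over it, i.e. a space with an $\mathrm{SL}_2$-action along $e$). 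Given this, Theorem \ref{thm.Hoch} tells us $\cS(\fS)=HH^0(\cS(\fS'))$, which at $q=1$ is the fixed-point/cotensor description; the same statement holds for $\BC[tw]$ by the van Kampen property of path groupoids. Hence it suffices to prove $\Phi$ is an isomorphism on the atomic pieces obtained by a full triangulation, namely the bigon $\cB$, the ideal triangle $T$, and (for closed or once-punctured exotic cases) a short finite list of small surfaces.

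**The atomic checks.** For the bigon, Theorem \ref{thm.1} identifies $\cS(\cB)|_{q=1}\cong\BC[\mathrm{SL}_2]$, and $tw(\cB)$ unwinds to a single $\mathrm{SL}_2$ worth of data (the holonomy from one edge-frame to the other, twisting being invisible on a disc up to the fibre class which is killed), so $\Phi_{\cB}$ is the tautological identification of $\BC[\mathrm{SL}_2]$ with the matrix-entry functions $a,b,c,d$. For the ideal triangle, I would compute $\BC[tw(T)]$ directly: three edge-frames, holonomies between consecutive ones, subject to the fibre relation around the one boundary puncture pattern, yielding a $3$-variable coordinate ring; matching this with the $q=1$ specialization of the triangle algebra (use Theorem \ref{thm.tri} in the reduced picture, or better its unreduced lift $\cS(T)$) pins down $\Phi_T$. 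The commutativity at $q=1$ is automatic on both sides. The remaining exotic surfaces (closed surface, sphere with one or two punctures, etc. — the ones that don't admit an ideal triangulation) are each a finite explicit computation; for the closed surface one uses the classical result of Bullock–Przytycki–Sikora together with the twisted refinement (flat bundles on $U\fS$ with $-\Id$ around fibres $\leftrightarrow$ the $q=1$ skein algebra), which is essentially Barrett's spin-structure statement cited in the paper.

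**Main obstacle.** The crux is not any single algebraic identity but getting the \emph{twisted} groupoid $\Pi$ and the endpoint-smoothing dictionary exactly right so that (i) unoriented arcs genuinely yield well-defined functions — one must check independence of the smoothing choices and that reversing an arc's orientation is absorbed, which is where $\mathrm{SL}_2$ (via $A\mapsto (A^{-1})^{T}$ conjugate to $A$, i.e. the self-duality of the standard representation at $q=1$) is essential and fails for other groups; and (ii) the composition of arcs across an edge, read through the two edge-trivializations, reproduces precisely the state-sum defining $\theta_c$ — i.e. the ``sum over $\pm$'' on the skein side matches matrix multiplication $\sum_k M_{ik}N_{kj}$ on the bundle side. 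Once this dictionary is nailed down, naturality under cutting is formal, and the theorem follows by assembling the atomic isomorphisms; I would therefore spend the bulk of the proof on the careful definition of $tw(\fS)$, its groupoid reformulation, the smoothing trick, and the verification that $\Phi$ is a well-defined algebra map, relegating the glue-up to an application of Theorem \ref{thm.Hoch} and the bigon/triangle computations.
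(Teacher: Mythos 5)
Your construction is the same as the paper's: the authors also define $tw(\fS)$ via representations of the fundamental groupoid of the unit tangent bundle with $-\Id$ around the fibre, use exactly the endpoint-smoothing ("good lift") trick to assign to a stated unoriented arc a matrix-entry function (Definition \ref{def:traces}, Example \ref{ex:traces}, Lemma \ref{lem:lifts}), verify the defining relations by reducing them to a bigon using injectivity of the classical cutting morphism (Proposition \ref{prop:algebrainjection}, Lemma \ref{lem:classicalbigon}), and prove the key compatibility you single out as the crux, namely that matrix multiplication across a cut reproduces the state sum of $\theta_c$ (Proposition \ref{prop:cuttrace}). Where you diverge is the final assembly. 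You propose to identify both sides of the cutting map with cotensor products/$HH^0$ (Theorem \ref{teo:cotensor} on the skein side, and a "van Kampen" descent statement $\BC[tw(\fS)]\cong HH^0(\BC[tw(\fS')])$ on the classical side) and then glue the atomic isomorphisms. The paper's argument is lighter: it only needs that \emph{both} splitting maps are injective (Theorem \ref{thm.1a} and Proposition \ref{prop:algebrainjection}), that the square with the trace maps commutes, and that the right-hand vertical arrow is an isomorphism for polygons/triangles (Lemma \ref{lem:classicalbigon}, which rests on Theorem \ref{teo:SL2} and Corollary \ref{cor:polygons}), with surjectivity coming from the explicit generators of $\chi(\fS)$ given by traces of stated arcs (Lemma \ref{lem:twistedbundles}). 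Your route buys a cleaner conceptual statement (it characterizes the images on both sides), but it carries an extra burden you do not discharge: the classical descent/cotensor statement is asserted, not proved, and it is genuinely a statement requiring an argument (note that the analogous naive cotensor statement already fails for the reduced skein algebras, so some care is needed). If you keep your scheme, you must prove it; otherwise you can drop it and close the argument the paper's way.

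Two smaller cautions. For the triangle you should not invoke Theorem \ref{teo:redtri}: the reduced algebra $\bcS(\P_3)$ is a proper quotient of $\cS(\P_3)$, so the relevant computation is the unreduced one, $\cS(\P_n)\cong\OSL^{\uot(n-1)}$ (Corollary \ref{cor:polygons}), which at $q=1$ matches $\chi(\P_n)=\BC[\mathrm{SL}_2]^{\otimes(n-1)}$ as in Example \ref{ex:twpolygon}. Also, the "exotic" closed and boundaryless cases you list are outside the scope of the theorem as proved: the classical-limit section assumes $\fS$ has at least one boundary edge (the trivializations along edges are built into $tw(\fS)$), so the only non-triangulable surfaces you actually need are the monogon and the bigon, both of which are immediate.
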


 In classical terms, the splitting theorem becomes an instance of a van-Kampen like theorem for groupoid representations.

Theorem \ref{teo:classicaliso} highlights a relation between $\SS$ and the coordinate ring of the character variety of $\fS$. The study of quantizations of such rings has been performed with different techniques (based on Hopf algebras and lattice gauge theory) by  Alekseev-Grosse-Schomerus (\cite{AGS}), Buffenoir-Roche \cite{BR}, Fock-Rosly (\cite{FR}) and, later, via skein theoretical approaches by Bullock-Frohman-Kania-Bartoszynska (\cite{BFK}). 
The relation of our work with these previous ones is still to be clarified, although it seems that one of the main differences between our approach and some of the above cited ones, is that we allow for ``observables with boundary'' and, as explained in the preceding paragraph, this endows the algebras we work with rich algebraic structures which in particular make computations much easier.

\subsection{Related results}

While the authors were completing the present work,  D. Ben-Zvi, A. Brochier and D. Jordan \cite{BBJ} constructed a theory of quantum character variety for general Hopf algebras, based on completely  different techniques. Part of the results of this paper could probably be recasted in that theory, though we don't know the precise relation between the two theories. The substantial difference of the techniques used makes these works complementary. K. Habiro informed us that his  ``quantum fundamental group theory'' also 
gives an alternative approach to the theory of quantum character variety.

When the authors presented their works at conferences, Korinman informed us that he in joint work 
A. Quesney obtained results similar to Theorem \ref{thm.Hoch} and Theorem \ref{thm.classic}, see their recent preprint \cite{KQ}.

{\bf Acknowledgements.} The authors would like to thank F. Bonahon, R. Kashaev, A. Sikora,  V. Turaev, and D. Thurston for helpful discussions.  
Both authors were partially supported by the CIMI (Centre International de Math\'ematiques et d'Informatique) Excellence program during the Thematic Semester on Invariants in low-dimensional geometry and topology held in spring 2017. The second author is partially support by an NSF grant.

The authors presented the results of this paper in the form of talks or mini-courses at many conferences, including  ``Thematic School on Quantum Topology and Geometry'' (University of Toulouse, May 2017), ``Algebraic Structures in Topology and Geometry'' (Riederalp, Switzerland, January 2018), ``TQFTs and categorification'' (Cargese, March 2018), ``Volume Conjecture in Tokyo'' (University of Tokyo, August 2018), ``Categorification and beyond'' (Vienna, January 2019),  Topology mini-courses (university of Geneva, May 2019), ``New developments in quantum topology'' (Berkeley June 2019), ``Expansions, Lie algebras and invariants'' (Montreal July 2019) and would like to thank the organizers for the opportunities to present their work.

\section{Stated skein algebras} \label{sec.def}
We will present the basics of the theory of stated skein algebras: definitions, bases of skein algebras, the splitting homomorphism,  filtrations and gradings. New results involve Proposition \ref{r.inversion} describing the inversion homomorphism and Proposition \ref{r.grbinom} giving the exact value of the splitting homomorphism in the associated graded algebra.
\subsection{Notations}
Throughout the paper let
$\BZ$ be the set of integers, $\BN$ be the set of non-negative integers, $\BC$ be  the set of complex numbers.
The ground ring $\cR$  is a commutative ring with unit 1,  containing a distinguished invertible element $q^{1/2}$.
 For a finite set $X$ we denote by $|X|$ the number of elements of $X$.
 
 \def\bue{\overset \bullet =}
 We will write $x \bue y$ if there is $k\in \BZ$ such that $x = q^{k} y$.

\def\fT{\mathfrak T}
\subsection{Punctured bordered surface}\label{sub:puncturedsurfaces} By a {\em punctured bordered surface} $\fS$ we mean a  surface of the form $\fS = \bfS \setminus \cP$, where $\bfS$ is 
 a compact oriented surface  with
(possibly empty) boundary $\pbfS$, and $\cP$ is a finite non empty set such that every connected component of the boundary $\pbfS$ has at least one point in $\cP$.  We don't require $\fS $ be to connected. It is easy to see that $\bfS$ is uniquely determined by $\fS$. Throughout this section we fix a  punctured bordered surface $\fS$.

An {\em ideal arc} on $\fS$ is an immersion $a: [0,1] \to \bfS$ such that $a(0),a(1)\in \cP$ and the restriction of $a$ onto $(0,1)$ is an embedding into $\fS$. 
Isotopies of ideal arcs are considered in the class of ideal arcs.

A connected component of $\pfS$ is called a {\em boundary edge} of $\fS$ (or simply an edge), which is an ideal arc.

\begin{remark} The fact that each connected component of $\partial \fS$ is an open interval is not a serious restriction as for the purpose of the constructions of this paper a point-less boundary component is treated as a puncture; so that in the end the only excluded surfaces are closed ones without punctures.
\end{remark}

\subsection{Ordinary skein algebra}  Let $M=\fS \times (0,1)$. For a point $(z,t)\in \fS \times (0,1)$ its {\em height} is $t$. A vector at
 $(z,t)$ is called {\em vertical} if it is a positive vector of $z \times (0,1)$. {\em A framing} of a 
 1-dimensional submanifold $\al$ of $M=\fS \times (0,1)$ is 
 a continuous choice of a vector transverse to $\al$ at each point of $\al$.

 A {\em framed link} in $\fS\times (0,1)$ is a closed 1-dimensional unoriented submanifold $\al$  equipped with a framing.  The empty set, by convention, is considered a framed link. 
 
 A link diagram on $\fS$ determines an isotopy class of framed links in $\fS \times (0,1)$, where the framing is vertical everywhere. Every isotopy class of framed links in $\fS \times (0,1)$ is presented by a  link diagram.

The {\em skein module} $\ooS(\fS)$, first introduced  by Przytycki \cite{Prz} and Turaev \cite{Turaev0}, is defined to be the $\cR$-module generated by the isotopy classes of framed unoriented links in $\fS \times (0,1)$ modulo the Kauffman relations
\begin{align}
\label{eq.skein01} \cross \ &= \ q\resoP + q^{-1} \resoN\\
\label{eq.loop01}  \trivloop\  &=\  (-q^2 -q^{-2})\emptyr
 \end{align}
We use the following convention about pictures in these identities, as well as in other identities in this paper. Each  shaded part is a  part of $\fS$, with a link diagram on it.
Relation \eqref{eq.skein01} says that if  link diagrams $D_1, D_2$, and $ D_3$ are identical everywhere except for a small disk in which $D_1,D_2, D_3$ are like in respectively the first, the second, and the third shaded areas, then $[D_1] = q[D_2] + q^{-1} [D_3]$ in the skein module $\ooS(\fS)$. Here $[D_i]$ is the isotopy class of links determined by $D_i$. The other relation is interpreted similarly.

For two framed links $\al_1$ and $\al_2$ the product $\al_1 \al_2$  is defined as the result of stacking $\al_1$ above $\al_2$. That is, first isotope $\al_1$ and $\al_2$ so that $\al_1 \subset \fS \times (1/2,1)$ and $\al_2 \subset \fS \times (0, 1/2)$. Then $\al_1 \al_2= \al_1 \cup \al_2$.
 It is easy to see that this gives rise to a well defined product and hence an $\cR$-algebra structure on $\ooS(\fS)$.

\subsection{Tangles and order}\label{sub:tangle} In order to include the boundary of $\fS$ into the picture, we will replace framed links by more general objects called $\pM$-tangles. Recall that $M= \fS \times (0,1)$ and its boundary is $\pM = \pfS \times (0,1)$.

In this paper, a {\em $\pM$-tangle} is
an unoriented,
 framed, compact,  properly embedded 1-dimensional submanifold $\al \subset M=\fS \times (0,1)$ such that:
 \begin{itemize}
 \item  at every point of $\partial \al=\al \cap \pM$ the framing is {\em vertical}, and
\item  for any boundary edge $b$, the points of $\partial_b(\al):=\partial \al \cap (b \times (0,1))$  have distinct~heights.
 \end{itemize}

For a $\pM$-tangle $\al$ define a partial order on $\partial \al $ by: $x>y$ if there is a boundary edge $b$ such that $x, y \in b \times (0,1)$ and
 $x$ has greater height. If $x>y$ and there is no $z$ such that $x>z>y$, then we say
$x$ and $y$ are {\em consecutive}. 

{\em Isotopies of  $\pM$-tangles} are considered  in the class of $\pM$-tangles. \red{It should be noted that isotopies of $\pM$-tangles preserve the height order.}
The empty set, by convention, is a $\pfS$-tangle which is isotopic only to itself.

As usual, $\pM$-tangles are depicted by their diagrams on $\fS$, as follows.
Every $\pfS$-tangle is isotopic to one with vertical framing.
Suppose a vertically framed $\pM$-tangle $\al$ is in general position with respect to
the standard projection $\pi: \fS \times (0,1) \to \fS$, i.e.  the restriction $\pi |_{\al}:\al \to \fS$ is an immersion with transverse  double points as the only possible singularities and there are no double points on the boundary of $\fS$. 
Then $D=\pi (\al)$, together with 
\begin{itemize}
\item  the over/underpassing information at every double point, and 
\item  the linear order on $\pi(\al) \cap b$ for each boundary edge $b$ induced from the height order
\end{itemize} 
is called a {\em  $\pM$-tangle  diagram}, or simply a tangle diagram on $\fS$.
{\em Isotopies} of  $\pM$-tangle diagrams are  ambient isotopies in $\fS$.

Clearly the $\pM$-tangle diagram of a $\pM$-tangle $\al$ determines the isotopy class of $\al$.
 When there is no confusion, we identify a $\pM$-tangle diagram with its isotopy class of $\pM$-tangles.

\def\ori{{\mathfrak o}}
 Let $\ori$ be an {\em orientation} of $\pfS$,   which on a boundary edge may or may not be equal to the orientation inherited from $\fS$. A $\pM$-tangle diagram $D$ is {\em $\ori$-ordered} if for each boundary edge $b$ the order of $\partial D$ on $b$ is  increasing when one goes along $b$ in  the direction of $\ori$. It is clear that every $\pM$-tangle, after an isotopy, can be presented by an  $\ori$-ordered $\pM$-tangle diagram.
 If $\ori$ is  the orientation coming from $\fS$, the $\ori$-order is called the {\em positive order}.

\def\ori{{\mathfrak o}}
\def\ofS{\mathring {\fS}}

\subsection{Stated skein algebra}\label{sub:statedskeinalgebra}

A {\em state} on a finite set $X$ is a map $s: X \to \{\pm\}$. We write $\# s= |X|$.
A {\em stated $\pM$-tangle }  (respectively {\em a stated $\pM$-tangle diagram}) is a $\pM$-tangle (respectively {\em a  $\pM$-tangle diagram})  equipped with a {\em state} on its set of boundary points.

The {\em  stated skein algebra}  $\cSs(\fS)$ is  the $\cR$-module freely spanned
 by isotopy classes of stated $\pM$-tangles modulo the {\em defining relations}, which are the old  skein relation~\eqref{eq.skein} and the trivial loop relation~\eqref{eq.loop}, and the new boundary relations~\eqref{eq.arcs} and~\eqref{eq.order}:
\begin{align}
\label{eq.skein} \cross \ &= \ q\resoP + q^{-1} \resoN\\
\label{eq.loop}  \trivloop\  &=\  (-q^2 -q^{-2})\emptyr\\
\label{eq.arcs} \leftup\  & =\  q^{-1/2} \emptys\ , \qquad \leftupPP\ =0, \quad \  \leftupNN \ = 0   \\
 \label{eq.order}   \reordone\ &=\  q^2 \reordtwo \ +\  q^{-1/2} \reordthree
 \end{align}
Here each   shaded part is a  part of $\fS$, with a stated $\pM$-tangle diagram on it.
Each arrowed line is part of a boundary edge, and the order on that part is indicated by the arrow and the points on that part are consecutive in the height order. The order of other end points away from the picture can be arbitrary and are not determined by the arrows of the picture. On the right hand side of the first identity of \eqref{eq.arcs}, the arrow does not play any role; it is there only because the left hand side has an arrow.

Again for two $\pM$-tangles $\al_1$ and $\al_2$ the product $\al_1 \al_2$  is defined as the result of stacking $\al_1$ above $\al_2$. The product makes $\cSs(\fS)$ an $\cR$-algebra. In \cite{Le:TDEC} it is proved that if $\cR$ is a domain then $\SS$  does not have non-trivial zero-divisors, \red{a fact known earlier for the case when $\fS$ has no boundary \cite{PS2}.}

If   $\fS_1$ and $\fS_2$ are two punctured bordered  surfaces, then
there is a natural isomorphism
$$
\cSs (\fS_1 \sqcup \fS_2) \cong \cSs (\fS_1) \ot_\cR
\cSs (\fS_2).
$$

Since the interior $\ofS$ of $\fS$ does not have boundary, we have $
\cS(\ofS) = \ooS(\fS).$

\def\SMuller{\cS^{\mathrm{+}}(\fS)}
The subalgebra $\SMuller$ spanned by $\pM$-tangles whose  states  are all $+$ is naturally isomorphic to the skein algebra defined by Muller \cite{Muller}, see \cite{Le:TDEC,LP}. \red{If in the definition of $\cS(\fS)$ we use only two relations \eqref{eq.skein} and \eqref{eq.loop}, we get a coarser version $\widehat{\cS}(\fS)$ which was defined by Bonahon and Wong \cite{BW}.}

\begin{remark} \label{rem.hS}  Relations \eqref{eq.arcs} already appeared in \cite{BW}. Relation \eqref{eq.order} appeared  in \cite{Le:TDEC} where the stated skein algebra was introduced.
\end{remark}

\def\starto{\overset \star \longrightarrow}
\def\leftvep{\raisebox{-8pt}{\incl{.8 cm}{leftvep}}}

\subsection{Consequences of defining relations}

Define $C^\nu_{\nu'}$ for $\nu, \nu'\in \{\pm \}$ by
\be
\label{eq.Cve}
C^+_+ = C^-_- = 0, \quad C^+_-= q^{-1/2}, \quad C^-_+ = -q^{-5/2}.
\ee
In the next lemma we have the values of all the trivial arcs.
\begin{lemma}[Lemma 2.3 of \cite{Le:TDEC}] \label{r.arcs}
In $\cSs(\fS)$ one has
\begin{align}
\label{eq.kink} -q^{-3} \kinkp \ &= \ \kinkzero\  = \ -q^3\kinkn\\
\label{eq.arcs1} \leftve\   &=\  C^\nu _{\nu'}  \emptys\\
\label{eq.arcs2} \leftvedl \ & = \ \leftved\   =\  - q^{3} C^{\nu'} _{\nu}  \emptys
\end{align}
\end{lemma}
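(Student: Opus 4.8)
The plan is to establish the three displayed identities one after another, each time reducing to the defining relations \eqref{eq.skein}--\eqref{eq.order} and to the explicit constants \eqref{eq.Cve}.

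\emph{Step 1: the framing (kink) relation \eqref{eq.kink}.} This identity involves no boundary points, so it already lives in the ordinary skein module. I would simply resolve the unique self-crossing of $\kinkp$ with the Kauffman relation \eqref{eq.skein}: one of the two smoothings yields a straight strand together with a disjoint contractible loop, the other yields the straight strand alone. Evaluating the loop by \eqref{eq.loop} gives $\kinkp=\bigl(q(-q^2-q^{-2})+q^{-1}\bigr)\kinkzero=-q^{3}\kinkzero$, and the mirror-image computation gives $\kinkn=-q^{-3}\kinkzero$; together these are \eqref{eq.kink}.

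\emph{Step 2: the values of the trivial arcs \eqref{eq.arcs1}.} For three of the four states $(\nu,\nu')$ — those giving $C^{+}_{+}=C^{-}_{-}=0$ and $C^{+}_{-}=q^{-1/2}$ — the identity $\leftve=C^{\nu}_{\nu'}\,\emptys$ is, after matching the pictures, literally one of the three relations in \eqref{eq.arcs}. The only genuine computation is the remaining state, the one that should give $C^{-}_{+}=-q^{-5/2}$: here the two endpoints of the arc sit in the order opposite to the one for which \eqref{eq.arcs} is stated, so I would apply the height-exchange relation \eqref{eq.order} to this pair of consecutive boundary points. This rewrites $\leftve$ as $q^{2}$ times the same arc with its two endpoints swapped in height (whose value, $q^{-1/2}\,\emptys$, is the case just handled) plus $q^{-1/2}$ times the diagram in which those two endpoints are capped off by a small turnback; in the latter the two legs of the arc close into a single contractible loop carrying one kink, which is evaluated by Step 1 together with \eqref{eq.loop}. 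Summing the two contributions should yield exactly $-q^{-5/2}\,\emptys$.

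\emph{Step 3: the twisted arcs \eqref{eq.arcs2}.} First, $\leftved$ and $\leftvedl$ are isotopic through $\partial M$-tangles by sliding the returning portion of the arc across the boundary edge, so it is enough to evaluate one of them. Each differs from the corresponding arc already treated in \eqref{eq.arcs1} by precisely such a slide, which on the one hand exchanges the two boundary endpoints — turning $C^{\nu}_{\nu'}$ into $C^{\nu'}_{\nu}$ — and on the other hand introduces one kink in the framing, contributing the factor $-q^{3}$ by \eqref{eq.kink}. Hence $\leftvedl=\leftved=-q^{3}C^{\nu'}_{\nu}\,\emptys$, as claimed. The only step requiring real care is the fourth-state case in Step 2: the pitfall is the bookkeeping of framings and height orders when \eqref{eq.order} is applied, since a wrong sign or power of $q$ there would also corrupt \eqref{eq.arcs2}; everything else is either a direct appeal to a defining relation or a transparent isotopy.
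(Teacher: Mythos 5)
Your overall strategy (the three easy states of \eqref{eq.arcs1} are literally the defining relations \eqref{eq.arcs}, the fourth state follows from \eqref{eq.order} together with the loop relation, and the arcs of \eqref{eq.arcs2} differ from those of \eqref{eq.arcs1} by a kink) is the natural one — note the paper gives no proof here, it simply cites Lemma 2.3 of \cite{Le:TDEC} — and Step 1 is fine. But Step 2, the only case that genuinely needs computing, contains a concrete error: when you apply \eqref{eq.order} to the two endpoints of the trivial returning arc, the correction term closes the arc up by a small turnback into an \emph{embedded, crossingless} contractible loop with blackboard framing; there is no kink. Its value is $-q^2-q^{-2}$ by \eqref{eq.loop} alone, and then $q^{2}\cdot q^{-1/2}+q^{-1/2}\,(-q^{2}-q^{-2})=-q^{-5/2}=C^{-}_{+}$, as required. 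If instead you insert the kink factor $-q^{\pm 3}$ that you describe, the sum becomes $q^{3/2}+q^{9/2}+q^{1/2}$ or $q^{3/2}+q^{-3/2}+q^{-11/2}$, neither of which equals $-q^{-5/2}$; since you defer the arithmetic (``should yield exactly''), the one constant the lemma is really about is not established by your argument as written.

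A secondary issue is in Step 3: ``sliding the returning portion of the arc across the boundary edge'' is not an available move — a $\pM$-tangle meets $\pM$ only at its endpoints, and, as the paper stresses, isotopies of $\pM$-tangles preserve the height order, so no isotopy can exchange the roles of the two endpoints. The legitimate version of your idea is to slide one endpoint \emph{along} the edge past the other (this is an allowed isotopy; heights and hence the height order are unchanged); doing so introduces exactly one crossing near the edge, which you then remove with \eqref{eq.skein}--\eqref{eq.loop}, i.e.\ with \eqref{eq.kink}, after checking that the crossing sign gives $-q^{3}$ rather than $-q^{-3}$. The resulting standard picture now carries the states in swapped positions, which is where $C^{\nu'}_{\nu}$ comes from. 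With the correction to Step 2 and this repair of Step 3 (including the sign check), your proof goes through.
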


The next lemma describes how a skein behaves when the order of two consecutive boundary points is switched.

\begin{lemma}[Height exchange move, Lemma 2.4 of \cite{Le:TDEC}]\label{r.refl}
 (a) One has
\begin{align}
\label{eq.reor1}
\reordonez \  = \ q^{-1} \,  \reordonea , \quad \reordsixz \  &= \ q^{-1} \, \reordsixa , \quad \reordonepn = q \, \reordtwopn \\
\label{eq.reor2}
q^{\frac 32} \reordnp  - q^{-\frac 32}  \reordpn  &= (q^2 -q^{-2}) \reordthree.
\end{align}

(b) Consequently, if $q=1$ or $q=-1$, then for all $\nu, \nu'\in \{\pm \}$,
\be
\label{eq.sign}
 \reordoneall =
 q\,  \reordoneallp.
 \ee

\end{lemma}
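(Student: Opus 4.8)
The final statement is Lemma \ref{r.refl}, the height exchange move. Part (a) asserts three families of height-reordering identities: two ``same-sign'' versions (with factor $q^{-1}$) for the configurations $\reordonez$ and $\reordsixz$, one mixed $+-$ version (with factor $q$) in \eqref{eq.reor1}, and the key commutator-type relation \eqref{eq.reor2} relating the $-+$ and $+-$ orderings. Part (b) deduces that at $q=\pm 1$ all reorderings pick up only a factor $q$ (i.e.\ a sign). Since this is quoted as Lemma 2.4 of \cite{Le:TDEC}, the ``proof'' is really a derivation from the defining relations \eqref{eq.skein}--\eqref{eq.order} together with Lemma \ref{r.arcs}.

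**Plan for part (a).** The plan is to reduce every height-exchange to an application of the reordering relation \eqref{eq.order} combined with the trivial-arc evaluations of Lemma \ref{r.arcs}. First I would treat \eqref{eq.reor2}, which is the heart of the matter: take the configuration where two strands meet a boundary edge at consecutive heights with states giving the $-+$ order on the left side; apply relation \eqref{eq.order} to rewrite it, and separately apply \eqref{eq.order} (read the other way, after a height exchange) to the $+-$ configuration. The difference of the two resulting expressions is a combination of a ``turnback'' arc term — whose value is computed by \eqref{eq.arcs1}/\eqref{eq.Cve} — and the through-strand term $\reordthree$; collecting the scalars $C^\nu_{\nu'}$ from \eqref{eq.Cve} produces exactly the coefficient $(q^2-q^{-2})$. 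For the same-sign identities of \eqref{eq.reor1}, I would use \eqref{eq.order} in the case where the new ``cap/cup'' term vanishes by the second and third relations in \eqref{eq.arcs} ($\leftupPP = \leftupNN = 0$), so that only the single scaled term survives and gives the clean factor $q^{-1}$ (resp.\ $q$ in the mixed case, where instead one of the two terms on the right of \eqref{eq.order} is killed). The bookkeeping of which state combination kills which term, and of the half-integer powers of $q$ coming from \eqref{eq.arcs}, is the only delicate point.

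**Plan for part (b).** Setting $q = \pm 1$, one has $q^2 = q^{-2} = 1$, so the right-hand side of \eqref{eq.reor2} vanishes, giving $q^{3/2}\reordnp = q^{-3/2}\reordpn$, i.e.\ $\reordnp = q^{-3}\reordpn = q\cdot q^{-4}\reordpn$; since $q^4 = 1$ this is $\reordnp = q\,\reordpn$. Combined with the already-established same-sign and $+-$ cases of \eqref{eq.reor1} (whose factors $q^{-1}$, $q$ all equal $q$ when $q=\pm1$, using $q^{-1}=q$), this yields the uniform statement \eqref{eq.sign} for all $\nu,\nu' \in \{\pm\}$. This part is purely formal once (a) is in hand.

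**Main obstacle.** The genuinely fiddly step is \eqref{eq.reor2}: one must apply the asymmetric relation \eqref{eq.order} in two different ways to two different height-orderings and verify that the ``error'' between them collapses precisely to $(q^2 - q^{-2})\reordthree$. This requires carefully tracking the scalars $C^\nu_{\nu'}$ from \eqref{eq.Cve} (note $C^-_+ = -q^{-5/2}$ carries an awkward $q^{-5/2}$) and the $q^{\pm 1/2}$ factors from \eqref{eq.arcs}, and checking that all half-integer powers combine into integer powers in the final identity. I expect no conceptual difficulty beyond this, since the statement is a known lemma; the work is in getting the $q$-powers to land correctly.
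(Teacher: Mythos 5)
Part (b) of your plan is fine: at $q=\pm1$ the right side of \eqref{eq.reor2} vanishes and $q^{-1}=q^{-3}=q$, so \eqref{eq.sign} follows formally from (a), exactly as you say. The problem is part (a). Note first that the paper itself does not prove this lemma (it is imported from Lemma 2.4 of \cite{Le:TDEC}), so what has to be supplied is a derivation from the defining relations \eqref{eq.skein}--\eqref{eq.order} together with Lemma \ref{r.arcs} — and your proposed mechanism for that derivation is not the right one. The relation \eqref{eq.order} is a single relation with a \emph{fixed} height order (the arrow) and \emph{fixed mixed} states: it exchanges the states of two consecutive endpoints at the price of the turnback term. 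It has no same-sign instance, so your claim that $\reordonez = q^{-1}\reordonea$ is "\eqref{eq.order} in the case where the cap term vanishes" does not parse; and for \eqref{eq.reor2} you propose to apply \eqref{eq.order} "read the other way, after a height exchange", which presupposes exactly the height-exchange identity you are trying to prove. More fundamentally, your toolkit (\eqref{eq.order} plus the arc values) contains nothing that can change the height order of two through-strand endpoints: isotopies of $\pM$-tangles preserve the height order (as the paper stresses), \eqref{eq.order} keeps the arrow fixed, and Lemma \ref{r.arcs} only evaluates complete returning arcs with no through-strands.

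The missing idea is the geometric move that actually trades one height order for the other: slide one of the two consecutive endpoints along the edge past the other (their heights never coincide, so this is a legitimate isotopy), which in the diagram creates a crossing between the two legs whose over/under information is dictated by which endpoint is higher. One then resolves that crossing with the Kauffman relation \eqref{eq.skein}: the parallel smoothing reproduces the diagram with the opposite height order, while the turnback smoothing produces a small boundary arc whose value is read off from Lemma \ref{r.arcs} (equations \eqref{eq.arcs1}--\eqref{eq.arcs2}, i.e.\ the scalars $C^\nu_{\nu'}$ and $-q^3C^{\nu'}_{\nu}$). Since $C^+_+=C^-_-=0$, the turnback dies for equal states and you get the clean factors in \eqref{eq.reor1}; for mixed states the turnback survives, and carrying out the resolution for the two mixed configurations and eliminating the turnback between them is what yields \eqref{eq.reor2} with the coefficient $q^2-q^{-2}$. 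Your write-up never invokes \eqref{eq.skein} or this endpoint-sliding isotopy, so as it stands the plan has no valid route from one height order to the other; reorganizing the argument around the crossing resolution (as in \cite{Le:TDEC}) is what is needed, after which your bookkeeping remarks and part (b) go through.
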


\begin{remark} Because of relation  \eqref{eq.sign}, in general  $\cSs(\fS)$ is not commutative when $q=-1$. This should be contrasted with the case of the usual skein algebra   $\ooS(\fS)$, which is
  commutative and is canonically equal to the $\mathrm{SL}_2(\BC)$ character variety of $\pi_1(\fS)$ if $\cR=\BC$ and $q=-1$ (assuming $\fS$ is connected), see \cite{Bullock,PS1}.
\end{remark}

\def\id{\mathrm{id}}
\def\ofS{\mathring{\fS}}
\def\tsigma{\chi}

 \def\cL{\mathcal L}
 \def\tchi{\tilde \chi}

\subsection{Reflection anti-involution}  \label{sec.refl} 
\begin{proposition}[Reflection anti-involution, Proposition 2.7 in \cite{Le:TDEC}]   \label{r.reflection} \red{Suppose $\cR=\BZ[q^{\pm 1/2}]$. There exists a unique $\BZ$-linear map $\chi: \cSs(\fS) \to \cSs(\fS)$, such that

\begin{itemize}
\item  $\chi(q^{1/2})= q^{-1/2}$, 
\item $\chi$ is an anti-automorphism, i.e. for any $x,y \in \cSs(\fS)$,
$$\tsigma(x+y)= \tsigma(x) + \tsigma(y), \quad \tsigma(xy) = \tsigma(y) \tsigma(x),$$
\item if $\al$ is a stated $\pM$-tangle diagram then $\chi(\al)$ is the result of switching all the crossings of $\al$ and reversing the linear order on each boundary edge.
\end{itemize} 
}
 \end{proposition}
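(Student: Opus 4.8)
The plan is to construct $\chi$ explicitly as the ``mirror'' operation on stated tangle diagrams and then check that it descends to the skein relations. \textbf{Uniqueness} is immediate: $\cSs(\fS)$ is $\cR$-spanned by isotopy classes of stated $\pM$-tangle diagrams, and the three required properties force $\chi(q^{k/2}x)=q^{-k/2}\chi(x)$ for $k\in\BZ$, so $\chi$ is pinned down by its values on diagrams and on $q^{1/2}$; it therefore suffices to produce one such map. For \textbf{existence}, let $F$ be the free $\cR$-module on isotopy classes of stated $\pM$-tangles, so that $\cSs(\fS)=F/R$ with $R$ the $\cR$-submodule generated by the defining relations \eqref{eq.skein}, \eqref{eq.loop}, \eqref{eq.arcs}, \eqref{eq.order}. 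On a stated $\pM$-tangle diagram $D$, let $\bar D$ be obtained by switching every crossing and reversing the linear order on each boundary edge; since the height order along each edge is an isotopy invariant of the $\pM$-tangle, the rule $\al\mapsto\bar\al$ is well defined on isotopy classes (geometrically $\bar\al$ is the reflection of $\al$ in the middle level $\fS\times\{1/2\}$). Define $\widetilde\chi\colon F\to F$ by $q^{k/2}\al\mapsto q^{-k/2}\bar\al$, extended additively.

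Writing bar also for the ring involution $q^{1/2}\mapsto q^{-1/2}$ of $\cR$, the map $\widetilde\chi$ satisfies $\widetilde\chi(cx)=\bar c\,\widetilde\chi(x)$ for $c\in\cR$ and is an involution (since $\bar{\bar\al}=\al$); and since reflection reverses the order of stacking, $\widetilde\chi(\al\beta)=\widetilde\chi(\beta)\widetilde\chi(\al)$. So $\widetilde\chi$ is an $\cR$-antilinear anti-automorphism of $F$ with $\widetilde\chi(q^{1/2})=q^{-1/2}$. The crux is then to prove $\widetilde\chi(R)\subseteq R$: granting this, $\widetilde\chi$ descends to a map $\chi\colon\cSs(\fS)\to\cSs(\fS)$, which inherits the anti-homomorphism property and, being an involution (hence bijective), is the desired anti-automorphism; its three properties hold by construction, and uniqueness has already been shown. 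Because $R$ is an $\cR$-submodule and $\widetilde\chi$ is $\cR$-antilinear, it is enough to check that $\widetilde\chi$ of \emph{each} of the defining relations \eqref{eq.skein}, \eqref{eq.loop}, \eqref{eq.arcs}, \eqref{eq.order} vanishes in $\cSs(\fS)$.

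For \eqref{eq.skein} this holds because the mirror of a crossing satisfies the Kauffman skein relation with $q$ and $q^{-1}$ interchanged, which is exactly undone by $\widetilde\chi(q^{1/2})=q^{-1/2}$; for \eqref{eq.loop} it is clear, since a small trivial loop and the scalar $-q^2-q^{-2}$ are fixed by the mirror and by $q\mapsto q^{-1}$. The substantive case is that of the boundary relations \eqref{eq.arcs} and \eqref{eq.order}: here $\widetilde\chi$ reverses the order of the two consecutive boundary points occurring in those pictures, and I would bring the reversed configurations back to standard form using the height-exchange moves \eqref{eq.reor1}--\eqref{eq.reor2} of Lemma~\ref{r.refl} together with the values of trivial arcs \eqref{eq.kink}--\eqref{eq.arcs2} of Lemma~\ref{r.arcs}; one then checks that, after this rewriting, $\widetilde\chi$ of \eqref{eq.arcs} and of \eqref{eq.order} is an $\cR$-linear combination of defining relations.

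I expect this last bookkeeping to be the only genuine obstacle. It is routine but delicate: one must keep track of the half-integer powers of $q$ and of the asymmetry between $C^\nu_{\nu'}$ and $C^{\nu'}_\nu$ (equivalently, between the two sides of \eqref{eq.arcs1} and \eqref{eq.arcs2}), so that the cost of reordering boundary points exactly absorbs the change $q\mapsto q^{-1}$ in all the coefficients and the mirrored relations close up on the nose. Once $\widetilde\chi(R)\subseteq R$ is verified, $\widetilde\chi(R)=R$ follows from $\widetilde\chi^2=\id$, and the induced map $\chi$ is the reflection anti-involution.
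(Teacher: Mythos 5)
Your proposal is correct and follows essentially the same route as the proof this paper relies on (the paper itself gives no proof here, citing Proposition 2.7 of \cite{Le:TDEC}): uniqueness from the spanning set of stated diagrams, existence by defining the mirror map on diagrams and checking that each defining relation is carried to a consequence of the defining relations, which is exactly what Lemmas \ref{r.arcs} and \ref{r.refl} encode --- e.g.\ the reflected arc relation closes up precisely because $\overline{C^{\nu}_{\nu'}}=-q^{3}C^{\nu'}_{\nu}$, i.e.\ \eqref{eq.arcs2}, and the reflected reordering relation follows from the height-exchange identities \eqref{eq.reor1}--\eqref{eq.reor2}. The only point to phrase more carefully is well-definedness: reflection in $\fS\times\{1/2\}$ reverses the upward vertical framing required at boundary points, so it is cleaner to define $\chi$ on diagrams (switch crossings, reverse the order on each edge) and note invariance under the framed Reidemeister moves, which is immediate since switching all crossings preserves those moves, rather than to invoke the reflection homeomorphism on $\pM$-tangles directly.
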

Clearly $\tsigma^2=\id$. We call $\tsigma$ the reflection anti-involution.

\def\inv{\mathrm{inv}}
\def\id{\mathrm{id}}
\def\er{e_r}
\def\el{e_l}
\def\binv{\overline{\inv}}

\subsection{Inversion along an edge}  \label{sec.invers}

\red{
\begin{proposition}
\label{r.inversion}  Let  $e$ be a boundary edge of a punctured bordered surface $\fS$ and  $f:\{\pm \} \to \cR$ be a function such that $f(+) f(-)= - q^{-3}$.

There exists a unique $\cR$-linear homomorphism $\inv_{e,f}: \Ss(\fS) \to \Ss(\fS)$ such that if $\al$ is a stated $\partial \fS$-tangle diagram with a state $s$ and with positive order on $e$,
then 
\be 
\inv_{e,f}(\al)=  \left( \prod_{x \in (\al \cap e)} f(s(x))  \right)  \al',
\label{eq.def10}
\ee where $\al'$ is the same $\al$ except that the height order of $\al'$ on $e$ is given by the negative direction of $e$ and the state of $\al'$ on $e$ is obtained from that of $\al$ by switching $\nu\in \{\pm \}$ to $-\nu$ at every boundary points in $\al \cap e$.

If $e'$ is another boundary edge and $f'(+) f'(-)= -q^{-3}$, then 
\be 
\inv_{e,f}  \circ \inv_{e',f' } = \inv_{e'.f'} \circ \inv_{e,f}.
\label{eq.com5}
\ee 
\end{proposition}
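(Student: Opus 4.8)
The plan is to establish uniqueness first (it is immediate), then to construct $\inv_{e,f}$ explicitly on diagrams and verify it is compatible with the four defining relations of $\cS(\fS)$, and finally to deduce \eqref{eq.com5} by working with diagrams that are in positive order on all edges at once. For uniqueness, recall from Subsection~\ref{sub:tangle} that every stated $\partial\fS$-tangle is isotopic to one whose diagram is in positive order on $e$, and that such diagrams span $\cS(\fS)$ over $\cR$; since \eqref{eq.def10} prescribes the value of $\inv_{e,f}$ on each of them, at most one $\cR$-linear map can satisfy \eqref{eq.def10}.

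\emph{Construction.} For an arbitrary stated $\partial\fS$-tangle diagram $D$, let $D^\flat$ denote the same planar diagram with the linear order recorded on $e$ reversed and the state at each point of $D\cap e$ switched from $\nu$ to $-\nu$, all other data unchanged, and put
\[
\Psi(D):=\Big(\prod_{x\in D\cap e} f(s(x))\Big)\,[D^\flat]\in\cS(\fS),
\]
extending $\Psi$ $\cR$-linearly to the free $\cR$-module on diagrams. This makes sense since $D^\flat$ depends only on $D$. I would then show that $\Psi$ descends to $\cS(\fS)$, i.e. that it is invariant under isotopy of diagrams and annihilates the relations \eqref{eq.skein}, \eqref{eq.loop}, \eqref{eq.arcs} and \eqref{eq.order}; the resulting $\cR$-linear map is $\inv_{e,f}$, and on a diagram in positive order on $e$ one has $D^\flat=\al'$, so \eqref{eq.def10} holds. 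Compatibility with isotopy of diagrams and with \eqref{eq.skein}, \eqref{eq.loop} is immediate, because these moves are either localized in the interior of $\fS$ or leave the order data and states on $e$ untouched, so they act in the same way on $D$ and on $D^\flat$ while the scalar $\prod f(s(x))$ is merely carried along; the same argument handles \eqref{eq.arcs} and \eqref{eq.order} at any boundary edge other than $e$.

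\emph{The relations at the edge $e$.} This is the only subtle point, and the place where the hypothesis on $f$ enters. For \eqref{eq.arcs} at $e$: $\Psi$ sends the corner arcs with two equal states to a scalar times a reversed corner arc with equal states, which is $0$ by Lemma~\ref{r.arcs}, matching $\Psi(0)=0$; and it sends $\leftup$ to $f(+)f(-)$ times a reversed corner arc, which by Lemma~\ref{r.arcs} and the values \eqref{eq.Cve} is an explicit scalar multiple of $\emptys$, and a direct check shows that $f(+)f(-)=-q^{-3}$ is exactly the condition making this equal $q^{-1/2}\emptys=\Psi(q^{-1/2}\emptys)$. For \eqref{eq.order} at $e$: applying $\Psi$ reverses the order of the two consecutive boundary points involved, so one must know that reversing the whole height order on $e$ is consistent with \eqref{eq.order}; this is proved by combining \eqref{eq.order} with the height-exchange moves of Lemma~\ref{r.refl} (equations \eqref{eq.reor1}--\eqref{eq.reor2}) and checking that all powers of $q$ and coefficients $C^\nu_{\nu'}$ cancel correctly. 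I expect this finite but somewhat delicate bookkeeping to be the main obstacle. Granting it, $\Psi$ factors through the desired $\cR$-linear map $\inv_{e,f}$.

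\emph{Commutation.} Let $e\neq e'$. Every stated $\partial\fS$-tangle is isotopic to a diagram $D$ in positive order on all edges, in particular on $e$ and on $e'$. The flip $D^\flat$ performed at $e$ still has positive order on $e'$ and unchanged states on $e'$, so
\[
\inv_{e',f'}\big(\inv_{e,f}(D)\big)=\Big(\prod_{x\in D\cap e} f(s(x))\Big)\Big(\prod_{y\in D\cap e'} f'(s(y))\Big)\,[D^{\flat\flat}],
\]
where $D^{\flat\flat}$ has the height order reversed and the states flipped on both $e$ and $e'$. The same computation with the roles of $e$ and $e'$ interchanged gives the identical right-hand side for $\inv_{e,f}\big(\inv_{e',f'}(D)\big)$, and since such diagrams span $\cS(\fS)$ we obtain \eqref{eq.com5}.
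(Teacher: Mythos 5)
Your overall plan (uniqueness from the spanning set, construction of the map on diagrams, verification of the defining relations with the only nontrivial work at the edge $e$, and then \eqref{eq.com5} read off from the formula) is the same as the paper's, and your uniqueness argument, your check of \eqref{eq.arcs} at $e$, and your commutation argument are fine. But the construction step contains a genuine error: you define $\Psi$ by the formula of \eqref{eq.def10} on \emph{all} stated diagrams, with arbitrary height order on $e$, and ask that it descend to $\cS(\fS)$. It does not: the formula is valid only for diagrams with positive order on $e$ (this is exactly the content of the remark following Proposition \ref{r.inversion}). Indeed, by the first two identities of \eqref{eq.reor1} (equivalently, by the exponent $h(\boeta)$ computed in the subsection on Kashiwara's basis), exchanging the heights of two consecutive endpoints of $e$ carrying \emph{like} states multiplies the skein by one and the same power of $q$, irrespective of whether the two states are $(+,+)$ or $(-,-)$. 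So let $D^{\mathrm{pos}}$ be a positively ordered simple diagram meeting $e$ in exactly two points, both stated $+$, let $D^{\mathrm{neg}}$ be the same diagram with the order on $e$ reversed, and let $\tilde D^{\mathrm{pos}},\tilde D^{\mathrm{neg}}$ be the corresponding diagrams with both states $-$. In $\cS(\fS)$ one has $D^{\mathrm{neg}}=q^{\varepsilon}D^{\mathrm{pos}}$ and $\tilde D^{\mathrm{neg}}=q^{\varepsilon}\tilde D^{\mathrm{pos}}$ with the \emph{same} $\varepsilon\in\{\pm1\}$. If $\Psi$ descended, your formula would give $\Psi(D^{\mathrm{neg}})=f(+)^2\,\tilde D^{\mathrm{pos}}$, while linearity and well-definedness force $\Psi(D^{\mathrm{neg}})=q^{\varepsilon}\Psi(D^{\mathrm{pos}})=q^{\varepsilon}f(+)^2\,\tilde D^{\mathrm{neg}}=q^{2\varepsilon}f(+)^2\,\tilde D^{\mathrm{pos}}$; since $\tilde D^{\mathrm{pos}}$ is a basis element by Theorem \ref{thm.basis1a}, this is a contradiction in $\cR=\BZ[q^{\pm 1/2}]$. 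Because height-exchange identities are consequences of the defining relations, your $\Psi$ must already fail on some instance of \eqref{eq.arcs} or \eqref{eq.order} at $e$ (one in which the arrow points against the positive direction of $e$) — instances your set-up obliges you to check, but which your sketch does not separate out; the ``delicate bookkeeping'' you defer cannot in fact all be completed.

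The repair is the paper's route: define the map only on the free $\cR$-module spanned by isotopy classes of \emph{positively ordered} stated diagrams (these span $\cS(\fS)$), and verify invariance under the framed Reidemeister moves and under the defining relations in that setting. Note also that even there the compatibility with \eqref{eq.order} at $e$ — the step you explicitly grant — is the heart of the proof; it is a short computation using $f(+)f(-)=-q^{-3}$, the mixed-state exchange identity in \eqref{eq.reor1}, and \eqref{eq.reor2}, carried out in the paper in \eqref{eq.u2}--\eqref{eq.u3}, and it needs to be written out rather than assumed.
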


\def\tinv{\widetilde{\inv}}
\def\tSs{\widetilde{\Ss}}
\begin{proof}  Let $T$ be the set of isotopy classes of stated, positively ordered $\pfS$-tangle diagrams. Since $T$ spans $\Ss(\fS)$, the  uniqueness of $\inv_{e,f}$ is clear. 

Let $\tSs$ be the $\cR$-module freely span by $T$, and $\tinv_{e,f}: \Ss'\to \Ss(\fS)$ be the 
 $\cR$-linear map defined  by \eqref{eq.def10}. To show that  $\tinv_{e,f}$ descends to a map $\inv_{e,f}: \Ss(\fS)\to \Ss(\fS)$ one needs to prove $\tinv_{e,f}$ is invariant under isotopy in $M:=\fS \times (0,1)$ and under the moves generated by the defining relations \eqref{eq.skein}-\eqref{eq.order}. More precisely, we have to show that  $\tinv_{e,f}(\al) = \tinv_{e,f}(\al)$ for any $\al,\al'\in T$   whenever 
\begin{itemize}
\item[(i)] $\al$ and $\al'$ are isotopic as $\pM$-tangles, or
\item[(ii)] $\al$ and $\al'$ are respectively the left hand side and the right hand side of the defining relations \eqref{eq.skein}-\eqref{eq.order}.
\end{itemize}

It is known that $\al$ and $\al'$ are isotopic as $\pM$-tangles if and only if they are related by a sequence of the 3 framed Reidemeister moves of \cite[Section 1.2]{Oh}. 
The invariance under the 3 framed Reidemeister moves follows from the invariance under the defining relations \eqref{eq.skein} and \eqref{eq.loop}, see \cite{Kauffman}. Clearly $\tinv_{e,f}$ is invariant under the moves generated by the  defining relations \eqref{eq.skein} and \eqref{eq.loop}. There remains relations \eqref{eq.arcs} and \eqref{eq.order}.

Let us consider \eqref{eq.arcs}.  Using the definition, $f(+)f(-)=q^{-3}$, and then \eqref{eq.arcs2}, we have:

$$\inv_{e,f}\left(\leftup\right)=f(+)f(-)\leftmp =-q^{-3}\leftmp =-q^{-3}(-q^3)C_-^+ = q^{-1/2},$$
which proves the first identity of \eqref{eq.arcs}. The other two identities of \eqref{eq.arcs} are trivial.

Let us consider \eqref{eq.order}. By definition and Lemma \ref{r.refl},
\be \inv_{e,f}\left(\reordone\right)=f(+)f(-)\reordonepn=(-q^{-3})  ( q \, \reordtwopn)= -q^{-2} \reordtwopn.
\label{eq.u2}
\ee
\begin{align}\inv_{e,f}\left(q^2\reordtwo+q^{\frac{-1}{2}}\reordthree\right) & =q^2(-q^{-3})\reordnp+q^{-\frac{1}{2}}\reordthree \notag\\
&=-q^{-1} \left( q^{-3} \reordpn+ q^{-\frac{3}{2}}(q^2-q^{-2})\reordthree \right)+q^{-\frac{1}{2}}\reordthree  \notag
\\
&=-q^{-4}\reordpn+q^{-\frac{9}{2}}\reordthree, \label{eq.u3}
\end{align}
and the right hand sides of \eqref{eq.u2} and \eqref{eq.u3} are equal due to \eqref{eq.order}.

Identity \eqref{eq.com5} follows immediately from the definitions.
\end{proof}

There are two important cases for us. Define
\be \inv_e:= \inv_{e,C}, \quad \text{and} \ \binv_e:= \inv_{e,\bar C},
\ee
where \be 
C(+)= \bar C (-) = -q^{-5/2}, \quad C(-) = \bar C(+) = q^{-1/2}.
\ee

Note that $C(\nu) = C^{-\nu}_{ \nu}$ and $\bar C(\nu) = C(-\nu)$ for  $\nu\in \{\pm \}$. 
For a stated tangle diagram $\al$ with a state $s$ on the boundary edge $e$ define 
\be 
 C_e(\al) = \prod_{x\in \al \cap  e} C(s(x))= \prod_{x\in \al \cap  e} C^{-s(x)}_{s(x)}.
 \label{eq.Ce}
 \ee 
If $\al$ has positive order on $e$ then, with $\al'$  defined as in Proposition \ref{r.inversion}, one has
\be\label{eq:invC} 
\inv_e(\al) = C_e(\al) \al',
\ee

\begin{remark} The definition \eqref{eq.def10} works only for stated $\pfS$-tangle diagrams with positive order on $e$. If the order is not positive, the formula will be different.

In general $\inv_{e,f}$ is not an algebra homomorphism.
\end{remark}

}
\subsection{Basis of stated skein module} \label{sec.basis0}

{ A $\pM$-tangle diagram $D$} is {\em simple} if it has neither double point nor
trivial component. Here a closed component of $D$ is {\em trivial} if it bounds a disk in $\fS$,
 and an arc component of $\al$ is {\em trivial} if it can be homotoped relative to its boundary  to a subset of a boundary edge.
 By convention, the empty set is considered as a simple stated $\pM$-tangle diagram with 0 components.

Define an order on $\{ \pm\}$ so that the sign $-$ is less than the sign $+$. If $X$ is a partially ordered set, then a state $s: X \to \{\pm\}$ is {\em increasing} if $s$ is an increasing function, i.e. $f(x)  \le f(y)$ whenever $x \le y$.

\def\inv{\mathrm{nd}}

\def\tB{\tilde B}
 Choose an orientation  $\ori$  of
$\pfS$.
Let  $B(\fS;\ori)$ be the set of
 of all isotopy classes of increasingly stated, $\ori$-ordered simple $\pM$-tangle diagrams. From the defining relations  it is easy to show that the set $B(\fS;\ori)$ spans $\SS$ over $\cR$.

\begin{theorem} [Theorem 2.8 in \cite{Le:TDEC} ]   \label{thm.basis1a}
 Suppose $\fS$ is a punctured bordered surface and $\ori$ is an orientation of $\pfS$.
Then $B(\fS;\ori)$ is an $\cR$-basis of $\cSs(\fS)$.
  \end{theorem}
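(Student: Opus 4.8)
The plan is to prove that $B(\fS;\ori)$ is a basis by establishing linear independence, since spanning is already noted to follow easily from the defining relations. The standard strategy here is to build an auxiliary $\cR$-module $\widetilde{\cS}$ freely generated by the symbols in $B(\fS;\ori)$, and to define on it an action of ``resolving'' an arbitrary stated tangle diagram — i.e. a normal-form reduction map $\Phi$ sending any stated $\pM$-tangle diagram to an $\cR$-linear combination of elements of $B(\fS;\ori)$. The key is to show that $\Phi$ is well-defined, meaning independent of the choices made in the reduction (order of moves, choice of crossing to resolve, choice of which boundary bigon or trivial loop to remove). Once $\Phi$ is well-defined and descends through the defining relations \eqref{eq.skein}, \eqref{eq.loop}, \eqref{eq.arcs}, \eqref{eq.order}, it gives an $\cR$-linear map $\cSs(\fS) \to \widetilde{\cS}$ which is a one-sided inverse to the natural surjection $\widetilde{\cS} \to \cSs(\fS)$ induced by inclusion $B(\fS;\ori) \embed \cSs(\fS)$; this forces that surjection to be an isomorphism, so $B(\fS;\ori)$ is a basis.

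First I would fix a reduction procedure: given a $\pM$-tangle diagram, use the Kauffman relation \eqref{eq.skein} to remove all crossings (expanding into $2^{\#\text{crossings}}$ crossingless diagrams), use \eqref{eq.loop} to discard trivial closed loops, use \eqref{eq.arcs} (and its consequences in Lemma~\ref{r.arcs}, especially \eqref{eq.arcs1}–\eqref{eq.arcs2}) to remove trivial arc components that cut off a disk near a boundary edge, and use the height-exchange relations of Lemma~\ref{r.refl} together with \eqref{eq.order} to reorder the boundary points into the $\ori$-increasing state, picking up the appropriate scalars and lower-complexity correction terms. One should set up a well-founded complexity function on diagrams — lexicographically ordered by, say, (number of crossings, number of trivial components, a measure of ``disorder'' of the boundary states relative to $\ori$) — so that each move strictly decreases complexity and the procedure terminates, landing in the span of $B(\fS;\ori)$.

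The main obstacle, as always in these ``diamond lemma'' arguments, is confluence: showing that $\Phi$ does not depend on the order in which the reduction steps are applied. This requires checking that all the local ambiguities can be resolved — crossings resolved in either order (this is essentially the classical invariance of the Kauffman bracket under Reidemeister moves, cited via \cite{Kauffman}), and the genuinely new ambiguities involving the boundary relations: two boundary bigons that can be removed in either order, a boundary reordering move \eqref{eq.order} interacting with a nearby crossing resolution or arc removal, and the compatibility of the reordering scalars (the $q^2$, $q^{-1/2}$ in \eqref{eq.order} and the $q^{\pm 1}$, $q^{\pm 3/2}$ in \eqref{eq.reor1}–\eqref{eq.reor2}) across all orderings of a set of boundary points on one edge. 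Since this is Theorem 2.8 of \cite{Le:TDEC}, I would in practice cite that reference; for a self-contained argument one would carve out the verification into a finite list of local pictures and check each, the delicate ones being those where a reordering on an edge $b$ must be commuted past a resolution happening in the interior of $\fS$ — here one argues that the interior move is ``invisible'' to the edge and the scalars only depend on the multiset of states on $b$, which is preserved.

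An alternative, and arguably cleaner, route to linear independence uses the splitting homomorphism $\theta_c$ of \eqref{eq.rho0}: cut $\fS$ along a maximal collection of ideal arcs until the pieces are all disks (bigons, triangles, etc.), for which the basis statement can be verified directly by hand, and then use injectivity of the iterated splitting map plus the explicit state-sum formula for $\theta_c$ on basis elements — the content of Proposition~\ref{r.grbinom} (the value of the splitting homomorphism on the associated graded, which is a clean ``binomial'' formula) — to deduce independence of $B(\fS;\ori)$ from independence for the pieces. I would mention this as the conceptually preferred argument, while noting it presupposes the bigon and triangle computations and the injectivity of $\theta_c$, which in \cite{Le:TDEC} are themselves proved using the basis theorem, so care is needed to avoid circularity; the direct diamond-lemma proof is the logically safest and is the one I would write out.
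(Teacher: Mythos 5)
You should start from the fact that the present paper does not prove Theorem \ref{thm.basis1a} at all: it is imported from \cite{Le:TDEC} (Theorem 2.8 there), and the only commentary offered is Remark \ref{rem.cons}, which points out that the entire content of the statement is the \emph{consistency} of the coefficients in \eqref{eq.arcs} and \eqref{eq.order}. Measured against that, your primary route --- a normal-form reduction map into the free module on $B(\fS;\ori)$, well-definedness via a diamond-lemma confluence check, with the interior ambiguities disposed of by the classical Kauffman-bracket argument --- is the right strategy and is in the spirit of the cited proof; and your warning that the alternative route through the splitting homomorphism is circular is exactly right, since in this development the injectivity of $\theta_c$, Proposition \ref{r.grbinom}, and even the presentation of $\cS(\cB)$ in Lemma \ref{lem:relations} are all derived \emph{from} Theorem \ref{thm.basis1a}.

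The gap is that the step where all the mathematical content lives is only announced, never performed. Saying that one would ``carve out the verification into a finite list of local pictures and check each,'' and that ``the scalars only depend on the multiset of states on $b$, which is preserved,'' is precisely what has to be proved: the genuinely new ambiguities are, e.g., two overlapping applications of \eqref{eq.order} on three consecutive endpoints of one edge, \eqref{eq.order} against \eqref{eq.arcs} when one of the two points bounds a trivial arc, and the boundary relations against the height-exchange consequences \eqref{eq.reor1}--\eqref{eq.reor2}; with generic coefficients in place of $q^2$, $q^{-1/2}$, $0$, $0$, $q^{-1/2}$ these checks fail, so they cannot be waved through by symmetry or ``invisibility'' arguments. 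A smaller but concrete illustration that the details matter: your termination measure (crossings, trivial components, disorder) does not strictly decrease under \eqref{eq.order}, because the $q^{-1/2}$ correction term turns two boundary endpoints into a returning arc and can \emph{increase} the count of trivial components while leaving crossings unchanged; you need the number of endpoints on $\pfS$ as an earlier coordinate in the lexicographic order. As a plan your proposal is sound and matches the known proof's architecture, but as written it defers the theorem's actual substance, which is why, as you yourself suggest, the honest move in this paper is the one the authors make: cite \cite{Le:TDEC}.
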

  \def\dec{\mathrm{dec}}
  
 \begin{remark} Theorem \ref{thm.basis1a} means that the coefficients given in the defining relations \eqref{eq.arcs} and \eqref{eq.order} are consistent in the sense that they do not lead to any more relations among the set $B(\fS;\ori)$.
 \label{rem.cons}
 \end{remark}

\def\oB{\mathring B}

The subset $\oB(\fS;\ori)\subset B(\fS;\ori)$ consisting of $\al\in B(\fS;\ori)$ having no arcs is a basis of the ordinary skein algebra $\ooS(\fS)$. Similarly, the subset $B^{+}(\fS;\ori)\subset B(\fS;\ori)$ consisting of $\al\in B(\fS;\ori)$ having only positive states is a basis of the Muller  skein algebra $\SMuller$, see \cite{Muller,Le:TDEC,LP}. Hence we have the following.
\begin{corollary} Both the ordinary skein algebra $\ooS(\fS)$ and the Muller skein algebra $\SMuller$ are subalgebras of the stated skein algebra $\SS$.  
\end{corollary}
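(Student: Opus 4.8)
The plan is to produce the two natural algebra homomorphisms into $\SS$ and to read off their injectivity from the basis result Theorem \ref{thm.basis1a}, together with the identifications of the relevant sub-bases recorded just before the statement.

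First I would define $\iota\colon \ooS(\fS)\to\SS$ by sending a framed link $L\subset\fS\times(0,1)$ to its class as a stated $\pM$-tangle (with empty boundary, hence empty state, so that the choice of boundary orientation is irrelevant for links). Since $\ooS(\fS)$ is presented by the Kauffman relations \eqref{eq.skein01}--\eqref{eq.loop01}, which are literally among the defining relations \eqref{eq.skein}--\eqref{eq.order} of $\SS$, the map $\iota$ is well defined on the module level; and because the product in both algebras is stacking, $\iota$ is an algebra homomorphism. In exactly the same way, sending a $+$-stated $\pM$-tangle to its class in $\SS$ gives an algebra homomorphism $\iota^{+}\colon\SMuller\to\SS$, well defined because the relations defining $\SMuller$ are the specialisation of \eqref{eq.skein}--\eqref{eq.order} to all-$+$ states (cf. \cite{Muller,Le:TDEC,LP}) and hence again a subset of the relations defining $\SS$.

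It then remains to prove injectivity. Fix an orientation $\ori$ of $\pfS$. By the discussion following Theorem \ref{thm.basis1a}, the set $\oB(\fS;\ori)$ of arc-free increasingly stated simple $\ori$-ordered diagrams is an $\cR$-basis of $\ooS(\fS)$, and under $\iota$ it is carried bijectively onto the subset $\oB(\fS;\ori)\subset B(\fS;\ori)$ of the $\cR$-basis of $\SS$ provided by Theorem \ref{thm.basis1a}; in particular its image is $\cR$-linearly independent. An $\cR$-module homomorphism carrying an $\cR$-basis to a linearly independent set is injective, so $\iota$ is injective. Replacing $\oB(\fS;\ori)$ by $B^{+}(\fS;\ori)$, the basis of $\SMuller$ consisting of simple all-$+$ increasingly stated $\ori$-ordered diagrams, which is likewise a subset of $B(\fS;\ori)$, the identical argument shows $\iota^{+}$ is injective. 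Hence both $\ooS(\fS)$ and $\SMuller$ are (isomorphic images of) subalgebras of $\SS$.

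There is no genuine obstacle here beyond Theorem \ref{thm.basis1a} itself: all the real work is in that basis theorem and in the cited identifications of $\oB(\fS;\ori)$ and $B^{+}(\fS;\ori)$ as bases of $\ooS(\fS)$ and $\SMuller$. The only point I would state explicitly, for completeness, is the compatibility of the three presentations --- that the defining relations of $\ooS(\fS)$ and of $\SMuller$ do occur among those of $\SS$, so that $\iota$ and $\iota^{+}$ exist, and that the cited bases are precisely the images of the corresponding subsets of $B(\fS;\ori)$ --- both being immediate from the definitions in Sections \ref{sub:statedskeinalgebra} and \ref{sec.basis0}.
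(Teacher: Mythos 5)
Your proposal is correct and follows essentially the same route as the paper: the paper's (very terse) argument is precisely that $\oB(\fS;\ori)$ and $B^{+}(\fS;\ori)$, the cited bases of $\ooS(\fS)$ and $\SMuller$, sit inside the basis $B(\fS;\ori)$ of $\SS$ given by Theorem \ref{thm.basis1a}, so the natural maps are injective algebra homomorphisms. You merely spell out the well-definedness and the "basis maps to linearly independent set" step, which the paper leaves implicit.
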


\def\fA{{\mathfrak{A}}}
\subsection{Filtration and grading} \label{sec.filtration} Suppose $a$ is either an ideal arc or a simple closed curve on $\fS$ and $\al$ is a simple $\pM$-tangle diagram on $\fS$. The geometric intersection index $I(a,\al)$ is
$$ I(a,\al) = \min |a \cap \al'|,$$
where the minimum is over all the simple $\pM$-tangle diagrams $\al'$ isotopic to $\al$.

For a collection $\fA=\{ a_1, \dots, a_k\}$, where each $a_i$ is either an ideal arc or a simple closed curve, and $n\in \BN$ let $F^\fA_n(\SS)$ be the $\cR$-submodule of $\SS$ spanned by all stated simple $\pM$-diagrams $\al$ such that $\sum_{i=1}^k I(a_i, \al) \le n$. It is easy to see that the collection $\{F^\fA_n(\SS)\}_{n\in \BN}$ forms a filtration of $\SS$ compatible with the algebra structure, i.e. with $F_n= F^\fA_{n}(\SS)$ one has
$$ F_{n} \subset F_{n+1}, \ \bigcup_{n\in \BN} F_n = \SS, \ F_n F_{n'} \subset F_{n+n'}.$$
One can define the associated graded algebra 
$\Gr^{\fA}(\cSs(\fS))$:
 $$ \Gr^{\fA}(\cSs(\fS))= \bigoplus_{n=0}^\infty  \Gr^{\fA}_n(\cSs(\fS))\   \text{with } \Gr^{\fA}_n(\cSs(\fS))= F_n/F_{n-1}\ \forall n\ge 1 \ \text{and}\  \Gr_0= F_0.$$ 

This type of filtration has been used extensively in the theory of the ordinary skein algebra, see e.g. \cite{Le:QT,FKL,LP,Marche}. 

 The following is a consequence of Theorem \ref{thm.basis1a}:
\begin{proposition}[Proposition 2.12 in \cite{Le:TDEC}]\label{r.basis2}  Let  $\ori$ be an orientation of the boundary of a punctured bordered surface $\fS$, and $\fA=\{a_1,\dots, a_k\}$ be a collection of boundary edges of $\fS$.\\
 (a) The set $\{ \al \in B(\fS;\ori)\mid \sum_{i=1}^k I(\al, a_i)\le n\} $
is an $\cR$-basis of $F^\fA_n(\cSs(\fS))$.\\
(b) The set $\{ \al \in B(\fS;\ori) \mid \sum_{i=1}^k I(\al, a_i)= n\} $ is an $\cR$-basis of $\Gr^\fA_n(\cSs(\fS))$.\\
\end{proposition}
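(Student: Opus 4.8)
The plan is to transport the whole statement to the $\cR$-basis $B(\fS;\ori)$ of Theorem~\ref{thm.basis1a} and to keep track of one integer attached to each basis element. The starting observation is purely topological: if $a_i$ is a boundary edge and $\al$ a simple $\pM$-tangle diagram, then $I(a_i,\al)=|\partial_{a_i}(\al)|$, the number of endpoints of $\al$ on $a_i$. Indeed $a_i\subset\pfS$, so a properly embedded tangle meets $a_i\times(0,1)$ only in its boundary points, and any isotopy of $\pM$-tangle diagrams keeps each such point on $a_i$; hence $|a_i\cap\al'|$ does not change as $\al'$ ranges over simple diagrams isotopic to $\al$. Consequently, for $\al\in B(\fS;\ori)$ the number $\delta(\al):=\sum_{i=1}^k I(a_i,\al)$ is just the total count of endpoints of $\al$ on the edges of $\fA$ --- it ignores both the state and the height orders of $\al$. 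Set $B_n:=\{\al\in B(\fS;\ori)\mid\delta(\al)\le n\}$.

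For part (a) I would first note that $B_n\subseteq B(\fS;\ori)$ is $\cR$-linearly independent by Theorem~\ref{thm.basis1a}, and that each $\al\in B_n$ is a stated simple $\pM$-tangle diagram with $\sum_i I(a_i,\al)=\delta(\al)\le n$, hence lies in $F^\fA_n(\SS)$ by the very definition of the filtration; so $\mathrm{span}_\cR(B_n)\subseteq F^\fA_n(\SS)$. The reverse inclusion is the heart of the matter. Here I would revisit the reduction algorithm behind the spanning statement recalled just before Theorem~\ref{thm.basis1a}, which rewrites an arbitrary simple stated diagram $\beta$ as an $\cR$-combination of elements of $B(\fS;\ori)$ using only the defining relations, and check that it can be run so that $\delta$ never increases. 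If so, the $B(\fS;\ori)$-expansion of a simple $\beta$ involves only $\al$ with $\delta(\al)\le\delta(\beta)$; applying this to the spanning diagrams of $F^\fA_n(\SS)$ gives $F^\fA_n(\SS)\subseteq\mathrm{span}_\cR(B_n)$, so that $B_n$ is an $\cR$-basis of $F^\fA_n(\SS)$.

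To verify the $\delta$-non-increasing property one simply inspects the defining relations and their consequences in Lemmas~\ref{r.arcs} and~\ref{r.refl}. Resolving a crossing via \eqref{eq.skein}, erasing a trivial loop via \eqref{eq.loop}, and removing a kink via \eqref{eq.kink} leave every $|\partial_{a_i}(\cdot)|$ untouched; the two vanishing relations in \eqref{eq.arcs} send a term to $0$; deleting a trivial boundary arc via \eqref{eq.arcs1}--\eqref{eq.arcs2}, and replacing a height--exchange configuration by its ``contracted'' term in \eqref{eq.order} (equivalently in \eqref{eq.reor2}), each remove exactly two endpoints from the single edge carrying the move --- so $\delta$ drops by $2$ if that edge belongs to $\fA$ and is unchanged otherwise; and the remaining reorderings \eqref{eq.reor1} only multiply by a power of $q$. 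A contracted diagram may acquire new trivial loops or arcs but no new crossings, so the recursion stays within this list of moves. Hence $\delta$ is non-increasing all the way to the basis expansion.

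For part (b), fix $n\ge1$. By part (a), $F_n:=F^\fA_n(\SS)$ is $\cR$-free with basis $B_n$ and $F_{n-1}$ is $\cR$-free with basis $B_{n-1}$, with $B_{n-1}\subseteq B_n$; since $F_{n-1}$ is the $\cR$-span of a sub-basis of $F_n$, it is a direct summand, and $\Gr^\fA_n(\SS)=F_n/F_{n-1}$ is $\cR$-free with basis the image of $B_n\setminus B_{n-1}=\{\al\in B(\fS;\ori)\mid\delta(\al)=n\}$. For $n=0$ one has $\Gr_0=F_0$, which by part (a) has basis $\{\al\in B(\fS;\ori)\mid\delta(\al)=0\}$. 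This gives (b). The one genuinely non-formal point is the $\delta$-non-increasing claim, and within it the decisive fact is that a height--exchange move trades a diagram for a reordered copy of itself --- same endpoints, same $\delta$ --- plus a diagram with two fewer endpoints on the relevant edge; that is exactly why this crude endpoint count, and not some finer invariant, controls the filtration.
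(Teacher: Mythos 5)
Your overall strategy is the intended one: the paper offers no proof of this proposition (it is quoted from \cite{Le:TDEC} and presented as a consequence of Theorem \ref{thm.basis1a}), and the expected derivation is exactly yours — identify $I(a_i,\al)$ with the number of endpoints of $\al$ on the boundary edge $a_i$, show that rewriting an arbitrary stated simple diagram in the basis $B(\fS;\ori)$ never increases the total endpoint count $\delta$ over $\fA$, and then deduce (a) and, formally, (b). Your opening observation (isotopy invariance of the per-edge endpoint count) and your deduction of (b) from (a) are fine.

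The one step that does not work as you wrote it is the height reordering. The exchange relations \eqref{eq.order}, \eqref{eq.reor1}, \eqref{eq.reor2} apply only to two endpoints that are adjacent along the edge \emph{and} consecutive in the height order, and such moves alone cannot in general convert a given height order into the $\ori$-order: with three endpoints at positions $p_1<p_2<p_3$ along one edge carrying heights $3,1,2$, the admissible exchanges only ever produce the patterns $(3,2,1)$ and $(2,3,1)$, never the ordered pattern $(1,2,3)$. So your claim that the reduction proceeds with ``no new crossings'' and stays within your list of moves fails. The actual spanning argument first re-presents the underlying tangle by an $\ori$-ordered diagram via an isotopy that slides endpoints along the edge past one another (possible because they have distinct heights); this isotopy can create crossings in the projection, which are then resolved by \eqref{eq.skein}, after which trivial components are removed and states are sorted by \eqref{eq.order}. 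Your conclusion survives, because the repair uses only facts you already recorded: such an isotopy does not change the number of endpoints on any boundary edge, crossing resolutions and trivial-loop removals do not change it, and trivial-arc removals and the capped terms of \eqref{eq.order} decrease it by two on a single edge. Hence $\delta$ is non-increasing along the corrected algorithm, $F^\fA_n(\cSs(\fS))$ is spanned by $\{\al\in B(\fS;\ori)\mid \delta(\al)\le n\}$, and (a) and (b) follow as you state; you should simply replace the ``no new crossings'' claim by this isotopy-plus-resolution step.
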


\def\Ed{{\cE_\partial}}
\def\ld{\mathrm{lt}}
For what concerns the grading, for each non-negative integer $m$ and a boundary edge $e$ let  $G^e_m$ be the $\cR$-subspace of $\cSs(\fS)$ spanned by stated $\pM$-tangle diagrams  $\al$ with 
 $\delta_e(\al):= \sum_{u \in (\al \cap e)} s(u) =m,$ where $s$ is the state and 
  we identify $+$ with $+1$ and $-$ with $-1$.  
 
From the defining relations it is clear that $\cSs(\fS)= \bigoplus_{m  \in \BZ} G^e_m$ and $G^e_m G^e_{m'} \subset G^e_{m+m'}$. In other words,  $\cSs(\fS)$ is a graded algebra with the grading $\{ G^e_m\}_{m\in \BZ}$.

Also the following is a consequence of Theorem \ref{thm.basis1a}:
\begin{proposition}\label{r.basis3}
Let $\fS$ be a punctured bordered surface and $\ori$ be an orientation of $\pfS$.
 The set $\{ \al \in B(\fS;\ori) \mid \delta_e(\al)=m\} $ is an $\cR$-basis of $G^e_{m}(\cSs(\fS))$.

\end{proposition}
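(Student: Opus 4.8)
The plan is to deduce this directly from Theorem~\ref{thm.basis1a}, exploiting that the grading $\{G^e_m\}_{m\in\BZ}$ refines the basis $B(\fS;\ori)$, in the sense that every basis element is homogeneous with respect to $\delta_e$.

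First I would observe that for a simple stated $\pM$-tangle diagram $\al$ the integer $\delta_e(\al)=\sum_{u\in(\al\cap e)}s(u)$ depends only on $\al$, so $B(\fS;\ori)$ decomposes as a disjoint union $B(\fS;\ori)=\bigsqcup_{m\in\BZ}B_m$ with $B_m:=\{\al\in B(\fS;\ori)\mid\delta_e(\al)=m\}$, and by the very definition of $G^e_m$ one has $B_m\subset G^e_m$. Since $B(\fS;\ori)$ is an $\cR$-basis of $\cSs(\fS)$ by Theorem~\ref{thm.basis1a}, every subset of it is $\cR$-linearly independent, so each $B_m$ is linearly independent; it remains only to show that $B_m$ spans $G^e_m$.

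For the spanning statement I would set $H_m:=\bigoplus_{\al\in B_m}\cR\al$, so that $H_m\subset G^e_m$. Theorem~\ref{thm.basis1a} gives $\cSs(\fS)=\bigoplus_{m\in\BZ}H_m$, while the grading structure recalled just above the proposition gives $\cSs(\fS)=\bigoplus_{m\in\BZ}G^e_m$. Given $x\in G^e_m$, write $x=\sum_{m'}h_{m'}$ with $h_{m'}\in H_{m'}\subset G^e_{m'}$; comparing with the direct-sum decomposition into the $G^e_{m'}$ forces $h_{m'}=0$ for $m'\neq m$, hence $x=h_m\in H_m$. Thus $H_m=G^e_m$, and $B_m$ is an $\cR$-basis of $G^e_m$.

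I do not expect any real obstacle here. The only point requiring verification is the homogeneity of the elements of $B(\fS;\ori)$ under $\delta_e$, which is immediate from the definition of a stated tangle diagram, and the remainder is the elementary linear-algebra fact that a grading refining a basis decomposition restricts to a grading on each homogeneous summand. The identical argument, with $\delta_e$ replaced by $\al\mapsto\sum_i I(a_i,\al)$, yields Proposition~\ref{r.basis2}.
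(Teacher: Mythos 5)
Your proof is correct and matches the paper's intent: the paper states Proposition \ref{r.basis3} as an immediate consequence of Theorem \ref{thm.basis1a}, and your argument (homogeneity of the basis elements under $\delta_e$, plus the elementary fact that a direct-sum decomposition refining a basis decomposition with $H_m\subseteq G^e_m$ forces $H_m=G^e_m$) is exactly the intended filling-in, legitimately using the decomposition $\cSs(\fS)=\bigoplus_m G^e_m$ asserted just before the proposition.
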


If $\ori'$ is another orientation of the boundary $\pfS$, the change from basis $B(\fS;\ori)$ to $B(\fS;\ori')$ might be complicated. For the associated space $\Gr^\fA(\SS)$, the change of bases is simpler. 

Recall that $\al \bue \al'$ means $\al = q^m \al'$ for some $m\in \BZ$.

\begin{proposition} \label{r.orichange} Suppose $\al$ is stated tangle diagram on $\fS$ and $I(\al,e)=k$ where $e$ is a boundary edge. Let's alter $\al$ to get $\al'$ by changing the height order on $e$ and the states on $e$ such that $\delta_e(\al) = \delta_e(\al')$. Then  one has
 \be 
\al \bue \al'   \quad \text{ in $\Gr^e_k(\SS)$}\label{eq.orichange}. \ee
\end{proposition}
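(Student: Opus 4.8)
The plan is to reduce the statement to a local claim about switching two consecutive boundary points on $e$, and then to iterate. Since $I(\al,e)=k$, after an isotopy we may assume $\al$ meets $e$ in exactly $k$ points, so $\al$ already represents an element of $\Gr^e_k(\SS)$ with nonzero image (by Proposition \ref{r.basis2}(b), once we put $\al$ in $\ori$-ordered simple form). Both $\al$ and $\al'$ have the same underlying diagram away from $e$ and the same multiset of $k$ signs with the same $\delta_e$-value; the only difference is the height order on $e$ together with a possible reshuffling of which sign sits at which height. Any two such configurations are related by a finite sequence of transpositions of consecutive boundary points on $e$, so it suffices to show that a single such transposition changes $\al$ only by a power of $q$ modulo $F_{k-1}$.

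The key step is therefore the local analysis of the height-exchange moves of Lemma \ref{r.refl} applied to two consecutive points on $e$. There are three cases according to the pair of signs $(\nu,\nu')$ being exchanged. When the two signs are equal, \eqref{eq.reor1} gives exactly $\al \bue \al'$ on the nose (no error term), so there is nothing to prove. When the signs differ, the relevant relation is \eqref{eq.reor2}: exchanging a $+$ below a $-$ with a $-$ below a $+$ produces $\al$ equal to $q^{\pm 2}\al'$ plus a correction term of the form $(q^2-q^{-2})\,\reordthree$, where the diagram $\reordthree$ is obtained by the ``cap-cup'' resolution that joins the two boundary points by a trivial arc and removes them from $e$. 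The point is that this correction diagram meets $e$ in $k-2$ points (two fewer than $\al$), hence $I(\cdot,e)\le k-2 < k$, so it lies in $F^{\{e\}}_{k-1}(\SS)$ and vanishes in $\Gr^e_k(\SS)$. Thus in the associated graded $\al$ and $\al'$ agree up to the stated power of $q$.

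Iterating over a sequence of consecutive transpositions that carries the height order and sign placement of $\al$ to that of $\al'$, each step contributes only a power of $q$ in $\Gr^e_k(\SS)$, and the product of these powers gives the single power of $q$ asserted in \eqref{eq.orichange}. I expect the only mildly delicate point to be bookkeeping: one must check that the correction terms arising at each step, when expanded back in terms of simple $\ori$-ordered diagrams, still have geometric intersection number with $e$ strictly less than $k$ — but this is automatic since resolving a crossing or a cap-cup on $e$ can only decrease $|\al\cap e|$, and passing to a simple representative only decreases it further. Hence the argument goes through and the proposition follows.
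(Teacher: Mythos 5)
There is a genuine gap: you only ever invoke the height-exchange moves of Lemma \ref{r.refl}, but those moves cannot account for the full generality of the statement. A height exchange swaps which of two consecutive endpoints lies higher while each state stays attached to its own strand; compositions of such moves therefore realize an arbitrary new height order but never change the assignment of states to the endpoints of $\al$. The proposition, however, allows $\al'$ to carry a \emph{different} state assignment, as long as $\delta_e$ (equivalently, the multiset of signs) is unchanged. Concretely, if $\al$ has two strands ending on $e$ at consecutive points with states $(+,-)$ (read in height order) and $\al'$ is the same diagram with the same height order but states $(-,+)$, then $\al'$ is not in the orbit of $\al$ under height exchanges, so your induction never reaches it; your phrase ``reshuffling of which sign sits at which height'' is exactly the part of the move set that Lemma \ref{r.refl} does not provide.

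The missing ingredient is the defining relation \eqref{eq.order}, which exchanges the \emph{states} of two consecutive endpoints at fixed heights: modulo its last term, a turnback diagram meeting $e$ in two fewer points and hence lying in $F^e_{k-1}$, it says that such a state swap multiplies the class by $q^{\pm 2}$ in $\Gr^e_k(\SS)$. This is precisely the paper's second type of elementary move: the paper factors the passage from $\al$ to $\al'$ into (i) height exchanges of consecutive points, handled by \eqref{eq.reor1} and \eqref{eq.reor2} as you do, and (ii) state exchanges of consecutive points, handled by \eqref{eq.order}. Your filtration argument (corrections have two fewer endpoints on $e$, so they die in the graded piece) is correct and is the same mechanism the paper uses; once you add the moves of type (ii) with the same argument applied to the $q^{-1/2}$-term of \eqref{eq.order}, the proof closes.
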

\begin{proof} 

One can get $\al'$ from $\al$ by a sequence of moves, each is either (i) an exchange of the heights of two consecutive vertices on $e$, or (ii) an exchange of states of two consecutive vertices on $b$. We can assume that $\al'$ is the result of doing a move of type (i) or type (ii).

In case of move (i), the identities \eqref{eq.reor1} and \eqref{eq.reor2} prove \eqref{eq.orichange}.

In case of move (ii), the identity \eqref{eq.order} prove \eqref{eq.orichange}.
\end{proof}

\def\tpr{\widetilde{\pr}}
\def\tal{\tilde \al}
\subsection{Splitting/Gluing punctured bordered surfaces}    \label{sec.31}
Suppose $a$ and $b$ are distinct boundary edges of a punctured bordered surface $\fS'$ which may not be connected. Let $\fS= \fS'/(a=b)$ be the result of gluing $a$ and $b$ together in such a way that the  orientation is compatible. The canonical projection $\pr: \fS' \to \fS$  induces a projection $\tpr: M=\fS' \times (0,1) \to M=\fS\times (0,1)$.
Let $c= \pr(a)=\pr(b)$. It is an interior ideal arc  of $\fS$.

Conversely if $c$ is an ideal arc in the interior of $\fS$, then there exists $\fS', a,b$ as above such that $\fS= \fS'/(a=b)$, with $c$ being the common image of $a$ and $b$. We say that $\fS'$ is the result of splitting $\fS$ along $c$.

 A $\pM$-tangle  $\al\subset M=\fS \times (0,1)$,  is said to be {\em vertically transverse to $c$} if
 \begin{itemize}
 \item $\al$ is transverse  to $c \times (0,1)$,
 \item the points in $\partial_c\, \al:= \al \cap (c \times (0,1))$ have distinct heights, and have vertical framing.
 \end{itemize}
 Suppose $\al$ is a stated $\pM$-tangle vertically transverse  to $c$. Then $\tal:=\tpr^{-1}(\al)$ is a $\pM'$-tangle
 which is stated at every boundary point except for newly created boundary points, which are points in  $\tpr^{-1} (\partial_c\, \al)$.
  {\em A lift} of $\al$ is a stated $\pM'$-tangle $\beta$ which is $\tal$ equipped with states on $\tpr^{-1} (\partial_c\, \al)$ such that if $x,y\in \tpr^{-1} (\partial_c\, \al)$ with $\tpr(x)=\tpr(y)$ then $x$ and $y$ have the same state. If $|\partial_c\, \al|=k$, then  $\al$ has $2^k$ lifts.

\begin{theorem}[Splitting Theorem, Theorem 3.1 in \cite{Le:TDEC}] \label{thm.1a} Suppose $c$ is an ideal arc in the interior of a  punctured bordered surface $\fS$ and $\fS'$ is the result of splitting $\fS$ along $c$.

(a)
There is a unique $\cR$-algebra homomorphism
$ \theta_c: \cSs(\fS) \to \cSs(\fS')$, called the splitting homomorphism along $c$,
such that if $\al$ is a stated $\pM$-tangle  vertically transverse  to $c$, then
\be 
\theta _c(\al)=\sum \beta, \label{eq.slit}
\ee where the sum is over all lifts $\beta$ of $\al$.

(b) In addition, $\theta_c $ is injective.

(c) If $c_1$ and $c_2$ are two non-intersecting ideal arcs in the interior of $\fS$, then
$$ \theta_{c_1} \circ \theta_{c_2} = \theta_{c_2} \circ \theta_{c_1}.$$

\end{theorem}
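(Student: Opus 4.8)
The plan is to construct $\theta_c$ on diagrams by the state sum, check it descends to $\cSs(\fS)$ and is an algebra map (part (a)), deduce uniqueness from the fact that tangles transverse to $c$ span, prove injectivity (part (b)) by passing to an associated graded algebra where the splitting map becomes ``regluable'', and get (c) formally.

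For part (a), let $\mathcal T_c$ be the set of isotopy classes of stated $\pM$-tangle diagrams that are vertically transverse to $c$. Every stated $\pM$-tangle is isotopic to one in $\mathcal T_c$, so $\mathcal T_c$ spans $\cSs(\fS)$; this yields uniqueness at once, once existence is established. Define $\tilde\theta_c$ on the free $\cR$-module on $\mathcal T_c$ by $\al \mapsto \sum_\beta \beta$, the sum over the $2^{|\partial_c \al|}$ lifts of $\al$. To see $\tilde\theta_c$ descends to $\cSs(\fS)$, recall that two elements of $\mathcal T_c$ represent the same stated $\pM$-tangle exactly when they differ by a sequence of: (i) isotopies within $\mathcal T_c$; (ii) framed Reidemeister moves and the defining relations \eqref{eq.skein}--\eqref{eq.order} supported in a ball disjoint from $c \times (0,1)$; (iii) the ``pass-through'' move sliding a finger of a strand across $c$. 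Invariance under (i) is clear since lifts go to lifts, and under (ii) it holds because taking lifts is $\cR$-linear inside a ball disjoint from $c$, reducing it to the corresponding identities in $\cSs(\fS')$. The one substantive point is (iii): the pass-through move changes $\tilde\al=\tpr^{-1}(\al)$ by adding, near $a$, a trivial arc with two new boundary points, and near $b$ the matching trivial arc; evaluating these by Lemma \ref{r.arcs} (formulas \eqref{eq.arcs1}--\eqref{eq.arcs2}), using the height-exchange Lemma \ref{r.refl} to reorder the new points when necessary, and summing over the two coupled states, relation \eqref{eq.arcs} collapses the sum back to the unmodified strand. Finally $\theta_c$ is an algebra homomorphism: if $\al_1$ is stacked above $\al_2$ with both in $\mathcal T_c$, then $\partial_c(\al_1\al_2)=\partial_c\al_1\sqcup\partial_c\al_2$, the lifts of $\al_1\al_2$ are exactly the products of a lift of $\al_1$ with a lift of $\al_2$, and the corresponding $\beta$ equals $\beta_1\beta_2$ in $\cSs(\fS')$, so $\theta_c(\al_1\al_2)=\theta_c(\al_1)\theta_c(\al_2)$.

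Part (b) is the heart of the matter. Fix orientations and let $\al$ range over the basis $B(\fS;\ori)$ of Theorem \ref{thm.basis1a}, each put in minimal position so $|\al\cap c|=I(\al,c)=:k$; then all $2^k$ lifts occur. Filter $\cSs(\fS)$ by $F_n=F^{\{c\}}_n$ and $\cSs(\fS')$ by the analogous filtration by geometric intersection with $a$ and with $b$; then $\theta_c$ is filtered. One then uses the explicit description of $\Gr(\theta_c)$ (Proposition \ref{r.grbinom}): modulo lower filtration the lifts no longer interact through crossings near $c$, so after applying Proposition \ref{r.orichange} and the constants $C^\nu_{\nu'}$ to bring each lift into standard basis form, each lift contributes a unit multiple ($\bue$) of a basis element of $\cSs(\fS')$, and on these leading terms the operation ``reglue $a$ to $b$'' is a well-defined one-sided inverse to $\Gr(\theta_c)$. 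Hence $\Gr(\theta_c)$ is injective, and since the filtration exhausts $\cSs(\fS)$ this lifts to injectivity of $\theta_c$. I expect the main obstacle to be precisely this bookkeeping: one must choose, for each $\al$, a distinguished lift whose standard-form expansion is a ``top'' term that is not shared by $\theta_c(\al')$ for any other basis element $\al'$, which requires controlling the height reorderings on $a$ and $b$ and the exact powers of $q$ and signs produced by \eqref{eq.arcs1}--\eqref{eq.arcs2} and \eqref{eq.order}.

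For part (c), given disjoint interior ideal arcs $c_1,c_2$, isotope a stated $\pM$-tangle $\al$ to be simultaneously vertically transverse to $c_1$ and to $c_2$ (possible since $c_1\cap c_2=\emptyset$, and neither cutting disturbs the other arc). Both $\theta_{c_1}(\theta_{c_2}(\al))$ and $\theta_{c_2}(\theta_{c_1}(\al))$ then equal the single sum, over all states on the preimages of $c_1$ and of $c_2$ with matching states across each arc, of the total preimage of $\al$ in the surface obtained by splitting along both $c_1$ and $c_2$; the two iterated sums differ only in the order of summation. Since such $\al$ span $\cSs(\fS)$ and both sides are algebra homomorphisms agreeing on this spanning set, $\theta_{c_1}\circ\theta_{c_2}=\theta_{c_2}\circ\theta_{c_1}$.
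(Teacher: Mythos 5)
A preliminary remark: the paper does not prove Theorem \ref{thm.1a} at all — it is quoted from \cite{Le:TDEC} — so your proposal must be measured against the proof there (and against the closely parallel argument this paper does write out for the reduced version, Theorem \ref{teo:rsplit}). Your architecture — state sum on transverse representatives, descent, uniqueness from spanning, a filtered/leading-term analysis for injectivity, and the double state sum for (c) — is the standard one, and parts (c), the uniqueness claim, and the homomorphism property are fine. The first genuine gap is in the descent step of (a): your list of moves is incomplete. A generic isotopy between two vertically transverse representatives passes not only through finger moves (tangencies with $c\times(0,1)$) but also through events where two points of $\al\cap(c\times(0,1))$ exchange heights. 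This is not subsumed by your move (i), and invariance under it is not ``clear since lifts go to lifts'': exchanging the heights of two consecutive points on $c$ changes the height order of the lifted endpoints on \emph{both} $a$ and $b$, so the lifts before and after are genuinely different stated tangles in $\fS'$. One must apply \eqref{eq.order} and Lemma~\ref{r.refl} on $a$ and on $b$ and check that the $q^{\pm 2}$ factors cancel and that the ``turnback'' cross-terms cancel between the different matched-state summands; together with the finger move, this is precisely the computation that forces the coefficients in \eqref{eq.arcs}--\eqref{eq.order}, and omitting it leaves well-definedness unproved. (Also, in your finger-move check it is \eqref{eq.order}, not \eqref{eq.arcs}, that resums the ``bitten'' strand on the $b$-side back into the through-strand; \eqref{eq.arcs1}--\eqref{eq.arcs2} only evaluate the $a$-side U-turn to scalars.)

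The second gap is in (b), where your argument stops exactly at the decisive point, which you yourself flag as an ``expectation''. Moreover, as phrased it would not quite work: distinct lifts of the same basis element $\al$ do \emph{not} stay distinct after sorting into standard form — they merge, and by Proposition~\ref{r.grbinom} the merged coefficients are quantum binomials $\binom{k}{i}_{q^4}$, which over the general ground ring $\cR$ (only assumed to contain an invertible $q^{1/2}$) may vanish; so ``each lift contributes a unit multiple of a basis element'' plus a reglue map is not by itself a one-sided inverse on leading terms. The correct repair is to isolate, for each basis element $\al$ with $I(\al,c)=k$ maximal in the given linear combination, an extreme lift (e.g.\ all states $+$ on $a$ and $b$), whose coefficient is exactly $1$ (up to a power of $q$ after reordering) and which, being recoverable from $\al$ by regluing and carrying a state pattern no other term can produce, cannot be cancelled by contributions of other basis elements or by lower filtration terms. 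This is exactly the shape of the argument the paper carries out in detail in the harder reduced case (Theorem~\ref{teo:rsplit}); without it, your part (b) is an outline of the right strategy rather than a proof.
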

\begin{remark} The coefficients of the right hand sides of the defining relations \eqref{eq.arcs} and \eqref{eq.order} were chosen so that one has the consistency (see Remark \ref{rem.cons}) and the splitting theorem. It can be shown that if one requires the consistency and the splitting theorem, then the coefficients are unique, up symmetries of a group isomorphic to $\BZ/2 \times \BZ/2$.
\end{remark}

\subsection{Splitting homomorphism and filtration} Fix an orientation $\ori$ of the boundary edges of $\partial \fS$. Let  $\fS'$ be the result of splitting $\fS$ along an ideal arc $c$, with $c$ being lifted to boundary edges $a$ and $b$ of $\fS'$. 
Choose an orientation of $c$ and lift this orientation to $a$ and $b$ which, together with $\ori$, gives an orientation $\ori'$ for $\fS'$.
Assume $D$ is a stated simple $\ori$-ordered $\pM$-tangle diagram which is {\em taut with respect to $c$}, i.e. $|D\cap c|=I(D,c)$. 
For each each $i=0,1,\dots, m:=|D \cap c|$ let $(\tD,s_i)$ be the $\pM'$-tangle diagram where $\tD= \pr^{-1}(D)$,  and the states on both $a$ and $b$ are $\ori'$-increasing and having exactly $i$  minus signs. Then each $(\tD, s_i)$ is in the  basis  of the free $\cR$-module $\Gr^{\{a,b\}}_{2m}(\cSs(\fS'))$ described in Proposition \ref{r.basis2}.

For non-negative integers $n,i$ the quantum binomial coefficient is defined by
$$ \binom ni _q = \frac{\prod_{j=n-i+1}^n (1-q^j)}{\prod_{j=1}^i (1-q^j) }.$$

\begin{proposition}\label{r.grbinom} In $\Gr^{\{a,b\}}_{2m}(\cSs(\fS'))$ one has
\be
\label{eq.split2} 
\theta_c(D) = \sum_{i=0}^m \binom{m}{i}_{q^4} (\tD, s_i).
\ee
\end{proposition}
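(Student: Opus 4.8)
The plan is to evaluate $\theta_c(D)$ one lift at a time. By the Splitting Theorem~\ref{thm.1a} we have $\theta_c(D)=\sum_\beta\beta$, where $\beta$ runs over the $2^m$ lifts of $D$; a lift is recorded by a state $\epsilon\colon D\cap c\to\{\pm\}$, namely the common value assigned to the two preimages of each point of $D\cap c$ on the edges $a$ and $b$. Since we work in $\cSs(\fS')$, we may first apply an isotopy of $D$ supported in a neighborhood of $c$ disjoint from $\pfS$; this changes neither $|D\cap c|=I(D,c)$ nor the property of being $\ori$-ordered, and it lets us arrange that, reading $D\cap c$ along $c$ in the chosen direction, the heights increase. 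With $D$ in this position the diagram $\tD=\pr^{-1}(D)$ is $\ori'$-ordered, and tautness of $D$ along $c$ guarantees that $\tD$ is simple with $I(\tD,a)=I(\tD,b)=m$. Hence $\tD$ with any states represents a class in $\Gr^{\{a,b\}}_{2m}(\cSs(\fS'))$, and by Proposition~\ref{r.basis2}(b) the diagrams $(\tD,s_0),\dots,(\tD,s_m)$ are basis elements of this free $\cR$-module; so it remains to express the class of each lift in this basis.

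Fix a lift $\beta$ with state $\epsilon$ and set $i=\#\{p\in D\cap c:\epsilon(p)=-\}$. Reading along $a$ in the $\ori'$-direction, the states of $\beta$ form the sequence of $\epsilon$-values taken in the order of $D\cap c$ along $c$; the same sequence appears on $b$, because $a$ and $b$ are both carried orientedly onto $c$ by $\pr$. Thus $\beta$ and $(\tD,s_i)$ have the same underlying diagram and differ only by the arrangement of the states on $a$ and on $b$ (which have equal total sign sums). Rewriting $\beta$ in the basis amounts to sorting the state sequence on $a$, and then the one on $b$, into the increasing pattern of $s_i$ by repeated use of the boundary relation~\eqref{eq.order} (and the height-exchange relations~\eqref{eq.reor1}--\eqref{eq.reor2} when needed). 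Each elementary step turns a skein into a power of $q$ times the skein with two consecutive boundary data interchanged, plus a term containing a turnback near $a$ (resp.\ $b$); such a turnback term has two fewer endpoints on $a$ (resp.\ $b$) and the same number on the other edge, so it lies in $F^{\{a,b\}}_{2m-2}\subseteq F^{\{a,b\}}_{2m-1}$ and dies in $\Gr^{\{a,b\}}_{2m}(\cSs(\fS'))$. Therefore, modulo lower filtration, reducing $\beta$ to the basis only multiplies it by an explicit power of $q$; carrying the coefficients of~\eqref{eq.order} through the sorting shows that the edge $a$ contributes the factor $q^{2\,\mathrm{inv}(\epsilon)}$, where $\mathrm{inv}(\epsilon)$ counts the pairs $p<p'$ of points of $D\cap c$ (ordered along $c$) with $\epsilon(p)=+$ and $\epsilon(p')=-$, and the edge $b$ contributes the same factor. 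Hence in $\Gr^{\{a,b\}}_{2m}(\cSs(\fS'))$ one has $\beta\ \bue\ q^{4\,\mathrm{inv}(\epsilon)}\,(\tD,s_i)$ (that $\beta\bue(\tD,s_i)$ is also the content of Proposition~\ref{r.orichange}; here we need the exact exponent).

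Summing over all lifts and collecting those with a prescribed number $i$ of minus signs gives, in $\Gr^{\{a,b\}}_{2m}(\cSs(\fS'))$,
$$\theta_c(D)=\sum_{i=0}^{m}\Bigl(\sum_{\epsilon\,:\,\#\{-\}=i}q^{4\,\mathrm{inv}(\epsilon)}\Bigr)(\tD,s_i),$$
and the inner sum equals $\binom{m}{i}_{q^4}$ because the Gaussian binomial coefficient, in the normalization recalled before the statement, is the generating function for the inversion statistic of binary words of length $m$ with $i$ ones (equivalently, for the area of the corresponding lattice paths); this is~\eqref{eq.split2}. The two points needing genuine care are the verification that every correction term produced along the way indeed drops to $F^{\{a,b\}}_{2m-1}$, and the bookkeeping of the $q$-powers: it is precisely the splitting of the single arc $c$ into the two edges $a$ and $b$, each contributing a factor $q^{2\,\mathrm{inv}(\epsilon)}$, that promotes $q^{2}$ to $q^{4}$ in the base of the Gaussian binomial, and pinning down this exponent (with the correct sign) is the crux of the argument.
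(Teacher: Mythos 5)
Your proposal is correct and follows essentially the same route as the paper: expand $\theta_c(D)$ over lifts, sort the coupled states on $a$ and $b$ using \eqref{eq.order} and \eqref{eq.reor1} modulo the lower filtration so that each simultaneous exchange contributes $q^4$, and sum the resulting inversion statistic to get $\binom{m}{i}_{q^4}$. The only cosmetic difference is that the paper establishes the generating-function identity $\sum_{0\le x_1\le\dots\le x_i\le m-i} q^{4(x_1+\cdots+x_i)}=\binom{m}{i}_{q^4}$ by induction, whereas you invoke the standard interpretation of the Gaussian binomial as the inversion generating function of binary words.
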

\def\reordonegram{\raisebox{-13pt}{\incl{1.3 cm}{reord1gram}}}
\def\reordonegrbm{\raisebox{-13pt}{\incl{1.3 cm}{reord1grbm}}}
\def\reordonegrb{\raisebox{-13pt}{\incl{1.3 cm}{reord2grb}}}
\def\reordonegrbm{\raisebox{-13pt}{\incl{1.3 cm}{reord2grbm}}}
\def\reordnine{\raisebox{-13pt}{\incl{1.3 cm}{reord9}}}
\def\reordninem{\raisebox{-13pt}{\incl{1.3 cm}{reord9m}}}
\begin{proof} For $s: D \cap c \to \{ \pm \}$ let $(\tD,s)$ be the stated $\ori'$-ordered $\fS'$-tangle diagram with state $s$ on $a$ and $b$.
By definition,
\be 
\theta(D) = \sum_{s: D \cap c \to \{ \pm \} }  (\tD, s) \quad \in \cSs(\fS').
\ee 
Taking into account the filtration, from relations \eqref{eq.order} and \eqref{eq.reor1}, we see that in $\Gr^{\{a,b\}}_{2m}(\cSs(\fS'))$,
\be \label{eq.swap} \reordonegra  = q^2 \left( \reordonegram\right), \quad  \reordonegrb = q^2 \, \left( \reordonegrbm \right).
\ee
It follows that $\Gr^{\{a,b\}}_{2m}(\cSs(\fS'))$ we have
\be 
\label{eq.switch}
\reordnine \ =\ q^4 \left (  \reordninem 
\right).
\ee
Suppose $s: D \cap c \to \{\pm\}$ has $i$ minus values. 
For $k=1,\dots, i$ let $x_k$ be the number of plus states (of $s$) lying below the $k$-th minus state. By doing many switches, each changing  a pair of  consecutive $(-, +)$ to $(+,-)$, we can transform $s$ into $s_i$. The number of switches is $x_1 + \dots + x_i$. Hence from \eqref{eq.switch} we see that 
$$ (\tD,s) = q^{4(x_1+ \dots x_i)} (\tD, s_i).$$
Taking the sum over all $s: D \cap c \to \{\pm\}$ with  $i$ minus values, we get
 \be  \label{eq.fg1}
\theta(D) = \sum_{i=0}^m \left( \sum_{0\le x_1 \le \dots \le x_i \le m-i} q^{4( x_1 + \dots + x_i)}\right) \, (\tD, s_i) \quad \text{in} \ \Gr^{\{a,b\}}_{2m}(\cSs(\fS')).
\ee
By induction on $i$ one can easily prove that
\be
 \sum_{0\le x_1 \le \dots \le x_i \le n} q^{4( x_1 + \dots + x_i)} =\binom {n+i}i_{q^4},
\ee
from which and \eqref{eq.fg1} we get \eqref{eq.split2}.
\end{proof}

\subsection{The category of punctured bordered surfaces and the functor $\cSs$.} \label{sec.cat1}

A {\em morphism} from  one bordered punctured surface $\fS$ to another one $\fS'$ is an isotopy  class of orientation-preserving embeddings from $\fS$ to $\fS'$. Here we  assume that the embeddings map a boundary edge of $\fS$ into (but not necessarily onto) a boundary edge of $\fS'$.

Very often we identify an embedding $f: \fS\embed \fS'$ with its isotopy class.

\def\inv{\mathrm{inv}}
\def\id{\mathrm{id}}
\def\er{e_r}
\def\el{e_l}

Suppose $f: \fS \to \fS'$ is an embedding representing a morphism from $\fS$ to $\fS'$. Define an $\cR$-linear homomorphism $f_*: \Ss(\fS)\to \Ss(\fS')$ such that if $\al$ is a stated tangle diagram on $\fS$ with positive order then $f_*(\al)$ is given by the  stated tangle diagram $f(\al)$, also with positive order.
 It is clear that $f_*$ is an $\cR$-linear homomorphism, and it does  not change under isotopy of $f$. 

In general $f_*$ is not an $\cR$-algebra homomorphism. However, if every edge of $\fS'$ contains the image of at most one edge of $\fS$, then $f_*$ is an $\cR$-algebra homomorphism.

\begin{example} {\em A bigon } is the standard closed disk in the plane with two points on its boundary removed. Suppose $a\subset \fS$ is an arc whose two end points are in distinct boundary edges $e_1$ and $e_2$, where 
\end{example}

\begin{example} Let $e$ be a boundary edge of $\fS$ and $\fS'= \fS \setminus \{v\}$, where $v \in e$. The embedding $\iota: \fS' \embed \fS$ induces an $\cR$-linear homomorphism $\iota_*: \cS(\fS') \to \SS$ which is surjective but not injective in general.

 Suppose $e'\subset e$ is one of the two boundary edges of $\fS'$ which is part of $e$. There is a diffeomorphism $g: \fS\to \fS' \setminus \{e'\}$ which is unique up to isotopy. Thus, we have a morphism $f: \fS \to \fS'$, which is the composition
 $$ \fS\overset g \longrightarrow  \fS' \setminus \{e'\} \embed \fS'.$$
The morphism $f$ induces an injective (but not surjective) algebra morphism $f_*: \SS \embed \cS(\fS')$. 
\end{example}

\def\bmu{{\vec \mu}}
\def\bnu{{\vec\nu}}
\def\boeta{{\vec \eta}}

\section{Hopf algebra structure of the bigon and $\OSL$} \label{sec:bigon}

We will define using geometric terms a  \red{dual quasitriangular (a.k.a. cobraided)} Hopf algebra structure on the stated skein algebra $\cS(\cB)$ of the bigon $\cB$ and then show that it is naturally isomorphic to the \red{dual quasitriangular} Hopf algebra $\OSL$. We also show simple pictures of the canonical basis of $\OSL$, and discuss the Jones-Wenzl idempotents in $\cS(\cB)$. In this section $\cR=\BZ[q^{\pm 1/2}]$ unless otherwise stated.

\def\monogon{\raisebox{-13pt}{\incl{1.3 cm}{monogon}}}
\def\monogona{\raisebox{-13pt}{\incl{1.3 cm}{monogona}}}
\subsection{Monogon and Bigon}\label{sub:monogon} Let $D$ be the standard disk
$$ D = \{ (x,y) \in \BR^2 \mid x^2 + y^2 \le 1\}$$
and $v_1=(0,-1)$ and $v_2 = (0,1)$ are two points on the circle $\partial D$. The punctured bordered surface $\cM= D\setminus \{v_1\}$ is called the {\em monogon}, and 
 $\cB= D\setminus \{v_1, v_2\}$ is called the {\em bigon}. Let $\el, \er$ be the two  boundary edges of $\cB$ as depicted in Figure \ref{fig:bigon0}. For $\bmu=(\mu_1, \dots, \mu_k)$ and $\boeta=(\eta_1,\dots,\eta_k)$ in $\{\pm\}^k$ let $\al_{\boeta\bmu}\in \cSs(\fB)$ be the element presented by $k$ parallel arcs as in Figure  \ref{fig:bigon0}, with $(\eta_1,\dots,\eta_k)$ being the states on $\el$ in increasing order and $(\mu_1, \dots, \mu_k)$ being the states on $\er$ in increasing order.

\FIGc{bigon0}{Monogon, bigon with its edges $e_l$ and $e_r$, elements $\al_{+-}$ and  $\al_{\boeta\bmu}$. Note that for $\al_{\boeta\bmu}$ the height order is indicated by the arrows.}{2.6cm}
We have $\Ss(\cM)=\cR$. 
\red{Moreover, one has

\be \monogon =  \monogona
\label{eq.mon0}
\ee
Here the circle enclosing $x$ and two vertical lines attaching to it stand for a stated $\pfS$-tangle diagram. The proof follows by using the skein relation \eqref{eq.skein}, then the loop relation \eqref{eq.loop}, and finally the arc relation \eqref{eq.arcs} to reduce $x$ to a scalar.
}

We study the algebra $\Ss(\cB)$ in this section.
\def\rot{\mathrm{rot}}
 Let $\rot: \cB\to \cB$ be the rotation (of the plane containing $\cB$) by $180^o$ about the center of $\cB$, which is a self-diffeomorphism  of $\cB$ and induces an $\cR$-algebra involution
 \be 
 \rot_*: \Ss(\cB) \to \Ss(\cB).
 \label{def.rot}
 \ee

\subsection{Coproduct}
Suppose $e$ is a boundary edge of a punctured bordered surface  $\fS$. Let $\fS'$ be the result of cutting out of $\fS$ a bigon $\cB$ whose right edge $\er$ is identified with $e$.
Since $\fS'$ is canonically isomorphic to $\fS$ in the category of punctured bordered surfaces, we will identify $\Ss(\fS)$ with $\Ss(\fS')$.
 The splitting homomorphism gives  an injective algebra homomorphism 
 $$\Ss(\fS')  \embed \Ss(\fS\sqcup \fB) \equiv  \Ss(\fS) \otimes _\cR \Ss(\cB).$$ Since we identify $\Ss(\fS)$ with $\Ss(\fS')$, this map becomes an $\cR$-algebra homomorphism 
\be \label{eq.comodule}
  \D_e: \Ss(\fS)   \embed \Ss(\fS) \otimes _\cR \Ss(\cB).\ee
  \FIGc{split1}{The coaction $\D_e$.}{2.5cm}

Suppose $x\in B(\fS, \ori)$ is a basis element, where $\ori$ is a given orientation of $\partial \fS$. Assume the state of $x$ on $e$ is $\bmu$, then we have (see Figure \ref{fig:split1}):
\be
\D_e(x) = \sum_{\boeta  \in S_{x\cap e}} x_\boeta \otimes\,  \al_{\boeta\bmu}, \label{eq.coact}
\ee 
where $S_{x \cap e}$ is the set of all states of $x\cap e$ and $x_\boeta$ is $x$ with the state on $e$ switched to $\boeta$.

In particular, when $\fS=\cB$ and $e=\er$, we get an $\cR$-algebra homomorphism $\Delta= \Delta_{\er}$,
\be 
\Delta: \Ss(\cB) \to \Ss(\cB) \otimes_ R \Ss(\cB).
\ee

Theorem \ref{thm.1a}(c) implies that  $\Delta$ is coassociative, i.e.
\be 
(\Delta \otimes \id) \Delta = (\id \otimes \Delta ) \Delta.
\ee

Applying \eqref{eq.coact} to $x=\al_{\nu\mu}$ with $\nu,\mu\in \{\pm\}$, we get
\be 
\label{eq.coproduct1}
\Delta(\al_{\nu\mu} )= \sum _{\eta\in \{\pm \}} \al_{\nu\eta} \otimes \al_{\eta \mu}.
\ee
\subsection{Presentation of $\Ss(\cB)$} A presentation of the algebra $\Ss(\cB)$ was given in \cite{Le:TDEC}. We give here a presentation of $\Ss(\cB)$ in a form which is suitable for us. Recall that $C(\eta)= C^{\bar \eta}_{ \eta}$ for $\eta\in \{\pm \}$ were defined by \eqref{eq.Cve}. We form the following matrix

\be  C:= \begin{pmatrix}
C^+_+ & C^+_- \\ C^-_+ & C^-_-
\end{pmatrix} = \begin{pmatrix}
0 & q^{-1/2} \\ -q^{-5/2} & 0
\end{pmatrix}
\ee

\def\antipode{\raisebox{-18pt}{\incl{1.5 cm}{antipode}}}
\def\aptwo{\raisebox{-18pt}{\incl{1.5 cm}{ap2}}}

\begin{lemma}\label{lem:relations}
The $\cR$-algebra $\cSs(\B)$ is generated by $\{\alpha_{\nu,\mu} \mid \nu,\mu\in \{\pm\}\}$ with the following relations: 
\begin{align}
C &= A^t C A   \label{eq.rel1a}\\
C&= A C A^t, \label{eq.rel2a}
\end{align}
where $A^t$ is the transpose of $A$ and
$$ A := \begin{pmatrix}
\al_{++} & \al_{+-} \\ \al_{-+} & \al_{--}
\end{pmatrix}.
$$

\end{lemma}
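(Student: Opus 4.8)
The plan is to establish the presentation of $\cSs(\cB)$ in two stages: first verify that the stated relations \eqref{eq.rel1a} and \eqref{eq.rel2a} actually hold among the generators $\al_{\nu\mu}$, and then show that these relations suffice, i.e. that any relation among the $\al_{\nu\mu}$ is a consequence of them. For the first stage I would compute both sides of each matrix identity entrywise. The entries of $A^tCA$ and $ACA^t$ are $\cR$-linear combinations of products $\al_{\nu\mu}\al_{\nu'\mu'}$, and each such product is a two-arc stated tangle diagram in the bigon. Using the defining relations \eqref{eq.arcs} and \eqref{eq.order} together with Lemma \ref{r.arcs} and Lemma \ref{r.refl}, each such product reduces to a scalar multiple of one of the basis elements $\al_{\boeta\bmu}$ (for $\boeta,\bmu\in\{\pm\}^2$) or to the empty diagram. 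The claim $C = A^tCA$ then says that certain linear combinations of these basis elements collapse to the scalar matrix $C$ (which, recall, is the "$\leftve$"-type evaluation of a single turn-back arc); geometrically this is exactly the statement that capping off two parallel arcs on the left (resp.\ right) with a turnback and sliding it through yields the same as a single turnback on the other side, which is a direct consequence of \eqref{eq.arcs1}–\eqref{eq.arcs2}. I expect this stage to be a finite and essentially mechanical check.

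For the second stage — that the listed relations are a complete set of defining relations — the cleanest route is to invoke the known basis of $\cSs(\cB)$. By Theorem \ref{thm.basis1a} applied to $\fS=\cB$, with $\ori$ the positive orientation, the set $B(\cB;\ori)$ of increasingly stated simple $\ori$-ordered tangle diagrams is an $\cR$-basis; in the bigon a simple diagram is (up to isotopy) a disjoint union of $k\ge 0$ parallel arcs running from $\el$ to $\er$, and with increasing states this is precisely $\al_{\boeta\bmu}$ with $\boeta,\bmu\in\{\pm\}^k$ both weakly increasing. So $\cSs(\cB)$ has an explicit $\cR$-basis, indexed by pairs of weakly increasing sign sequences of equal length. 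Let $\CB$ be the abstract $\cR$-algebra defined by the generators $\al_{\nu\mu}$ and relations \eqref{eq.rel1a}, \eqref{eq.rel2a}; there is a surjective algebra map $\pi:\CB\onto\cSs(\cB)$ sending generators to generators (surjective because the $\al_{\nu\mu}$ generate, which one checks by reducing an arbitrary simple diagram of $k$ parallel arcs to a product of $k$ single arcs using the splitting/stacking and the boundary relations — cf.\ the coproduct formula \eqref{eq.coproduct1} read backwards, or more directly by noting $\al_{\boeta\bmu}$ is, up to height reordering covered by \eqref{eq.order}, the product $\al_{\eta_1\mu_1}\cdots\al_{\eta_k\mu_k}$). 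To finish I would show $\pi$ is injective by exhibiting a spanning set of $\CB$ of the same "size" as the basis above and matching it up. Concretely: the relations \eqref{eq.rel1a}–\eqref{eq.rel2a} are equivalent (as in the standard presentation of $\OSL$) to the four $q$-commutation relations among $\al_{++},\al_{+-},\al_{-+},\al_{--}$ plus the single quantum-determinant relation $\al_{++}\al_{--} - q^{2}\al_{+-}\al_{-+} = 1$; a diamond-lemma / PBW argument then shows $\CB$ is spanned by ordered monomials $\al_{++}^{a}\al_{+-}^{b}\al_{-+}^{c}\al_{--}^{d}$ with $\min(a,d)=0$, and these biject with the basis $\{\al_{\boeta\bmu}\}$ via $(a,b,c,d)\leftrightarrow$ (the weakly increasing pair with $a$ or $d$ copies of the "pure" arc and $b,c$ mixed arcs). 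Since $\pi$ sends this spanning set onto a basis, it is an isomorphism.

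The main obstacle will be the bookkeeping in matching the PBW/ordered-monomial spanning set of $\CB$ with the explicit tangle basis $B(\cB;\ori)$ — keeping track of the powers of $q$ introduced by the height-reordering relation \eqref{eq.order} when one writes $\al_{\boeta\bmu}$ as a product of single arcs, and checking that the quantum determinant relation corresponds correctly to the geometric identity that a "turnback on the left composed with a turnback on the right" equals the empty diagram (this is essentially \eqref{eq.mon0} / the monogon computation). Rather than a full diamond-lemma argument from scratch, it is likely cleaner to first derive the $q$-commutation and quantum-determinant relations from \eqref{eq.rel1a}–\eqref{eq.rel2a} (pure linear algebra with the explicit matrix $C$), then quote the classical fact that these give a PBW basis for $\OSL$, and finally observe that $\pi$ carries that PBW basis bijectively onto $B(\cB;\ori)$; this isolates the only genuinely geometric input (the reductions of two-arc diagrams) to a short finite list.
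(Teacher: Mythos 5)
Your overall strategy (verify the relations, then use the geometric basis of Theorem \ref{thm.basis1a} to show the relations are complete) is the same as the paper's, and your first stage is fine: the paper gets \eqref{eq.rel1a} more slickly by applying the splitting/coaction $\D_e$ to a stated trivial arc in the monogon (whose value is $C^\nu_\mu$ by Lemma \ref{r.arcs}) and then gets \eqref{eq.rel2a} by the rotation $\rot_*$, but your direct entrywise reduction of two-arc diagrams via \eqref{eq.arcs}, \eqref{eq.order} and Lemmas \ref{r.arcs}, \ref{r.refl} also works.

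The gap is in your final matching step. The PBW spanning set you choose for the abstract algebra, namely ordered monomials $\al_{++}^{a}\al_{+-}^{b}\al_{-+}^{c}\al_{--}^{d}$ with $\min(a,d)=0$, is \emph{not} carried by $\pi$ onto the geometric basis, not even up to powers of $q$. A monomial containing both $\al_{+-}$ and $\al_{-+}$ (e.g.\ $\al_{+-}\al_{-+}$) maps to a diagram whose states on one edge are not monotone in the height order, and its expansion in $B(\cB;\ori)$ has several terms (indeed $\al_{+-}\al_{-+}=q^{2}(\al_{++}\al_{--}-1)$ in $\cSs(\B)$); conversely an increasingly stated simple diagram in the bigon can never contain both a $(+,-)$-arc and a $(-,+)$-arc, so the geometric basis corresponds to monomials using at most one of $\al_{+-},\al_{-+}$ — it eliminates $\al_{+-}\al_{-+}$, whereas your PBW set eliminates $\al_{++}\al_{--}$. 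So the claim ``$\pi$ sends this spanning set onto a basis'' fails, and you cannot replace it by a rank count: a surjection between free $\cR$-modules of infinite rank need not be injective. The repair is exactly what the paper does: take as spanning set of the abstractly presented algebra the monomials $\{\al_{++}^h\al_{-+}^k\al_{--}^l\}\cup\{\al_{++}^h\al_{+-}^k\al_{--}^l,\ k\ge 1\}$ of \eqref{eq.basis}, check (using the $q$-commutations and the determinant relations derived from \eqref{eq.rel1a}--\eqref{eq.rel2a}, the latter to eliminate products of $\al_{+-}$ with $\al_{-+}$) that every monomial reduces to their $\cR$-span, and observe that these monomials are, up to units, exactly the basis elements of Theorem \ref{thm.basis1a}; injectivity of $\pi$ then follows. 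Two smaller slips in the same step: the correct determinant relation is $\al_{++}\al_{--}-q^{-2}\al_{+-}\al_{-+}=1$ (not $-q^{2}$), and the matrix relations give \emph{two} determinant-type identities ($ad-q^{-2}cb=1$ and $da-q^{2}bc=1$ in the notation of Theorem \ref{teo:SL2}), whose difference is what yields $bc=cb$; a single determinant relation together with the four $q$-commutations would not be an equivalent presentation.
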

\begin{proof} Explicitly, the relations \eqref{eq.rel1a} and  \eqref{eq.rel2a} are respectively
\begin{align}
C^{\nu}_{\mu}\cdot 1& = \sum_{\eta\in \{ \pm \}} C(\bar\eta) \al_{\eta \nu} \al_{\bar\eta \mu}= C^{+}_-\alpha_{+\nu}\alpha_{-\mu}+C^{-}_+\alpha_{-\nu}\alpha_{+\mu} \qquad \forall \nu,\mu\in \{\pm\}\label{eq.relations1}\\
C^{\nu}_{\mu}\cdot 1&=  \sum_{\eta\in \{ \pm \}} C(\bar \eta) \al_{\nu \eta } \al_{\mu\bar\eta }= C^{+}_-\alpha_{\nu+}\alpha_{\mu-}+C^{-}_+\alpha_{\nu-}\alpha_{\mu+}\qquad \forall \nu,\mu\in \{\pm\}\label{eq.relations2}.
\end{align}

Let $e$ be the only boundary edge of the monogon $\cM$. Because $\Ss(\cM)= \cR$ and $\cR \otimes \Ss(\cB)= \Ss(\cB)$, the $\cR$-algebra map $\D_e: \Ss(\cM) \to \Ss(\cM) \otimes_{\cR} \Ss(\cB)$ is an $\cR$-algebra map $\D_e: \cR \to \Ss(\cB)$. As any $\cR$-linear map, we must have $\D_e(c)= c \cdot 1$, where $1$ is the unit of $\Ss(\cB)$. Apply $\Delta_e$ to the simple arc in the monogon whose endpoints are stated by $\nu$ and $\mu$ and we get a 
 proof of \eqref{eq.relations1} as follows:
$$ C^\nu_\mu= \antipode \overset {\D_e} \longrightarrow \sum_{\eta, \eta'\in \{\pm \}} \aptwo=  \sum_{\eta\in \{ \pm \}} C(\bar\eta) \al_{\eta \nu} \al_{\bar\eta \mu}.$$
Equation \eqref{eq.relations2} is obtained from Equation \eqref{eq.relations1} by applying the map $\rot_*$ of \eqref{def.rot}.

Using  Theorem \ref{thm.basis1a} one sees that the set
\be  B=  \{ \alpha_{++}^h\alpha_{-+}^k\alpha_{--}^l    \mid h,k,l\in \BN\} \cup \{ \alpha_{++}^h\alpha_{+-}^k\alpha_{--}^l \mid h,k,l\in \mathbb{N}, k \ge 1\}
\label{eq.basis}
\ee
is an $\cR$-basis of $\Ss(\cB)$. In particular, $\Ss(\cB)$ is generated by $\al_{\nu\mu}$ with $\nu,\mu\in \{\pm \}$.

Using these relations it is easy to check that any monomial in the $\al_{\nu\mu}$ can be expressed as an $\cR$-linear combinations of   $B$. The proposition follows.
\end{proof}

\subsection{Counit} 
 The embedding $\iota: \cB \embed \cM$ gives rise to an $\cR$-linear map $\iota_*: \Ss(\cB) \to \Ss(\cM)=\cR$. Define $\epsilon: \Ss(\cB) \to \cR$ as the composition $\epsilon= \iota_* \circ \inv_{e_r}$, 
$$ \epsilon: \Ss(\cB) \overset {\inv_{e_r}} \longrightarrow \Ss(\cB) \overset {\iota_*} \longrightarrow \Ss(\cM)=\cR,$$
where $\inv_{e_r}$ is defined in Section \ref{sec.invers}.
Explicitly, if $\al$ is a  stated $\partial \cB$-tangle diagram as in Figure \ref{fig:counit}, then
\be  \epsilon(\al) = C_{e_r}(\al) \, \al',
\label{eq.counit}
\ee
where $\al'$ is described in Figure \ref{fig:counit} and  $C_{e_r}(\al)$ is defined by \eqref{eq.Ce}.

\FIGc{counit}{How to obtain $\al'$ and $\al''$ from $\al$ in the definition of counit and antipode. Height order  is  indicated by the arrows on the boundary edges. Then $\al'$ is the same $\al$, but considered as a tangle diagram in $\cM$ with  its states on the edge $e_r$ switched from $\nu$ to $\bar \nu =-\nu$. And $\al''$ is obtained from $\al$ by a rotation of $180^o$, and switching all the states $\nu$ to $\bar \nu$.
 }{4cm}

Using \eqref{eq.counit} and the values of $C(\eta)$, one can check that 
\be
\label{eq.counit1} 
\epsilon(\al_{\nu\mu}) = \delta_{\nu\mu}: = \begin{cases} 1 \quad &\text{if} \ \nu=\mu \\
0 &\text{if} \ \nu\neq \mu \end{cases}.
\ee

\def\counitthree{\raisebox{-18pt}{\incl{1.5 cm}{counit3}}}
\def\counitfour{\raisebox{-18pt}{\incl{1.5 cm}{counit4}}}
\def\counitfive{\raisebox{-18pt}{\incl{1.5 cm}{counit5}}}
\def\counitx{\raisebox{-18pt}{\incl{1.5 cm}{counitx}}}
\def\counity{\raisebox{-18pt}{\incl{1.5 cm}{counity}}}

\begin{proposition} The algebra $\Ss(\cB)$ is a bialgebra with  counit $\epsilon$ and coproduct $\D$.

\end{proposition}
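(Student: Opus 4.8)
The plan is to verify the bialgebra axioms directly from the already-established presentation of $\Ss(\cB)$ (Lemma~\ref{lem:relations}) together with the splitting-theorem properties of the coproduct. We have already observed that $\D = \D_{\er}$ is an $\cR$-algebra homomorphism (it is a splitting homomorphism, hence multiplicative), and Theorem~\ref{thm.1a}(c) gives coassociativity $(\D\ot\id)\D = (\id\ot\D)\D$. Equation \eqref{eq.counit1} shows that $\epsilon$ sends $\al_{\nu\mu}$ to $\delta_{\nu\mu}$. So the remaining tasks are: (i) check that $\epsilon$ is an algebra homomorphism, and (ii) check the counit identity $(\epsilon\ot\id)\D = \id = (\id\ot\epsilon)\D$.

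First I would dispose of (i). One could argue geometrically: $\epsilon = \iota_*\circ\inv_{\er}$, and although $\iota_*$ is only $\cR$-linear and $\inv_{\er}$ is not an algebra map in general, their composite is. The cleanest route, however, is to check multiplicativity on generators using the presentation: since $\Ss(\cB)$ is generated by the $\al_{\nu\mu}$, it suffices to verify $\epsilon(\al_{\nu\mu}\al_{\nu'\mu'}) = \epsilon(\al_{\nu\mu})\epsilon(\al_{\nu'\mu'}) = \delta_{\nu\mu}\delta_{\nu'\mu'}$, and that $\epsilon$ respects the defining relations \eqref{eq.rel1a}--\eqref{eq.rel2a}. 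Concretely, $\al_{\nu\mu}\al_{\nu'\mu'}$ is a two-component stated tangle diagram, and applying $\inv_{\er}$ (which reverses the height order on $\er$ and flips the states there with the scalar $C_{\er}$) followed by $\iota_*$ (which fills in the puncture $v_2$, killing any arc that becomes trivial in the monogon) leaves a diagram in $\cM$; using the monogon relation \eqref{eq.mon0} and the arc relations \eqref{eq.arcs}, \eqref{eq.arcs1} this reduces to the scalar $\delta_{\nu\mu}\delta_{\nu'\mu'}$ as claimed. That $\epsilon$ is compatible with \eqref{eq.rel1a}--\eqref{eq.rel2a} is then the statement that $C = C^t C C$-type identities hold for the numerical matrix $C$, which is immediate from its explicit form.

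Next I would prove (ii). Using formula \eqref{eq.coproduct1}, $\Delta(\al_{\nu\mu}) = \sum_{\eta}\al_{\nu\eta}\ot\al_{\eta\mu}$, so
\be
(\epsilon\ot\id)\Delta(\al_{\nu\mu}) = \sum_{\eta}\epsilon(\al_{\nu\eta})\,\al_{\eta\mu} = \sum_{\eta}\delta_{\nu\eta}\,\al_{\eta\mu} = \al_{\nu\mu},
\ee
and symmetrically $(\id\ot\epsilon)\Delta(\al_{\nu\mu}) = \sum_{\eta}\al_{\nu\eta}\,\delta_{\eta\mu} = \al_{\nu\mu}$. Since both $(\epsilon\ot\id)\Delta$ and $(\id\ot\epsilon)\Delta$ are algebra endomorphisms of $\Ss(\cB)$ (composites of the algebra maps $\Delta$, $\epsilon\ot\id$, etc.) agreeing with the identity on the generating set $\{\al_{\nu\mu}\}$, they equal $\id$ on all of $\Ss(\cB)$.

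The main obstacle is step (i): proving that $\epsilon$ is multiplicative, since $\epsilon$ is built from the non-multiplicative maps $\inv_{\er}$ and $\iota_*$. The resolution is essentially the observation that once both endpoints of every arc have been pushed to the single edge of the monogon and the arcs resolved via \eqref{eq.mon0}, the order-reversal and state-flip bookkeeping organized by the scalar $C_{\er}$ precisely cancels the reordering coefficients coming from relation \eqref{eq.order}; alternatively, one checks multiplicativity only on generators and on the relations \eqref{eq.rel1a}--\eqref{eq.rel2a}, which is a finite and routine verification given \eqref{eq.counit1} and the explicit matrix $C$. Everything else (coassociativity, multiplicativity of $\Delta$) is already in hand from the splitting theorem.
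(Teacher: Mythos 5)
Your handling of coassociativity, of the multiplicativity of $\Delta$, and of the counit identity (checking $(\epsilon\ot\id)\circ\Delta=\id=(\id\ot\epsilon)\circ\Delta$ on the generators $\al_{\nu\mu}$, all maps in sight being algebra homomorphisms) is exactly the paper's argument and is fine \emph{once} step (i) is in place. The gap is in step (i) itself. Your ``cleanest route'' claims that it suffices to verify $\epsilon(\al_{\nu\mu}\al_{\nu'\mu'})=\delta_{\nu\mu}\delta_{\nu'\mu'}$ for pairs of generators together with compatibility of the assignment $\al_{\nu\mu}\mapsto\delta_{\nu\mu}$ with the relations \eqref{eq.rel1a}--\eqref{eq.rel2a}. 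That is not a valid sufficiency criterion: $\epsilon=\iota_*\circ\mathrm{inv}_{\er}$ is defined as an $\cR$-linear map independently of the presentation, so its values on monomials of length $\ge 3$ (for instance on the basis elements $\al_{++}^h\al_{-+}^k\al_{--}^l$ of \eqref{eq.basis}) are in no way determined by its values on generators and on products of two generators. Checking the relations only shows that there \emph{exists} an algebra homomorphism $\epsilon'$ with $\epsilon'(\al_{\nu\mu})=\delta_{\nu\mu}$; to conclude that your geometric $\epsilon$ is multiplicative you would still have to prove $\epsilon=\epsilon'$, i.e. compare the two maps on a spanning set, which forces you to evaluate $\epsilon$ geometrically on monomials of arbitrary length --- precisely the general computation you were trying to avoid. (An induction on word length, or the alternative of proving the counit identity for $\epsilon$ directly on arbitrary diagrams via \eqref{eq.coact} and then invoking uniqueness of counits, runs into the same need: you must control $\epsilon$ on an arbitrary diagram multiplied by a generator, or know $\epsilon(\al_{\boeta\bmu})=\delta_{\boeta\bmu}$ for multi-indices, neither of which is established in your write-up.)

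The paper closes exactly this point with a short but genuinely general diagrammatic argument: for \emph{arbitrary} stated tangle diagrams $x,y$ one computes from the definition $\epsilon(xy)=C_{\er}(x)\,C_{\er}(y)$ times the diagram with both blobs pushed into the monogon, and then the sliding identity \eqref{eq.mon0} separates the two blobs, giving $\epsilon(x)\epsilon(y)$. Your final paragraph gestures at something like this, and you do invoke \eqref{eq.mon0} for the two-generator case, but the general case is never carried out; note also that the mechanism is not a cancellation of the $C_{\er}$ factors against the reordering coefficients of \eqref{eq.order}, but the identity \eqref{eq.mon0} itself. Since the multiplicativity of $\epsilon$ is the only nontrivial content of the proposition, as written your proof has a genuine gap; it can be repaired by carrying out the paper's two-blob computation (or an equivalent evaluation of $\epsilon$ on all basis monomials).
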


\begin{proof} We already saw that $\D$ is an algebra homomorphism and is associative.
It remains to show that $\epsilon$ is an algebra homomorphism, and 
\be
\label{eq.coco}
(\epsilon \otimes \id ) \circ \Delta(x) =x =  (\id \otimes \epsilon  ) \circ \Delta(x).
\ee
Let $x, y\in \Ss(\cB)$ be presented by  tangle diagrams schematically depicted as in Figure \ref{fig:counit2}. 
\FIGc{counit2}{Elements $x,y\in \Ss(\cB)$. Each horizontal strand stands for several horizontal lines which are tangled in the two small disks.}{1.6cm}

The following shows that $\epsilon(xy)= \epsilon(x)\epsilon(y)$, i.e. $\epsilon$ is an algebra homomorphism:
$$ \epsilon(xy)=  \epsilon\left(  \counitthree  \right) = 
C_{\er}(x)\, C_{\er}(y) \counitfour 
= C_{\er}(x)\, C_{\er}(y) \counity \, \counitx \, 
= \epsilon(y) \epsilon(x). $$
Here we use equality \eqref{eq.mon0} in the third identity.

As both $\D$ and $\epsilon$ are algebra homomorphisms, one only needs to check \eqref{eq.coco} for the generators $x=\al_{\nu\mu}$ with $\nu,\mu\in \{\pm \}$. Using \eqref{eq.coproduct1} and \eqref{eq.counit1}, we have
$$ (\epsilon \otimes \id ) \circ \Delta(\al_{\nu\mu}) =  (\epsilon \otimes \id )  \sum_\eta \al_{\nu\eta} \otimes \al_{\eta\mu} = \al_{\nu\mu},$$
which proves the first identity of \eqref{eq.coco}. The other identity is proved similarly.
\end{proof}

\subsection{Antipode} \label{sec:antipode}
 Define 
$ S: \Ss(\cB) \to \Ss(\cB)$, \text{by} $S:= \rot_* \circ ( \inv_{\er} \circ (\binv _{\el})^{-1}),$ where $\inv$ and $\binv$ are defined in Subsection \ref{sec.invers}.
Explicitly, if $\al$ is a  stated $\partial \cB$-tangle diagram as in Figure \ref{fig:counit}, then
\be  S(\al) = \frac{C_{\er}(\al) }{ C_{\el}(\al)} \, \al'',
\label{eq.antipode}
\ee
where $\al''$ is described in Figure \ref{fig:counit}. In particular, we have
\begin{equation}
S(\al_{\nu\mu}) = \frac{C( \mu)}{C( \nu)} \al_{\bar \mu \bar \nu} \label{eq.ap1}.
\end{equation}
Explicitly,
\be 
S(\al_{++}) = \al_{--}, \ S(\al_{--})= \al_{++}, \ S(\al_{+-}) = -q^{2}\al_{+-}, \ S(\al_{-+}) = -q^{-2} \al_{-+}.
\ee

\begin{proposition}
The map $S$ is an antipode of the bialgebra $\Ss(\cB)$, making $\Ss(\cB)$ a Hopf algebra.
\end{proposition}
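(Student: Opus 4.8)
The plan is to verify the antipode axiom
\[
m \circ (S \otimes \id) \circ \Delta = \eta \circ \epsilon = m \circ (\id \otimes S) \circ \Delta
\]
on the generators $\al_{\nu\mu}$, since $\Delta$ and $\epsilon$ are algebra homomorphisms, $m$ is the multiplication, and $\eta$ the unit; the identity on generators then propagates to all of $\cSs(\cB)$ by the usual argument that the convolution algebra identity holds on a generating set of a bialgebra (more precisely, that $S * \id$ and $\id * S$ are algebra maps into $\cSs(\cB)$ agreeing with $\eta\circ\epsilon$ on generators, hence everywhere).

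First I would combine the coproduct formula \eqref{eq.coproduct1}, namely $\Delta(\al_{\nu\mu}) = \sum_{\eta} \al_{\nu\eta}\otimes\al_{\eta\mu}$, with the explicit values of $S$ on generators from \eqref{eq.ap1}, $S(\al_{\nu\mu}) = \tfrac{C(\mu)}{C(\nu)}\al_{\bar\mu\bar\nu}$. This gives
\[
(m\circ(S\otimes\id)\circ\Delta)(\al_{\nu\mu}) = \sum_{\eta\in\{\pm\}} S(\al_{\nu\eta})\,\al_{\eta\mu} = \sum_{\eta\in\{\pm\}} \frac{C(\eta)}{C(\nu)}\,\al_{\bar\eta\bar\nu}\,\al_{\eta\mu},
\]
and similarly $(m\circ(\id\otimes S)\circ\Delta)(\al_{\nu\mu}) = \sum_\eta \al_{\nu\eta}\,S(\al_{\eta\mu}) = \sum_\eta \tfrac{C(\mu)}{C(\eta)}\al_{\nu\eta}\al_{\bar\mu\bar\eta}$. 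The point is that these are exactly the sums appearing in the defining relations \eqref{eq.relations1} and \eqref{eq.relations2} of Lemma \ref{lem:relations} (up to rescaling each term by the ratios of $C$-values and relabelling $\eta\mapsto\bar\eta$): relation \eqref{eq.relations1} reads $C^\nu_\mu\cdot 1 = C^+_-\al_{+\nu}\al_{-\mu} + C^-_+\al_{-\nu}\al_{+\mu}$, and after dividing by the appropriate $C$-values one recognizes $\sum_\eta \tfrac{C(\eta)}{C(\nu)}\al_{\bar\eta\bar\nu}\al_{\eta\mu}$ as $\delta_{\nu\mu}\cdot 1 = \epsilon(\al_{\nu\mu})\cdot 1$. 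So the antipode axiom on generators is precisely a bookkeeping reinterpretation of the two families of relations already established. I would just write out the $2\times 2$ table of cases $(\nu,\mu)\in\{\pm\}^2$ using $C(+) = -q^{-5/2}$, $C(-) = q^{-1/2}$ to confirm the scalar coefficients match; for instance for $\nu=\mu=+$ one needs $\tfrac{C(+)}{C(+)}\al_{--}\al_{++} + \tfrac{C(-)}{C(+)}\al_{-+}\al_{-+}\cdots$ to collapse to $1$ via \eqref{eq.relations1}, and for $\nu\ne\mu$ it must vanish.

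After checking both one-sided axioms on generators, I would invoke the standard fact: if $(H,m,\eta,\Delta,\epsilon)$ is a bialgebra and $S\colon H\to H$ is an anti-algebra map (which we have, since $S$ is a composition of the algebra involution $\rot_*$ with the maps $\inv_{\er}$ and $(\binv_{\el})^{-1}$ — though I should double-check $\inv_{\er}$, $\binv_{\el}$ are anti-homomorphisms or at least that the composite $S$ is; in fact the cleanest route is to observe $S*\id$ is an algebra homomorphism $H\to H$ because $S$ is an anti-homomorphism and $H$ is commutative... but $\cSs(\cB)$ is \emph{not} commutative, so I must be careful here). The safe argument: let $\phi = m\circ(S\otimes\id)\circ\Delta$ and $\psi = \eta\circ\epsilon$; I claim the set $X = \{x\in\cSs(\cB) : \phi(x)=\psi(x)\}$ is a subalgebra. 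Since $\psi$ is an algebra map and both sides lie in the convolution monoid, one uses that $\phi = \id^{*-1}*\id$ formally — but without knowing $S$ is the inverse yet, the clean way is: the convolution algebra $\mathrm{End}_\cR(\cSs(\cB))$ need not be used; instead directly, for $x,y\in X$, $\phi(xy) = \sum S(x_{(1)}y_{(1)})x_{(2)}y_{(2)} = \sum S(y_{(1)})S(x_{(1)})x_{(2)}y_{(2)}$, and inserting $\phi(x)=\psi(x)=\epsilon(x)1$ (using coassociativity and that $x\in X$) collapses the $x$-part, giving $\epsilon(x)\phi(y) = \epsilon(x)\epsilon(y)1 = \epsilon(xy)1$. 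This shows $X$ is a subalgebra; since it contains all generators $\al_{\nu\mu}$ it is all of $\cSs(\cB)$. The identical argument with $S$ on the right handles the other axiom.

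The main obstacle, as flagged above, is the non-commutativity of $\cSs(\cB)$: the textbook one-line argument "$S*\id$ is an algebra map because everything is commutative" does not apply, so I would need to run the subalgebra-of-agreement argument carefully, making genuine use of the anti-homomorphism property of $S$ and of coassociativity in the right order. A secondary point requiring care is confirming that $S$ really is an anti-homomorphism: $\rot_*$ is an algebra \emph{homomorphism} (it is induced by a diffeomorphism), and $\inv_{\er}$, $(\binv_{\el})^{-1}$ are not algebra homomorphisms at all (Remark after Proposition \ref{r.inversion}), so the cleanest verification of "$S$ is anti-multiplicative" is probably to check it directly on generators using \eqref{eq.ap1} and the basis \eqref{eq.basis}, or alternatively to bypass it entirely and prove the antipode axiom by a longer but elementary computation term-by-term on a spanning set. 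Everything else is the routine $2\times 2$ case check against the relations of Lemma \ref{lem:relations}.
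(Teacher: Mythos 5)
Your core verification coincides with the paper's: the antipode identity $\sum S(x')x''=\epsilon(x)1=\sum x'S(x'')$ is checked on the generators $\al_{\nu\mu}$ by combining \eqref{eq.coproduct1}, \eqref{eq.ap1} and the relations \eqref{eq.relations1}--\eqref{eq.relations2}, exactly as in the printed proof, and your ``subalgebra of agreement'' argument is a correct justification of the reduction to generators in the noncommutative setting (indeed more detailed than the paper, which only asserts this step): since $\sum S(x')x''=\epsilon(x)1$ is a scalar multiple of $1$, it can be pulled out of $\sum S(y')S(x')x''y''$, so the set of elements satisfying each one-sided identity is closed under multiplication.

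The genuine gap is the anti-multiplicativity of $S$, which your reduction (like the paper's) cannot do without, and neither of your proposed substitutes works as stated. Verifying $S(xy)=S(y)S(x)$ only when $x$ and $y$ are generators does not propagate: bilinearity extends such an identity to linear combinations of generators, not to longer products, so on this route you would have to define an abstract anti-homomorphism via the presentation of Lemma \ref{lem:relations} (checking that \eqref{eq.rel1a} and \eqref{eq.rel2a} are preserved) and then show it agrees with the geometrically defined $S$ on the whole basis \eqref{eq.basis}, a computation you have not carried out. The proposed ``bypass'' is no better: a term-by-term verification of the antipode identity on the spanning set of monomials would naturally go by induction on the length of a monomial, and the inductive step for $xg$ ($g$ a generator) again requires rewriting $S(x'g')$ as $S(g')S(x')$, i.e.\ exactly the property you were trying to avoid. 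The paper closes this point by a geometric observation that is missing from your proposal: in the explicit formula \eqref{eq.antipode} the scalar $C_{e_r}(\al)/C_{e_l}(\al)$ is a product over the endpoints of $\al$ (see \eqref{eq.Ce}), hence multiplicative under stacking, while the operation $\al\mapsto\al''$ --- the $180^\circ$ rotation combined with the reversal of the height order on both edges coming from $\mathrm{inv}_{e_r}\circ(\overline{\mathrm{inv}}_{e_l})^{-1}$ --- reverses the height order of the boundary points and therefore exchanges ``$x$ stacked above $y$'' with ``$y$ stacked above $x$''; this is why $S$ is an algebra anti-homomorphism directly from its definition (the same mechanism is made explicit for the punctured bigons in Proposition \ref{lem:Bnstandardhopf}). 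With that observation supplied, the rest of your argument goes through.
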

\begin{proof}
From the definition \eqref{eq.antipode} one  sees that $S$ is an anti-homomorphism, i.e.
$$S(xy)= S(y) S(x).$$
It remains to check the following property of an antipode:
\be
\label{eq.ap}
\sum  S(x') x'' = \epsilon(x) 1 = \sum  x' S(x'') ,
\ee
where we use the Sweedler's notation for the coproduct $\D x = \sum x' \otimes x''$.
Since $S$ is an anti-homomorphism and $\epsilon$ is an algebra homomorphism, it is enough to check \eqref{eq.ap} for generators $x= \al_{\nu\mu}$. In that case, using  \eqref{eq.coproduct1} we have
$ \D(x)=  \D(\al_{\nu\mu})=\sum_\eta \al_{\nu\eta} \otimes \al_{\eta\mu}$, and
\begin{align*}
 \sum S(x') x''&= \sum_{\eta\in \{\pm \}}  S(\al_{ \nu \eta})  \al_{\eta\mu}\\
& = \sum_{\eta\in \{\pm \}} \frac{C(\eta)}{C(\nu)} \al_{\bar \eta \bar \nu}  \al_{\eta\mu} \quad \text{by  \eqref{eq.ap1}}\\
&= \frac{C^{\bar \nu}_ \mu \cdot 1 }{C(\nu)} \quad \text{by  \eqref{eq.relations1}}\\
&= \delta_{\nu\mu} \cdot 1 = \epsilon(\al_{\nu\mu}) \cdot 1 \quad \text{by definition of $C(\nu)$ and  \eqref{eq.counit1}},
\end{align*}
which proves the first identity of \eqref{eq.ap1}. The second identity of \eqref{eq.ap1} is proved similarly.
\end{proof}

\subsection{Quantum algebra $\OSL$}
Let us recall the definition of the  quantum coordinate ring $\OSL$ of $\mathrm{SL}_2(\BC)$, which is the Hopf dual of the quantum group
$\USL$. See \cite{MajidFoundation}.
\begin{definition}[$\OSL$]
The Hopf algebra $\OSL$ is the $\cR$-algebra generated by $a,b,c,d$ with 
relations 
\begin{align}  ca& =q^2ac, \ db=q^2bd, \ ba=q^2ab,\ dc=q^2cd, \label{eq.rel1} \\
 bc& =cb,  \ ad-q^{-2}bc=1  \ {\rm and} \  da - q^2cb=1 \label{eq.rel2}.
\end{align}
Its coproduct structure is given by
 $$\Delta(a)=a\otimes a+b\otimes c,\qquad \Delta(b)=a\otimes b+b\otimes d,
\qquad \Delta(c)=c\otimes a+d\otimes c, \qquad\Delta(d)=c\otimes b+d\otimes d.$$ 
Its counit is defined as $\epsilon(a)=\epsilon(d)=1,\epsilon(b)=\epsilon(c)=0$
 and its antipode is defined by $S(a)=d,S(d)=a,S(b)=-q^2 b,S(c)=-q^{-2} c$.
\end{definition}
\begin{theorem}\label{teo:SL2}
There exists a Hopf algebra isomorphism $\phi: \Ss(\cB) \to \OSL$
given on the generators by 
\be \phi(\alpha_{+,+})=a,\qquad \phi(\alpha_{+,-})=b,\qquad\phi(\alpha_{-,+})=c,\qquad\phi(\alpha_{-,-})=d.
\label{eq.def1}
\ee
Furthermore, 
\red{
under the identification of $\Ss(\cB)$ with $ \OSL$ via the isomorphism $\phi$, the involution $\rot_*: \Ss(\cB)\to \Ss(\cB)$,  given by the rotation of $180^{\circ}$ around the center of the bigon (see \eqref{def.rot}), becomes the $\cR$-algebra involution $r:\OSL\to \OSL$ given by
 $r(a)=a,r(b)=c,r(c)=b, r(d)=d$. Moreover, $r$ is a co-algebra antimorphism, i.e. 
 $$(r\otimes r) \circ \Delta^{op}=\Delta\circ r.$$  
 }
\end{theorem}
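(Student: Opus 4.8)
The plan is to verify that the assignment $\phi$ of \eqref{eq.def1} respects all the defining relations of $\OSL$, hence defines an algebra homomorphism, then exhibit an inverse, and finally check compatibility of $\phi$ with coproduct, counit and antipode. First I would compute the relations \eqref{eq.rel1a}--\eqref{eq.rel2a} of Lemma \ref{lem:relations} explicitly in terms of $\al_{\nu\mu}$ using the matrix $C$; expanding $C=A^tCA$ and $C=ACA^t$ entrywise (this is exactly \eqref{eq.relations1}--\eqref{eq.relations2}) and substituting $C^+_-=q^{-1/2}$, $C^-_+=-q^{-5/2}$ produces a short list of quadratic relations among the $\al_{\nu\mu}$. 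Matching these against \eqref{eq.rel1}--\eqref{eq.rel2} under $\phi$ is then a finite bookkeeping check: for instance the $(\nu,\mu)=(+,+)$ entry of \eqref{eq.relations1} reads $0 = q^{-1/2}\al_{++}\al_{-+} - q^{-5/2}\al_{-+}\al_{++}$, i.e. $\al_{++}\al_{-+}=q^{-2}\al_{-+}\al_{++}$, which under $\phi$ is $ac=q^{-2}ca$, matching $ca=q^2ac$; the remaining relations of \eqref{eq.rel1}, the commutation $bc=cb$, and the two "quantum determinant" relations $ad-q^{-2}bc=1$, $da-q^2cb=1$ come out similarly from the other entries and from combining them. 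Conversely, since by Lemma \ref{lem:relations} the algebra $\cSs(\cB)$ is generated by the $\al_{\nu\mu}$ subject only to \eqref{eq.rel1a}--\eqref{eq.rel2a}, and these are carried bijectively onto the defining relations of $\OSL$, the inverse map $a\mapsto \al_{++}$, etc., is well defined, so $\phi$ is an algebra isomorphism. (Alternatively one can compare $\cR$-bases: the monomial basis \eqref{eq.basis} of $\cSs(\cB)$ maps to the analogous PBW-type basis of $\OSL$, giving injectivity and surjectivity directly.)

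Next I would check that $\phi$ is a coalgebra morphism. By \eqref{eq.coproduct1} we have $\Delta(\al_{\nu\mu})=\sum_{\eta}\al_{\nu\eta}\otimes\al_{\eta\mu}$, which under $\phi$ is precisely the matrix-coproduct formula $\Delta\begin{pmatrix}a&b\\c&d\end{pmatrix}=\begin{pmatrix}a&b\\c&d\end{pmatrix}\overset{.}\otimes\begin{pmatrix}a&b\\c&d\end{pmatrix}$ from the definition of $\OSL$; since both $\Delta$'s are algebra homomorphisms (on $\cSs(\cB)$ by the splitting theorem, on $\OSL$ by construction) and agree on generators, they agree. For the counit, \eqref{eq.counit1} gives $\epsilon(\al_{\nu\mu})=\delta_{\nu\mu}$, matching $\epsilon(a)=\epsilon(d)=1$, $\epsilon(b)=\epsilon(c)=0$; again both are algebra maps agreeing on generators. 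For the antipode one quotes the explicit formulas just computed: $S(\al_{++})=\al_{--}$, $S(\al_{--})=\al_{++}$, $S(\al_{+-})=-q^{2}\al_{+-}$, $S(\al_{-+})=-q^{-2}\al_{-+}$, which are exactly $S(a)=d$, $S(d)=a$, $S(b)=-q^2b$, $S(c)=-q^{-2}c$; since a bialgebra map between Hopf algebras automatically intertwines antipodes, this is in fact redundant but worth recording. This establishes the first assertion.

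For the "furthermore" part I would work directly with the geometric description of $\rot$. The rotation by $180^\circ$ sends the arc $\al_{\nu\mu}$ (state $\nu$ on $\el$, $\mu$ on $\er$, in increasing height order) to the same underlying arc but with the roles of $\el$ and $\er$ exchanged and the height order reversed; reversing the height order on each edge of the bigon is the identity on a single-arc diagram (there is one endpoint per edge), so $\rot_*(\al_{\nu\mu})=\al_{\mu\nu}$. Under $\phi$ this reads $r(a)=a$, $r(b)=c$, $r(c)=b$, $r(d)=d$, as claimed; that $r$ is an algebra homomorphism is immediate since $\rot_*$ is (the rotation maps each edge to a single edge, cf. the criterion in Section \ref{sec.cat1}). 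Finally, to see $r$ is a coalgebra antimorphism, apply $r\otimes r$ to \eqref{eq.coproduct1}: $(r\otimes r)\Delta(\al_{\nu\mu})=\sum_\eta \al_{\eta\nu}\otimes\al_{\mu\eta}$, while $\Delta^{op}\circ r(\al_{\nu\mu}) = \Delta^{op}(\al_{\mu\nu})=\sum_\eta \al_{\eta\nu}\otimes\al_{\mu\eta}$; these coincide, and since $r$, $\Delta$, $\Delta^{op}$ are all algebra (anti)homomorphisms the identity $(r\otimes r)\circ\Delta^{op}=\Delta\circ r$ extends from generators to all of $\OSL$.

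The only genuinely non-routine point is making sure the coefficient conventions are consistent: one must be careful that the normalization of $C$, the direction of the height order built into $\al_{\nu\mu}$, and the sign/power-of-$q$ choices in \eqref{eq.arcs}--\eqref{eq.order} really do reproduce the precise relations \eqref{eq.rel1}--\eqref{eq.rel2} of $\OSL$ and not some twisted or rescaled variant. Concretely, the main obstacle is the careful entrywise expansion of $C=A^tCA$ and $C=ACA^t$ and checking, with the correct orientation conventions for reading states on $\el$ versus $\er$, that the six quadratic relations plus the two determinant relations come out with exactly the powers of $q$ in the definition; everything after that is formal.
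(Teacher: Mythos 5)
Your proposal is correct and takes essentially the same route as the paper: it relies on the presentation of $\cS(\cB)$ from Lemma \ref{lem:relations}, matches the entrywise expansions of $C=A^tCA$ and $C=ACA^t$ with the defining relations of $\OSL$ under $\phi$ in both directions (with $bc=cb$ recovered by combining the two determinant-type relations), and then checks the coproduct, counit, antipode and the rotation statement on generators, which is exactly the paper's ``straightforward on generators'' and ``direct verification'' steps carried out explicitly. The only wording to adjust is ``carried bijectively onto the defining relations,'' which should be read as ``the two sets of relations generate the same ideal,'' precisely as your own bookkeeping (including the combination step for $bc=cb$) demonstrates.
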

\begin{proof}\red{By Lemma \ref{lem:relations}, the $\cR$-algebra $\Ss(\cB)$ is generated by $\al_{\pm\pm}$ with relations \eqref{eq.rel1a} and \eqref{eq.rel2a}. Under the assignment $\phi$ given 
on generators $\al_{\pm\pm}$
by \eqref{eq.def1}, 
the matrix relations \eqref{eq.rel1a} and \eqref{eq.rel2a} become respectively
\begin{align}  ca&= q^2 ac ,\  db= q^2 bd,\  ad- q^{-2} cb=1,\  da -q^2 bc=1 \label{eq.rel1as} \\
ba&= q^2 ab,\  dc= q^2 cd,\  ad- q^{-2} bc=1,\ da - q^2 cb=1 \label{eq.rel1bs}
\end{align}
Each of these identities is a consequence of the relations in \eqref{eq.rel1} and \eqref{eq.rel2}. Conversely, all the relations in \eqref{eq.rel1} and \eqref{eq.rel2}, except for $bc=cb$, are among the identities  \eqref{eq.rel1as} and \eqref{eq.rel1bs}. The remaining relation $bc=cb$ is obtained by taking the difference between the last identity of \eqref{eq.rel1as} and the last identity of \eqref{eq.rel1bs}. Hence $\phi$ is an $\cR$-algebra isomorphism.
}

To check that $\phi$ is a Hopf algebra isomorphism it is sufficient to check this on the level of generators where it is straightforward. 

The last statement is a direct verification.
\end{proof}

\subsection{Geometric depiction of co-$R$-matrix, a lift of the co-$R$-matrix} The Hopf algebra $\OSL$ is \red{\em ``dual quasitriangular'' (see  \cite{MajidFoundation} Section 2.2) or ``cobraided'' (see e.g. \cite{Kassel}, Section VIII.5)} , i.e. it has a co-$R$-matrix with the help of which one can make the category of $\OSL$-modules a braided category. Formally, a {\em co-$R$-matrix}  is a bilinear form 
$$ \rho : \OSL \otimes \OSL \to \cR$$ such that there exists another bilinear form $\bar \rho: 
\OSL \otimes \OSL \to \cR$ (the ``inverse" of $\rho$)
satisfying for any $x,y,z\in U$,
\begin{align}
\sum \rho (x' \ot y') \bar \rho (x'' \ot y'')&= \sum \bar \rho (x' \ot y') \rho (x'' \ot y'')= \epsilon (x) \epsilon(y) \\
\sum \rho (x''\ot y'') y'x'  &= \sum \rho (x'\ot y') x'' y''     \label{eq.rflip}\\
\rho (xy \ot z)& = \sum \rho (x' \ot z') \rho  (y'' \ot z'') \epsilon(x'') \epsilon (y')  
\label{eq.cobraid3}\\
\rho (x \ot y z)& = \sum \rho  (x' \ot z') \rho (x'' \ot y'') \epsilon(z'') \epsilon (y') 
\label{eq.cobraid4}
\end{align}
Here we use Sweedler's notation for the coproduct.
Relations \eqref{eq.cobraid3} and \eqref{eq.cobraid4} show that $\rho$ is totally determined by  its values at a set of generators of the algebra $\OSL$, and the values $\rho$ at a set of generators are given by (see \cite{Kassel})
\be 
\rho \begin{pmatrix}
a\ot a & b \ot b & a\ot b & b \ot a \\
c\ot c & d \ot d & c \ot d & d \ot c\\
a\ot c & b \ot d & a \ot d & b \ot c\\
c\ot a & d \ot b & c \ot b & d \ot a
\end{pmatrix} = 
\begin{pmatrix}
q & 0 & 0 & 0 \\
0 & q & 0 & 0\\
0 & 0 & q^{-1} & q - q^{-3}\\
0 & 0 & 0 & q^{-1}
\end{pmatrix}   \label{eq.cobraid0}
\ee

\def\letterx{  \raisebox{-9pt}{\incl{.9 cm}{x}} } 
\def\lettery{  \raisebox{-9pt}{\incl{.9 cm}{y}} }
\def\letterxy{  \raisebox{-9pt}{\incl{.9 cm}{xy}} }
\def\xybar{  \raisebox{-9pt}{\incl{.9 cm}{xybar}} }
\def\yxbar{  \raisebox{-9pt}{\incl{.9 cm}{yxbar}} }
\def\letterxyz{  \raisebox{-10pt}{\incl{1 cm}{xyz}} } 
\def\xyzp{  \raisebox{-10pt}{\incl{1 cm}{xyzp}} } 
\def\xyzpp{  \raisebox{-10pt}{\incl{1 cm}{xyzpp}} } 
\def\letterbc{  \raisebox{-10pt}{\incl{1 cm}{bc}} }
\def\letterbcp{  \raisebox{-10pt}{\incl{1 cm}{bcp}} }
\def\letterbcpp{  \raisebox{-10pt}{\incl{1 cm}{bcpp}} }
\def\pcB{\partial \cB}
 
 \begin{theorem}\label{teo:cobraid} Under the identification of $\cS(\cB)$ and $\OSL$ via the isomorphism $\phi$,
 the co-$R$-matrix $\rho$ and its inverse $\bar \rho$  have the following geometric description
 \begin{align}
 \rho \left (\letterx  \ot \lettery \right) & = \epsilon \left(  \letterxy \right)  \label{eq.cobraid} 
 \\
  \bar \rho \left (\letterx  \ot \lettery \right) & = \epsilon \left(  \xybar \right). \label{eq.cobraid2}
\end{align}   
 \red{Here a circle enclosing $x$ and two lines adjacent to the circle stand for a stated $\pcB$-tangle diagram, also denoted by $x$. The left hand side of \eqref{eq.cobraid} stands for $\rho(x\ot y)$. }
 \end{theorem}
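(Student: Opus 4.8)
\textbf{Proof plan for Theorem \ref{teo:cobraid}.}

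The plan is to exploit the fact that the co-$R$-matrix $\rho$ is uniquely determined by two properties: its values on a generating set of $\OSL$ (the matrix \eqref{eq.cobraid0}), and the compatibility relations \eqref{eq.cobraid3}--\eqref{eq.cobraid4} that extend it multiplicatively. So it suffices to check that the formulas $\rho'(x\ot y):=\epsilon(\letterxy)$ and $\bar\rho'(x\ot y):=\epsilon(\xybar)$, where in $\letterxy$ the strand of $y$ overcrosses the strand of $x$ (and undercrosses in $\xybar$), satisfy these defining properties; by uniqueness they then coincide with $\rho,\bar\rho$. First I would verify that $\rho'$ is well defined: the right-hand sides depend only on the elements $x,y\in\cS(\cB)$ and not on their tangle representatives, since the crossing of a $y$-strand over an $x$-strand is performed in a small product-neighbourhood and the resulting element of $\cS(\cB)$ is insensitive to isotopies, which follows from the defining skein relations. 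One must also check that $\epsilon$ is being applied to a genuine element of $\cS(\cB)$ and that the bilinearity in $x$ and $y$ is manifest from the linearity of stacking and of $\epsilon$.

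Next I would compute the ``initial values'' of $\rho'$ on the generators $\al_{\nu\mu}$, i.e. evaluate $\epsilon$ on the picture of $\al_{\nu\mu}$ overcrossing $\al_{\nu'\mu'}$. Here I would use the skein relation \eqref{eq.skein} to resolve the single crossing into the two planar resolutions with coefficients $q^{\pm1}$, then apply the arc relations \eqref{eq.arcs}, \eqref{eq.arcs1}, \eqref{eq.arcs2} and relation \eqref{eq.order} to reduce each resolved diagram to a scalar multiple of the empty diagram, to which $\epsilon$ assigns $1$ (using $\epsilon(\emptyset)=1$ and formula \eqref{eq.counit1}). The outcome should reproduce exactly the $4\times4$ matrix \eqref{eq.cobraid0} under $\phi(\al_{+,+})=a$, etc.; this is the routine computational core and I would organize it as a single lemma enumerating the sixteen values. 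The analogous computation with the opposite crossing gives the values of $\bar\rho'$, and checking that these are the entrywise inverse data (the first two relations in the co-$R$-matrix axioms at the level of generators) is then immediate.

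The main conceptual point — and the step I expect to be the real obstacle — is verifying the multiplicativity relations \eqref{eq.cobraid3}--\eqref{eq.cobraid4} (and the ``$\rho$-flip'' relation \eqref{eq.rflip}) directly for $\rho'$ from the topology, rather than just quoting uniqueness. Topologically, $\rho'(xy\ot z)$ is represented by letting a $z$-strand pass over the product (stacked) diagram $xy$; one wants to slide $z$ first over $x$ and then over $y$, producing $\rho'(x\ot z)\rho'(y\ot z)$-type terms. The subtlety is bookkeeping the coaction/counit factors $\epsilon(x'')\epsilon(y')$ that appear in \eqref{eq.cobraid3}: these arise precisely because, after splitting the bigon into the product, the ``middle'' boundary states of $x$ and $y$ must be summed and collapsed by $\epsilon$. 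I would make this rigorous by using the coproduct formula \eqref{eq.coproduct1} together with the splitting Theorem \ref{thm.1a}, reinterpreting the picture of $z$ crossing $xy$ as a skein in a bigon split along an ideal arc separating the $x$-part from the $y$-part, and then applying $\epsilon$. If carrying this out cleanly proves too delicate, the fallback is the clean route: prove well-definedness and the generator values, observe that relations \eqref{eq.cobraid3}--\eqref{eq.cobraid4} \emph{define} the unique bilinear extension of any given set of generator-values to a putative co-$R$-matrix, cite that $\OSL$ is known to be cobraided with co-$R$-matrix \eqref{eq.cobraid0} (e.g.\ \cite{Kassel}), and conclude $\rho=\rho'$ and $\bar\rho=\bar\rho'$ by matching on generators. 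Either way the theorem follows; I would present the uniqueness argument as the main line and relegate the direct topological verification of \eqref{eq.cobraid3}--\eqref{eq.cobraid4} to a remark or a second proof for those who want the ``purely topological'' statement.
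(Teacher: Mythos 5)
Your main line is essentially the paper's own proof: define $\rho'$ by the picture, verify \eqref{eq.cobraid3}--\eqref{eq.cobraid4} by splitting the bigon along the middle ideal arc and using $\epsilon(u)=\sum\epsilon(u')\epsilon(u'')$, check the sixteen generator values against \eqref{eq.cobraid0} via the skein and arc relations, and conclude $\rho'=\rho$ since those relations determine $\rho$ from its generator values. One caution: your ``fallback'' of only matching generator values and citing that $\OSL$ is cobraided would not suffice on its own, because without verifying \eqref{eq.cobraid3}--\eqref{eq.cobraid4} for $\rho'$ there is nothing forcing the topologically defined bilinear form to extend from generators in the same way as $\rho$; but since your primary route carries out exactly that verification, the proposal is sound.
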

 \begin{proof}
 Let $\rho'$ be the map defined by the r.h.s. of \eqref{eq.cobraid}: we will show that $\rho'=\rho$. For this it is enough to show that $\rho'$ satisfies \eqref{eq.cobraid3}, \eqref{eq.cobraid4}, and the initial values identity \eqref{eq.cobraid0}, all with $\rho$ replaced by $\rho'$. 
We have, \red{where a line labeled by, say $x$, stands for the stated $\pcB$-tangle diagram $x$},
$$\rho'(xy \ot z)= \epsilon \left(\letterxyz   \right)$$
Split the bigon by the vertical middle ideal arc, then use the fact that $\epsilon(u) =\sum \epsilon(u') \epsilon(u'')$ (in any Hopf algebra) where $\Delta(u)= \sum u' \ot u''$, we have
\begin{align*}
&= \sum \epsilon \left(\xyzp   \right) \epsilon \left(\xyzpp   \right)   \\
&= \sum \rho  (x' \ot z') \rho  (y'' \ot z'') \epsilon(x'') \epsilon (y'). 
\end{align*}
This proves \eqref{eq.cobraid3} for $\rho'$. The proof of \eqref{eq.cobraid4} is similar.

To check \eqref{eq.cobraid0} we have to check 16 identities, all of which are easy. For example, the most difficult one is the identity of the $(3,4)$ entries:
\begin{align*}
\rho'(b\ot c) &= \epsilon \left( \letterbc \right) = q \, \epsilon \left( \letterbcp \right) + q^{-1} \, \epsilon \left( \letterbcpp \right)  \\
&= q - q^{-3} \qquad \text{by \eqref{eq.counit1} and \eqref{eq.arcs2}}. \ 
\end{align*}
 This proves \eqref{eq.cobraid0} for  the $(3,4)$ entries. Identity \eqref{eq.cobraid0} for  other entries are similar.
 \end{proof}
 
\begin{remark}\label{rem:rhoprime} \red{The bilinear form $ \rho':  \OSL \otimes \OSL \to \cR$ defined by
\be 
 \rho'(x \otimes y) := \bar \rho(y \otimes x) =  \epsilon \left(  \yxbar \right) \label{eq.coR2}
\ee
gives a new co-$R$-matrix for $\OSL$, which is the mirror reflection of $\rho$.}
\end{remark}

\subsection{The Jones-Wenzl idempotents as elements of the bigon algebra}\label{sub:joneswenzl}
\red{
In this subsection we will work over the ring $\cR^{loc}$ obtained by localizing $\cR$ over the multiplicative set generated by $\{[n]=\frac{q^{2n}-q^{-2n}}{q^2-q^{-2}}, n\geq 1\}$.
Recall that the $n^{th}$ Temperely-Lieb algebra $T_n$ (see e.g. \cite{Tu:Book}) is the $\cR^{loc}$-algebra generated by non-stated simple $(n,n)$-tangle diagrams in $\B$ modulo isotopy (rel to the boundary) and relation \eqref{eq.loop}. The product is obtained by concatenating horizontally. 
\begin{figure}[hbtp!]
 \includegraphics[width=2cm]{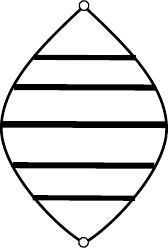} \includegraphics[width=2cm]{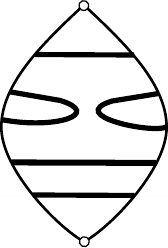}
\caption{On the left the unit of $T_5$. On the right an element 
of $T_5$.}\label{fig:temperely}
\end{figure}

A $\cR^{loc}$-basis of $T_n$ is given by simple $(n,n)$-tangle diagrams without closed components; 
define $\epsilon:T_n\to \cR^{loc}$ be the dual of the element $1$ with respect to this basis. 
The $n^{th}$ Jones-Wenzl idempotent is an element $JW_n\in T_n$ defined by recursion as explained in Figure \ref{fig:joneswenzl}.
\begin{figure}[hbtp!]
\raisebox{-1.3cm}{\includegraphics[width=2cm]{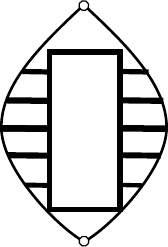}}\ =\ \raisebox{-1.3cm}{\includegraphics[width=2cm]{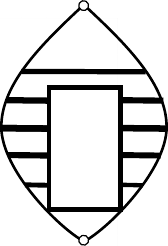}}$+\frac{[n-1]}{[n]}$ \raisebox{-1.3cm}{\includegraphics[width=2cm]{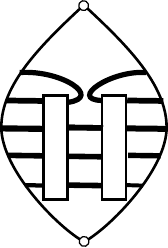}}
\caption{The recursion relation for $JW_n\in T_n$. By definition $JW_1=1\in T_1$.}\label{fig:joneswenzl}
\end{figure}

The following is the key property of the $JW_n$, see e.g. \cite{Tu:Book}.
\begin{proposition}\label{prop:joneswenzlproperties} One has $\epsilon(JW_n)= 1$. 
For all $x\in T_n$, 
it holds $JW_n  x=x JW_n=\epsilon(x)JW_n$. In particular $JW_n^2=JW_n$.
\end{proposition}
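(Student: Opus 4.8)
The plan is to prove Proposition \ref{prop:joneswenzlproperties} by induction on $n$, exploiting the recursive definition of $JW_n$ displayed in Figure \ref{fig:joneswenzl} together with the standard structural facts about $T_n$. Recall that $T_n$ is generated as an algebra by the elements $U_1,\dots,U_{n-1}$, where $U_i$ is the diagram with a cup-cap connecting strands $i$ and $i+1$ and identity elsewhere, and that $U_i^2=(-q^2-q^{-2})U_i=-[2]_{q^2}\,U_i$ where I write $[2]$ for $[2]=\frac{q^4-q^{-4}}{q^2-q^{-2}}=-( -q^2-q^{-2})$; more precisely the loop value is $-q^2-q^{-2}$ from \eqref{eq.loop}, so $U_i^2=(-q^2-q^{-2})U_i$. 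Thus to check $JW_n x = \epsilon(x) JW_n$ for all $x\in T_n$ it suffices to check it for $x=1$ (trivial, once $\epsilon(JW_n)=1$ is known) and for $x=U_i$ with $\epsilon(U_i)=0$, i.e.\ to show $JW_n U_i = 0$ for $1\le i\le n-1$; right multiplication is handled symmetrically (or by the obvious up-down flip symmetry of the diagrams, under which $JW_n$ is fixed).

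The key steps, in order, are as follows. First, I would establish $\epsilon(JW_n)=1$ by induction: applying $\epsilon$ to the recursion in Figure \ref{fig:joneswenzl}, the first term on the right is $JW_{n-1}$ tensored with a through-strand, which has $\epsilon$-value $\epsilon(JW_{n-1})=1$ by induction, while the second term contains a cup-cap (it factors through $U_{n-1}$ applied to $JW_{n-1}\otimes\mathrm{id}$), hence is not a basis diagram equal to $1$ and contributes $0$ to $\epsilon$; so $\epsilon(JW_n)=1$. Second, I would prove $JW_n U_i=0$. For $i\le n-2$: by the recursion, $JW_n=(JW_{n-1}\otimes\mathrm{id})\,\bigl(1+\tfrac{[n-1]}{[n]}\,U_{n-1}(JW_{n-1}\otimes\mathrm{id})\bigr)$, and since $U_i$ for $i\le n-2$ commutes appropriately and $JW_{n-1}U_i=0$ by the inductive hypothesis (as $U_i$ lives in the first $n-1$ strands and $\epsilon(U_i)=0$ there), both summands kill $U_i$ on the right after moving $JW_{n-1}$ into contact with it; this uses the standard partial-trace / absorption identities for Temperley--Lieb diagrams. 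The remaining and genuinely computational case is $i=n-1$: here one expands $JW_n U_{n-1}$ using the recursion, uses $U_{n-1}^2=(-q^2-q^{-2})U_{n-1}$, uses the partial closure identity $(JW_{n-1}\otimes\mathrm{id})\,U_{n-1}\,(JW_{n-1}\otimes\mathrm{id}) = \frac{[n]}{[n-1]}\,(\text{closure of last strand of }JW_{n-1})\otimes\cup\!\cap$ — more precisely the identity that produces exactly the ratio $\tfrac{[n-1]}{[n]}$ appearing in the recursion — and checks that the coefficient of $U_{n-1}(JW_{n-1}\otimes\mathrm{id})$ works out to $1+\tfrac{[n-1]}{[n]}\cdot\bigl(-(-q^2-q^{-2})\bigr)\cdot(\text{something})=0$. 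This is the classical Wenzl argument and the recursion coefficient $[n-1]/[n]$ was designed precisely to make it vanish. Finally, $JW_n^2=JW_n$ follows by writing $JW_n$ as a linear combination of basis diagrams: the coefficient of $1$ in $JW_n$ is $\epsilon(JW_n)=1$ by construction of $\epsilon$ and step one, so $JW_n = 1 + \sum_{x\ne 1} c_x x$ with each $x\ne 1$ of the form $($something$)\,U_i\,($something$)$; then $JW_n\cdot JW_n = JW_n\cdot 1 + \sum_{x\ne 1} c_x\, JW_n x$, and by the flip-symmetry version of step two each $JW_n x$ with $x\ne 1$ vanishes (write $x = x_1 U_i x_2$ and note $JW_n x_1 U_i = \epsilon(x_1 U_i)JW_n\cdot(\ldots)=0$ since $JW_n U_j=0$ absorbs into it), leaving $JW_n^2=JW_n$.

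The main obstacle is the $i=n-1$ case of $JW_n U_{n-1}=0$: this is where the specific value $\tfrac{[n-1]}{[n]}$ of the recursion coefficient is used, and it requires the non-obvious "bubble" or partial-trace identity expressing $(JW_{n-1}\otimes\mathrm{id}_1)U_{n-1}(JW_{n-1}\otimes\mathrm{id}_1)$ in terms of $JW_{n-2}\otimes U_{1}$ (equivalently, closing the top strand of a $JW_{n-1}$ produces the factor $\frac{[n]}{[n-1]}JW_{n-2}$). Since this is entirely standard Temperley--Lieb theory, in the paper I would simply cite \cite{Tu:Book} for these identities rather than reproving them, and present only the short inductive bookkeeping. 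One mild subtlety worth flagging: $\epsilon$ on $T_n$ as defined here is the coefficient functional dual to the basis of diagrams without closed components, so $\epsilon(x)$ for a single generator $U_i$ is $0$ because $U_i$ is itself a basis diagram distinct from $1$; multiplicativity of $\epsilon$ is not needed and in fact $\epsilon$ is not an algebra map, so all uses of "$\epsilon(x)$" above refer only to this linear functional and the reduction to generators is justified purely by the algebra-generation of $T_n$ by the $U_i$ together with linearity.
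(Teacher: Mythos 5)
The paper does not actually prove Proposition \ref{prop:joneswenzlproperties}: it is stated as standard Temperley--Lieb theory with a pointer to \cite{Tu:Book}, so there is no internal argument to compare yours against, and your sketch is a reconstruction of the classical Wenzl induction, which is correct in outline. The pieces that work as you describe: $\epsilon(JW_n)=1$ follows from the recursion of Figure \ref{fig:joneswenzl} because every diagram occurring in the second term factors through $U_{n-1}$, hence has fewer than $n$ through-strands and never contributes to the coefficient of the identity; $JW_nU_i=0$ for $i\le n-2$ is immediate from the inductive hypothesis; and the passage from generators to all of $T_n$ is legitimate because every element is a linear combination of the identity and of products containing at least one $U_i$, and on such products both $JW_n\,(\cdot)$ and $\epsilon(\cdot)JW_n$ vanish (note $\epsilon$ is only a linear functional, as you flag). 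Two points of precision. First, idempotency needs no separate argument: once $JW_nx=\epsilon(x)JW_n$ and $\epsilon(JW_n)=1$ are established, take $x=JW_n$; your final paragraph re-derives this in a roundabout way. Second, in the crucial case $i=n-1$ the cancellation does not come from $U_{n-1}^2=(-q^2-q^{-2})U_{n-1}$, as your schematic coefficient formula suggests: writing $A=JW_{n-1}\otimes 1$, one has $JW_nU_{n-1}=AU_{n-1}+\tfrac{[n-1]}{[n]}AU_{n-1}AU_{n-1}$, and one must invoke the partial-trace (bubble) identity --- closing the last strand of $JW_{n-1}$ gives $-\tfrac{[n]}{[n-1]}JW_{n-2}$, the sign coming from the loop value $-q^2-q^{-2}$ of \eqref{eq.loop} --- together with the absorption $JW_{n-1}(JW_{n-2}\otimes 1)=JW_{n-1}$ (itself part of the inductive hypothesis), so that $JW_nU_{n-1}=AU_{n-1}\bigl(1+\tfrac{[n-1]}{[n]}\cdot(-\tfrac{[n]}{[n-1]})\bigr)=0$. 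Since you defer this bubble identity to \cite{Tu:Book} --- exactly as the paper defers the entire proposition --- this is a matter of bookkeeping rather than a genuine gap; a fully self-contained version would simply carry the partial-trace formula along as an extra clause of the induction.
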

For a simple $(n,n)$-tangle diagram $x\in T_n$ and $\vec{\mu_l},\vec{\mu_r} \in \{\pm \}^n$ let $x(\vec{\mu_l},\vec{\mu_r})$ be the stated $\pcB$-tangle diagram which is $x$ with states $\vec{\mu_l}$ on $e_l$ and states $\vec{\mu_r}$ on $e_r$, and the height order on each of $e_l$ and $e_r$ is from bottom to top. By linearity, for  $y\in T_n$, we define  $y(\vec{\mu_l},\vec{\mu_r})$. This is well-defined since \eqref{eq.loop} is part of the defining relations of stated skein algebra. Thus, if $y$ is trivial $(n,n)$-tangle diagram, then $y(\boeta, \bomu)$ is the element $\al_{\boeta \bomu}$ described in Figure \ref{fig:bigon8}.
\begin{example}\label{ex:joneswenzl} If all the components of $\boeta$ are the same and all the components of $\bomu$ are the same, then 
$JW_n(\boeta, \bomu)= \al_{\boeta \bomu}$.
 Indeed $JW_n$ is equal to trivial $(n,n)$-tangle diagram plus the linear combination of diagrams each contains an arc whose endpoints are both in $e_l$ or in $e_r$; such an arc is 0 by \eqref{eq.arcs}.
\end{example}
\begin{proposition}\label{prop:joneswenzl} For $\bomu\in \{\pm\}^n$ let $o(\bomu)$  be obtained by reordering increasingly the states of $\bomu$ and $no(\vec{\mu})$ is the minimal number of exchanges needed to do so. Then
\begin{align} JW(\vec{\mu_l},\vec{\mu_r}) &=q^{2\ no(\vec{\mu_l})+2\ no(\vec{\mu_r})} JW(o(\vec{\mu_l}),o(\vec{\mu_r})) \\
\Delta(JW(\vec{\mu_l},\vec{\mu_r}))&=\sum_{j=0}^n \binom{n}{j}_{q^4} JW(\vec{\mu_l},\vec{\eta_j})\otimes JW(\vec{\eta_j},\vec{\mu_r}),
\end{align}
where $\vec{\eta_j}$ is the increasing state containing $j$ signs $+$ and $n-j$ signs $-$.
\end{proposition}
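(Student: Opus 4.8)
The plan is to prove the reordering identity first and then use it, together with the splitting Theorem \ref{thm.1a} and the absorbing property of $JW_n$, to compute the coproduct.

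\emph{Reordering identity.} Fix $\vec{\mu_r}$ and reorder on $e_l$ (the edge $e_r$ is handled symmetrically). Since $-<+$, passing from $\vec{\mu_l}$ to the increasing word $o(\vec{\mu_l})$ is a bubble sort: a sequence of exactly $no(\vec{\mu_l})$ moves, each swapping two consecutive (in height) endpoints whose states read $(+,-)$ into $(-,+)$, never the reverse. For one such move, relation \eqref{eq.order} (together with the height-exchange relations \eqref{eq.reor1}--\eqref{eq.reor2} of Lemma \ref{r.refl}) rewrites the skein as $q^2$ times the diagram with those two states swapped, plus a term in which the two affected strands of $JW_n$ are joined by a turnback just inside $e_l$. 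In the Temperley--Lieb picture that correction is $JW_n$ composed with an elementary cup, which over $\cR^{loc}$ is a split monomorphism (capping it off multiplies by $-[2]\neq 0$); since Proposition \ref{prop:joneswenzlproperties} gives $x\,JW_n=\epsilon(x)\,JW_n=0$ for the turnback generator $x$ of $T_n$ (which is not the identity, so $\epsilon(x)=0$), this forces $JW_n$ composed with a cup or cap to vanish, and the correction term dies. Hence each move contributes a factor exactly $q^2$; doing all moves on $e_l$ and then all moves on $e_r$ gives $JW(\vec{\mu_l},\vec{\mu_r})=q^{2\,no(\vec{\mu_l})+2\,no(\vec{\mu_r})}JW(o(\vec{\mu_l}),o(\vec{\mu_r}))$.

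\emph{Coproduct identity.} Here $\Delta=\theta_c$ for $c$ the ideal arc of $\cB$ parallel to $e_r$. Using the idempotent identity $JW_n=JW_n\circ JW_n$ in $T_n$, I would present $JW(\vec{\mu_l},\vec{\mu_r})$ by placing one copy of $JW_n$ on each side of $c$, glued along $n$ points lying on $c$ at distinct heights; every tangle diagram occurring in this linear combination meets $c$ transversally precisely in those $n$ points. Summing over the $2^n$ lifts via \eqref{eq.slit} and identifying each of the two resulting bigons with $\cB$ yields
\[
\Delta\big(JW(\vec{\mu_l},\vec{\mu_r})\big)=\sum_{\vec{\nu}\in\{\pm\}^n}JW(\vec{\mu_l},\vec{\nu})\otimes JW(\vec{\nu},\vec{\mu_r}).
\]
Now apply the reordering identity in both tensor factors — twice in each, to sort $\vec{\nu}$ and then to move it back into an already-increasing slot — obtaining $JW(\vec{\mu_l},\vec{\nu})=q^{2\,no(\vec{\nu})}JW(\vec{\mu_l},\vec{\eta_j})$ and $JW(\vec{\nu},\vec{\mu_r})=q^{2\,no(\vec{\nu})}JW(\vec{\eta_j},\vec{\mu_r})$, where $j$ is the number of $+$'s in $\vec{\nu}$ and $\vec{\eta_j}=o(\vec{\nu})$. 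Grouping the sum by $j$, the coefficient becomes $\sum_{\vec{\nu}:\ |\vec{\nu}|_+=j}q^{4\,no(\vec{\nu})}$, which is the classical inversion generating function over binary words with $j$ ones, hence equals $\binom{n}{j}_{q^4}$ in the paper's normalization; this is the desired formula.

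\emph{Main obstacle.} The delicate point is the vanishing of the turnback correction in the reordering step: one must verify that the term produced by \eqref{eq.order} really is $JW_n$ pre- or post-composed with a Temperley--Lieb cap/cup (rather than a reduced, scalar trivial arc) and then invoke the absorbing property of $JW_n$. Once that is in place, the rest — the bubble-sort bookkeeping, the splitting computation, and the $q$-binomial identity — is routine.
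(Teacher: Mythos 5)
Your proof is correct and follows essentially the same route as the paper: each out-of-order exchange of consecutive states contributes a factor $q^2$ while the turnback term produced by \eqref{eq.order} (and by the height-exchange relations of Lemma \ref{r.refl} on $e_l$) is annihilated by $JW_n$ — your split-monomorphism justification of that standard fact is fine — and the coproduct is obtained from $JW_n=JW_n^2$ together with the splitting formula \eqref{eq.slit}. The only difference is cosmetic: where the paper cites Proposition \ref{r.grbinom}, you re-derive the coefficient $\binom{n}{j}_{q^4}$ directly from the reordering identity and the inversion generating function for binary words, which if anything makes the step more self-contained, since it gives the identity exactly rather than in the associated graded algebra.
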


}

\begin{proof}
Observe that if one exchanges a sign $-$ and a $+$ which are not in the increasing order along $e_r$ then by \eqref{eq.order} one gets $q^2$ times the reordered term and $q^{\frac{1}{2}}$ times a term killed by $JW_n$. Since a similar argument (using Lemma \ref{r.refl}) shows that each reordering along $e_l$ multiplies $JW$ by $q^2$,  the first statement follows. 
The second statement is a consequence of the fact that $JW_n^2=JW$ in $T_n$  and Proposition \ref{r.grbinom}. 
\end{proof}


\def\Bcan{B_{\mathrm{can} }}
\subsection{Kashiwara's basis for $\OSL$}
\red{
 We will see that the celebrated Kashiwara canonical basis of $\OSL$, see \cite{Ka}, is the geometrically defined basis $B(\cB,\ori_+)$ of Theorem \ref{thm.basis1a}, up to powers of $q$. Here $\ori_+$ is the positive orientation of the boundary $\pcB$ of the bigon $\cB$.

\FIGc{bigon8}{Elements $\beta_{\boeta \bomu}$ (left) and $\beta'_{\boeta \bomu}$ (right) with $\boeta=(-,+,+)$ and $\bomu=(--+)$; the difference is the direction of the left edge.}{2cm}

First recall $B(\cB,\ori_+)$. For sequences $\boeta,\bomu \in \{\pm \}^n$
 let $\beta_{\boeta,\bomu}$ be the stated $\pcB$-tangle diagram consisting of $n$ horizontal arcs and stated by $\boeta$ on the left edge and $\bomu$ on the right edge, with height order given by $\ori_+$; see 
 Figure \ref{fig:bigon8}. 
Then 
 $$ B(\cB,\ori_+)= \{ \beta_{\boeta,\bomu} \mid \boeta, \bomu  \text{ are increasing}\}.$$
Let $\beta'_{\boeta,\bomu}$ be the same $\beta_{\boeta,\bomu}$ with reverse height order on the left edge, see Figure \ref{fig:bigon8}. Define
$$ \Bcan:= \{ \beta'_{\boeta,\bomu} \mid  \beta_{\boeta,\bomu} \in B(\cB;\ori_+) \}
=\{ \al_{\boeta,\bomu} \mid \boeta \text{ is decreasing}, \bonu \text{ is increasing}.\}
 .$$
 Here $\al_{\boeta,\bomu}$ is defined in Figure \ref{fig:bigon0}.
 
From Lemma \ref{r.refl} which deals with height exchange, one can easily show that
\be 
\beta'_{\boeta,\bomu} = q^{h(\boeta)} \beta_{\boeta,\bomu}, \ h(\boeta):= \frac{1}{2}(n_+ n_- + n_+ + n_- -n_+^2 - n_-^2  )
\ee
where $n_+$ (respectively $n_-$) is the number of $+$ (respectively $-$) in the sequence $\boeta$. It follows that $\Bcan$ is also an $\cR$-basis of $\Ss(\cB)$.
}

\red{
\begin{proposition}\label{prop:kashiwara}

Via the isomorphism of Theorem \ref{teo:SL2}, the basis $\Bcan$ coincides with the canonical basis defined by Kashiwara  \cite[Proposition 9.1.1]{Ka}. Both bases $\Bcan$ and $B(\cB;\ori_+)$ are positive with respect to the product and to the coproduct, i.e. for $B= \Bcan$ or $B(\cB;\ori_+)$ and $\alpha,\beta \in B$ one has
$$ \alpha \beta\in \mathbb{N}[q^{\pm1}]\cdot B $$
$$ \Delta(\alpha)\in \mathbb{N}[q^{\pm1}]\cdot B\otimes B.$$
\end{proposition}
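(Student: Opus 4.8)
The plan is to make the basis $\Bcan$ completely explicit on the algebra side, match it with Kashiwara's list, and then deduce both positivity statements from the Hopf-algebra structure of $\cS(\cB)$ supplied by Theorem \ref{teo:SL2}. The crucial elementary observation for the first step is that $\al_{\boeta,\bomu}$, being the diagram of $n$ parallel horizontal arcs, is literally the product of single-arc generators: stacking one arc above another again produces a family of parallel arcs, so by induction $\al_{\boeta,\bomu}=\al_{\boeta_n\bomu_n}\,\al_{\boeta_{n-1}\bomu_{n-1}}\cdots\al_{\boeta_1\bomu_1}$ in $\cS(\cB)$, with \emph{no} correcting power of $q$. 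Applying $\phi$ and \eqref{eq.def1}, $\phi(\al_{\boeta,\bomu})$ is the monic monomial in $a,b,c,d$ read off, bottom to top, from the states, and a direct inspection shows that as $\boeta$ ranges over decreasing and $\bomu$ over increasing sequences, $\phi(\al_{\boeta,\bomu})$ runs precisely through the monic ordered monomials $c^{x}a^{y}b^{z}$ ($x,y,z\ge0$) and $c^{x}d^{w}b^{z}$ ($x,z\ge0$, $w\ge1$); so $\phi(\Bcan)$ is exactly this set, which by Theorem \ref{thm.basis1a} is an $\cR$-basis of $\OSL$.

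Next I would compare with the explicit description of the lower global (canonical) basis of the quantized coordinate ring of $\mathrm{SL}_2$ in \cite[Proposition 9.1.1]{Ka}. After translating conventions — Kashiwara's quantum parameter is our $q^{2}$, and his distinguished generators correspond to $a,b,c,d$ — his canonical basis is exactly the set of monomials produced above. Since each $\phi(\al_{\boeta,\bomu})$ is monic it is automatically bar-invariant (the bar involution of $\OSL$ singled out by the canonical basis fixes the generators) and trivially congruent to itself modulo the crystal lattice, so these monomials can only be the canonical basis vectors; this also pins down the normalisation, upgrading ``up to powers of $q$'' to an equality. The statement for $B(\cB;\ori_+)$ then follows because $\beta_{\boeta,\bomu}$ and its $\Bcan$-partner differ only by the scalar $q^{h(\boeta)}$ recorded, via Lemma \ref{r.refl}, just before the proposition.

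For positivity of the product, the key remark is that \emph{all} defining relations \eqref{eq.rel1}--\eqref{eq.rel2} of $\OSL$ have coefficients in $\mathbb N[q^{\pm1}]$: oriented as $ac=q^{-2}ca$, $bd=q^{-2}db$, $ba=q^{2}ab$, $dc=q^{2}cd$, $bc=cb$, $ad=q^{-2}cb+1$, $da=q^{2}cb+1$, they form a rewriting system, positive at every step, whose irreducible words are exactly the monomials of Step 1. Termination is immediate from a lexicographic measure on words whose first coordinate is $(\#a)(\#d)$, strictly decreased by the last two rules, and whose second coordinate counts inversions for the letter order $c<\{a,d\}<b$. Since the irreducible words form a basis, it follows that \emph{every} monomial in $a,b,c,d$ lies in $\mathbb N[q^{\pm1}]\cdot\phi(\Bcan)$; as the product of two monomials is a monomial and each element of $\Bcan$ is one, this gives $\alpha\beta\in\mathbb N[q^{\pm1}]\cdot\Bcan$ for $\alpha,\beta\in\Bcan$, and the case $B(\cB;\ori_+)$ follows by the rescaling of Step 2.

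Finally, since $\Delta$ is an algebra homomorphism (Theorem \ref{thm.1a}) and $\Delta(\al_{\nu\mu})=\sum_{\eta}\al_{\nu\eta}\otimes\al_{\eta\mu}$ has all coefficients $1$ by \eqref{eq.coproduct1}, writing $\al_{\boeta,\bomu}$ as the product of its single-arc factors from Step 1 gives $\Delta(\al_{\boeta,\bomu})=\sum_{\bonu}\al_{\boeta,\bonu}\otimes\al_{\bonu,\bomu}$, a sum of tensor products of monomials; applying Step 3 to each tensor factor expands it positively in $\Bcan\otimes\Bcan$, and $B(\cB;\ori_+)$ again follows by rescaling. The step I expect to require real care is the identification with Kashiwara's basis: pinning down his ordering of the generators and, above all, his $q$-normalisation, so that the identification is a literal equality rather than one merely up to powers of $q$. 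Once $\cS(\cB)$ is recognised as $\OSL$ with its manifestly positive presentation, the positivity statements themselves are essentially formal.
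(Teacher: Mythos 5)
Your proposal is correct and follows essentially the same route as the paper: identify $\Bcan$ with Kashiwara's explicit monomial list $\{c^la^mb^n\}\sqcup\{c^ld^mb^n,\ m>0\}$ via Theorem \ref{teo:SL2}, reduce $B(\cB;\ori_+)$ to $\Bcan$ by the $q$-power rescaling, and obtain positivity of the product from the positivity of the defining relations of $\OSL$, with coproduct positivity then following from $\Delta(\al_{\nu\mu})=\sum_\eta\al_{\nu\eta}\otimes\al_{\eta\mu}$ and the fact that $\Delta$ is an algebra map. Your rewriting-system/termination argument is simply an explicit version of the paper's check of the sixteen generator pairs, and the bar-invariance detour is unnecessary since Kashiwara's Proposition 9.1.1 already gives the canonical basis as exactly these monomials.
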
 
}
\begin{proof}
The first statement is an observation directly following Theorem \ref{teo:SL2}: in \cite[Proposition 9.1.1]{Ka} the basis is $\{c^la^mb^n, l,m,n\geq 0\}\sqcup \{c^ld^mb^n, l,n\geq 0, m>0\}$.
\red{
As $B(\cB;\ori_+)$ is equal to $\Bcan$ up to powers of $q$, one needs only to prove the second statement for $\Bcan$.
}
For positivity of multiplication, it is sufficient to check it on pairs of generators: there are then $16$ cases. 
All of them are straightforward; we provide some instances among the most complicated cases  where the right hand sides are in  $\mathbb{N}[q^{\pm1}]\cdot\Bcan$:
 $$\alpha_{+-}\cdot \alpha_{-+}=\alpha_{-+}\cdot \alpha_{+-}, \alpha_{+-}\cdot \alpha_{--}=q^{-2}\alpha_{--}\cdot \alpha_{+-}, \alpha_{--}\cdot \alpha_{-+}=q^{2}\alpha_{-+}\cdot \alpha_{--}$$ $$\alpha_{+-}\cdot \alpha_{++}=q^2\alpha_{++}\cdot \alpha_{+-}, \alpha_{++}\cdot \alpha_{-+}=q^{-2}\alpha_{-+}\cdot \alpha_{++}, \alpha_{++}\cdot \alpha_{-+}=q^{-2}\alpha_{-+}\cdot \alpha_{++}.$$

Once positivity is known for multiplication, the statement for comultiplication can be checked on generators where it is straightforward. 
\end{proof}

\begin{remark} A direct proof of positivity using pictures is also easy and left to the reader.
\end{remark}

\section{Comodule structures, co-tensor products and braided tensor products}\label{sec.comodule}

\def\De{\Delta_e}
\def\eD{\null _e\Delta}
\def\bal{{\text{-bal}}}

In this section we show that given any edge of $\fS$ the skein algebra $\SS$ has a natural structure of $\OSL$-comodule algebra. We show how to decompose this comodule into finite dimensional comodules. We then identify the image of the splitting homomorphism using the Hochshild cohomology, and give a dual result using Hochshild homology. When $\fS$ is the result of gluing two surfaces $\fS_1$ and $\fS_2$ to two edges of an ideal triangle, we show that the skein algebra $\SS$ is canonically isomorphic to the braided tensor product of $\cS(\fS_1)$ and $\cS(\fS_2)$. In this section $\cR=\BZ[q^{\pm 1/2}]$.

\subsection{Comodule}\label{sub:comodule} Suppose  $e$ is a boundary edge of a  punctured bordered surface $\fS$. Recall that by cutting out of $\fS$ a bigon $\fB$ whose right edge is $e$ and canonically identifying $\fS\setminus int(\fB)$ with $\fS$, we get an $\cR$-linear map
$$ \De: \cSs(\fS) \to \cSs(\fS) \otimes \cSs(\fB),$$
see Figure \ref{fig:split1}.
Similarly cutting out of $\fS$ a bigon $\fB$ whose left edge is $e$ and canonically identifying $\fS\setminus int(\fB)$ with $\fS$, we get an $\cR$-linear algebra homomorphism
$$ \eD: \cSs(\fS)\to \cSs(\fB \sqcup \fS) \equiv   \cSs(\fB) \ot \cSs(\fS).$$

 \begin{proposition}\label{prop:comodule} (a) The map $\D_e: \cSs(\fS) \to  \cSs(\fS) \otimes \cSs(\fB)$ gives $\cSs(\fS)$ a right comodule-algebra structure over the Hopf algebra $\cSs(\fB)$. Similarly $\eD$ gives gives $\cSs(\fS)$ a left comodule-algebra structure over the Hopf algebra $\cSs(\fB)$. 
 
 (b) It holds $\eD=\fl \circ (\Id_{\cSs(\fS)}\otimes \rot_*)\circ \De$ where $\fl (x\otimes y)=y\otimes x$ and $\rot_*:\cSs(\B)\to \cSs(\B)$ is the algebra involution defined by \eqref{def.rot}.

(c)  If $e_1,e_2$ are two distinct boundary edges, the coactions on the two  edges commute, i.e.  $$(\Delta_{e_2}\otimes \id)\circ \Delta_{e_1}=\red{(\fl\otimes \id)\circ}(\Delta_{e_1}\otimes \id)\circ \Delta_{e_2}.$$
\end{proposition}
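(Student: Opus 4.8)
The plan is to prove the three parts by reducing each assertion to a manipulation of the splitting homomorphism $\theta_c$ together with its basic properties from Theorem \ref{thm.1a}, specializing to the particular ideal arcs that define $\Delta_e$ and $\eD$. First, observe that all three statements are statements about maps on $\cSs(\fS)$ whose targets are built from tensor factors $\cSs(\fS)$ and $\cSs(\cB)$; since these maps are $\cR$-algebra homomorphisms (being compositions of splitting homomorphisms with the canonical identifications of $\fS$ with a bigon-trimmed copy of itself) it suffices to verify each identity on a spanning set, and the most convenient one is the set of stated tangle diagrams $\al$ that are vertically transverse to all the relevant ideal arcs simultaneously and are in ``nice'' position near the edge(s). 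For such $\al$ the coactions are given by the explicit state-sum \eqref{eq.coact}, which makes every verification a finite bookkeeping over states.

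For part (a), the right comodule-algebra axiom $(\Delta_e \otimes \Id)\circ \Delta_e = (\Id \otimes \Delta)\circ \Delta_e$ follows from Theorem \ref{thm.1a}(c): cutting off a bigon along an arc $c$ parallel to $e$, and then cutting that bigon again along its own middle arc, can be performed in the two opposite orders, and the two resulting systems of two disjoint ideal arcs in $\fS$ are isotopic, so the corresponding iterated splitting homomorphisms coincide. The counit axiom $(\Id\otimes\epsilon)\circ\Delta_e = \Id = (\epsilon\otimes\Id)\circ\Delta_e$ (with $\epsilon$ the counit of $\cSs(\cB)$ from Section \ref{sec:antipode}) is checked on diagrams: applying $\Delta_e$ produces $\sum_{\boeta} x_{\boeta}\otimes \al_{\boeta\bmu}$, and then $\epsilon(\al_{\boeta\bmu})$ picks out, up to the $C$-coefficients that the inversion homomorphism $\inv_{e_r}$ contributes, exactly the term $\boeta = \bmu$, reconstructing $x$; here one invokes \eqref{eq.counit} and the arc relations \eqref{eq.arcs}--\eqref{eq.arcs2} exactly as in the proof that $\cSs(\cB)$ is a bialgebra. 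That $\Delta_e$ is an algebra map is already part of the Splitting Theorem. The left version is symmetric, which leads naturally into part (b).

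For part (b), the identity $\eD = \fl\circ(\Id\otimes \rot_*)\circ\Delta_e$ expresses the elementary geometric fact that cutting a bigon off along $e$ so that $e$ is its \emph{left} edge differs from cutting it off so that $e$ is its \emph{right} edge precisely by the $180^\circ$ rotation $\rot$ of the bigon (which swaps $e_l$ and $e_r$), followed by swapping the two tensor factors to put $\cSs(\cB)$ on the correct side. I would make this precise by taking a diagram $\al$ vertically transverse to the relevant arc, writing out both sides via \eqref{eq.coact}, and checking that $\rot_*(\al_{\boeta\bmu})$ is the element of $\cSs(\cB)$ obtained by reading the same arcs with $e_l$ and $e_r$ interchanged — which is exactly what appears in the state-sum for $\eD$. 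For part (c), the coactions on two disjoint edges $e_1,e_2$ are given by splitting along two disjoint arcs (parallel to $e_1$ and $e_2$), so commutativity is again Theorem \ref{thm.1a}(c); the only subtlety, and the place I expect to spend the most care, is tracking the tensor-factor ordering: $\Delta_{e_2}$ produces a factor $\cSs(\cB)$ to the \emph{right} of $\cSs(\fS)$, then $\Delta_{e_1}\otimes\Id$ inserts another $\cSs(\cB)$ between them, whereas doing $e_1$ first and then $e_2$ produces the two bigon factors in the opposite relative order — hence the flip $\fl\otimes\Id$ on one side of the stated equation. So the main obstacle is not any deep geometric input but the disciplined handling of these identifications and the placement of tensor factors; once a consistent convention is fixed, each claim collapses to the disjointness/commutativity of splittings plus the explicit state-sum formula \eqref{eq.coact}.
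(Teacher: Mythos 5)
Your proposal is correct and follows essentially the same route as the paper: coassociativity and the commutation in (c) come from Theorem \ref{thm.1a}(c), the counit axiom is checked on the state-sum \eqref{eq.coact} using $\epsilon(\al_{\boeta\bmu})=\delta_{\boeta\bmu}$ (the outcome is exactly the Kronecker delta, with no residual $C$-coefficients), the comodule-algebra property is just the fact that the splitting map is an algebra homomorphism, and (b) rests on the same observation that cutting off the bigon on the left versus the right differs by $\rot_*$ composed with the flip, which the paper records as $(\rot_*\otimes\rot_*)\circ\Delta^{op}=\Delta\circ\rot_*$.
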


\begin{proof} (a) The associativity of $\D_e$ follows from the
the commutativity  of the splitting maps of theorem \ref{thm.1a}(c). Applying $(\id \ot \epsilon)$ to Equation \eqref{eq.coact} and using the value of $\epsilon(\al_{\boeta\bmu})$ from~\eqref{eq.counit1}, we get that 
$$ (\id \ot \epsilon) \D_e(x) = x_\bmu=x, \quad \forall x \in B(\fS,\ori).$$
Hence $\D_e$ gives $\cSs(\fS)$ the structure of a right $\cSs(\fB)$-comodule.

Recall that $\cSs(\fS)$ is a comodule-algebra over the bialgebra $\cSs(\fB)$, see e.g. \cite[Proposition III.7.2]{Kassel}, if and only if the map $ \De: \cSs(\fS) \to \cSs(\fS) \otimes \cSs(\fB)$ is an algebra homomorphism. The last fact follows easily from the definition of $\D_e$.

(b) Observe that $(\rot_*\otimes \rot_*)\circ \Delta^{op}=\Delta\circ \rot_*$. 

(c) is clear from the definition.
\end{proof}
By identifying $\SB$ with $\OSL$ using Theorem \ref{teo:SL2}, the above proposition also provides $\SS$ with the structure of a $\OSL$-comodule. 
More in general, we will use the following terminology:
\begin{definition}[Surfaces with indexed boundary]\label{defi:indexed}
A punctured bordered surface $\fS$ has \emph{indexed boundary} if its boundary edges are partitioned into two ordered sets (the left and right ones, with indices $L$ and $R$ respectively): $e^L_1,\ldots e^L_n, e^R_1,\ldots e^R_m$.
\end{definition}

If $\fS$ has indexed boundary then $\cSs(\fS)$  is naturally endowed with a structure of $$(\OSL^{\otimes n},\OSL^{\otimes m})-{\rm bicomodule}$$ by the left coaction $\Delta^L:\cSs(\fS)\to \OSL^{\otimes n}\otimes \cSs(\fS)$ defined by 
$$\Delta^L:=
(\Id_{\OSL}^{\otimes n-1}\otimes \eDD{e^L_n})\circ (\Id_{\OSL}^{\otimes n-2}\otimes \eDD{e^L_{n-1}})\circ \cdots \circ (\Id_{\OSL}\otimes \eDD{e^L_2})\circ \eDD{e^L_1}$$ and the right coaction 
$$\Delta^R:=(\Delta_{e^R_1}\otimes\Id_{\OSL}^{\otimes m-1} )\circ (\Delta_{e^R_2}\otimes\Id_{\OSL}^{\otimes m-2} )\circ \cdots \circ (\Delta_{e^R_{m-1}}\otimes\Id_{\OSL} )\circ \Delta_{e^R_1}.$$ 

Furthermore notice that $\cSs(\fS)$ is not only a bicomodule but a bicomodule-algebra as each of the above maps $\Delta_{e^R_i}$ or $\eDD{e^L_j}$ are also morphisms of algebras.

\subsection{Quantum group $\USL$} \label{sub:module}
Recall that the quantized enveloping algebra $\USL$ is the Hopf algebra generated over the field $\mathbb{Q}(q^{1/2})$ by $K^{\pm1},E,F$ with relations \begin{equation}\label{eq:uqprod}
KE=q^4 EK,\quad KF=q^{-4}FK,\quad [E,F]=\frac{K-K^{-1}}{q^2-q^{-2}}.\end{equation}
The coproduct and the antipode are given by 
\begin{align}
\label{eq:uqcoprod}
\Delta(K)=K\otimes K,\quad \Delta(E)=1\otimes E+E\otimes K, \quad \Delta(F)=K^{-1}\otimes F+F\otimes 1\\
S(K)=K^{-1},\ S(E)=-EK^{-1}, S(F)=-KF.
\end{align}
  
We emphasize that $\USL$ is defined over the field $\BQ(q^{1/2})$. There is an integral version $U^L_{q^2}(\mathfrak{sl}_2)$, defined by Lusztig \cite{Lusztig}, which is the $\cR$-subalgebra of $\USL$ generated by $K^{\pm1}$ and the divided powers  $E^{(n)}:=\frac{E^n}{[n]!},F^{(n)}:=\frac{F^n}{[n]!}$. Here $[n]! = \prod_{i=1}^n (q^{2i} - q^{-2i}) /(q^2 - q^{-2})$.

One has a non degenerate Hopf pairing  
\be \la \cdot, \cdot \ra: \USL\ot_\cR \OSL \to \Qq.
\label{eq.form1}
\ee
This is a Hopf duality since it satisfies (with Sweedler's coproduct notation)
\be 
\la x, y_1 y_2 \ra = \sum \la x', y_1 \ra \la x'', y_2 \ra,  \quad \la x_1 x_2, y\ra = \sum \la x_1, y' \ra \la x_2, y'' \ra
\ee
The values of the form on generators are given by
\begin{align}
 \left \langle K, \begin{pmatrix}
 a & b \\ c & d
 \end{pmatrix}  \right \rangle= \begin{pmatrix}
 q^2 & 0 \\ 0 & q^{-2}
 \end{pmatrix}, \ \left \langle E, \begin{pmatrix}
 a & b \\ c & d
 \end{pmatrix}  \right \rangle= \begin{pmatrix}
 0 & 1 \\ 0 & 0
 \end{pmatrix}, \ \left \langle F, \begin{pmatrix}
 a & b \\ c & d
 \end{pmatrix}  \right \rangle= \begin{pmatrix}
 0 & 0 \\ 1 & 0
 \end{pmatrix}.
\end{align}

\begin{lemma}
\label{lem:integralpairing} The form \eqref{eq.form1} on $U^L_{q^2}(\mathfrak{sl}_2)$ is integral, i.e. it restricts to a map
$$  U^L_{q^2}(\mathfrak{sl}_2)\ot_\cR \OSL \to \cR= \BZ[q^{\pm1/2}]  .$$

\end{lemma}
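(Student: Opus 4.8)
The plan is to reduce the claim to a statement about the pairing of the generators $K^{\pm 1}, E^{(n)}, F^{(n)}$ of $U^L_{q^2}(\mathfrak{sl}_2)$ against the canonical $\cR$-basis $B$ of $\OSL \cong \cSs(\cB)$ given in \eqref{eq.basis}, namely the monomials $\al_{++}^h\al_{-+}^k\al_{--}^l$ and $\al_{++}^h\al_{+-}^k\al_{--}^l$ (with $k\ge 1$). Since $B$ is an $\cR$-basis, it suffices to show that $\la u, b\ra \in \cR$ for every $u \in \{K^{\pm 1}, E^{(n)}, F^{(n)} : n \ge 0\}$ and every $b \in B$; integrality for a general element of $U^L_{q^2}(\mathfrak{sl}_2)$ then follows because such elements are $\cR$-linear combinations of products of the generators, and the Hopf-duality identity $\la u_1 u_2, y\ra = \sum \la u_1, y'\ra \la u_2, y''\ra$ together with the fact that the coproduct of $\OSL$ preserves $\cR\cdot(B\otimes B)$ (by Proposition \ref{prop:kashiwara}, or directly from the explicit formulas) keeps us inside $\cR$.

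First I would record the action of the generators on $\OSL$ as $\cR$-linear functionals via the duality. Using the Hopf pairing property $\la x, y_1 y_2\ra = \sum \la x', y_1\ra\la x'', y_2\ra$ and the explicit values on $a,b,c,d$, the element $K$ acts diagonally on each monomial: $\la K, \al_{++}^h \al_{\pm\mp}^k \al_{--}^l\ra$ is a single power of $q^2$, since $K$ is group-like and $\la K, a\ra = q^2$, $\la K,d\ra = q^{-2}$, $\la K, b\ra = \la K,c\ra = 0$ forces only the ``diagonal'' term to survive; this clearly lies in $\cR$, and similarly for $K^{-1}$. For $E$ and $F$, which are skew-primitive ($\Delta(E) = 1\otimes E + E\otimes K$, $\Delta(F) = K^{-1}\otimes F + F\otimes 1$), I would use the Leibniz-type rule they satisfy against a product to reduce $\la E, b\ra$ and $\la F, b\ra$ for $b$ a length-$(h+k+l)$ monomial to sums of products of the elementary values $\la E, a\ra=\la E,d\ra=0$, $\la E,b\ra=1$, $\la E,c\ra=0$ (and symmetrically for $F$), weighted by powers of $q^2$ coming from $\la K,\cdot\ra$; these are manifestly in $\cR$. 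The substantive point is the divided powers: I would compute $\la E^{(n)}, \cdot\ra$ by iterating $\Delta^{(n-1)}$ and show that the resulting expression is $[n]!$ times something, so that dividing by $[n]!$ is legitimate over $\cR$. Concretely, on a monomial the only way to get a nonzero pairing with $E^n$ is to ``hit'' $n$ of the $b$-factors (if $b$ is of the form $\al_{++}^h\al_{+-}^k\al_{--}^l$, so one needs $k\ge n$), and the combinatorial coefficient that appears is a $q$-binomial $\binom{k}{n}_{q^4}$ — which is in $\cR=\BZ[q^{\pm1/2}]$ — together with the normalization $[n]!/[n]! = 1$; the key cancellation is that the factor of $n!$ coming from the $n!$ orderings of where $E$ lands, $q$-deformed appropriately, is exactly $[n]!$. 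An alternative, cleaner route is to invoke Lusztig's theorem that $\la U^L_{q^2}(\mathfrak{sl}_2), \OSL_\cR\ra \subseteq \cR$ for the standard integral forms, where $\OSL_\cR$ is Lusztig's integral form of the coordinate ring, combined with the (standard, and consistent with Proposition \ref{prop:kashiwara}) identification of $\cSs(\cB)$'s $\cR$-structure with $\OSL_\cR$; then no computation is needed.

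The main obstacle I anticipate is the bookkeeping for the divided powers $E^{(n)}, F^{(n)}$: one must verify that the $q$-binomial coefficients produced by pairing $E^n$ against a high power $\al_{+-}^k$ (and the analogous statements with $c$, $d$) are divisible by $[n]!$ in $\cR$ — equivalently that $\la E^{(n)}, \al_{+-}^k\ra = \binom{k}{n}_{q^4}$ up to a unit — which is a genuine $q$-combinatorial identity rather than a formality, though it is classical (it is essentially the statement that the matrix coefficients of the $(k+1)$-dimensional irreducible $\USL$-module lie in $\cR$, i.e. that $\USL$ acts integrally on the Weyl modules via their canonical bases). Everything else — the $K^{\pm1}$ case, the single-$E$/single-$F$ case, the passage from generators to arbitrary elements — is routine given the Hopf-duality axioms and the explicit coproduct formulas already in the excerpt. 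I would therefore structure the write-up as: (i) reduce to generators-against-basis; (ii) dispatch $K^{\pm 1}$; (iii) dispatch $E,F$; (iv) prove the divided-power identity $\la E^{(n)}, \al_{+-}^k\ra \in \cR$ (and its $F$-analogue), which is the heart of the matter; (v) conclude by the pairing axioms.
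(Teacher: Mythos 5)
Your proposal is correct in outline and would yield a proof, and it matches the paper's strategy in spirit (reduce via the Hopf-pairing axiom $\la x, y_1y_2\ra=\sum\la x',y_1\ra\la x'',y_2\ra$ to the divided powers $E^{(n)},F^{(n)}$, which are the only nontrivial point), but the paper handles the crux differently and more cheaply. Instead of your step (iv) — computing $\la E^{(n)},\cdot\ra$ on basis monomials and verifying divisibility of the resulting sum over orderings by $[n]!$ — the paper uses the standard integral coproduct formula $\Delta(E^{(n)})=\sum_{i=0}^{n} q^{2i(n-i)}E^{(i)}\otimes E^{(n-i)}K^i$: since the coefficients lie in $\cR$ and the tensor factors are again products of divided powers and $K$-powers, induction on the length of a monomial in $a,b,c,d$ reduces everything to evaluating $E^{(i)}K^j$ on the four generators, where the values are $0$, $1$ or a power of $q$. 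This formula is exactly the $q$-binomial/$[n]!$ cancellation you identify as the heart of the matter, prepackaged, so the paper needs no basis of $\OSL$ and no $q$-combinatorial verification; your "alternative cleaner route" of citing Lusztig amounts to citing this same fact. One factual slip in your combinatorics (harmless for integrality, but worth fixing if you write it up directly): $\la E^{(n)},\al_{+-}^k\ra$ is not $\binom{k}{n}_{q^4}$ up to a unit, and it is not enough that $k\ge n$ — a tensor slot of $b^{\ot k}$ not hit by an $E$ receives a power of $K$ (or $1$), and $\la K^m,b\ra=\epsilon(b)=0$, so the pairing vanishes unless $k=n$, in which case the sum over the $n!$ orderings gives $[n]!$ times a unit power of $q$ (e.g.\ $\la E^2,b^2\ra=(1+q^4)q^{-2}=[2]$); similarly on $a^h b^k d^l$ the pairing is nonzero only for $k=n$, and monomials containing $c$ pair to zero with all $E^{(i)}K^j$.
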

\begin{proof} It is enough to check that $\langle E^{(n)},x\rangle, \langle F^{(n)},x\rangle \in \cR$ for $n\geq 1$ and $  x\in \OSL$. 
Since $\Delta(E^{(n)})=\sum_{i=0}^{n} q^{2i(n-i)}E^{(i)}\otimes E^{(n-i)}K^i  $ it is sufficient to check the statement for the evaluations of $ E^{(i)}$ and $K^j$ on $a,b,c,d$ where this is a straightforward computation. Similarly for $F^{(n)}$. 
\end{proof}

\red{Recall that the rotation by $180^\circ$ of the bigon induces the involution $r:\OSL\to \OSL$, see Theorem \ref{teo:SL2}. Let $r^*:\USL\to \USL$ be the adjoint of the map $r$. We will show that $r^*$ is equal to the map $\rho$ of Lusztig's book \cite[Chapter 19]{Lusztig}, which is used in the study of canonical bases of quantum groups.

Let $\USL-Mod$ (respectively $Mod-\USL$) be the monoidal category of left (respectively right) $\USL$-modules.
}
\begin{lemma}[Left and right modules]\label{lem:LRfunctors}

(a) The map  $r^*$ is an algebra antimorphism involution and a coalgebra morphism, i.e. for $x,y\in \USL$  one has $$(r^*)^2(x)=x\qquad r^*(xy)=r^*(y)r^*(x), \quad \Delta(r^*(x))=(r^*\otimes r^*)\circ \Delta(x).$$

Explicitly, the value of $r^*$ on the generators is 
\be r^*(E)=q^2KF,\qquad  r^*(K)=K,\qquad r^*(F)=q^{-2}EK^{-1}.
\label{eq.rr}
\ee

(b) The map $r^*$ induces monoidal functors $$LR:\USL-Mod\to Mod-\USL, {\rm and}\ RL:Mod-\USL\to \USL-Mod$$ which are inverse to each other as follows: for each left (resp. right) module $M$
 the module $LR(M)$ (resp. $RL(M)$) is the right (resp. left) module whose underlying vector space is $M$ and on which the action of $x\in  \USL$ 
 is given by, with on $\alpha \in M$,
  $$ \alpha\cdot x:=r^*(x)\cdot \alpha \quad (resp.\ \  x\cdot \alpha:= \alpha\cdot r^*(x) ).$$

\end{lemma}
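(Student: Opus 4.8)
The plan is to prove (a) by explicit computation on generators, then (b) by a formal argument from (a). For part (a), recall that $r^*$ is defined as the adjoint of $r:\OSL\to\OSL$ with respect to the non-degenerate Hopf pairing \eqref{eq.form1}, so $\la r^*(x),y\ra = \la x, r(y)\ra$ for all $x\in\USL$, $y\in\OSL$. First I would verify the formula \eqref{eq.rr} by checking it on the three generators $E,F,K$: using the tabulated values of the pairing on $a,b,c,d$ and the fact that $r$ swaps $b\leftrightarrow c$ and fixes $a,d$, one computes $\la r^*(E),a\ra=\la E, r(a)\ra=\la E,a\ra=0$, similarly $\la r^*(E),d\ra=0$, $\la r^*(E),b\ra=\la E,c\ra=0$, $\la r^*(E),c\ra=\la E,b\ra=1$; matching this against the pairing values of $q^2KF$ on $a,b,c,d$ (using the Hopf-pairing property to expand $\la KF,x\ra=\sum\la K,x'\ra\la F,x''\ra$ on the coproducts of $a,b,c,d$) confirms $r^*(E)=q^2KF$, and symmetrically for $F$ and $K$. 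Because a Hopf-algebra element is determined by its pairing against all of $\OSL$ (non-degeneracy), and because the generators $E,F,K^{\pm1}$ generate $\USL$, it then suffices to check the three structural identities on generators. That $r^*$ is a coalgebra morphism, $\Delta\circ r^*=(r^*\otimes r^*)\circ\Delta$, is dual to $r$ being an algebra morphism of $\OSL$ (part of Theorem \ref{teo:SL2}); dually, $r^*$ being an algebra antimorphism, $r^*(xy)=r^*(y)r^*(x)$, is dual to $r$ being a coalgebra antimorphism, which is exactly the statement $(r\otimes r)\circ\Delta^{op}=\Delta\circ r$ proved in Theorem \ref{teo:SL2}. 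The involutivity $(r^*)^2=\mathrm{id}$ is dual to $r^2=\mathrm{id}$, which is clear since $r$ comes from the rotation $\rot$ and $\rot^2=\mathrm{id}$. Alternatively, all four identities can be checked directly on $E,F,K$ from \eqref{eq.rr} using the relations \eqref{eq:uqprod}, \eqref{eq:uqcoprod} — e.g. $(r^*)^2(E)=q^2 r^*(K)r^*(F)=q^2 K\cdot q^{-2}EK^{-1}=KEK^{-1}=E$ by \eqref{eq:uqprod} — which is short enough that I would include it as a backup.

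For part (b), the content is purely formal category theory once (a) is in hand. Given a left $\USL$-module $M$, define $LR(M)$ to have the same underlying $\BQ(q^{1/2})$-vector space with right action $\alpha\cdot x:=r^*(x)\cdot\alpha$. This is a well-defined right action precisely because $r^*$ is an algebra antimorphism: $(\alpha\cdot x)\cdot y=r^*(y)\cdot(r^*(x)\cdot\alpha)=(r^*(y)r^*(x))\cdot\alpha=r^*(xy)\cdot\alpha=\alpha\cdot(xy)$, and it is unital since $r^*(1)=1$. On morphisms $LR$ acts as the identity on underlying maps; a $\USL$-linear map stays linear because the action is just precomposed with $r^*$. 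Since $(r^*)^2=\mathrm{id}$, the analogously-defined functor $RL$ satisfies $RL\circ LR=\mathrm{id}$ and $LR\circ RL=\mathrm{id}$ on the nose, so they are mutually inverse isomorphisms of categories. Finally, monoidality: for left modules $M,N$ the tensor product $M\otimes N$ carries the action through $\Delta$, and one must check $LR(M\otimes N)\cong LR(M)\otimes LR(N)$ as right modules — here the right-hand side uses the right-module tensor structure, which is built from $\Delta$ as well. The required compatibility is exactly that $r^*$ is a coalgebra morphism: $\Delta(r^*(x))=(r^*\otimes r^*)\Delta(x)$ ensures the two actions on the common underlying space $M\otimes N$ agree, so the identity map is the monoidal structure isomorphism, and the coherence hexagon/triangle axioms hold trivially since everything is the identity on underlying spaces.

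The main obstacle is minor: it is simply the bookkeeping in part (a) — expanding $\la KF,x\ra$, $\la q^{-2}EK^{-1},x\ra$ etc. via the Hopf-pairing axioms on the coproducts of $a,b,c,d$, and keeping the powers of $q$ straight (the factors $q^{\pm2}$ in \eqref{eq.rr} are exactly what is needed to make $r^*$ an \emph{anti}morphism rather than just a twisted morphism). Everything in part (b) is formal and follows from (a) with no topological or representation-theoretic input. If one instead verifies (a) directly on generators from \eqref{eq.rr} using the defining relations of $\USL$, then even part (a) reduces to a handful of one-line computations, and there is no real obstacle at all; I would present that direct verification as the primary argument and mention the duality interpretation (via Theorem \ref{teo:SL2}) as the conceptual reason the formulas take the form they do.
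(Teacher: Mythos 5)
Your proposal is correct and follows essentially the same route as the paper: the structural properties of $r^*$ (antimorphism, involution, coalgebra morphism) are deduced by duality from Theorem \ref{teo:SL2} using the non-degenerate Hopf pairing, the explicit formula \eqref{eq.rr} is verified by pairing against $a,b,c,d$, and part (b) is the same formal argument (antimorphism gives functoriality, involution gives mutual inverses, coalgebra morphism gives monoidality). One small slip in your optional direct check: you applied $r^*$ to $q^2KF$ in multiplicative order and then used $KEK^{-1}=E$, which is false since $KEK^{-1}=q^4E$; the anti-multiplicative order gives the correct computation $(r^*)^2(E)=q^2\,r^*(F)\,r^*(K)=q^2\,(q^{-2}EK^{-1})K=E$.
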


\begin{remark} Formula \eqref{eq.rr} shows that $r^*$ is equal to $\rho$ of \cite[Chapter 19]{Lusztig}.
\end{remark}
\begin{proof}
(a) Since $r$ is a algebra involution and a co-algebra antimorphism by  Theorem \ref{teo:SL2} and the Hopf pairing is non-degenerate,  its dual $r^*$ is an algebra antimorphism involution and a coalgebra morphism.

It is sufficient to compute $r^*$ on the generators where one can verify the values provided in the statement.
For instance:$$\langle r^*(E),a\rangle=\langle E,a\rangle=0=\langle q^2 KF,a\rangle=\langle q^2K\otimes F,a\otimes a+c\otimes b\rangle$$
$$\langle r^*(E),b\rangle=\langle E,c\rangle=0=\langle q^2 KF,b\rangle=\langle q^2K\otimes F,a\otimes b+b\otimes d\rangle$$
$$\langle r^*(E),c\rangle=\langle E,b\rangle=1=\langle q^2 KF,c\rangle=\langle q^2K\otimes F,c\otimes a+d\otimes c\rangle$$
$$\langle r^*(E),d\rangle=\langle E,d\rangle=0=\langle q^2 KF,d\rangle=\langle q^2K\otimes F,c\otimes b+d\otimes d\rangle$$

The verification for the pairings with $a,b,c,d$ for $r^*(F)$ and $r^*(K)$ are similar.

(b) $LR$ and $RL$ are functors as $r^*:\USL\to \USL$ is an algebra antimorphism; they are inverse to each other as $r^*$ is an involution. Monoidality is a consequence of the fact that $r^*$ is a coalgebra morphism. 
\end{proof}
\subsection{Module structure of $\cSs(\fS)$.}\label{sub:modulestructure} As usual, the Hopf duality implies that 
every right (resp. left) $\OSL$-comodule $V$  has a natural structure of a left (resp. right) $\USL$-module, via the following construction. For $a \in \USL$ and $v \in V$, one has
\be  a \cdot v:= \sum \la a, b' \ra v', \quad \text{where}\quad \Delta_r(v) = \sum v' \ot b'.
\label{eq.dual}
\ee

To be precise, we have to replace $V$ by $V\ot_\cR \Qq$, since $\USL$ is defined over $\Qq$.

 \def\SSQ{\SS \ot_\cR \Qq}

In particular, for an edge $e$ of $\fS$ the right comodule structure $\eD: \SS \to  \SS \ot \SB$ gives $\SSQ$ a left module structure over $\USL$, and we want to understand this module structure.

 Fix an orientation $\ori$ of the boundary $\pfS$. Recall that $B(\fS;\ori)$ is an $\cR$-basis of $\cSs(\fS)$. For each edge $e$ let $B_{e,d}(\fS;\ori) \subset B(\fS;\ori)$ be the set of all $\al \in B(\fS;\ori)$ such that $|\al\cap e|=d$ and all the states on $\al \cap e$ are signs $+$. Let
$B_e(\fS;\ori) = \cup _{d=0}^\infty B_{e,d}(\fS;\ori)$. 
For $\alpha\in B_{e,d}(\fS;\ori)$ and $\vec{\eta}\in \{\pm\}^d$, let $\alpha(\vec{\eta})$ be the same $\alpha$ except for the states of $e\cap \alpha$ which are given by $s(x_i)= \eta_i$, where $x_1, \dots, x_d$ are the points of $\al\cap e$ listed in decreasing order. In particular let $\alpha_j:=\alpha(+,+,\cdots ,+,-,-, \dots,-)$ where the number of $-$ is $j$. For example, $\alpha=\alpha_0$.
 \begin{lemma}[Module structures of $\cSs(\fS)$ along an edge $e$]\label{lem:uqmodule}
The left action of $\USL$ on $\SSQ$, dual to $\eD$,  is: 
$$K_{left}(\alpha_j)=q^{2(d-2j)}\alpha_j$$
$$E_{left}(\alpha_0)=0,\ {\rm and}\ E_{left}(\alpha_j)=[j]_{q^2}\alpha_{j-1} \mod F^e_{d-1}$$
$$F_{left}(\alpha_d)=0,\ {\rm and} \ F_{left}(\alpha_j)=[d-j]_{q^2}\alpha_{j+1} \mod  F^e_{d-1}$$
where  $F^e_{d-1}= F^e_{d-1}(\SSQ)$ is the $\Qq$-span of elements $\beta\in B(\fS;\ori)$ with $|\beta \cap e| < d$. 
The right action, dual to $\De$, is given by the left action of $r^*(K),r^*(E),r^*(F)$ (see Lemma \ref{lem:integralpairing}).

\end{lemma}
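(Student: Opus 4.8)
The plan is to read the module structure off the coaction formula \eqref{eq.coact} and the duality formula \eqref{eq.dual}. Fix $\alpha=\alpha_0\in B_{e,d}(\fS;\ori)$ and, for $0\le j\le d$, let $\vec{\mu}_j$ be the (increasing) state on $e$ carried by $\alpha_j$. By \eqref{eq.coact} one has $\Delta_e(\alpha_j)=\sum_{\boeta\in\{\pm\}^d}\alpha(\boeta)\otimes \al_{\boeta\vec{\mu}_j}$, so \eqref{eq.dual} gives $a\cdot\alpha_j=\sum_{\boeta}\la a,\al_{\boeta\vec{\mu}_j}\ra\,\alpha(\boeta)$ for $a\in\USL$. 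Two elementary remarks make the right--hand side tractable. First, reading the $d$ parallel arcs from top to bottom identifies $\al_{\boeta\vec{\mu}_j}$ with the honest product of single--arc generators $\al_{\eta_d,(\mu_j)_d}\cdots\al_{\eta_1,(\mu_j)_1}$ in $\cSs(\cB)$. Second, by Proposition \ref{r.orichange}, reordering the states of $\alpha(\boeta)$ on $e$ into increasing order multiplies by $q^{2p(\boeta)}$ modulo $F^e_{d-1}$, where $p(\boeta)$ is the number of pairs ``$+$ below $-$'' in $\boeta$ and $j(\boeta)$ its number of minus signs; that is, $\alpha(\boeta)\equiv q^{2p(\boeta)}\alpha_{j(\boeta)}\pmod{F^e_{d-1}}$, the ``cap'' correction terms produced by \eqref{eq.order} having strictly smaller intersection number with $e$.

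For $a=K$ group--likeness makes $\la K,\cdot\ra$ multiplicative, and from the pairing table of Section \ref{sub:module} one has $\la K,\al_{++}\ra=q^2$, $\la K,\al_{--}\ra=q^{-2}$, $\la K,\al_{+-}\ra=\la K,\al_{-+}\ra=0$; hence only $\boeta=\vec{\mu}_j$ contributes, with value $q^{2(d-2j)}$, giving $K_{left}(\alpha_j)=q^{2(d-2j)}\alpha_j$ exactly. For $a=E$, iterating $\Delta(E)=1\otimes E+E\otimes K$ gives $\la E,x_1\cdots x_d\ra=\sum_{l}\big(\prod_{l'<l}\epsilon(x_{l'})\big)\la E,x_l\ra\big(\prod_{l'>l}\la K,x_{l'}\ra\big)$; feeding in $\la E,\al_{+-}\ra=1$ (all other single--arc pairings with $E$ vanishing), $\epsilon(\al_{\nu\mu})=\delta_{\nu\mu}$, and the values of $\la K,\cdot\ra$, one checks that the surviving $\boeta$ are exactly those obtained from $\vec{\mu}_j$ by turning one of its $j$ minus signs into a plus, the $m$--th one contributing $q^{-2(m-1)}$ and satisfying $p(\boeta)=j-m$. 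The second remark then turns the sum into $\big(\sum_{m=1}^{j}q^{-2(m-1)}q^{2(j-m)}\big)\alpha_{j-1}$, a geometric series in $q^4$ that collapses to $[j]_{q^2}$, whence $E_{left}(\alpha_j)=[j]_{q^2}\alpha_{j-1}\bmod F^e_{d-1}$.

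The case $a=F$ is entirely parallel, using $\Delta(F)=K^{-1}\otimes F+F\otimes 1$ and $\la F,\al_{-+}\ra=1$: the surviving $\boeta$ turn one plus of $\vec{\mu}_j$ into a minus, with the $m$--th contributing $q^{-2(d-m)}$ and $p(\boeta)=m-1-j$, and the resulting series collapses to $[d-j]_{q^2}$, giving $F_{left}(\alpha_j)=[d-j]_{q^2}\alpha_{j+1}\bmod F^e_{d-1}$. Finally, the statement about the action associated to the opposite edge--side coaction (and the $r^*$--twisted form) follows formally: by Proposition \ref{prop:comodule}(b) the two coactions differ by $\Id\otimes\rot_*$, under Theorem \ref{teo:SL2} the map $\rot_*$ corresponds to $r$, and since the Hopf pairing \eqref{eq.form1} is non--degenerate the transpose of $r$ is $r^*$; hence one module structure is obtained from the other by precomposition with $r^*$, i.e.\ by the functor of Lemma \ref{lem:LRfunctors}, which is exactly the asserted description via $r^*(K),r^*(E),r^*(F)$ with the explicit values \eqref{eq.rr}.

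The main obstacle is the $q$--power bookkeeping in the $E$ and $F$ cases: one must correctly combine the exponent produced by the iterated coproduct (the $\la K,\cdot\ra$-- and $\la K^{-1},\cdot\ra$--tails) with the reordering exponent from Proposition \ref{r.orichange}, and then recognise the alternating/geometric sum that appears as the quantum integer $[j]_{q^2}$, respectively $[d-j]_{q^2}$. A subsidiary point requiring care is to verify that every correction produced en route --- both the ``cap'' terms of \eqref{eq.order} and the terms by which $\al_{\boeta\vec{\mu}_j}$ differs from a reordered monomial in the generators --- strictly decreases the geometric intersection number with $e$, so that it genuinely vanishes modulo $F^e_{d-1}$.
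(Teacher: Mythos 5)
Your proposal is correct and follows essentially the same route as the paper: expand the coaction via \eqref{eq.coact}, pair with the iterated coproducts of $K,E,F$ using the single-arc pairing values and $\epsilon(\al_{\nu\mu})=\delta_{\nu\mu}$, and then reorder states on $e$ via \eqref{eq.order}/Proposition \ref{r.orichange} modulo $F^e_{d-1}$, the geometric sums collapsing to $[j]_{q^2}$ and $[d-j]_{q^2}$. The only difference is cosmetic: you carry out explicitly the final $q$-power bookkeeping that the paper dismisses as ``a direct computation using relation \eqref{eq.order}'' (your index conventions are the mirror of the paper's but yield the same sums), and you spell out the $r^*$-twist for the other coaction via Proposition \ref{prop:comodule}(b) and Lemma \ref{lem:LRfunctors}, which is exactly what the paper intends.
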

\begin{proof} \red{
By \eqref{eq.coact},  $\Delta(\alpha(\vec{\eta}))=\sum_{\vec{\epsilon} \in \{\pm\}^d} \alpha(\vec{\epsilon}) \otimes \alpha_{\vec{\epsilon}\vec{\eta}}$ where  $\alpha_{\vec{\epsilon}\vec{\eta}}$ is defined in Figure \ref{fig:bigon0}. If we define inductively $\Delta^{[d]} = (\Delta\ot\id^{\ot (d-2)} )\circ\Delta^{[d-1]}$ for $d\ge 3$, with $\Delta^{[2]} = \Delta$, then
\begin{align}
  \Delta^{[d]} (K)&= K^{\ot d},\\
 \Delta^{[d]} (E)&= \sum_{j=1}^d 1^{\ot (j-1)} \ot E \ot K^{\ot  (d-j)},  \label{eq.DE} \\
 \Delta^{[d]} (F)&= \sum_{j=1}^d (K^{-1}) ^{\ot j} \ot F \ot 1^{\ot d-j-1}  \label{eq.DF}.
\end{align}
Applying these} to  compute the Hopf pairing of $K,E,F$ with $\alpha_{\vec{\epsilon}\vec{\eta}}$ we get
$$K_{left}(\alpha(\vec{\eta}))=q^{2\sum \eta_i}\alpha(\vec{\eta}),$$ 
$$E_{left}(\alpha(\vec{\eta}))=\sum_{j=1}^d (\delta_{\eta_j,-}) q^{2\sum_{k=j+1}^d \eta_k}\alpha(\eta_1,\cdots ,\eta_{j-1}, +, \eta_{j+1},\cdots, \eta_d),$$
$$F_{left}(\alpha(\vec{\eta}))=\sum_{j=1}^d (\delta_{\eta_j,+}) q^{-2\sum_{k=1}^{j-1} \eta_k}\alpha(\eta_1,\cdots ,\eta_{j-1},-,\eta_{j+1},\cdots ,\eta_d).$$
Now the main claim is a direct computation using relation \eqref{eq.order}.
\end{proof}

Let  $\fS$ have indexed boundary $\partial \fS=\{e^L_1,\ldots e^L_m, e^R_1,\ldots e^R_n\}$ as explained in Subsection \ref{sub:comodule}. The Hopf duality gives $\SSQ$ an  algebra bimodule structure over $(\USL^{\otimes n},\USL^{\otimes m})$. (Notice the inversion between left and right when passing to modules). 

For each $\vec m\in \mathbb{N}^m$ and $\vec n\in \mathbb{N}^n$, let $B_{\vec{m},\vec{n}}(\fS;\ori)$ be defined as:
$$B_{\vec{m},\vec{n}}(\fS;\ori)=\left(\bigcap_{i=1}^m B_{e_i^L,m_i}(\fS;\ori)\right)\cap \left(\bigcap_{j=1}^n B_{e_j^R,n_j}(\fS;\ori)\right).$$
Let also, for each $\vec{j}\leq \vec{m}$ and $\vec{h}\leq \vec{n}$ (component-wise) and each $\alpha\in B_{\vec{m},\vec{n}}(\fS;\ori)$ let $\alpha_{\vec{j},\vec{h}}\in B(\fS;\ori)$ be the skein identical to $\alpha$ but for its state which is increasing and contains $\vec{j}$ (resp. $\vec{h}$) signs $-$ on the left (resp. right) edges.

\begin{theorem}\label{teo:module} Suppose that $\fS$ has indexed boundary $\partial \fS=\{e^L_1,\ldots e^L_n, e^R_1,\ldots e^R_m\}$.

(a) For each $\vec m\in \mathbb{N}^m$ and $\vec n\in \mathbb{N}^n$ and each $\al \in B_{\vec{m},\vec{n}}(\fS;\ori)$, 
 the $ (\USL^{\otimes n},\USL^{\otimes m})$-bimodule generated by $\alpha$ (namely $\USL^{\otimes n}\cdot \alpha \cdot \USL^{\otimes m}$) is irreducible and isomorphic to $V^L_{n_1}\otimes\cdots \otimes V^L_{n_n}\otimes V^R_{m_1}\otimes\cdots \otimes V^R_{m_m}$, where $V^L_k$ (resp. $V^R_k$) is the irreducible left (resp. right) module on $\USL$ with highest weight $k$. 

(b) As $ (\USL^{\otimes n},\USL^{\otimes m})$-bimodules, we have
\be 
\SSQ = \bigoplus \USL^{\otimes n}\cdot \alpha\cdot \USL^{\otimes m}.
\ee
where the sum is taken over all $\vec{m}\in \mathbb{N}^m,\vec{n}\in \mathbb{N}^n$, and all $\alpha\in B_{\vec{m},\vec{n}}(\fS;\ori)$.
In particular, the bimodule $\cSs(\fS)$ is a direct sum of finite dimensional bimodules over $ (\USL^{\otimes n},\USL^{\otimes m})$.

(c) Furthermore the bimodule structure restricts to that of a $ (U^L_{q^2}(\mathfrak{sl}_2)^{\otimes n},U^L_{q^2}(\mathfrak{sl}_2)^{\otimes m})$-bimodule (where $U^L_{q^2}(\mathfrak{sl}_2)$ is the integral version of $\USL$) and a decomposition similar to the above one holds:
\be 
\cSs(\fS) = \bigoplus \foo_{\vec{j}\leq \vec{m},\vec{h}\leq \vec{n}} \left( U^L_{q^2}(\mathfrak{sl}_2)^{\otimes n}\cdot \alpha_{\vec j,\vec{h}}\cdot U^L_{q^2}(\mathfrak{sl}_2)^{\otimes m}\right)
\ee
where the direct sum is taken over all $\vec{m}\in \mathbb{N}^m,\vec{n}\in \mathbb{N}^n$ and all $\alpha\in B_{\vec{m},\vec{n}}(\fS;\ori)$, and the $\foo_{\vec{j},\vec{h}}$ symbol stands for the non direct sum.
 
\end{theorem}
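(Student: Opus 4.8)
The plan is to deduce the whole theorem from the single‑edge computation of Lemma~\ref{lem:uqmodule}, bringing in the boundary edges one at a time and using that the module structures attached to distinct edges commute (the module‑theoretic counterpart of Proposition~\ref{prop:comodule}(c) and Theorem~\ref{thm.1a}(c)). Since $q^{1/2}$ is generic here, I will freely use that the category of integrable type‑$1$ $\USL$‑modules is semisimple with simple objects the finite‑dimensional $V^L_k$ ($k\in\mathbb N$), that the analogous statement holds for right modules $V^R_k$, and that an outer tensor product of such simples over a tensor power of $\USL$ acting as a bimodule is again simple.

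First I would treat one edge $e$ and analyse the left action of $\USL$ on $\SSQ$ dual to $\De$; the right action dual to $\eD$ is obtained from it through the algebra antimorphism $r^*$ of Lemma~\ref{lem:LRfunctors} and is handled identically, producing the right modules $V^R_k$. The heart is an induction on $d$ proving that $F^e_d(\SSQ)$ is a $\USL$‑submodule which is a union of finite‑dimensional submodules. By Lemma~\ref{lem:uqmodule}, $K$ preserves $F^e_d(\SSQ)$ and $E,F$ map it into itself, and modulo $F^e_{d-1}(\SSQ)$ the operator $E$ lowers and $F$ raises the number of $-$ signs on $e$; the base case $d=0$ is the trivial module, since a skein disjoint from $e$ is killed by both $E$ and $F$. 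For the inductive step, if $\alpha\in B_{e,d}(\fS;\ori)$ then $E\alpha=0$ and $K\alpha=q^{2d}\alpha$ exactly, while Lemma~\ref{lem:uqmodule} gives $\alpha_j\in\Qq\,F^{\,j}\alpha+F^e_{d-1}(\SSQ)$ with invertible leading coefficients and $F^{\,d+1}\alpha\in F^e_{d-1}(\SSQ)$; hence $\USL\cdot\alpha=\sum_{k=0}^d\Qq\,F^k\alpha+\USL\cdot F^{\,d+1}\alpha$ is finite‑dimensional by the inductive hypothesis, and every element of $F^e_d(\SSQ)$, being a $\Qq$‑combination of the $\alpha_j$'s ($\alpha\in B_{e,d}(\fS;\ori)$) and of elements of $F^e_{d-1}(\SSQ)$, lies in a finite sum of finite‑dimensional submodules. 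Together with the diagonalisability of $K$ furnished by the grading $\{G^e_m\}$ of Proposition~\ref{r.basis3}, this shows $\SSQ$ is integrable of type $1$, hence semisimple as a $\USL$‑module.

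Next I would pin down the simple summands. On the associated graded $\Gr^{\{e\}}(\SSQ)$ the congruences of Lemma~\ref{lem:uqmodule} become equalities, so for each $\alpha\in B_{e,d}(\fS;\ori)$ the classes $\bar\alpha_0,\dots,\bar\alpha_d$ span a copy of $V^L_d$ (the coefficients $[j]_{q^2}$ and $[d-j]_{q^2}$ are nonzero), whence $\Gr^{\{e\}}_d(\SSQ)\cong (V^L_d)^{\oplus|B_{e,d}(\fS;\ori)|}$ by Proposition~\ref{r.basis2}(b). A semisimple module is isomorphic to the associated graded of any exhaustive filtration by submodules, so $\SSQ\cong\bigoplus_d (V^L_d)^{\oplus|B_{e,d}(\fS;\ori)|}$; moreover for $\alpha\in B_{e,d}(\fS;\ori)$ the finite‑dimensional cyclic module $\USL\cdot\alpha$ is highest weight of highest weight $d$, hence isomorphic to $V^L_d$, and these submodules are independent (they hit distinct summands of $\Gr^{\{e\}}(\SSQ)$) and span $\SSQ$ by induction on $d$. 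This is precisely (a) and (b) when $\partial\fS$ has one edge. For the general case I would iterate the decomposition over the $n+m$ edges: commutativity of the edge actions makes each step refine the previous one into joint weight spaces, and the simple summands obtained after processing all edges are exactly $\USL^{\otimes n}\cdot\alpha\cdot\USL^{\otimes m}$ for $\alpha$ ranging over the all‑$+$ basis elements, i.e.\ over $\bigcup_{\vec m,\vec n}B_{\vec m,\vec n}(\fS;\ori)$. Such an $\alpha$ is annihilated by the extremal operator of each of the $n+m$ tensor factors and is a simultaneous eigenvector of all the Cartans, so by the single‑edge computation (and Lemma~\ref{lem:LRfunctors} for the right factors) it generates the outer tensor product of the simple left modules $V^L_k$ and simple right modules $V^R_k$ attached to the edges, the label $k$ at each edge being the corresponding intersection number of $\alpha$; this outer tensor product is simple, which gives (a), and collecting these submodules gives (b). Part (c) is the same argument carried out over $\cR$ with Lusztig's integral form $U^L_{q^2}(\mathfrak{sl}_2)$, which acts on the $\cR$‑lattice $\cSs(\fS)$ by Lemma~\ref{lem:integralpairing}: the integral submodule generated by an all‑$+$ basis element is the $\cR$‑span of its divided‑power descendants $\alpha_{\vec j,\vec h}$ and is the integral form of the corresponding simple, these descendants of a fixed $\alpha$ all lying inside that one lattice (whence the non‑direct sum $\foo$ within a family), while distinct families remain independent and exhaustive.

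The step I expect to be the genuine obstacle is controlling the ``$\bmod F^e_{d-1}$'' corrections in Lemma~\ref{lem:uqmodule}: a priori an all‑$+$ skein $\alpha\in B_{e,d}(\fS;\ori)$ is a highest weight vector, but $F^{\,d+1}\alpha$ is only known to lie in $F^e_{d-1}(\SSQ)$, not to vanish, so the finite‑dimensionality of $\USL\cdot\alpha$ is not immediate. The induction along the filtration $F^e_\bullet$, coupled with semisimplicity at generic $q$ (which then forces $\SSQ\cong\Gr^{\{e\}}(\SSQ)$), is exactly what converts the ``mod lower order'' identities into honest submodule decompositions; once this is in place, the remainder is standard representation theory of $\mathfrak{sl}_2$ and of $U_{q^2}(\mathfrak{sl}_2)$.
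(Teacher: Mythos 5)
Your treatment of (a) and (b) is correct, but it follows a genuinely different route from the paper's. You obtain integrability by an induction along the filtration $F^e_\bullet$ (local finiteness plus $K$-diagonalizability), invoke semisimplicity at generic $q$, identify the summands through the associated graded module, and then iterate over the edges; the paper instead inserts Jones--Wenzl projectors next to each edge: since $\al=JW(\al)$ and the projector kills self-returns, the orbit of $\al$ is computed \emph{exactly} as the span of the elements $JW(\al_{\vec j,\vec h})$, so irreducibility and the decomposition in (b) follow without any appeal to semisimplicity or to $\Gr$. Your route avoids the JW machinery; the paper's yields an explicit spanning set of each orbit, which is precisely what it then reuses for part (c).

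Part (c) is where there is a genuine gap, and in fact an incorrect assertion. You claim that the integral cyclic bimodule $U^L_{q^2}(\mathfrak{sl}_2)^{\otimes n}\cdot\al\cdot U^L_{q^2}(\mathfrak{sl}_2)^{\otimes m}$ equals the $\cR$-span of the descendants $\al_{\vec j,\vec h}$, and you conclude that the symbol $\foo$ merely records redundancy inside that one lattice. This is false: already for a single edge with $|\al\cap e|=d$, one has $F^{(j)}\al\equiv \binom{d}{j}_{q^2}\,\al_j$ modulo basis elements with fewer endpoints on $e$, and for $0<j<d$ this $q$-binomial is not a unit of $\cR=\BZ[q^{\pm1/2}]$; comparing images in $\Gr^{e}_d$ shows $\al_j\notin U^L_{q^2}(\mathfrak{sl}_2)\cdot\al$. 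Hence the $\foo_{\vec j,\vec h}$ in the statement is a strictly larger, genuinely non-direct sum of the \emph{distinct} integral orbits of all the $\al_{\vec j,\vec h}$, and the substantive content of (c) --- that for a fixed all-$+$ element $\al$ this sum still meets $\cR\cdot B^{<}_{\vec m,\vec n}(\fS;\ori)$ only in $0$, so that the sum over different $\al$ remains direct --- is exactly the point your sketch does not address. Nor can it be dismissed as ``the same argument over $\cR$'': semisimplicity and the existence of complements, which drive your proof of (a)--(b), are unavailable over $\cR$. The paper closes this by combining the $\Qq$-statement of (b) (the orbit of $\al=JW(\al)$ is a direct summand over $\Qq$, hence meets $\cR\cdot B^{<}_{\vec m,\vec n}(\fS;\ori)$ trivially) with the observation that some $c(\vec j,\vec h)\,\al_{\vec j,\vec h}+\ell(\vec j,\vec h)$, with $c(\vec j,\vec h)\in\cR\setminus\{0\}$ and $\ell(\vec j,\vec h)$ supported on lower-intersection basis elements, lies in the orbit of $\al$, and then clearing the factors $c(\vec j,\vec h)$ to derive a contradiction; some argument of this kind is needed to complete your part (c).
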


\begin{proof} 

(a) Fix $m_i, n_j$ and $\alpha$ as in the statement and let $JW(\alpha)$ be the skein obtained by inserting a $JW_{m_i}$ near $e^L_i$ and a $JW_{n_j}$ near $e^R_j$ for all $i,j$. By Example \ref{ex:joneswenzl} it is clear that $\al=JW(\al)$ and by Lemma \ref{lem:uqmodule} that it is a highest weight vector of weight $q^{2m_i}$ for the action of the $i^{th}$-copy of $\USL$ for each $i\leq m$; similarly it is a highest weight vector of weight $q^{2n_j}$ for the right action of the $j^{th}$-copy of $\USL$.  Furthermore, by Lemma \ref{lem:uqmodule} and the fact that the $m^{th}$-Jones Wenzl projector kills the self-returns, the orbit of $\al$ is exactly the span of the vectors $JW(\alpha_{\vec j,\vec{h}})$ with $\vec j\leq \vec m$ and $\vec h\leq \vec n$. 

(b)
It is straightforward from $a)$ and from Theorem \ref{thm.basis1a}.

(c) If a left (resp. right) $\USL$-module weight $M$ (over $\Qq$) has a basis formed by weight vectors over which the action of $E^{(r)}, F^{(r)},r\geq 1$ has coefficients in $\cR$, then $M$ restricts to a $U^L_{q^2}(\mathfrak{sl}_2)$-module; 
 we claim that  the basis $B(\fS;\ori)$ of $\cSs(\fS)$ has this property. Indeed since the structure of module is induced by the right (resp. left) comodule structure on each edge and the Hopf pairing between $\USL$ and $\OSL$, and since the comodule structure is integral in the basis $B(\fS;\ori)$, it is sufficient to observe that the Hopf pairing between $\USL$ and $\OSL=\cSs(\B)$ extends to a $\cR$-bilinear Hopf pairing between $U^L_{q^{2}}(\mathfrak{sl}_2)$ in the basis $B(\B,\ori)$: this is the content of point $(1)$ of Lemma \ref{lem:integralpairing}. 

To prove that the direct sum decomposition still holds,
let 
 $$B^{<}_{\vec{m},\vec{n}}(\fS;\ori):=\{\alpha\in B(\fS;\ori)|\# (\alpha\cap e^L_i)\leq m_i ,\# (\alpha\cap e^R_j)\leq n_j, \forall i\leq m, \forall j\leq n \}\setminus B_{\vec{m},\vec{n}}(\fS;\ori).$$ 
 
To prove the claim, we will show that for each $\alpha \in B_{\vec{m},\vec{n}}(\fS;\ori)$, the following holds:
$$\left(\cR\cdot B^{<}_{\vec{m},\vec{n}}(\fS;\ori)\right)\bigcap \left(\foo_{\vec{j}\leq \vec{m},\vec{h}\leq \vec{n}}\left( U^L_{q^2}(\mathfrak{sl}_2)^{\otimes n}\cdot \alpha_{\vec j,\vec{h}}\cdot U^L_{q^2}(\mathfrak{sl}_2)^{\otimes m}\right)\right)=\{0\}.$$

We start by remarking that if $\alpha\in B_{\vec{m},\vec{n}}(\fS;\ori)$, then $\alpha=JW(\alpha)$ and so, over $\Qq$, its orbit is a direct summand of $\cSs(\fS)$ and thus it has trivial intersection with the $\cR$-span of $B^{<}_{\vec{m},\vec{n}}(\fS;\ori)$: 
$$\left(\cR\cdot B^{<}_{\vec{m},\vec{n}}(\fS;\ori)\right)\bigcap \left( \USL^{\otimes n}\cdot \alpha\cdot \USL^{\otimes m}\right)=\{0\}.$$

Furthermore, by the point (b), given $\vec{j}\leq \vec{m},\vec{h}\leq \vec{n}$,  there exist $c(\vec{j},\vec{h})\in \cR\setminus 0$ and $\ell(\vec{j},\vec{h})\in \cR\cdot B^{<}_{\vec{m},\vec{n}}(\fS;\ori)$ such that $c(\vec{j},\vec{h}) \alpha_{\vec{j},\vec{h}}+\ell(\vec{j},\vec{h})$ is in the orbit of $\alpha$: 
$$c(\vec{j},\vec{h}) \alpha_{\vec{j},\vec{h}}+\ell(\vec{j},\vec{h})\in U^L_{q^2}(\mathfrak{sl}_2)^{\otimes n}\cdot \alpha\cdot U^L_{q^2}(\mathfrak{sl}_2)^{\otimes m}.$$

Now suppose that for some $l_i\in U^L_{q^2}(\mathfrak{sl}_2)^{\otimes m}$ and $r_i\in  U^L_{q^2}(\mathfrak{sl}_2)^{\otimes n}$ and some $\vec{j}_i,\vec{h}_i$ it holds $$\sum_{i} l_i\cdot \alpha_{\vec{j}_i,\vec{h}_i}\cdot r_i\in \cR\cdot B^{<}_{\vec{m},\vec{n}}(\fS;\ori)\setminus \{0\}.$$
Then, multiplying by $\prod_{i} c(\vec{j}_i,\vec{h}_i)$ (which gives a non-zero vector as $\cSs(\fS)$ is free as a $\cR$-module) we also get that 
$\sum_{i} l_i \cdot \alpha\cdot r_i \in \cR\cdot B^{<}_{\vec{m},\vec{n}}(\fS;\ori)\setminus \{0\}$,
which as already argued is impossible. 

\end{proof}

 \begin{example}\label{ex:bigonright}
Let $\B$ be the bigon whose edges $e_l$ and $e_r$ are declared to be respectively of type $L$ and $R$. Then by Theorem \ref{teo:SL2}, $\cSs(\B)$ is the right and left module $\USL$-module $\OSL$: the left action is induced by the right comodule structure coming from $e_r^R$ and the right action from $e_l^L$. 
If we let $\B^R$ be the bigon where both $e_l$ and $e_r$ are declared to be of type R (right), then $\cSs(\B^R)$ is a left $(\USL)^{\otimes 2}$-module; the action of $x\otimes y$ on a skein $b\in \cSs(\B^R)$ is given by 
$$(x\otimes y) \cdot b=x\cdot b\cdot r^*(y)$$ where $r^*(y)$ is the algebra antimorphism provided in Lemma \ref{lem:LRfunctors} and the left and right actions are those on $\cSs(\B)$ described above. 
\end{example}

\subsection{Co-tensor product} Suppose  $U$ is a coalgebra over a ground ring $\cR$. Assume 
$M$ is a left $U$-comodule with coaction  $\Delta_M:M\to U\otimes_{\cR} M$, and $N$ a right $U$-comodule with coaction $\Delta_N:N\to N\otimes_R U$. 
Then the cotensor product $N \Box _U M $  is
$$N\Box_U M:=\{v\in N\otimes M \mid (\Delta_N\otimes \id_M)(v)=(\id_N\otimes \Delta_M)(v)\}.$$

Cotensor product is a special case of the following notion of Hochshild cohomology. 
Assume $V$ is a $\cR$-module with a left $U$-coaction and a right $U$-coaction:
$$ \D_r: V \to V \ot U, \quad _l\Delta: V \to U \ot V .$$
The 0-th Hochshild cohomology of $V$ is defined by
$$ HH^0(V)= \{ x\in V \mid \D_r(x) =  \fl(\null_l\D(x)),$$
where $\fl: V \ot U\to U \ot V$ is the flip $\fl(x\ot y) = y \ot x$.

With $M$ and $N$ as above, define a left $U$-coaction and a right $U$ coaction on $N \ot_R M$ by
\begin{align*}
\D_r &: N \ot_R M \to N \ot_R M \ot_R U, \quad \D_r(n \otimes m) = \sum n' \ot m \ot u' \ \text{if}\  \D_N(n) = \sum n' \ot u' \\
\null_l\D &: N \ot_R M \to  U \ot_R N \ot_R M, \quad \null_l\D (n \otimes m) = \sum u'' \ot n \ot m''  \ \text{if}\  \D_M(m) = \sum u'' \ot m''.
\end{align*}

Then the cotensor product $N\Box_U M= HH^0(N\otimes_{\cR} M)$.

\subsection{Splitting as co-tensor product and Hochshild cohomology}

\def\cD{\null _c\D}
\def\Dc{\D_c}
\def\lD{\null _l\D}
\def\Dr{\D_r}

Suppose $c_1, c_2$ are distinct boundary edges of a punctured bordered surface $\fS'$ and $\fS= \fS'/(c_1=c_2)$, with $c\subset \fS$ being the common image of $c_1$ and  $c_2$.  The splitting homomorphism gives an embedding 
$\theta_c: \cSs(\fS) \embed \cSs(\fS') $ and  we will make precise the image of $\theta_c$.

 \FIGc{split2}{(a) The middle shaded part is the bigon, while the left and the right shaded parts are part of $\fS'$. Gluing $c_1=e_l$ gives the right coaction $\Dr$ and gluing $\er=c_2$ gives the left coaction $\lD$.
 (b) Element $\null  x _{\bnu\bmu}\in \cSs(\fS')$. The horizontal lines are part of $x$. Note the order of indices.}{2.6cm}

\begin{theorem}\label{teo:cotensor} 
Suppose $c_1, c_2$ are distinct boundary edges of a punctured bordered surface $\fS'$ and $\fS= \fS'/(c_1=c_2)$. The splitting homomorphism  
\be  \theta_c: \cSs(\fS) \embed \cSs(\fS') \label{eq.split6}.
\ee
maps $\SS$ isomorphically onto the Hochshild cohomology $HH^0(\cS(\fS'))$, which is a $\cS(\cB)$-bimodule via the left coaction $\lD:= \null_{c_2}\D$ and the right coaction $\Dr:= \D_{c_1}$ (see Figure \ref{fig:split2}(a)).

In particular, if $c_1$ is a boundary edge of $\fS'_1$ and $c_2$ is a boundary edge of $\fS'_2$ which is disjoint from $\fS'_1$, and $\fS=(\fS'_1 \sqcup \fS'_2)/(c_1=c_2)$,  then $\theta_c$ maps $\cSs(\fS)$ isomorphically onto
the cotensor product of $\cSs(\fS'_1)$ and $\cSs(\fS'_2)$ over $\cSs(\fB)$.
\end{theorem}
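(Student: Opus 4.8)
The plan is as follows. Since $\theta_c$ is injective by Theorem~\ref{thm.1a}(b), the whole statement reduces to proving that its image equals $HH^0(\cSs(\fS'))$, where the $\cSs(\cB)$-bicomodule structure on $\cSs(\fS')$ is given by the right coaction $\Dr=\Delta_{c_1}$ and the left coaction $\lD=\null_{c_2}\Delta$. Granting this, the last assertion is immediate: when $\fS'=\fS'_1\sqcup\fS'_2$ with $c_1\subset\fS'_1$ and $c_2\subset\fS'_2$ one has $\cSs(\fS')=\cSs(\fS'_1)\ot_\cR\cSs(\fS'_2)$, the coaction $\Dr$ involves only the first tensor factor and $\lD$ only the second, so by the very definition of cotensor product recalled above $HH^0\big(\cSs(\fS'_1)\ot_\cR\cSs(\fS'_2)\big)=\cSs(\fS'_1)\,\Box_{\cSs(\cB)}\,\cSs(\fS'_2)$.

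First I would establish the inclusion $\theta_c(\cSs(\fS))\subseteq HH^0(\cSs(\fS'))$, i.e. $(\Dr-\fl\circ\lD)\circ\theta_c=0$. The conceptual reason is that both $\Dr\circ\theta_c$ and $\fl\circ\lD\circ\theta_c$ equal the splitting homomorphism of $\fS$ along $c$ together with a parallel copy $c'$ of $c$ cobounding a bigon with it: indeed $\Dr=\Delta_{c_1}$ is itself the splitting along an ideal arc parallel to $c_1$, and $\lD=\null_{c_2}\Delta$ the splitting along one parallel to $c_2$, so after pulling these arcs back to parallel copies of $c$ in $\fS$ the claim follows from the commutativity of splitting homomorphisms (Theorem~\ref{thm.1a}(c)) together with the fact that gluing a bigon along an edge is the identity operation. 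Concretely this also drops out of a short computation with \eqref{eq.coact} and Proposition~\ref{prop:comodule}(b), keeping track of the orderings on $c_1$ and $c_2$: for a simple tangle diagram $D$ on $\fS$ taut with respect to $c$ and $\theta_c(D)=\sum_s(\tD,s)$ (each $(\tD,s)$ carrying the state $s$ on both $c_1$ and $c_2$), one gets $\Dr(\theta_c(D))=\sum_{s,s'}(\tD;\,c_1\mapsto s',\,c_2\mapsto s)\ot\al_{s's}$ and $\fl(\lD(\theta_c(D)))=\sum_{s,s''}(\tD;\,c_1\mapsto s,\,c_2\mapsto s'')\ot\al_{s\,s''}$, which coincide after relabelling.

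The reverse inclusion $HH^0(\cSs(\fS'))\subseteq\theta_c(\cSs(\fS))$ is where the real work lies, and I would approach it via the associated graded. Fix an orientation $\ori'$ of $\partial\fS'$ and the basis $B(\fS';\ori')$ of Theorem~\ref{thm.basis1a}. Grade $\cSs(\fS')$ by the signed intersection numbers $(\delta_{c_1},\delta_{c_2})$ and, inside each homogeneous component, filter by the total geometric intersection $I(\cdot,c_1)+I(\cdot,c_2)$ as in Subsection~\ref{sec.filtration}; both coactions respect this structure, so the equation $\Dr(x)=\fl(\lD(x))$ passes to $\Gr(\cSs(\fS'))$. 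On the associated graded the lower order term of the reordering relation \eqref{eq.order} disappears --- exactly as in Proposition~\ref{r.orichange} and in the proof of Proposition~\ref{r.grbinom} --- so $\overline{\Dr}$ and $\overline{\fl\circ\lD}$ become the explicit $q^4$-binomial state sums of Proposition~\ref{r.grbinom}. On each bigraded piece the equation then becomes a finite linear system, and the heart of the argument is to solve it: one shows its solution space has $\cR$-rank equal to that of the matching piece of $\Gr(\cSs(\fS))$ and is spanned there by the leading terms of the $\theta_c(D)$, $D\in B(\fS;\ori)$. Since $\theta_c$ is injective and carries $F^{\{c\}}_n(\cSs(\fS))$ into $F^{\{c_1,c_2\}}_{2n}(\cSs(\fS'))$ without lowering degree (Proposition~\ref{r.grbinom} again, whose leading coefficient $\binom{m}{0}_{q^4}$ equals $1$), the induced map on associated graded algebras is injective with image inside that solution space, so the rank equality forces $\Gr\big(\theta_c(\cSs(\fS))\big)=\Gr\big(HH^0(\cSs(\fS'))\big)$ in every bidegree; as both are exhaustively filtered submodules of $\cSs(\fS')$ with the same associated graded, they coincide.

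I expect the main obstacle to be the explicit solution of that linear system on the associated graded, together with the accompanying bookkeeping needed to make the grading and filtration genuinely compatible with $\Dr$ and $\lD$ in the form required by the rank count (in particular, that $HH^0(\cSs(\fS'))$ has no component in odd total-intersection degree). Should this prove awkward, an alternative for the last step is to note that $\theta_c(\cSs(\fS))$ and $HH^0(\cSs(\fS'))$ are both \emph{saturated} $\cR$-submodules of the free module $\cSs(\fS')$ --- saturatedness of $HH^0$ because $\cSs(\fS')/HH^0$ embeds, via $\Dr-\fl\circ\lD$, into the free module $\cSs(\fS')\ot_\cR\cSs(\cB)$, and saturatedness of the image from the triangularity of $\theta_c$ in Proposition~\ref{r.grbinom} --- and then to check equality of $\cR$-ranks after tensoring with $\BQ(q^{1/2})$, where every $\OSL$-comodule is cosemisimple (cf. Theorem~\ref{teo:module}) and the cotensor product is exact, making the rank computation transparent.
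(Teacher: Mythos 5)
Your easy direction (that $\theta_c(\cSs(\fS))\subseteq HH^0(\cSs(\fS'))$), the reduction of the disconnected case to the general one, the saturation remark, and the ``equal associated graded implies equal submodule'' step are all fine. The problem is that the converse inclusion --- which is the entire content of the theorem --- is not actually proved. Two concrete issues. First, the claim that the condition $\Dr(v)=\fl(\lD(v))$ decomposes over the bigrading by $(\delta_{c_1},\delta_{c_2})$ into independent finite linear systems is not correct as stated: neither coaction preserves $\delta_{c_1}$ or $\delta_{c_2}$ of the first tensor factor, and while a componentwise analysis does show that the off-diagonal components ($\delta_{c_1}\neq\delta_{c_2}$) of an element of $HH^0$ must vanish (because $\Dr$ is injective), the equation genuinely couples the different diagonal components; the only clean finite pieces are the fixed-underlying-diagram summands of the associated graded for the filtration by geometric intersection with $c_1\cup c_2$. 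Second, and more seriously, the identification of the solution space of the graded equation with the span of the leading terms $\sum_i\binom{m}{i}_{q^4}(\tD,s_i,s_i)$ of $\theta_c(D)$ --- equivalently the rank count --- is exactly where the theorem lives, and you defer it (``the heart of the argument is to solve it''). It is not implied by anything established earlier: a priori the graded equation could have more solutions than lift from $HH^0$, and your alternative route via cosemisimplicity over $\BQ(q^{1/2})$ is not ``transparent'' either, because matching $\dim HH^0$ (which, by the Schur-type argument of Theorem~\ref{r.Hoch2}, counts fixed-diagram summands of $\Gr$ with equal numbers of points on $c_1$ and $c_2$) against $\dim\cSs(\fS)$ requires a genuinely topological input you never address: that gluing a simple diagram of $\fS'$ with no trivial components and matched endpoints on $c_1,c_2$ yields a \emph{taut} diagram of $\fS$ with no trivial components (an innermost-bigon argument), so that cutting gives a bijection between the two indexing sets degree by degree.

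For comparison, the paper avoids all rank counts and works integrally and directly: given $0\neq v\in HH^0(\cSs(\fS'))$, one subtracts suitable multiples of $\theta_c(\bar x)$ to remove every top-filtration term whose states on both $c_1$ and $c_2$ are all $+$ (using that such a term occurs in $\theta_c(\bar x)$ with coefficient exactly $1$), and then applies to the identity $\Dr(v')=\fl(\lD(v'))$ the projections onto the top $\delta$-graded components of both tensor factors (the grading $G^e_m$ of Section~\ref{sec.filtration}, not the intersection filtration): the $\Dr$-side projects to $0$ while the $\lD$-side survives on any remaining top-filtration basis element, forcing the top filtration level of $v'$ to be empty; descending induction on the maximal intersection number finishes the proof. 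If you want to keep your route, you must either carry out the $q^4$-binomial linear algebra you postponed, or complete the field-level dimension count with the cut/glue bijection above and then invoke your saturation argument; as written, the proposal stops short of both.
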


\def\coef{\mathrm{coef}}
\def\btau{{\vec \tau}}
\begin{proof} Let us identify $\SS$ with its image under $\theta_c$.
 From the splitting formula \eqref{eq.slit} it is easy to see that $\cSs(\fS)\subset HH^0(\cSs(\fS'))$. Let us prove the converse inclusion.
Assume  $0\neq v \in HH^0(\cSs(\fS'))$.  By definition, this means
\be 
\Dr(v) - \fl(\lD(v))=0.
\ee

Choose an orientation $\ori$ of $\pfS$ and an orientation of $c$. Then $\ori$ and the orientation of $c_1$ and $c_2$ induced from $c$ give an orientation $\ori'$ of $\pfS'$. 
Recall that  $B(\fS';\ori')$ is a free $\cR$-basis of $\cSs(\fS')$. Let $\tilde B(\fS';\ori')$ be the set of all
isotopy classes $x$ of $\ori'$-ordered $\pM'$-tangle diagrams which are increasingly stated on every boundary edge except for $c_1$ and $c_2$. 
If $\bmu$ is a state of $x \cap c_1$ and $\bnu$ is a state of $x\cap c_2$, let $x _{\bnu\bmu}$ be the  stated $\ori'$-ordered $\pM'$-tangle
 diagram whose states on $x \cap c_1$ and $x \cap c_2$ are respectively $\bmu$ and $\bnu$. See Figure \ref{fig:split2}(b). 
If $\bmu$ and $\bnu$ are increasing, then $x _{\bnu\bmu} \in B(\fS', \ori')$ is a basis element. For each $i=1,2$ let $S_{x\cap c_i}$ and $S_{x \cap c_i}^\uparrow$ be respectively the set of all states and the set of all increasing states of $x \cap c_i$. Then
$$ B(\fS',\ori') = \{ x_{\bnu\bmu} \mid x \in \tilde B(\fS', \ori'), \bmu \in S_{x \cap c_1}^\uparrow, \bnu \in S_{x \cap c_2}^\uparrow \}.$$
Using the above  $\cR$-basis $B(\fS';\ori')$ of $\cSs(\fS')$, we can present $v\in  \cSs(\fS')$ in the form
\be v=\sum_{x\in X(v)}\,  \sum_{  \bmu \in S_{x \cap c_1}^\uparrow }\,  \sum_{  \bnu \in S_{x \cap c_2}^\uparrow } \, \coef(v, x_{\bnu\bmu})\, \, x_{\bnu\bmu},
\label{eq.v}
\ee
where $X\subset \tilde B(\fS';\ori')$ is a minimal finite set, so that for each $x\in X$, there are $\bmu, \bnu$ such that  the coefficient  $\coef(v, x_{\bnu\bmu})$ is non-zero.

Let $m(v)=\max\{ |x \cap c_1|, |x\cap c_2|,  x\in X(v)\}$. We show by induction on $m(v)$ that $v \in \cSs(\fS)$.

For $i=1,2$ let $X_i(v)= \{ x\in X(v) , |x \cap  c_i| = m\}$.   If $x\in X_1(v)\cap X_2(v)$, then $|x\cap c_1|=|x \cap c_2|= m(v)$, and there is an element $\bar x\in \B(\fS,\ori)$ such that $x$ has coefficient non-zero in the result of splitting $\bar x$ along $c$. From the definition of the splitting map we have
\be 
\coef(\theta(\bar x), x_{\btau\btau})= 1, \label{eq.max}
\ee
where $\btau= (+)^m$ is the state consisting of $m$ plus signs. Let 
$$v' = v - \sum _{x\in X_1(v) \cap X_2(v)} \coef(v, x_{\btau \btau} ) \, \theta(\bar x).$$ 
If $m(v') < m(v)$ then  we are done by induction. Assume that $m(v')=m(v)$. One of $X_1(v'), X_2(v')$ is not empty, and
without loss of generality assume $X_2(v') \neq \emptyset$. Formula \eqref{eq.v} for $v'$ has the form
\be 
v'=\sum_{x\in X(v')}\,  \sum_{  \bmu \in S_{x \cap c_1}^\uparrow }\,  \sum_{  \bnu \in S_{x \cap c_2}^\uparrow } \, \coef(v', x_{\bnu\bmu})\, \, x_{\bnu\bmu},
\label{eq.vp}
\ee
and because of \eqref{eq.max} we can assume that there is no $x_{\tau\tau}$ on the right hand side of \eqref{eq.vp}.

Let $p^{c_2}_m:  \cSs(\fS') \to \cSs(\fS')$ be
 the projection onto the homogeneous part $G^{c_2}_m(\cSs(\fS'))$,  and $p^{\er}_m: \cSs(\fB) \to \cSs(\fB)$ be the projection onto the homogeneous part $G^{\er}_m(\cSs(\fB))$, see the end of Section \ref{sec.filtration}. Explicitly, for $x\in X(v')$ we have
\be
p^{c_2}_m (x_{\bnu\bmu})= \begin{cases} 0 \quad &\text{if} \ \bnu \neq \btau \\
x_{\btau \bmu} & \text{if} \ \bnu = \tau,
\end{cases} \quad p^{\er}_m (\al_{\bnu\bmu})= \begin{cases} 0 \quad &\text{if} \ \bmu \neq \tau \\
\al_{\bnu\btau}  & \text{if} \ \bmu = \btau.
\end{cases}
\ee

From Formula \eqref{eq.coact} for the coaction, we have, for $x\in X(v')$ and $(\bnu,\bmu) \neq (\btau,\btau)$, 
\begin{align}
\label{eq.sp1}x _{\bnu\bmu} & \overset{\Dr}{\longrightarrow}  \sum _{\boeta\in  S_{x \cap c_1}}  x _{\bnu\boeta}  \ot  \al _{\boeta\bmu}  \overset{p^{c_2}_m \ot p^{\er}_m}{\longrightarrow} 0 \\ 
\label{eq.sp2}x _{\bnu\bmu} & \overset{\lD}{\longrightarrow} \sum _{\boeta\in  S_{x \cap c_2}}  \al _{\bnu\boeta} \ot   x _{\boeta\bmu} \overset{\fl}{\longrightarrow} \sum _\boeta   x _{\boeta\bmu}  \ot  \al _{\bnu\boeta} \overset{p^{c_2}_m \ot p^{\er}_m}{\longrightarrow} \begin{cases}   0 \ &\text{ if } \ x \not \in X_2(v') \\
x_{\btau \bmu} \ot \al _{\bnu\btau}  \ &\text{ if } \ x \in X_2(v'). \end{cases}
\end{align}
It follows that
$$ 0= (p^{c_2}_m \ot p^{\er}_m )(\fl (\lD (v')) - \Dr(v'))=  \sum_{x\in X_2(v')} \sum_{  \bmu \in S_{x \cap c_1}^\uparrow }\,  \sum_{  \bnu \in S_{x \cap c_2}^\uparrow } \, \coef(v', x_{\bnu\bmu})\,  \, x_{\btau \bmu} \ot \al _{\bnu\btau}.$$
As the right hand side is a linear combination of elements of a basis, all the coefficients 
$v$ there are 0. This means $X_2(v')=\emptyset$, a contradiction. Thus $m(v') < m(v)$ and we are done.
\end{proof}

\begin{remark}\label{rem:cotensor}
Theorem \ref{teo:cotensor} holds also if we change the base ring to $\mathbb{C}$ by evaluating $q$ to a non-zero complex number. 
\end{remark}
\def\SSQp{\cS(\fS') \ot_\cR \Qq}
Using the above result together with Theorem \ref{teo:module} we can deduce a similar result for $\USL$-modules.
Recall that for a bi-module $V$ over a $\Qq$-algebra $U$ the 0-homology group is
$$ HH_0(V) = V/\Qq{\text - span}\la  a \cdot v - v \cdot a \mid a \in U, v \in V\ra.$$

\begin{theorem}\label{r.Hoch2}
Suppose $c_1, c_2$ are distinct boundary edges of a punctured bordered surface $\fS'$ and $\fS= \fS'/(c_1=c_2)$, with $c$ being the common image of $c_1$ and $c_2$. Then the composition
\be  \SSQ \overset{\theta_c} \longrightarrow  \cS(\fS') \ot_R \Qq \to HH_0(\cS(\fS') \ot_R \Qq)
\label{eq.iso6}
\ee
is an isomorphism of $\Qq$-vector spaces. Here $\SSQp$ is a $\USL$-bimodule via the dual actions of $\Delta_{c_1}$ and $\eDD{c_2}$.

In particular, if $\fS'= \fS_1 \sqcup \fS_2$ with $c_1 \subset \fS_1$ and $c_2 \subset \fS_2$, then the map in \eqref{eq.iso6} is an isomorphism between  $\SSQ$ and  $(\cS(\fS_1) \ot_R \Qq) \ot_{\USL} (\cS(\fS_2) \ot_R \Qq)$. 
\end{theorem}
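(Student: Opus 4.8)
The plan is to reduce this to the comodule statement of Theorem~\ref{teo:cotensor}, using the Hopf duality between $\OSL$ and $\USL$ together with the decomposition of $\cS(\fS')\ot_\cR\Qq$ into finite-dimensional pieces furnished by Theorem~\ref{teo:module}. Write $V:=\cS(\fS')\ot_\cR\Qq$, regarded as a $\USL$-bimodule via the actions dual to the coactions $\D_{c_1}$ and $\eDD{c_2}$ (so $\D_{c_1}$ induces the left action and $\eDD{c_2}$ the right action). Since $\Qq$ is flat over $\cR$ and $HH^0$ is defined by a kernel, Theorem~\ref{teo:cotensor} together with Remark~\ref{rem:cotensor} shows that $\theta_c\ot\id_{\Qq}$ identifies $\cS(\fS)\ot_\cR\Qq$ with the subspace $HH^0(V)=\{v\in V:\ xv=vx\text{ for all }x\in\USL\}$ of $V$. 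The map appearing in the theorem is $\theta_c$ followed by the projection $V\to HH_0(V)$, where $HH_0(V)$ is $V$ modulo the $\Qq$-span of the elements $xv-vx$; hence it suffices to prove that the restriction of this projection to $HH^0(V)$ is an isomorphism onto $HH_0(V)$.

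To do this I would combine Theorem~\ref{teo:module} with the functor $RL$ of Lemma~\ref{lem:LRfunctors}, which turns the right $\USL$-action coming from $c_2$ into a left one, twisted by the involution $r^*$. This exhibits $V$ as a direct sum $\bigoplus_j M_j$ of finite-dimensional $\USL$-bimodules, each of which, after the twist, is an external tensor product $V_a\otimes V_b$ of two finite-dimensional irreducible $\USL$-modules over $\Qq$ (possibly carrying an extra inert tensor factor coming from the remaining boundary edges of $\fS'$, which plays no role below). Each $M_j$ is a sub-bimodule, so $HH^0(V)=\bigoplus_j HH^0(M_j)$ and $HH_0(V)=\bigoplus_j HH_0(M_j)$, and the projection respects the decomposition; thus it is enough to argue summand by summand.

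For one summand $M_j$, written as $V_a\otimes V_b$ after the $r^*$-twist, the subspace one quotients by to form $HH_0$ is spanned by the vectors $(x\otimes 1-1\otimes r^*(x))\cdot v$, so a linear functional descends to $HH_0(M_j)$ precisely when it is an invariant (``twisted trace'') pairing $V_a\times V_b\to\Qq$. Since every finite-dimensional irreducible $\USL$-module over the generic field $\Qq$ is self-dual and $r^*$ preserves $K$-weights, such a nonzero pairing exists if and only if $a=b$, in which case it is unique up to scalar and non-degenerate; by the same bookkeeping (consistently with Theorem~\ref{teo:cotensor}, which identifies $HH^0(V)$ with $\cS(\fS)\ot_\cR\Qq$) the space $HH^0(M_j)$ is one-dimensional when $a=b$ and zero otherwise, spanned by the image $c_j$ of a coevaluation. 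When $a=b$ the kernel of the invariant functional $\tau_j$ has codimension one and coincides with the subspace defining $HH_0(M_j)$, so $HH^0(M_j)\to HH_0(M_j)$ is an isomorphism as soon as $\tau_j(c_j)\neq 0$; and $\tau_j(c_j)$ equals, up to a unit of $\Qq$, a (possibly quantum) dimension of $V_a$, which is a nonzero element of $\Qq=\BQ(q^{1/2})$ because $q$ is generic. Assembling the summands yields the isomorphism $HH^0(V)\cong HH_0(V)$, hence the theorem.

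For the last assertion, if $\fS'=\fS_1\sqcup\fS_2$ with $c_1\subset\fS_1$ and $c_2\subset\fS_2$, then $V=(\cS(\fS_1)\ot_\cR\Qq)\otimes(\cS(\fS_2)\ot_\cR\Qq)$ with the right action on the first factor and the left action on the second, so $HH_0(V)$ is by its very definition the tensor product $(\cS(\fS_1)\ot_\cR\Qq)\otimes_{\USL}(\cS(\fS_2)\ot_\cR\Qq)$, and the statement follows. The step I expect to be the main obstacle is the summand-by-summand comparison of $HH^0$ and $HH_0$: one must arrange the decomposition of Theorem~\ref{teo:module} so that it is simultaneously compatible with $\theta_c$ and with both Hochshild functors, and then verify the non-vanishing $\tau_j(c_j)\neq 0$, which is exactly the place where working over $\BQ(q^{1/2})$, rather than specialising $q$ to a root of unity, is used in an essential way.
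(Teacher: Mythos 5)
Your proposal is correct and follows essentially the same route as the paper: identify the image of $\theta_c$ with $HH^0$ via Theorem \ref{teo:cotensor}, decompose $\cS(\fS')\ot_\cR\Qq$ into finite-dimensional bimodules using Theorem \ref{teo:module}, and use self-duality of the irreducibles plus Schur's lemma to see that both $HH^0$ and $HH_0$ of each summand of type $V_a\otimes V_b$ are $\delta_{a,b}$-dimensional, with the composite map nonzero by a trace non-vanishing argument. The only cosmetic difference is that the paper pins down the last step with the ordinary trace (the commutators $xM-Mx$ are trace-zero matrices while the identity is not), whereas you allow a possibly quantum dimension, which is equally nonzero over $\BQ(q^{1/2})$.
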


\begin{proof}{

The decomposition of $\cS(\fS') \ot_R \Qq$ given by part (b) of Theorem \ref{teo:module} shows that it is sufficient to prove that if $V^R_{m_2}$ (resp. $V^L_{m_1}$) is the irreducible $m_2+1$-dimensional (resp. $m_1+1$-dimensional) right (resp. left) $\USL$-module, then the composition of natural map
\be  HH^0(V^L_{m_1}\otimes V^R_{m_2}) \hookrightarrow V^L_{m_1}\otimes V^R_{m_2}\onto 
 HH_0( V^L_{m_1}\otimes V^R_{m_2})
\ee
is an isomorphism of vector spaces. We will see that this follows from the fact that every finite-dimensional $\USL$-module $V$ is equivalent to its dual $V^*$.

First observe that since the pairing between $\OSL$ and $\USL$ is non degenerate, $HH^0(V^L_{m_1}\otimes V^R_{m_2})$ can be equivalently defined as 
$$HH^0(V^L_{m_1}\otimes V^R_{m_2})=\{v\otimes w\in V^L_{m_1}\otimes V^R_{m_2}| x\cdot v\otimes w=v\otimes w\cdot x,\ \forall x\in \USL\}.$$
Then using the isomorphism between $V^R_{m_2}$ and $(V^L_{m_2})^*$ we have :
$$HH^0(V^L_{m_1}\otimes V^R_{m_2})=Hom_{\USL}(V^L_{m_2},V^L_{m_1})=\delta_{m_1,m_2} \Qq$$
by Schur's lemma.

Now let's take the dual of the above equation and get: 
\be  (HH_0( V^L_{m_1}\otimes V^R_{m_2}))^*\hookrightarrow 
  (V^R_{m_2})^*\otimes (V^L_{m_1})^*\onto (HH^0(V^L_{m_1}\otimes V^R_{m_2}))^*
 \ee
 where the first arrow maps an element of $(HH_0( V^L_{m_1}\otimes V^R_{m_2}))^*$ to some $f\in (V^L_{m_1}\otimes V^R_{m_2})^*$ such that $f(x \cdot v\otimes w)=f(v\otimes w\cdot x)$ for all $x\in \USL$ and $v\otimes w\in V^L_{m_1}\otimes V^R_{m_2}$.  Using again the isomorphism between $(V^R_{m_2})^*$ and $V^L_{m_2}$ we have that the image of $(HH_0( V^L_{m_1}\otimes V^R_{m_2}))^*$ in $V^L_{m_2}\otimes (V^L_{m_1})^*$ is $Hom_{\USL}(V^L_{m_1},V^L_{m_2})=\delta_{m_1,m_2}\Qq$ by Schur's lemma. 
 
 To conclude, observe that if $m_1=m_2$ then the image of the inclusion $HH^0(V^L_{m_1}\otimes V^R_{m_1}) \hookrightarrow V^L_{m_1}\otimes V^R_{m_1}\simeq V^L_{m_1}\otimes (V^L_{m_1})^*=Hom(V^L_{m_1},V^L_{m_1})$ is given by the multiples of the identity map. But the kernel of the projection $V^L_{m_1}\otimes V^R_{m_1}\onto HH_0(V^L_{m_1}\otimes V^R_{m_1})$ is the sub vector space of $Hom(V^L_{m_1},V^L_{m_1})$ spanned by the matrices of the form $xM-Mx$ where $x$ represents the action of an element of $\USL$ and $M\in Hom(V^L_{m_1},V^L_{m_1})$; thus it is contained the set of matrices with zero trace and so the projection of $HH^0(V^L_{m_1}\otimes V^R_{m_1})$ in $HH_0(V^L_{m_1}\otimes V^R_{m_1})$ is nonzero.

}   
\end{proof}
\begin{remark}
By the splitting theorem and Proposition \ref{r.Hoch2}, $\cSs(\fS)$ is both a submodule and a quotient module of $\cSs(\fS')$.
\end{remark}

\begin{example}\label{ex:bigonR}
Clearly, if in Theorem \ref{r.Hoch2} $c_1=e_i^R$ and $c_2=e_j^L$ belong to two distinct connected components of $\fS'$, then one can restate the $HH_0$ simply as a tensor product over a copy of $\USL$ acting on the left on the skein algebra of one component and on the right on the other.

In particular, if $\fS$ is obtained by glueing a bigon $\B$ along its right edge to a left edge of $\fS'$ then $\cSs(\fS)=\cSs(\fS')\otimes_{\USL} \cSs(\B)$ is isomorphic to $\cSs(\fS')$ as it can be seen  directly by Theorem \ref{thm.1a}.
 
If $\B^R$ is the bigon whose edges are both declared to be of type $R$ (right), then $\cSs(\B^R)$ is a left module over $\USL^{\otimes 2}$ (see Example \ref{ex:bigonright}). Then glueing $\B^R$ to $\fS'$ along one edge of type $L$, shows that 
$$\cSs(\fS)=\cSs(\fS')\otimes _{\USL} \cSs(\B^R).$$
The resulting surface $\fS$ is still homeomorphic to $\fS'$ but the edge on which the glueing has been performed has been transformed from an edge of type $L$ to one of type $R$. This corresponds to applying Lemma \ref{lem:LRfunctors} to the module structure coming from that edge. 
\end{example}
\begin{remark}\label{rem:glueingcommutes}
If $c'_1,c'_2$ are two other edges of $\partial \fS'$ (and then of $\partial \fS$), \eqref{eq.iso6} is an isomorphism of $\USL$-modules for the structure associated to $c'_1$ and $c'_2$. Furthermore the theorem can be applied independently to glue also $c'_1$ and $c'_2$ and the final isomorphism between $\cSs(\fS'/(c_1=c_2,c'_1=c'_2))\otimes \Qq$ and  $HH^0(\cSs(\fS')\otimes \Qq)$ (with respect to the $\USL^{\otimes 2}$-bimodule structure) does not depend on the order in which the glueing was performed.
\end{remark}

\def\uot{\underline{\ot}}
\def\oneD{{ _1\Delta }}
\def\CSS{ \cSs(\fS)}
\def\ufS{\underline{\fS}} 
\def\uast{\underline \ast}
\subsection{Braided tensor product}

Let $U$ be a dual quasitriangular Hopf algebra. 
Assume $A$ is an algebra admitting two right $U$-comodule-algebra structures $\Delta_1:A\to A\otimes U$ and $\Delta_2:A\to A\otimes U$ which commute, i.e.
\be (\Delta_1\otimes \Id_U)\circ \Delta_2 =(\Id_A \otimes \fl )\circ(\Delta_2\otimes \Id_U)\circ \Delta_1 
\label{eq.D12}
\ee
where $\fl :U\otimes U\to U\otimes U$ is the flip operator. Denote the common operator of \eqref{eq.D12} by $\Delta_{12}$.

Observe that since $\Delta_1$ and $\Delta_2$ commute,  $A$ can be endowed with a right $U$-comodule structure $\bD:=\Delta_1 \uast \Delta_2: A \to A \ot U$ defined by
\be 
\bD(x) = \sum x' \ot u_1 u_2 \ \text{if} \ \Delta_{12}(x) = \sum x'\ot u_1 \ot u_2.
\ee

However $\bD: A \to A \ot U$ is not an algebra homomorphism, i.e. $A$ is not a right $U$-comodule algebra with respect to $\bD$, even though it is a right $U$-comodule algebra with respect to each of  $\Delta_1$ and $\Delta_2$. So we define a new product. For $x,y\in A$ let
\be \label{eq:selfproduct}
 x\uast y = \sum x' y' \rho(u \ot v) \ \text{if} \ 
\Delta_2 (x) =\sum x' \ot u,  
\Delta_1(y) =\sum y' \ot v.
\ee
 It is easy to check that $\uast$ gives $A$ a new associative product, and we call $A$ with this new product  the \emph{self braided product} of $\Delta_1$ and $\Delta_2$ and denote it $\uot A$.

\begin{lemma} 
With respect to the product $\uast$ and the right $U$-comodule given by $\bD=\Delta_1 \uast \Delta_2$, the algebra $A$ is a right $U$-comodule-algebra. 
\end{lemma}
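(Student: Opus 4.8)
The plan is to verify the two comodule-algebra axioms for $(A, \uast, \bD)$ directly: first that $\bD$ is an algebra homomorphism $\uot A \to \uot A \otimes U$, and second that $\bD$ is coassociative and counital, i.e.\ that it is a genuine right $U$-comodule structure. The counit and coassociativity are quickly reduced to the corresponding properties of $\Delta_1$ and $\Delta_2$ together with the commutation \eqref{eq.D12}: counitality of $\bD$ follows from counitality of $\Delta_1$ and $\Delta_2$ since $u_1 u_2 \mapsto \epsilon(u_1)\epsilon(u_2) = \epsilon(u_1 u_2)$, and coassociativity follows by chasing $\Delta_{12}$ through the coassociativity of each factor and using \eqref{eq.D12} once more. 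The only genuinely delicate point is the multiplicativity of $\bD$ with respect to the new product $\uast$, and this is where the dual-quasitriangularity axioms \eqref{eq.rflip}--\eqref{eq.cobraid4} enter.

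For the multiplicativity, I would compute $\bD(x \uast y)$ and $\bD(x) \uast_{\text{target}} \bD(y)$ in Sweedler notation and match them. Writing $\Delta_2(x) = \sum x' \ot u$, $\Delta_1(y) = \sum y' \ot v$ so that $x \uast y = \sum x' y' \rho(u \ot v)$, one applies $\bD$ to $x'y'$ using the fact that each of $\Delta_1, \Delta_2$ is multiplicative, hence so is $\Delta_{12}$ and so is $\bD$ on the \emph{old} product. The product on $\uot A \otimes U$ is the braided one (recording $\rho$ between the $U$-component of the left factor and the $A$-component of the right factor via the coaction), so expanding both sides produces expressions in $\rho$ applied to various coproduct components of $u$ and $v$. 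The identity \eqref{eq.cobraid3}--\eqref{eq.cobraid4} let one contract $\rho(u_{(1)}u_{(2)} \ot -)$ and $\rho(- \ot v_{(1)} v_{(2)})$ into single $\rho$-factors, while \eqref{eq.rflip} is exactly the relation needed to move a $\rho$-braiding past a product of the two comodule structures; after these substitutions and a bookkeeping of the $\epsilon$-terms the two sides coincide. I would present this as a short Sweedler-notation computation rather than in full gory detail, since every step is an instance of an axiom already recorded.

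The main obstacle, and the part deserving the most care, is getting the bookkeeping of \emph{which} comodule structure ($\Delta_1$ versus $\Delta_2$) sits where in the braided product, and consequently which slot of $\rho$ each coproduct component lands in; a sign or order error here is easy to make and the axioms \eqref{eq.cobraid3}, \eqref{eq.cobraid4} are not symmetric in their two arguments. Concretely, in \eqref{eq:selfproduct} the braiding uses $\Delta_2$ on the left factor and $\Delta_1$ on the right factor, and the commutation \eqref{eq.D12} is what guarantees that $\bD = \Delta_1 \uast \Delta_2$ is well defined and that the various reorderings of $u_1 u_2$ are consistent; I would be careful to invoke \eqref{eq.D12} precisely at the point where a $\Delta_1$-component must be slid past a $\Delta_2$-component. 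Once the indices are pinned down the verification is mechanical, so I would state the lemma's proof compactly: ``Counitality and coassociativity of $\bD$ are immediate from those of $\Delta_1,\Delta_2$ and \eqref{eq.D12}. For multiplicativity, expand both sides in Sweedler notation using that $\Delta_1,\Delta_2$ are algebra maps, then apply \eqref{eq.cobraid3}, \eqref{eq.cobraid4} to contract the $\rho$-factors and \eqref{eq.rflip} to interchange the braiding with the composite coaction; the two sides agree.''
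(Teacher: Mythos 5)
Your overall plan (reduce the comodule axioms to those of $\Delta_1,\Delta_2$ and \eqref{eq.D12}, then verify multiplicativity of $\bD$ by a Sweedler computation) is the right shape, but the multiplicativity argument as you describe it rests on two incorrect premises. First, you assert that since $\Delta_1,\Delta_2$ are algebra maps, ``so is $\bD$ on the old product.'' This is false, and the paper says so explicitly just before defining $\uast$: $\bD$ is \emph{not} an algebra homomorphism for the original product of $A$ --- that failure is the entire reason the twisted product $\uast$ is introduced. What is true is that $\Delta_{12}$ is an algebra map into $A\ot U\ot U$ with componentwise product; after applying $\id_A\ot m_U$ the two $U$-legs of $\bD(ab)$ come out interleaved as $u_1 w_1 u_2 w_2$ rather than $u_1u_2w_1w_2$, and it is exactly this interleaving that the computation must track. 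Second, you take the product on the target $\uot A\ot U$ to be ``the braided one,'' with $\rho$ inserted between the $U$-leg of the left factor and the $A$-leg of the right factor. The comodule-algebra axiom uses the ordinary tensor-product algebra structure on $(\uot A)\ot U$: the $A$-components multiply by $\uast$ (which is where $\rho$ lives) and the $U$-components multiply in $U$, with no extra braiding between the factors. With the target product mis-specified, the identity you would end up verifying is not \eqref{eq.uw1}.

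These two slips also explain why you expect \eqref{eq.cobraid3} and \eqref{eq.cobraid4} to do the heavy lifting: in the correct expansion there is nothing to contract. Writing $\Delta_{12}(x)=\sum x'\ot u_1\ot u$ and $\Delta_{12}(y)=\sum y'\ot v\ot v_2$, the expansion of $\bD(x\uast y)$ carries a single factor $\rho(u''\ot v'')$ with $U$-leg $u_1 v' u' v_2$, while $\bD(x)\uast\bD(y)$ carries a single factor $\rho(u'\ot v')$ with $U$-leg $u_1 u'' v'' v_2$ (these are \eqref{eq.eq1} and \eqref{eq.eq2} in the paper); the two sides agree by one application of \eqref{eq.rflip} to the pair $(u,v)$, and neither \eqref{eq.cobraid3} nor \eqref{eq.cobraid4} enters. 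You did correctly single out \eqref{eq.rflip} as the relation that exchanges the braiding with the product of the two coactions, so the fix is mainly bookkeeping: replace the false multiplicativity of $\bD$ by multiplicativity of $\Delta_{12}$ together with coassociativity and \eqref{eq.D12}, use the ordinary (not braided) algebra structure on the target, and the whole verification collapses to the single use of \eqref{eq.rflip}.
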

\begin{proof} We have to show that for $x,y\in A$ one has 
\be \bD (x\uast y)= \bD(x)\uast\,  \bD (y).  \label{eq.uw1}
\ee
Let us now  write $\Delta_{12}(x)$ and $\Delta_{12}(y)$ as
\begin{align*}
\Delta_{12}(x)= \sum x' \ot u_1 \ot u \in A \ot U \ot U \\
\Delta_{12}(y)= \sum y' \ot v  \ot v_2 \in A \ot U \ot U.
\end{align*}
 Using  the commutativity of $\Delta_1$ and $\Delta_2$ and a simple calculation, we obtain
\begin{align}
 \bD(x\uast y) &= \sum \rho(u''\ot v'') \, x' y'  \ot u_1   v'u'    v_2      \label{eq.eq1}\\
 \bD(x)\uast\,  \bD (y)  &= \sum  \rho(u'\ot v') \, x' y'  \ot u_1   u'' v''     v_2   \label{eq.eq2} ,
\end{align}
where $\Delta(u)= \sum u' \ot u'', \Delta(v) = v' \ot v''$ are coproducts in $U$.
The right hand sides of \eqref{eq.eq1} and \eqref{eq.eq2} are equal thanks to \eqref{eq.rflip}. This proves \eqref{eq.uw1}. For those who are familiar with graphical calculations in Hopf algebras,
we provide a graphical proof in Figure \ref{fig:selftensorproductcompatible}.
\end{proof}
\begin{figure}[htpb]
\raisebox{1cm}{$\bD(\cdot \uast \cdot)=$}
\includegraphics[width=1.9cm]{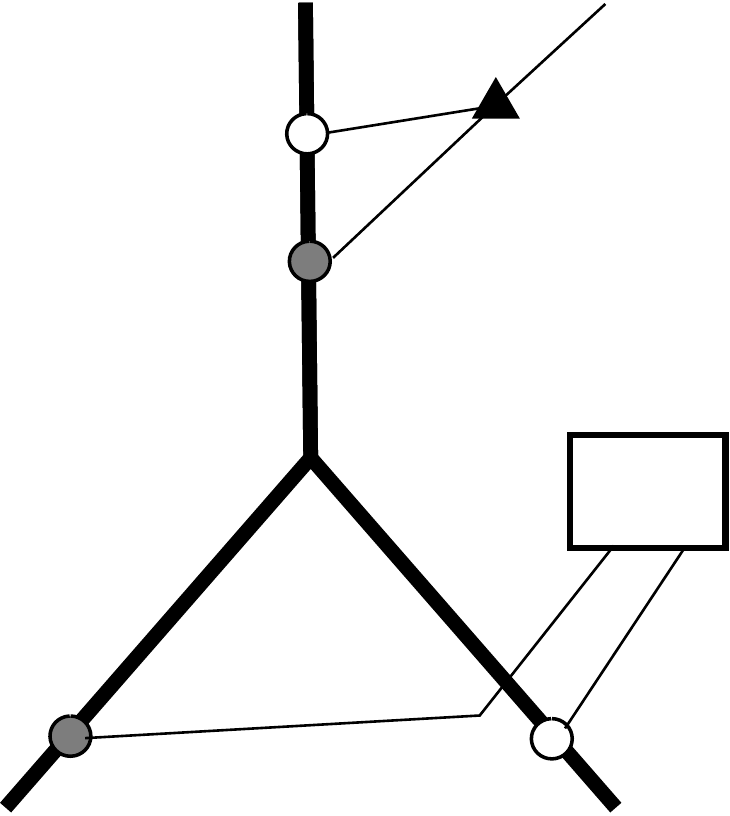}\raisebox{1cm}{\ =}\includegraphics[width=1.9cm]{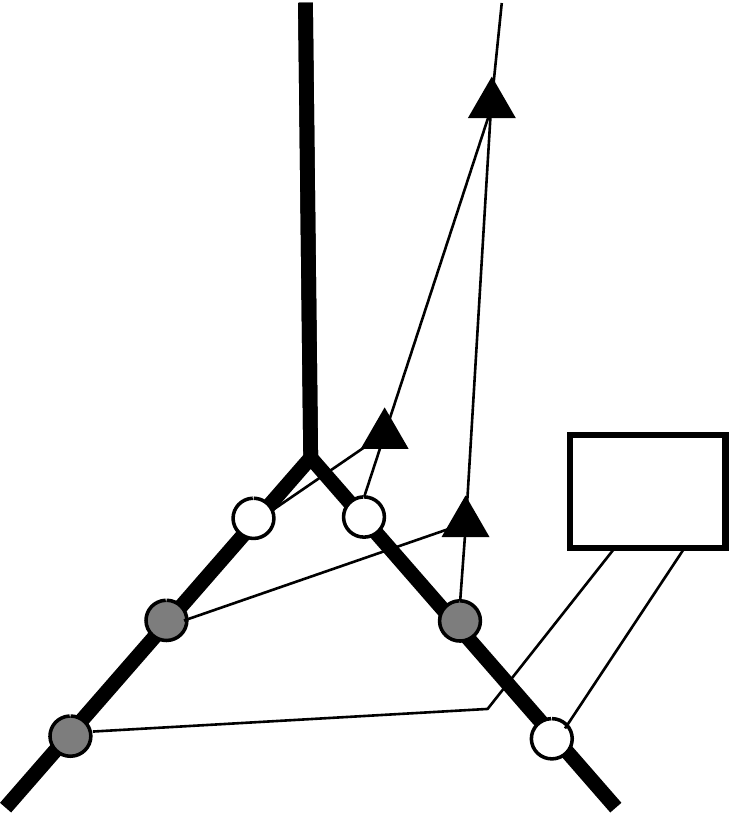}\raisebox{1cm}{\ =}\includegraphics[width=1.9cm]{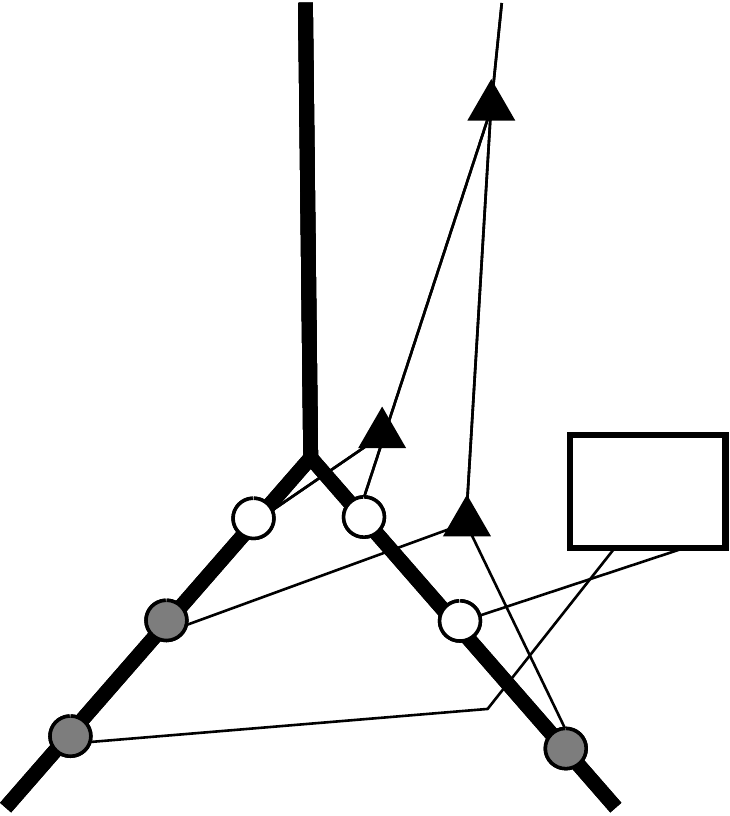}\raisebox{1cm}{\ =}\includegraphics[width=2.08cm]{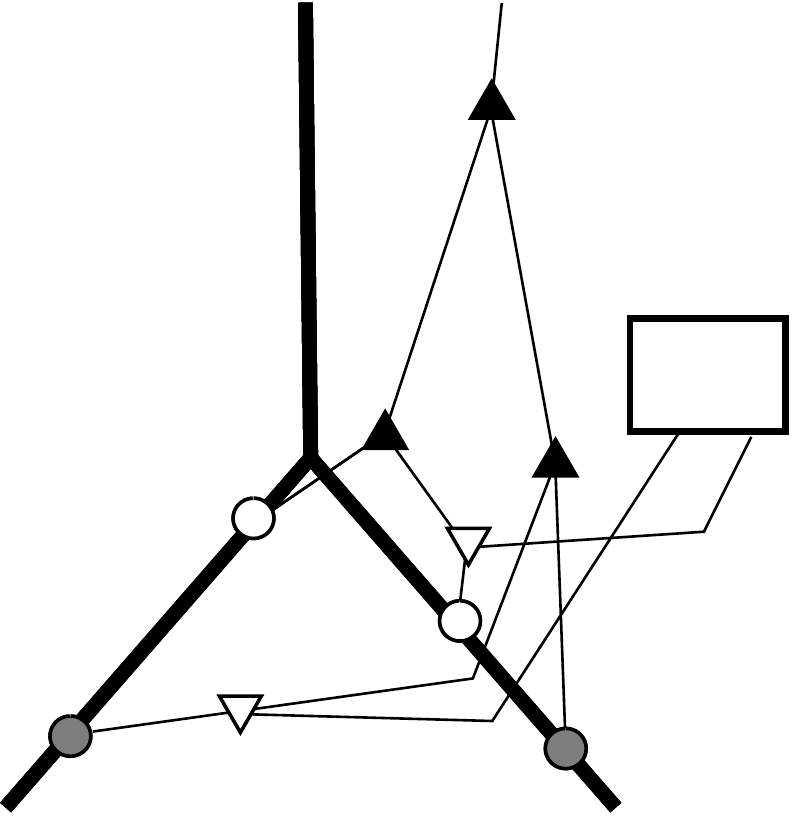}\raisebox{1cm}{\ =}\includegraphics[width=2cm]{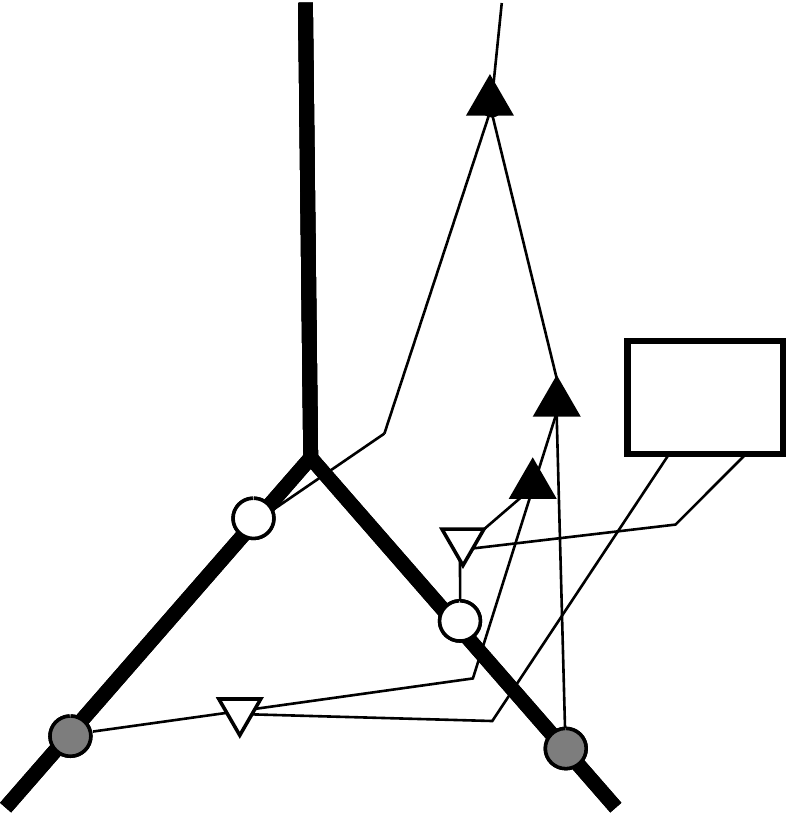}\raisebox{1cm}{=}\\\

\raisebox{1cm}{=}\includegraphics[width=1.7cm]{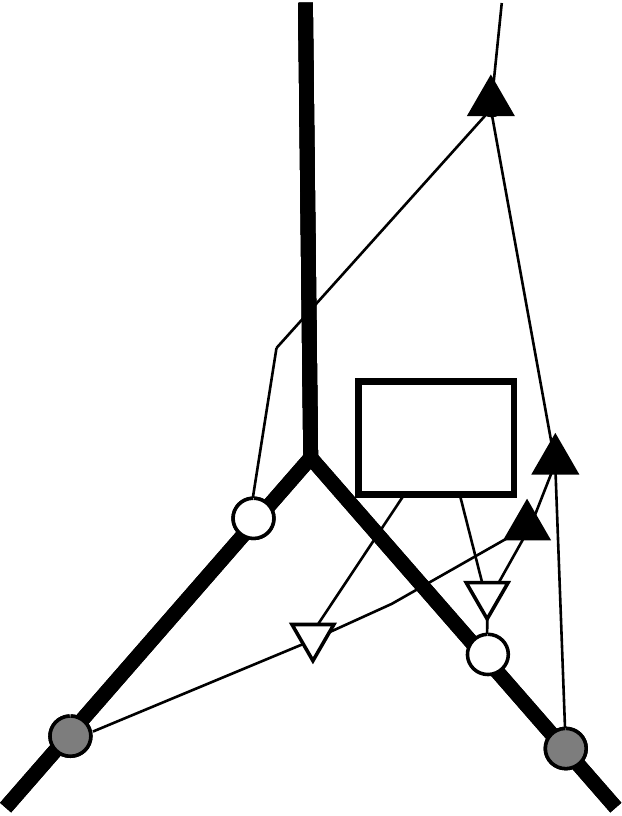}\raisebox{1cm}{\ =}\includegraphics[width=1.7cm]{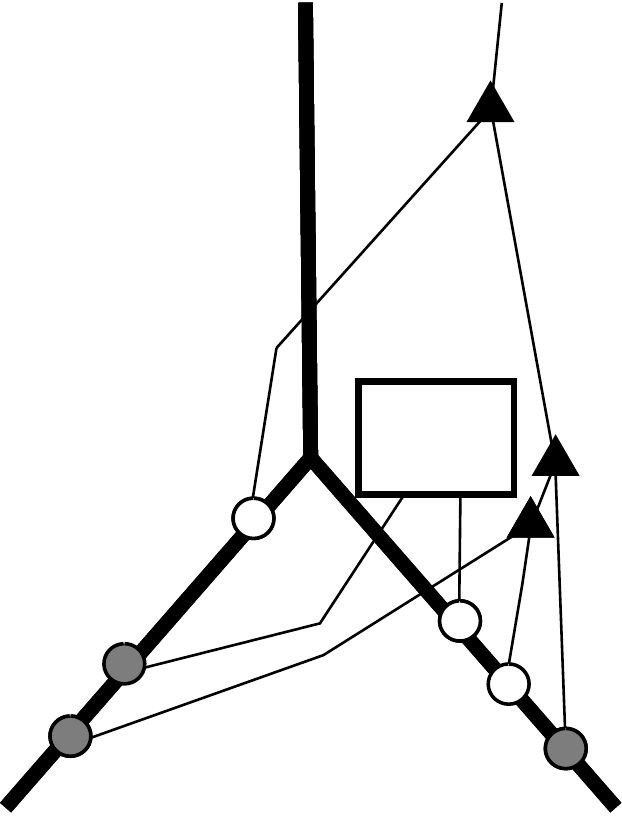}\raisebox{1cm}{\ =}\includegraphics[width=1.7cm]{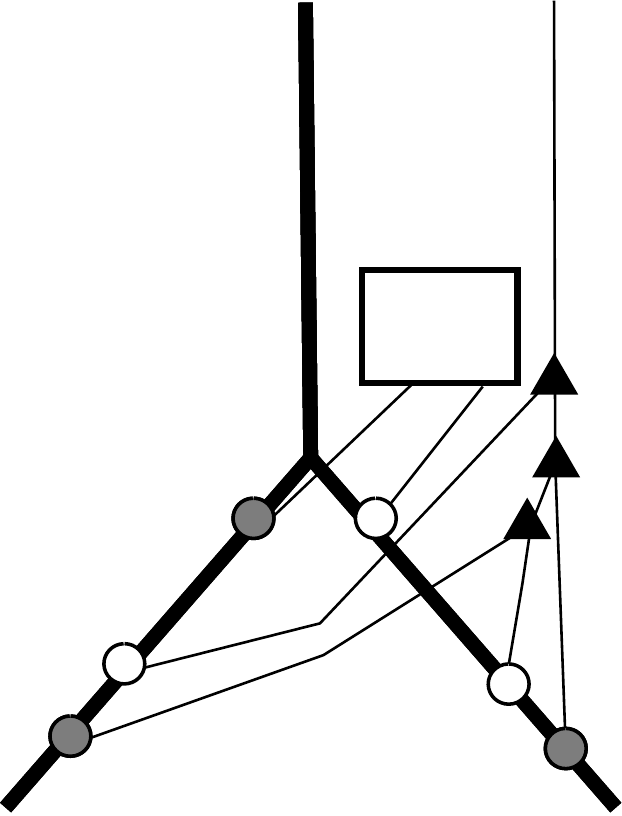}\raisebox{1cm}{\ =}\includegraphics[width=2cm]{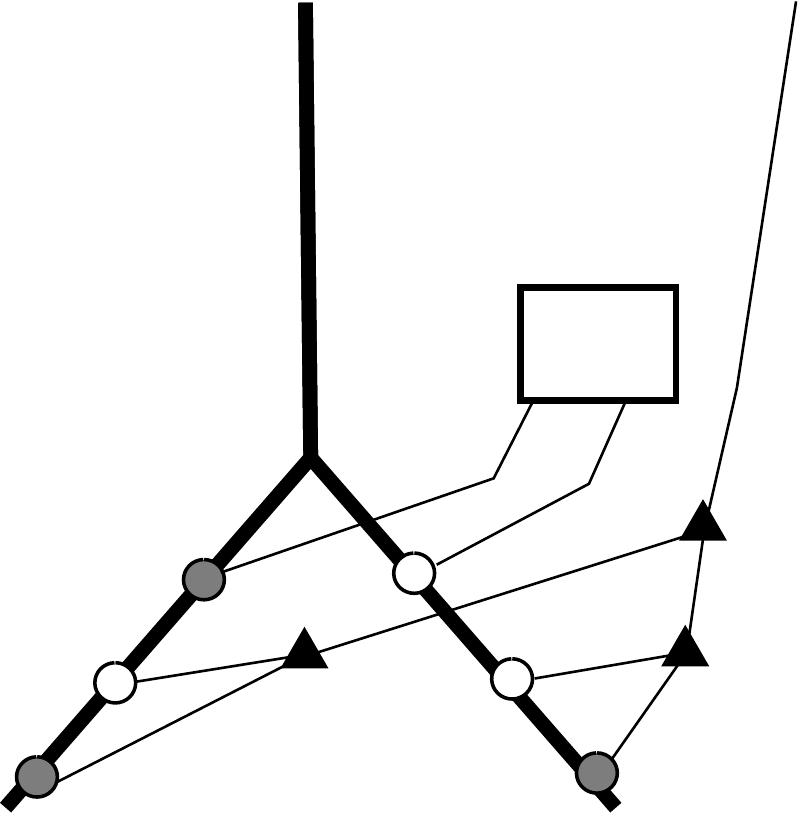}\raisebox{1cm}{$=\bD(\cdot) *\bD(\cdot)$}
\caption{The proof of the compatibility of $\bD$ and $\uast$. The diagrams are to be read from bottom to top, the thick (resp. thin) strands are $A$-colored (resp. $U$-colored), the crossings are flips, the white (resp. gray) solid dots represent $\Delta_1$ (resp. $\Delta_2$), the rectangle represents the co-$R$-matrix $\rho$, the black (resp. white) triangle is the product (resp. coproduct) of $U$, the thick trivalent vertex is the initial product in $A$. The equalities follow in order from: compatibility of $\Delta_i$ with the product of $A$, $\Delta_1$ commutes with of $\Delta_2$, coassociativity of $\Delta_i$, associativity of the product in $U$, equation \eqref{eq.rflip},  coassociativity of $\Delta_i$, $\Delta_1$ commutes with $\Delta_2$, associativity of the product in $U$.
}\label{fig:selftensorproductcompatible}
\end{figure}

\begin{example}[Braided tensor product]
The first  example of the above structure is the well-known {\em braided tensor product of two right comodule-algebras}, which we describe in details for the reader's convenience.
Suppose $A_1, A_2 $ are right comodule-algebras over a dual quasitriangular  Hopf algebra $U$. The tensor product 
$A=A_1 \ot_{\cR} A_2$ has two commuting right $U$-comodule structures. Namely  $\Delta_1=(\Id_{A_1}\otimes \fl )\circ (\Delta_{A_1}\otimes \Id_{A_2})$ and $\Delta_2=\Id_{A_1 }\otimes \Delta_{A_2}$.

By the above construction, $A$ has  the structure of a right $U$-comodule algebra, with the coaction $\bD= \Delta_1 \uast \Delta_2$ and the product $\uast$. Explicitly, for $x\in A_1$ and $y\in A_2$, the coaction is
$$\bD (x\ot y)= \sum (x' \ot y') \ot uv \ \text{if} \ \Delta_{A_1}(x)= \sum x' \ot u, \Delta_{A_2}(y)= \sum y' \ot v. $$

Let us now describe the product $\uast$. 
Identify $A_1$ with $A_1 \ot \{1\}$ and $A_2$ with $\{1\} \ot A_2$ (as subsets of $A_1\ot_{\cR}A_2)$.  Then the new product \eqref{eq:selfproduct} is given by
\be  \label{eq.brprod}
x\uast y = \begin{cases} xy \qquad &\text{if } \ x, y \in A_1 \ \text{or } \ x, y \in A_2 \\
x \ot y &\text{if } \ x \in A_1 , y \in A_2 \\
\sum \rho(u\ot v)(y' \ot  x')
&
\text{if } \ x \in A_2, y \in A_1, 
\end{cases} 
\ee
where $\Delta_{A_2}(x)= \sum x'\ot u,\Delta_{A_1}(y)= \sum y'\ot$,   and $\rho$ is the co-$R$-matrix.

The algebra $A_1 \ot_{\cR}A_2$ with this new product is called the {\em braided tensor product} of $A_1$ and $A_2$, and is denoted by $A_1 \uot_U A_2$.
For details see \cite{MajidFoundation}.

\end{example}

\begin{example}[Transmutation]  \label{ex.trans}
Assume that $U$ is a dual quasitriangular Hopf algebra and let $A=U$.  Then $\Delta_2:= \Delta: A \to A\ot U$ gives $A$ a right $U$-comodule algebra structure. To get another right $U$-comodule structure one converts the standard left comodule structure to a right one by
\be \Delta_1(x) := \sum x' \ot S(u) \ \text{if} \ \Delta(x) = \sum u \ot x'.
\ee

However $\Delta_1$ is not compatible with the algebra structure of $A$. One can twist the product of $A$ using the co-$R$-matrix $\rho$ of $U$ to make both $\Delta_1$ and $\Delta_2$ right comodule algebras as follows.
 Define a new product on $A$ using the common value of \eqref{eq.rflip}, i.e.
 \be  x \odot y = \sum \rho (x'\ot y') x'' y'',\ \text {if} \ \Delta(x) =\sum x' \ot x'', \Delta(y) = \sum y' \ot y''.
 \label{eq.newprod}
 \ee
It is easy to check that this gives $A$ a new product, with which both $\Delta_1$ and $\Delta_2$ give $A$  right $U$-comodule algebra structures. Besides, $\Delta_1$ and $\Delta_2$ commute.

Our construction now gives $A$ a right $U$-comodule algebra structure whose coaction $\bD=\Delta_1\uast \Delta_2$ and whose product $\uast$ are given by
\begin{align}
\bD(x) &= \sum x'' \ot S(x') x''' \ \text{if}\  (\Delta\ot \Id_U)\circ \Delta(x) = \sum x'\ot x'' \ot x'''
\label{eq.coact2}\\
x \uast y&= \sum   x'' y'' \rho( S(x' x''') \ot S( y')  )\  \text{if}\  \Delta(y) = \sum y'\ot y''.\label{eq.transmprod}
\end{align} 
It turns out that the coaction \eqref{eq.coact2} is exactly the {\em right coadjoint action}, see \cite[Example 1.6.14]{MajidFoundation} and the product $\uast$ is 
 exactly the {\em covariantized product} of \cite[Example 1.6.14]{MajidFoundation}. The algebra $A$, with this 
new product $\uast$ and the original coproduct $\Delta$,  is known as the {\em transmutation} of $A$, and is a {\em braided group} in the braided category of $U$-comodules, see~\cite{MajidFoundation}.

\end{example}

\subsection{Attaching an ideal triangle is a braided tensor product} \label{sec:braidedtensor}
Suppose $e, e_1, e_2$ are oriented edges of an ideal triangle $\P_3$  as depicted in Figure \ref{fig:brtensor0}.
\FIGc{brtensor0}{Left: Ideal triangle $\P_3$. Middle: Glueing $\fS$ and $\cP_3$ by $a_1=e_1$ and $a_2=e_2$ to get $\ufS$. Right: tangle diagram $x\in \cSs(\fS)$ and its image $f(x)\in \cSs(\ufS)$}{2.8cm}

Let $\fS$ be a (possibly disconnected) punctured bordered surface, with two boundary edges $a_1,a_2 \subset \partial\fS$. Define $\underline{\fS} = (\fS \sqcup \P_3)/(e_1=a_1, e_2= a_2)$, see Figure \ref{fig:brtensor0}. (A special case is when $\fS=\fS_1\sqcup \fS_2$ and $a_1\subset \fS',a_2\subset \fS''$.)
For $i=1,2$ the algebra $\CSS$ has a right comodule algebra structure $\Delta_i:= \Delta_{a_i}: \cSs(\fS) \to \CSS\ot \OSL$. 
The two coactions $\Delta_1$ and $\Delta_2$ commute, see Lemma \ref{prop:comodule}. Hence we can define the self braided tensor product $\uot \cSs(\fS)$ of $\Delta_1$ and $\Delta_2$, which gives $\CSS$ a new right comodule algebra structure over $\OSL$. On the other hand, $\Delta_e: \cSs(\underline{\fS}) \to \cSs(\underline{\fS}) \ot \OSL$ gives $\cSs(\underline{\fS})$ a right comodule algebra structure over $\OSL$.

\begin{theorem}\label{teo:braidedtensor}
The right $\OSL$-comodule algebra $\cSs(\underline{\fS})$ is naturally isomorphic to the self braided tensor product $\uot \cSs(\fS)$, defined with the co-$R$-matrix $\rho'$ of \eqref{eq.coR2}.

In particular when $\fS=\fS_1\sqcup \fS_2$ and $a_1\subset  \fS_1,a_2\subset \fS_2$, the right comodule algebra $\cSs(\underline{\fS})$ over $\OSL$ is naturally isomorphic to the braided tensor product $\cSs(\fS_1) \uot \cSs(\fS_2)$.
\end{theorem}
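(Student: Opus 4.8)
The plan is to construct an explicit isomorphism of $\cR$-modules and then check it intertwines the two algebra structures and the two comodule structures. First I would define the map $f_* : \cSs(\fS) \to \cSs(\ufS)$ coming from the canonical embedding $\fS \hookrightarrow \ufS$ obtained by gluing $\fS$ to two edges of the ideal triangle $\P_3$ (see the right part of Figure \ref{fig:brtensor0}). As an $\cR$-module this is an isomorphism: by the splitting theorem applied to the two arcs $a_1, a_2$ in $\ufS$ together with Theorem \ref{thm.basis1a} one reduces to noting that the third edge $e$ of $\P_3$ together with the two corner arcs gives, after using relations \eqref{eq.arcs} and \eqref{eq.order}, a bijection on bases; alternatively, one can invoke the computation of $\cSs(\P_3)$ (the reduced version appears in Theorem \ref{teo:redtri}, but for the full algebra a direct basis count works) and Theorem \ref{teo:cotensor} identifying $\cSs(\ufS)$ with the appropriate cotensor/Hochschild object over two copies of $\OSL$. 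The key point is that a tangle diagram in $\ufS$ can be pushed off the triangle $\P_3$ except for arcs crossing it, and such crossing arcs are resolved via the triangle relations into a linear combination of diagrams supported in $\fS$, giving the inverse of $f_*$.

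Next I would verify that $f_*$ carries the self-braided product $\uast$ on $\cSs(\fS)$ to the ordinary product of $\cSs(\ufS)$. Take $x, y \in \cSs(\fS)$ represented by diagrams as in Figure \ref{fig:brtensor0}; in $\cSs(\ufS)$ the product $f_*(y) f_*(x)$ stacks $y$ above $x$, and to bring it into standard position one must slide the bottom portion of $y$'s strands that run near $a_1$ and $a_2$ past the top strands of $x$ through the triangle $\P_3$. Resolving the resulting crossings inside $\P_3$ using the skein relation \eqref{eq.skein} and the arc relations produces precisely the co-$R$-matrix coefficients: this is where the geometric description of $\rho'$ from \eqref{eq.coR2} (equivalently Theorem \ref{teo:cobraid} together with Remark \ref{rem:rhoprime}) enters, because the crossing of strands through a triangular region is exactly the picture computing $\bar\rho$ after splitting. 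Concretely, $\Delta_2(x) = \sum x' \ot u$ records the states of $x$ along $a_2$, $\Delta_1(y) = \sum y' \ot v$ records the states of $y$ along $a_1$, and the triangle crossing contributes $\sum \rho'(u \ot v)$, matching \eqref{eq:selfproduct}. One then checks, again by a splitting picture, that $f_*$ intertwines $\bD = \Delta_1 \uast \Delta_2$ with $\Delta_e$ on $\cSs(\ufS)$: splitting $\ufS$ along the arc parallel to $e$ factors through splitting along $a_1$ and $a_2$ and then combining via the triangle, which on the comodule side is exactly $\bD$.

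The special case $\fS = \fS_1 \sqcup \fS_2$ with $a_1 \subset \fS_1$, $a_2 \subset \fS_2$ then follows immediately, since $\cSs(\fS) = \cSs(\fS_1) \ot_\cR \cSs(\fS_2)$ and the two coactions $\Delta_1, \Delta_2$ become the standard commuting coactions on a tensor product, so $\uot \cSs(\fS)$ is by definition the braided tensor product $\cSs(\fS_1) \uot \cSs(\fS_2)$ (with co-$R$-matrix $\rho'$), and the product formula \eqref{eq.brprod} specializes \eqref{eq:selfproduct}. I expect the main obstacle to be the bookkeeping in the middle step: one must verify that sliding strands through the triangle produces *exactly* the mirror co-$R$-matrix $\rho'$ and not $\rho$, which requires being careful about orientations of the edges $e_1, e_2$ and the induced height orders, and checking the sixteen generator cases (or a clean conceptual argument via Theorem \ref{teo:cobraid}) that the triangle-crossing tangle computes $\bar\rho$. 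A secondary subtlety is confirming that $f_*$ is well-defined as a linear isomorphism independent of the diagrammatic representatives, which is handled by the splitting theorem's functoriality (Theorem \ref{thm.1a}) together with the compatibility of $\theta_{a_1}$ and $\theta_{a_2}$.
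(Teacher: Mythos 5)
Your overall route is the paper's: you use the same map $f$ (extend strands across the ideal triangle until they hit $e$, no crossings in $\P_3$), the same mechanism for multiplicativity (split inside the triangle, apply the counit, and recognize the resulting coefficient as the mirrored co-$R$-matrix $\rho'$ of \eqref{eq.coR2} via the geometric picture of Theorem \ref{teo:cobraid}), and the same immediate specialization to the case $\fS=\fS_1\sqcup\fS_2$. The comodule-intertwining check you add is fine and is implicit in the paper.

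The one place where your plan has a genuine gap is the assertion that $f$ is an $\cR$-linear isomorphism. It is not a ``bijection on bases'': $f$ of a basis element of $\cSs(\fS)$ is in general not a basis element of $\cSs(\ufS)$ (the states on $e$ need not be increasing for the chosen order), and basis elements of $\cSs(\ufS)$ whose intersection with the triangle contains arcs joining $e_1$ to $e_2$ are not images of basis elements; defining an inverse by ``resolving'' such arcs requires a well-definedness argument you do not supply; and the alternative via Theorem \ref{teo:cotensor} needs the full bicomodule structure of $\cSs(\P_3)$, which is essentially the two-bigon case of the very theorem being proved. The paper closes this step with a filtration argument: filter both sides by $I(\cdot,a_1)+I(\cdot,a_2)$ (Section \ref{sec.filtration}); $f$ is filtered, and on each graded piece one compares the free $\cR$-modules $V(x)$ and $W(x)$ of stated diagrams lying over a fixed unstated simple diagram $x$ with $I(x,a_1)=m$, $I(x,a_2)=n$. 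Both have rank $(m+1)(n+1)$ (for $W(x)$ one counts $k$ arcs of type $e_1$--$e_2$, $m-k$ of type $e_1$--$e$, $n-k$ of type $e_2$--$e$, and $\sum_{k=0}^{\min(m,n)}(m+n-2k+1)=(m+1)(n+1)$); $\Gr(f)$ is surjective by the relation of Figure \ref{fig:brten6-new}, a consequence of \eqref{eq.order}, which converts $e_1$--$e_2$ arcs into arcs meeting $e$; and a surjection between free modules of equal finite rank is an isomorphism. Your ``push off the triangle'' idea is exactly this surjectivity input; it is injectivity (equivalently, consistency of the would-be inverse) that needs the rank count, so you should replace the bijection-on-bases claim with this graded comparison.
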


\def\brtone{  \raisebox{-13pt}{\incl{1.3 cm}{brtensor1}} }
\def\brttwo{  \raisebox{-15pt}{\incl{1.5 cm}{brtensor2}} }
\def\brtthree{  \raisebox{-13pt}{\incl{1.3 cm}{brtensor3}} }
\def\brtfour{  \raisebox{-8pt}{\incl{.8 cm}{brtfour}} }
\def\brtonenew{  \raisebox{-13pt}{\incl{1.3 cm}{brtensor1-new}} }
\def\brttwonew{  \raisebox{-15pt}{\incl{1.5 cm}{brtensor2-new}} }
\def\brtthreenew{  \raisebox{-13pt}{\incl{1.3 cm}{brtensor3-new}} }
\def\brtfournew{  \raisebox{-8pt}{\incl{.8 cm}{brtensor4-new}} }

\begin{proof}  Let $\ori_+$ be the positive orientation of the boundaries of $\fS$ and $\ufS$.  In the proof all tangle diagrams will have positive height order.

For a stated $\pfS$-tangle diagram $x$ let $f(x)$ be the stated $\partial \ufS$-tangle diagram obtained from $x$ by extending the strands ending on {$a_1\sqcup a_2$} until they end on $e$, with order on $e$ given by its positive direction,
see  Figure \ref{fig:brtensor0}. We require that $f(x)$ has no crossing inside $\cP_3$ and this makes $f(x)$ unique up to isotopy. Since $f$ clearly preserves the defining relations of a stated skein algebra, we can extend it to an $\cR$-linear map $f:\Ss(\fS)\to \Ss(\underline{\fS})$.

Recall that $\uot \cSs(\fS)$ is the same $\CSS$ with a new product $\uast$ given by \eqref{eq:selfproduct}.

\begin{lemma}The map $f: \uot \cSs(\fS) \to \cSs(\ufS)$ is an algebra homomorphism.
\end{lemma}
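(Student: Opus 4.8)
The plan is to verify $f(x\uast y)=f(x)\,f(y)$ on a spanning set and to carry out the essential computation inside the glued ideal triangle $\cP_3$. By $\cR$-bilinearity it suffices to take $x,y$ to be stated $\partial\fS$-tangle diagrams with positive height order and, after isotopy, to assume each of them meets $a_1$ and $a_2$ inside small collar neighbourhoods of those edges.

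First I would exhibit the diagram of $f(x)\,f(y)$ concretely. Stacking $f(x)$ above $f(y)$ in $\ufS\times(0,1)$ and isotoping so that all crossings created between the two pieces lie inside $\cP_3$, one gets a diagram which restricts to ``$x$ above $y$'' on $\fS$ and which, inside $\cP_3$, consists of the arcs of $f(x)$ (upper layer) and of $f(y)$ (lower layer), each family crossingless, every arc joining $a_1$ or $a_2$ to $e$. The only crossings are those forced by the position of the endpoints on $e$: there the whole block of $f(x)$-endpoints sits above the whole block of $f(y)$-endpoints, rather than interleaved as it would be in the crossingless extension of the ordinary product $xy\in\cSs(\fS)$; as a result — in the labelling fixed by Figure \ref{fig:brtensor0} — the $a_2$-arcs of $f(x)$ pass over the $a_1$-arcs of $f(y)$.

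Next I would resolve these crossings one at a time with the Kauffman relation \eqref{eq.skein}, reduce every arc created with both endpoints on a single (copy of an) edge by the boundary relations \eqref{eq.arcs}, and normalise each term to positive order with the height-exchange relations \eqref{eq.reor1}--\eqref{eq.reor2}. Reading the resulting sum through the splitting Theorem \ref{thm.1a} applied to the two interior arcs $c_1,c_2$ of $\ufS$ produced by the gluing, together with the coaction formula \eqref{eq.coact} (which describes $\Delta_2=\Delta_{a_2}$ and $\Delta_1=\Delta_{a_1}$ via the bigon tangles $\alpha_{\boeta\bmu}$), it becomes $f$ applied to $\sum x'y'\,\lambda(u,v)\in\cSs(\fS)$, where $\Delta_2(x)=\sum x'\otimes u$, $\Delta_1(y)=\sum y'\otimes v$, and the scalar $\lambda(u,v)$ collected from the resolved crossings is — by the very computation that proves Theorem \ref{teo:cobraid} — exactly the value $\rho'(u\otimes v)$ of the mirror co-$R$-matrix of \eqref{eq.coR2} (Remark \ref{rem:rhoprime}). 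By the definition \eqref{eq:selfproduct} of $\uast$ this is $f(x\uast y)$. Equivalently, one may first apply the injective splitting homomorphism $\theta_{c_1,c_2}\colon\cSs(\ufS)\embed\cSs(\fS)\otimes\cSs(\cP_3)$ obtained by composing the splittings along $c_1$ and $c_2$, and run the same computation for the product of the two arc families inside $\cSs(\cP_3)$.

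The step I expect to be the main obstacle is the sign and convention bookkeeping: one must confirm that resolving the crossings in the layering ``$f(x)$ over $f(y)$'', together with the orientation of $e$ that $f$ uses to order endpoints, assembles into the mirror co-$R$-matrix $\rho'$ of \eqref{eq.coR2} and not $\rho$ — which amounts to matching the crossing type and reading direction against the picture formula \eqref{eq.cobraid2} for $\bar\rho$ — and that every returning arc created by the non-trivial smoothings is indeed killed or rescaled by \eqref{eq.arcs}. Once this is checked in the generator cases, where each of $a_1,a_2$ carries a single arc with a single $\pm$ state and the outcome is compared directly with the entries \eqref{eq.cobraid0}, the general case follows: a multi-strand crossing resolves into single crossings, $\rho'$ is multiplicative by \eqref{eq.cobraid3}--\eqref{eq.cobraid4}, and coassociativity of the coactions (Theorem \ref{thm.1a}(c)) guarantees that the various re-statings fit together coherently.
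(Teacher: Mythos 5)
Your target identity is the right one and your overall strategy can be made to work, but it is a genuinely different, and heavier, route than the paper's. The paper never resolves a crossing: it draws $f(x)f(y)$, splits off a single bigon near $e$ containing all the crossings (the ``dashed line''), uses the counit property $u=\sum u'\epsilon(u'')$ of the coaction to write $f(x)f(y)=\sum f(x'y')\,\epsilon(\text{crossing tangle in the bigon})$, and then quotes once the geometric formula of Theorem \ref{teo:cobraid} and Remark \ref{rem:rhoprime}, which says that this counit value \emph{is} $\rho'(u\ot v)$. Your plan instead re-derives that identification locally: resolve the crossings in $\cP_3$ by \eqref{eq.skein}, evaluate the returning arcs, normalise heights, check the generator cases against \eqref{eq.cobraid0} and invoke multiplicativity \eqref{eq.cobraid3}--\eqref{eq.cobraid4}. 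That buys nothing over citing Theorem \ref{teo:cobraid} directly, and it costs you a substantial amount of bookkeeping that the counit trick avoids; your own ``equivalent'' variant (split along $c_1,c_2$ and compute in $\cSs(\fS)\ot\cSs(\cP_3)$) is closer in spirit to what the paper does, except that the paper only needs to split off the bigon near $e$.

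There is one concrete step missing from your computational plan. The reconnecting Kauffman smoothing of a crossing between an $a_2$-strand of $f(x)$ and an $a_1$-strand of $f(y)$ produces, besides a returning arc on $e$, an arc inside the triangle joining $a_1$ to $a_2$; such terms are \emph{not} of the form $f(\cdot)$, and they are not disposed of by the boundary relations \eqref{eq.arcs} (whose hypotheses require an innermost arc with consecutive endpoints on a single edge) together with the height exchanges \eqref{eq.reor1}--\eqref{eq.reor2}. To land on $\sum f(x'y')\rho'(u\ot v)$ you must additionally convert every $a_1$--$a_2$ arc into arcs reaching $e$, which is exactly the move of Figure \ref{fig:brten6-new} (a consequence of \eqref{eq.order}) that the paper uses elsewhere, in the surjectivity part of the proof of Theorem \ref{teo:braidedtensor}; likewise the returning arcs on $e$ created by smoothings need not be innermost and must first be isolated using \eqref{eq.order}. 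Neither issue is fatal, but as written your second paragraph asserts that the resolution ``becomes $f$ applied to $\sum x'y'\lambda(u,v)$'' without the ingredient that makes this true, and fixing it is where most of the work in your approach would actually lie.
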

\begin{proof} 
Let $x,y$ be stated $\pfS$-tangle diagrams.

\FIGc{brtensorxy-new}{$xy,f(xy)$, and $f(x) f(y)$}{2.5cm}
We present $xy, f(xy), f(x)f(y)$ schematically as in Figure \ref{fig:brtensorxy-new}. By splitting along the dashed line in the picture of $f(x)f(y)$ and using the counit property which says $u = \sum u' \ve(u'')$, we get, with $\Delta_2(x) = \sum x'\ot u$ and $\Delta_1(y) = \sum y'\ot v$,
\begin{align*}
f(x) f(y)
&= \sum \brtthreenew \ \ot \  \ve\left( \brtfournew \right) 
\\
&= \sum \,  f {( {x'  y'})\rho' (u \ot  v)},
 \qquad \text{using co-$R$-matrix $\rho'$ of  \eqref{eq.coR2}} \\
&= f(x\uast y)  \qquad \text{by   \eqref{eq:selfproduct}}.
\end{align*}
Thus $f$ is an algebra homomorphism.
\end{proof}

It remains to show that $f$ is an $\cR$-linear isomorphism.
For $l\in \BN$ let $F_l(\cSs(\fS))= F_l^{a_1, a_2}(\cSs(\fS)) $ and $F_l(\cSs(\ufS))= F_l^{a_1, a_2}(\cSs(\ufS)) $ be the filtrations defined in Subsection \ref{sec.filtration}. In other words, $F_l(\cSs(\fS))$ is
the ${\cR}$-submodule spanned by stated tangle diagrams $\al$ such that $I(\al, a_1)+ I(\al,a_2) \le l$, and similarly  for $F_l(\cSs(\underline{\fS}))$.
Denote by $\Gr_*$ the corresponding graded $\cR$-modules.
It is clear that $f$ preserves the filtrations $F_l$. It is enough to show that $\Gr(f)$ is a bijection.

Let $B_{m,n}$ be the set of isotopy classes of simple $\pfS$-tangle diagrams $\al$ such that $I(\al, a_1)=m, I(\al, a_2)=n$ and $\al$ is increasingly stated on each boundary edge, except for $a_1$ and $a_2$ where it is not stated.
 Then 
\begin{align}
\Gr_l (\cSs(\fS)) &= \bigoplus _{ m+n=l, \ x\in B_{m,n}} V(x)\\
\Gr_l (\cSs(\fS)) &= \bigoplus _{ m+n=l, \ x\in B_{m,n}} W(x).
\end{align}
Here $V(x)$ is the $\cR$-submodule of $\Gr_l (\cSs(\fS))$ spanned by $\al\in B(\fS;\ori_+)$ such that $\al=x$ if we forget the states on $a_1\cup a_2$, and $W(x)$ is the $\cR$-submodule of $\Gr_l (\cSs(\underline{\fS}))$ spanned by
$z\in B(\underline{\fS};\ori_+)$ such that $z\cap \fS= x$. It is enough to show that $\Gr(f)$ is an isomorphism from $V(x)$ to $W(x)$ for $x\in B_{m,n}$. 

Note that both $V(x)$ and $W(x)$ are free $\cR$-modules, and both have rank
$ (m+1)(n+1)$.
Indeed there are $m+1$ increasing states on $x\cap a_1$ and $n+1$ increasing states on $x\cap a_2$ and these can be chosen independently, thus $\rk_{\cR}(V(x))=(m+1)(n+1)$. For what concerns $W(x)$, observe that if  $z\in B(\underline{\fS};\ori_+)$ such that $z\cap \fS= x$ then $z\cap \cP_3$ consists of  $k$ arcs (for some $k\in [0,\min(m,n)]$) connecting $ e_1$ and $ e_2$, $m-k$ arcs connecting $ e_1$ and $ e$, and finally $n-k$ arcs connecting $ e_2$ and $ e$; furthermore $z\cap e$ is increasingly stated so that there are exactly $(m+n-2k+1)$ such $z$.
 Thus we have:
$$\rk_{\cR}(W(x))=
\sum_{k=0}^{min(m,n)}(m+n-2k+1)=(m+1)(n+1).$$
The reordering relation \eqref{eq.order} implies the relation in Figure \ref{fig:brten6-new}, which converts arcs connecting $e_1$ and $e_2$ to arcs with one end in $e$.  This shows that $\Gr(f): V(x) \to W(x)$ is surjective.
\FIGc{brten6-new}{}{1.7cm}

 Since both $V(x)$ and $ W(x)$ are free $\cR$-modules having the same rank, we conclude that $\Gr(f): V(x) \to W(x)$ is an isomorphism. This completes the proof of the theorem.
\end{proof}

\subsection{Examples: Polygons, punctured bigons, and punctured monogons} \label{sub:monogons}

\begin{example}[Polygon] \label{polygons}
The {\em polygon $\P_n$ }is the standard  disc with $n$ punctures on its boundary removed.
Note that the  triangle $\P_3$ is the result of attaching an ideal triangle to two bigons. By Theorem \ref{teo:braidedtensor} we have 
\be  \cS(\P_3) \cong \OSL\underline \ot_{\OSL} \OSL,
\ee
where each copy of $\OSL$ is a right $\OSL$-comodule algebra via the coproduct. Consequently, $\cS(\P_3) \cong \OSL \ot_\cR \OSL$ as $\cR$-modules, and its algebra structure is described by \eqref{eq.brprod}. From here one can get a presentation of $\cS(\P_3)$. In \cite{Le:TDEC}, a presentation of $\cS(\P_3)$ was obtained by brute force calculation.

The $n$-gon $\P_n$ is the result of attaching an ideal triangle to the disjoint union of $\P_{n-1}$ and the bigon. By induction, we obtain
\begin{corollary}\label{cor:polygons} One has
$$\Ss(\P_n)\cong \OSL\uot\cdots \uot \OSL,$$
 where there are $(n-1)$ copies of $\OSL$.
 \end{corollary}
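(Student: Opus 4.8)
The plan is to deduce Corollary~\ref{cor:polygons} from Theorem~\ref{teo:braidedtensor} by a straightforward induction on $n$. The base case is $n=2$: the bigon $\P_2 = \cB$ satisfies $\cSs(\cB) \cong \OSL$ by Theorem~\ref{teo:SL2}, which is exactly the statement with one copy of $\OSL$. (One may also start the induction at $n=3$, where $\P_3$ is obtained by gluing an ideal triangle along two of its edges to two disjoint bigons $\cB \sqcup \cB$, so Theorem~\ref{teo:braidedtensor} gives $\cSs(\P_3) \cong \OSL \uot_{\OSL} \OSL$ directly.)

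For the inductive step, I would first justify the topological claim stated in the text: the $n$-gon $\P_n$ is diffeomorphic to the surface obtained by gluing an ideal triangle $\cP_3$ along two of its edges $e_1, e_2$ to the two edges $a_1, a_2$ of the disjoint union $\P_{n-1} \sqcup \cB$, where $a_1$ is one boundary edge of $\P_{n-1}$ and $a_2$ is one boundary edge of $\cB$. This is a picture-level verification: gluing a triangle onto one edge of $\P_{n-1}$ and one edge of $\cB$ amalgamates the two discs into a single disc, and the count of punctures on the resulting boundary circle is $(n-1) + 1 + 1 - 2 = n-1$ interior-edge identifications... more carefully, $\P_{n-1}$ has $n-1$ boundary edges, $\cB$ has $2$, the triangle contributes $3$, two pairs get identified and the third triangle edge $e$ becomes a new boundary edge, giving $(n-1) + 2 + 3 - 2\cdot 2 = n$ boundary edges, hence $\P_n$. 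Then Theorem~\ref{teo:braidedtensor}, applied in the special case $\fS = \P_{n-1} \sqcup \cB$ with $a_1 \subset \P_{n-1}$ and $a_2 \subset \cB$, yields an isomorphism of right $\OSL$-comodule algebras
$$ \cSs(\P_n) \cong \cSs(\P_{n-1}) \uot_{\OSL} \cSs(\cB). $$
By the induction hypothesis $\cSs(\P_{n-1}) \cong \OSL^{\uot (n-2)}$ (the braided tensor product of $n-2$ copies) and $\cSs(\cB) \cong \OSL$, so $\cSs(\P_n) \cong \OSL^{\uot(n-1)}$, completing the induction.

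The only genuine subtlety — and the step I expect to require the most care — is bookkeeping of the comodule structures so that the induction closes as an isomorphism of $\OSL$-comodule algebras and not merely of algebras: one must check that the residual right $\OSL$-coaction $\Delta_e$ on $\cSs(\P_n)$ coming from the free edge $e$ of the glued triangle matches, under the braided tensor product identification, the coaction used to form $\OSL \uot \cdots \uot \OSL$, and that the comodule-algebra structure used on $\cSs(\P_{n-1})$ when we feed it back into Theorem~\ref{teo:braidedtensor} is the one inductively produced. This is handled by choosing at each stage which boundary edge of $\P_{n-1}$ plays the role of $a_1$ and tracking that Theorem~\ref{teo:braidedtensor} delivers precisely the coaction on the new free edge; since the braided tensor product $A_1 \uot_U A_2$ carries the diagonal coaction $\bD = \Delta_1 \uast \Delta_2$ which restricts to the original coactions on each tensor factor, iterating is consistent. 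Everything else is routine, so the corollary follows essentially immediately from Theorem~\ref{teo:braidedtensor} once this topological decomposition of $\P_n$ is recorded.
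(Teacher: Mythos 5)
Your proposal is correct and follows essentially the same route as the paper: the paper also observes that $\P_n$ is obtained by attaching an ideal triangle to $\P_{n-1}\sqcup \cB$ and then applies Theorem~\ref{teo:braidedtensor} inductively (starting from $\cS(\P_3)\cong \OSL\uot\OSL$, with $\cSs(\cB)\cong\OSL$ from Theorem~\ref{teo:SL2}). Your extra care about the residual coaction on the new free edge and the topological edge count is exactly the bookkeeping the paper leaves implicit, so nothing is missing.
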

 
 \end{example}

\def\sw{\mathrm{sw}}
\def\ro{\mathrm{ro}}
\begin{example}[Punctured bigons] Let $\B_n$ be the bigon $\B$ with $n$ interior punctures removed. 
For example 
$\B_0=\B$. Like in the case of $\B$, we will show that $\Ss(\B_n)$ has a natural structure of a Hopf alagebra where all the operations can be defined geometrically. Recall that we  denote $e_l$ and $e_r$ the left and right boundary edges of $\B$.
\FIGc{bigonn}{The embedding of the 4-punctured bigon $\B_4$ into $\B_8$ and the splitting homomorphism. The composition gives the coproduct.}{2cm}

Let $\iota:\B_n\hookrightarrow \B_{2n}$ be the inclusion identifying $\B_n$ with the complement of $n$ closed disjoint arcs, each  connecting two punctures of  the $2n$-punctures of $\B_{2n}$; See Figure \ref{fig:bigonn}. Let  $\Delta:\cSs(\B_n)\to \cSs(\B_n)\otimes\cSs(\B_n)$ be the map induced by $\iota$ and then splitting along the vertical arc connecting the two boundary ideal vertices of $\B_{2n}$ and identifying the two halves with $\B_n$. 

Let also $\epsilon:\cSs(\B_n)\to \cR$ be the map obtained by including $\B_n$ in $\B_0=\B$ and then applying $\epsilon :\cSs(\B)\to \cR$. 

Finally  define $S:\cSs(\B_n)\to \cSs(\B_n)$ as the $\cR$-linear map whose value on a stated tangle  $\alpha$ in $\B_n \times (-1,1)$ is obtained by first switching all the states $\eta$ to $-\eta$ and all the framing vectors $v$ to $-v$, then rotating $\alpha$ by $180^\circ$ around the axis passing through the two boundary ideal vertices and finally multiplying the result by $(\sqrt{- 1}\,  q)^{(\delta_{e_l}(\alpha)-\delta_{e_r}(\alpha))}$ (where $\delta_{e}(\alpha)$ was defined in Subsection \ref{sec.filtration} as the sum of the states on $\alpha\cap e$). It is easily checked that all the defining relations \eqref{eq.skein}-\eqref{eq.order} are preserved so that $S$ is well-defined. 

\begin{proposition}\label{lem:Bnstandardhopf}

 For each $n\geq0$ the skein algebra $\cSs(\B_n)$, endowed with $\Delta,\epsilon,S$, is a Hopf algebra.

\end{proposition}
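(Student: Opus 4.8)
The plan is to reduce everything to the already-established case $n=0$ (Theorems~\ref{teo:SL2} and the subsequent propositions about $\cSs(\cB)$) by showing that the inclusion $\cSs(\B_n)\hookrightarrow\cSs(\B)$ intertwines all the relevant structure maps, and then to verify the bialgebra and antipode axioms directly using the splitting theorem and the comodule-type machinery developed in Section~\ref{sec.comodule}. First I would check that $\Delta:\cSs(\B_n)\to\cSs(\B_n)\otimes\cSs(\B_n)$ is a well-defined algebra homomorphism: the map $\iota_*$ induced by $\iota:\B_n\hookrightarrow\B_{2n}$ is an algebra map because the target edge $e$ of $\B_{2n}$ contains the image of exactly one edge of $\B_n$ on each side (so the remark following the definition of $f_*$ in Section~\ref{sec.cat1} applies), and the splitting homomorphism along the vertical arc is an algebra map by Theorem~\ref{thm.1a}(a); the identification of the two halves with $\B_n$ is canonical. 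Coassociativity follows from Theorem~\ref{thm.1a}(c) exactly as for $\Delta$ on $\cSs(\cB)$: splitting $\B_{4n}$ along two parallel vertical arcs in either order gives the same map, and both compute $(\Delta\otimes\id)\Delta$ and $(\id\otimes\Delta)\Delta$ respectively.

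Next I would establish the counit axioms. The key geometric fact is that $\epsilon:\cSs(\B_n)\to\cR$ is the composition of the inclusion $j:\B_n\hookrightarrow\B$ (filling in the $n$ interior punctures) with the counit $\epsilon_\cB$ of $\cSs(\cB)$, and that $j$ is compatible with the coproducts in the sense that the square relating $\Delta$ on $\cSs(\B_n)$, $\Delta$ on $\cSs(\cB)$, and $j_*\otimes j_*$ commutes — this is because filling in interior punctures commutes with both $\iota_*$ and splitting along the vertical arc, as the vertical arc and the extra punctures are disjoint. Granting this, $(\epsilon\otimes\id)\Delta=(j_*{}\otimes\id)\circ(\epsilon_\cB\otimes\id)\circ\Delta_{\cSs(\cB)}\circ(\text{something})$; more cleanly, I would argue directly: after including into $\B_{2n}$ and splitting, applying $\epsilon$ to the left factor means including that half back into $\cB$ and computing $\epsilon_\cB$, which by \eqref{eq.mon0} (the monogon-reduction identity) collapses the left half, leaving the original skein in the right copy of $\B_n$. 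This is the same picture-chase that proves \eqref{eq.coco} in Section~\ref{sec:bigon}, now performed in $\B_n$ rather than $\cB$; the interior punctures ride along passively.

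For the antipode I would again lean on the $n=0$ case and a compatibility. One checks first that $S$ is an algebra anti-homomorphism: switching states, reversing framings, and rotating by $180^\circ$ is an anti-automorphism of the underlying skein algebra (it is the composition of $\rot_*$-type moves with the inversion homomorphisms $\inv_{e_l},\inv_{e_r}$ of Proposition~\ref{r.inversion}, which is where the scalar $(\sqrt{-1}\,q)^{\delta_{e_l}-\delta_{e_r}}$ comes from — it is exactly the product of the $C$- and $\bar C$-type factors needed to make the state-reversal a homomorphism), and the grading prefactor is multiplicative in the sense required by $G^e_mG^e_{m'}\subset G^e_{m+m'}$. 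Then the antipode identity $\sum S(x')x''=\epsilon(x)1=\sum x'S(x'')$ need only be checked on algebra generators; but rather than find generators of $\cSs(\B_n)$, I would use that $S$ and $\Delta$ are compatible with the inclusion $j:\B_n\hookrightarrow\B$ (i.e.\ $j_*\circ S=S_\cB\circ j_*$ and the coproduct square commutes) together with the fact that the antipode axiom, being an equality of maps $\cSs(\B_n)\to\cSs(\B_n)$, can be tested after... no — that direction fails since $j_*$ is not injective. Instead I would verify the antipode axiom directly by a picture computation generalizing the proof of the antipode proposition in Section~\ref{sec:antipode}: apply $\Delta$ to a stated tangle $\alpha$ in $\B_n$ (split $\B_{2n}$ along the vertical arc), apply $S\otimes\id$, and reglue; the rotated-and-reflected left half cancels the right half via the arc relations \eqref{eq.arcs} and the monogon identity \eqref{eq.mon0}, leaving $\epsilon(\alpha)\cdot 1$, with the scalar bookkeeping ($C$-factors and the $(\sqrt{-1}\,q)$-powers) matching exactly because $f(+)f(-)=-q^{-3}$ is precisely the normalization built into both $\inv_{e}$ and the definition of $\epsilon$.

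The main obstacle I anticipate is the scalar/sign bookkeeping in the antipode argument — tracking how the factors $(\sqrt{-1}\,q)^{\pm(\delta_{e_l}-\delta_{e_r})}$, the $C^\nu_{\nu'}$ coefficients from the arc relations \eqref{eq.arcs1}--\eqref{eq.arcs2}, and the powers of $q$ from the height-exchange relations \eqref{eq.reor1}--\eqref{eq.reor2} combine when one reglues the two halves after applying $S\otimes\id$. For $n=0$ this is hidden inside the clean definition $S=\rot_*\circ(\inv_{e_r}\circ(\binv_{e_l})^{-1})$, and the cleanest route for general $n$ is probably to give a similar closed-form description of $S$ on $\cSs(\B_n)$ as a composition of a rotation, a framing-reversal, and inversion homomorphisms along $e_l$ and $e_r$, so that the grading prefactor is automatically produced rather than checked by hand; the interior punctures genuinely play no role in any of the identities and can be suppressed throughout, which is what makes the whole reduction work.
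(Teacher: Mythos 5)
Your bialgebra part (that $\Delta$ is an algebra map, coassociativity via the iterated-inclusion picture, and the counit axiom by filling the punctures of one half) is essentially the paper's argument; note only that the relevant surface for coassociativity is $\B_{3n}$ (each original puncture becomes an arc through three punctures), not $\B_{4n}$, a harmless slip.

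The antipode step, however, has a genuine gap. You explicitly decline to reduce the identity $\sum S(x')x''=\epsilon(x)1$ to a generating set and instead assert a direct picture computation for an arbitrary stated tangle, claiming ``the rotated-and-reflected left half cancels the right half via \eqref{eq.arcs} and \eqref{eq.mon0}'' and that ``the interior punctures play no role and can be suppressed throughout.'' That last claim is exactly where the content lies: take $x$ a loop encircling an interior puncture. Then $\Delta(x)$ is a state sum over the two points where $x$ meets the splitting arc, each factor is a half-loop around a puncture of its copy of $\B_n$, and after applying $S\otimes\id$ and multiplying you must show that the state sum reassembles into a folded curve whose winding around the puncture cancels, and that the resulting trivial loop evaluates to $\epsilon(x)=-q^2-q^{-2}$ with all $C$-factors and $(\sqrt{-1}\,q)$-powers matching. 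None of this is a passive ``riding along'' of punctures, and you yourself flag the bookkeeping as unresolved; as written the key identity is asserted, not proved (likewise the claim that $S$ is an anti-homomorphism via an unverified factorization through $\inv_{e_l},\inv_{e_r}$). The paper avoids all of this by the reduction you set aside: relation \eqref{eq.order} shows that the arcs joining $e_l$ to $e_r$ (with all states) generate $\cSs(\B_n)$; since $S$ is an antimorphism and $\epsilon,\Delta$ are algebra maps, it suffices to check \eqref{eq.SS} on these generators, and each such generator, its coproduct factors, and the products occurring in $\sum S(x')x''$ all live inside an embedded bigon (the gap between consecutive punctures), so the verification literally reduces to the already-proved antipode identity in $\cSs(\B)=\OSL$. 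To repair your proof, either carry out that generator reduction or genuinely prove the cancellation-of-winding and scalar claims for arbitrary tangles, e.g.\ by first establishing a closed-form description of $S$ as rotation composed with inversion homomorphisms and then arguing the fold-back identity puncture by puncture.
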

\begin{proof} From the definition of the splitting homomorphism it is clear that $\Delta$ is an algebra homomorphism. The
coassociativity of $\Delta$ is a direct consequence of the fact that applying twice $\iota$ induces the same morphism as the identification of $\B_n$ with the complement of $n$ disjoint arcs in $\B_{3n}$ each containing 3 punctures as depicted here:
$$\raisebox{-1cm}{\incl{2cm}{bigonnn}}
$$
The map $\epsilon$ is a morphism of algebras by its definition; to verify that $(\epsilon\otimes \Id)\circ \Delta=\Id$ observe that if in Figure \ref{fig:bigonn} we fill the left punctures then we obtain the initial $\B_n$.

That $S$ is an antimorphism is a consequence of the fact the revolution by $180^\circ$ about the 
the axis connecting the two boundary vertices reverses the height order in $\B_n \times (-1,1)$.

We are left to prove  that  
\be (S\otimes \Id) \circ \Delta=(\Id\otimes S) \circ \Delta=\epsilon, 
\label{eq.SS}
\ee and since $S$ is an antimorphism, it is enough to check this identity on a set of generators. From relation \eqref{eq.order} we get that the set of  horizontal arcs, with all possible states, generates $\cSs(\B_n)$.
If $\al_{\eta\mu}$ is a horizontal arc with state $\eta$ on the left and state $\mu$ on the right, then the definition gives
$$S(\alpha_{\eta{\eta}})=\alpha_{\overline{\eta}\, \overline{\eta}}\  {\ \rm and\ }\  S(\alpha_{{\eta}\overline{\eta}})=-q^{2\eta}\alpha_{{\eta}\overline{\eta}},$$ so that on $\B_0=\B$, $S$ coincides with the antipode defined in \eqref{eq.ap1}. Now Identity \eqref{eq.SS} for $\al_{\eta\mu}$ follows from the same identity for the antipode in $\cSs(\B)=\OSL$.
\end{proof}
\begin{remark} (a) Since $\B_n$ is the result of attaching $2n$ ideal triangles to $n+1$ bigons, Theorem \ref{teo:braidedtensor} can be used to show that as $\cR$-algebras $\cSs(\B_n) \cong \OSL^{\ot (n+1)} $ where the tensor product is over $\cR$ and the algebra structure  of $\OSL^{\ot (n+1)}$ is the unique one determined by:

(i) the subset $A_i= 1^{\ot (i-1)}\ot_\cR \OSL \ot_\cR 1^{\ot (n+1-i)}\subset \OSL^{\ot (n+1)}$ is isomorphic to  $\OSL$ as  $\cR$-algebras for each $i=1,\dots, n+1$, and 

(ii) for $a\in A_i$ and $b\in A_j$ with $i< j$, one has $ab= a\ot b$ and
\begin{align}
ba  = \sum  \bar \rho'(b'\ot a') \rho'(b'''\ot a''')  a'' b'' 
\end{align}
where $\rho'$ is the co-R-matrix defined by \eqref{eq.coR2} and  $\bar \rho'$ is its inverse.

(b) For the  case  $n=1$, Proposition  \ref{lem:Bnstandardhopf} and a presentation of $\cSs(\B_1)$ were also independently obtained in \cite{Ko} via a direct calculation. 

\end{remark}

\end{example} 

\begin{example}[Punctured monogons] Let $\M_n$  be the monogon $\M$ with  $n$ punctures in its interior removed, see  Figure \ref{fig:monogonsbraided}. Let  $\bD:\cSs(\M_n)\to \cSs(\M_n)\otimes \cSs(\B)$ be the right $\cSs(\B)$-comodule algebra structure induced by the only boundary edge of $\M_n$.

Similarly $\cSs(\B_n)$ has two commuting right comodule-algebra structures over $\cSs(\B)$ induced by $e_l$ and $e_r$, let $\Delta_1$ be the one induced $e_l$ and $\Delta_2$ be the one induced by $e_r$.

Like in the bigon case, let $\iota:\M_n\hookrightarrow \M_{2n}$ be the inclusion identifying $\M_n$ with the complement of $n$  disjoint arcs, each  connecting two punctures of $\M_{2n}$. Observe that $\M_{2n}$ is the result of attaching an ideal triangle to two copies of $\M_n$, see Figure \ref{fig:monogonsbraided}.
By Theorem \ref{teo:braidedtensor} we have isomorphism of algebras $\cSs(\M_{2n})=\cSs(\M_n)\uot\cSs(\M_n)$.
 Let  $\Delta:\cSs(\M_n)\hookrightarrow \cSs(\M_{2n})=\cSs(\M_n)\uot\cSs(\M_n)$ be the map induced by $\iota_*$ and this isomorphism. 
\begin{figure}
$\iota:\raisebox{-1cm}{\includegraphics[width=2cm]{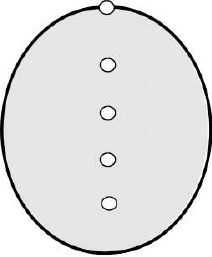}}\hookrightarrow\raisebox{-1cm}{\includegraphics[width=2cm]{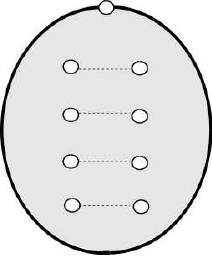}}\simeq  \raisebox{-1cm}{\includegraphics[width=2cm]{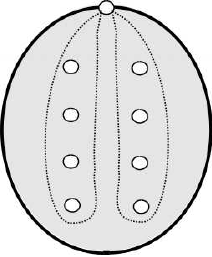}}$
\caption{
The inclusion $\iota:\M_4\hookrightarrow \M_8$ and the decomposition of $\M_{8}$ in a triangle and two copies of $\M_4$ used to fix the isomorphism $\cSs(\M_8)=\cSs(\M_4)\uot\cSs(\M_4)$.}\label{fig:monogonsbraided}
\end{figure}

\begin{proposition}
(a) For each $n\geq 0$ the algebra $\cSs(\M_n)$ endowed with the map $\Delta$ and the map $\epsilon:\cSs(\M_n)\to \cSs(\M_0)=\cR$ induced by inclusion is a bialgebra object in the category of $\OSL$-comodules (i.e. its product, coproduct, unit and counit are morphisms of $\OSL$-comodules). 

(b) 
The $\OSL$ comodule-algebra $\cSs(\M_n)$ is isomorphic to the self braided tensor product $\uot \cSs(\B_{n-1})$.
In particular $\cSs(\M_1)$ is isomorphic as a Hopf algebra to $BSL_q(2)$,  the ``transmutation'' of $\OSL$, or ``braided version'' or ``covariant version'' of $\OSL$ (see \cite{MajidFoundation} Examples 4.3.4 and 10.3.3). 
\end{proposition}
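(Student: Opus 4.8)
The plan is to obtain both statements from Theorem~\ref{teo:braidedtensor} together with two elementary topological observations about the surfaces $\M_n$. The first, already used to define $\Delta$, is that $\M_{2n}$ is the result of gluing an ideal triangle to $\M_n\sqcup\M_n$ along two of its edges (Figure~\ref{fig:monogonsbraided}). The second is that, for $n\geq 1$, $\M_n$ is itself of the form $\underline{\B_{n-1}}$: gluing an ideal triangle $\P_3$ to the punctured bigon $\B_{n-1}$ so that the two remaining edges $e_1,e_2$ of $\P_3$ are identified with the two boundary edges $e_l,e_r$ of $\B_{n-1}$ collapses the two boundary punctures of $\B_{n-1}$ together with the three punctures of $\P_3$ into one boundary puncture and one new interior puncture, producing a disk with one boundary puncture and $(n-1)+1=n$ interior punctures whose boundary edge is the third edge $e$ of $\P_3$ --- that is, $\M_n$. (A CW count of the glued disk, or inspection of Figure~\ref{fig:monogonsbraided}, confirms this.) For $n=0$ one has $\cSs(\M_0)=\cR$ and (a) is trivial, so throughout we may assume $n\geq 1$.

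Granting the second observation, part (b) is immediate: Theorem~\ref{teo:braidedtensor} applied with $\fS=\B_{n-1}$, $a_1=e_l$, $a_2=e_r$ yields a canonical isomorphism of right $\OSL$-comodule algebras $\cSs(\M_n)=\cSs(\underline{\B_{n-1}})\cong\uot\cSs(\B_{n-1})$, the self-braided product being formed from the two commuting coactions $\Delta_1=\Delta_{e_l}$, $\Delta_2=\Delta_{e_r}$ and the co-$R$-matrix $\rho'$ of \eqref{eq.coR2}. For $n=1$ this reads $\cSs(\M_1)\cong\uot\OSL$. Here $\Delta_2=\Delta_{e_r}$ is by definition the coproduct $\Delta$ of $\OSL$, while $\Delta_1=\Delta_{e_l}$ is identified, via the rotation $\rot_*=r$ of the bigon (which swaps $e_l,e_r$) together with Proposition~\ref{prop:comodule}(b), the coalgebra-antimorphism property of $r$ (Theorem~\ref{teo:SL2}) and the antipode $S$, with the right coaction on $\OSL$ obtained by converting the regular left coaction --- up to post-composition with the reflection $r$. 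This pair of coactions, together with a co-$R$-matrix, is exactly the input of the transmutation construction of Example~\ref{ex.trans}; comparing the product $\uast$ with the covariantized product \eqref{eq.transmprod} and the resulting coaction $\bD=\Delta_1\uast\Delta_2$ with the right coadjoint coaction \eqref{eq.coact2} (the passage between $\rho'$ and $\rho$ of Remark~\ref{rem:rhoprime} being absorbed by $r$) identifies $\uot\OSL$ with Majid's transmutation $BSL_q(2)$, which is a braided group, and in particular a Hopf algebra object in the category of $\OSL$-comodules \cite{MajidFoundation}.

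For part (a) with $n\geq 1$, let $e$ be the boundary edge of $\M_n$ and $\bD=\Delta_e$ the associated right $\OSL$-coaction. By Proposition~\ref{prop:comodule} the algebra $\cSs(\M_n)$ is an $\OSL$-comodule algebra, so its product and unit are morphisms of $\OSL$-comodules. The map $\Delta$ is an algebra homomorphism into the braided tensor square $\cSs(\M_n)\uot\cSs(\M_n)$: it is the composition of $\iota_*$ with the algebra isomorphism of Theorem~\ref{teo:braidedtensor}, and is seen to be an algebra map by the same argument ``from the definition of the splitting homomorphism'' used in the proof of Proposition~\ref{lem:Bnstandardhopf}; likewise $\epsilon$, induced by the inclusion $\M_n\embed\M_0=\M$, is an algebra homomorphism onto $\cR$. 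Next, $\Delta$ and $\epsilon$ are morphisms of $\OSL$-comodules: the coaction $\bD$ splits a bigon off along an arc parallel to $e$, whereas $\Delta$ (respectively $\epsilon$) splits $\M_{2n}$ along an arc $c$ (respectively fills in punctures) which may be isotoped to be disjoint from that parallel arc, so the two operations commute by Theorem~\ref{thm.1a}(c); this commutativity is precisely the intertwining identity required. Finally, coassociativity of $\Delta$ follows, as in Proposition~\ref{lem:Bnstandardhopf}, from the fact that the two iterates of $\iota$ agree with the inclusion $\M_n\embed\M_{3n}$ realizing $\M_n$ as the complement of $n$ triply-punctured arcs, combined again with Theorem~\ref{thm.1a}(c); and $(\epsilon\otimes\Id)\circ\Delta=\Id=(\Id\otimes\epsilon)\circ\Delta$ follows by filling in the punctures on one of the two halves of $\M_{2n}$, which returns $\M_n$. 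Together these statements establish (a).

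I expect the only genuinely delicate point to be the $n=1$ identification of the comodule algebra $\uot\OSL$ coming out of Theorem~\ref{teo:braidedtensor} with the transmutation $BSL_q(2)$ on the nose: this is a matter of convention bookkeeping --- tracking which of $e_l,e_r$ carries which coaction, the appearance of $\rot_*=r$ and of the mirror co-$R$-matrix $\rho'$ rather than $\rho$, and the direction in which the regular left coaction is converted to a right one --- and one must verify that all of these reflections and antipodes cancel, even though every ingredient (Proposition~\ref{prop:comodule}(b), the compatibility of $r$ with $S$, Remark~\ref{rem:rhoprime}, Example~\ref{ex.trans}) is already available. By contrast the topological identifications of the first paragraph and the formal verifications in part (a) are routine.
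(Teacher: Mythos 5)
Your strategy coincides with the paper's: the observation that $\M_n$ is obtained by attaching an ideal triangle to $\B_{n-1}$, the decomposition of $\M_{2n}$ into a triangle glued to $\M_n\sqcup\M_n$, and the appeal to Theorem~\ref{teo:braidedtensor} are exactly how the paper proves (a) and the first clause of (b). The one place in (a) where you diverge is the verification that $\Delta$ (and $\epsilon$) are comodule morphisms: the paper checks this by an explicit computation on generators (Figure~\ref{fig:deltamonogon}), while you argue formally. Your formal route can be made to work, but the citation of Theorem~\ref{thm.1a}(c) alone is not the right tool: the comodule structure on the target $\cSs(\M_n)\uot\cSs(\M_n)$ is $\bD=\Delta_1\uast\Delta_2$, which involves the multiplication of $\OSL$, so what you actually need is (i) that the isomorphism of Theorem~\ref{teo:braidedtensor} is an isomorphism of \emph{comodule} algebras, transporting the boundary coaction of $\cSs(\M_{2n})$ to $\bD$, and (ii) naturality of the edge coaction \eqref{eq.coact} under $\iota_*$ (and under the puncture-filling map defining $\epsilon$), neither of which is a splitting and hence neither is covered by Theorem~\ref{thm.1a}(c); both are easy from the locality of \eqref{eq.coact}, but should be said.

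The genuine gap is the ``in particular'' clause of (b). Theorem~\ref{teo:braidedtensor} gives you $\cSs(\M_1)\cong\uot\cSs(\B)$ with the two coactions $\Delta_{e_l},\Delta_{e_r}$ and the co-$R$-matrix $\rho'$, but identifying this with Majid's transmutation is not a cancellation of conventions, and your proposal stops exactly where the real work begins. Concretely, under $\phi$ the coaction $\Delta_{e_l}$ carries the rotation $r$ on the $\OSL$-leg (cf.\ Proposition~\ref{prop:comodule}(b)), whereas the first coaction of Example~\ref{ex.trans} carries the antipode $S$, and $r\neq S$ (e.g.\ $r(b)=c$ while $S(b)=-q^2b$); moreover one product is built from $\rho'$ and the other from $\rho$. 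So the identity of $\OSL$ does not intertwine the two self-braided structures, and an explicit nontrivial isomorphism must be produced and verified. This is what the paper does: it defines $T$ as the extension into the monogon of $\rot_*\circ\binv_{e_l}^{-1}$ (the edge inversion of Section~\ref{sec.invers}, with its $C(\nu)$-coefficients), and then checks by skein computations (Figure~\ref{fig:monogons}) that the pullback of the monogon coaction is the coadjoint coaction \eqref{eq.coact2} and the pullback of the product is the covariantized product \eqref{eq.transmprod}. None of the ingredients you invoke ($\rot_*$, Proposition~\ref{prop:comodule}(b), Remark~\ref{rem:rhoprime}, the antipode) produces this intertwiner by itself, so as written the isomorphism $\cSs(\M_1)\cong BSL_q(2)$ remains unproved in your proposal.
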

\begin{proof}
(a) The inclusion $\iota$ induces an injective  algebra  homomorphism  $\iota_*:\cSs(\M_n)\hookrightarrow \cSs(\M_{2n})=\cSs(\M_n)\uot\cSs(\M_n)$. As in the case of bigons, the coassociativity follows from the fact that applying twice $\iota$ induces the same morphism as the identification of $\M_n$ with the complement of $n$ disjoint arcs in $\M_{3n}$ each containing 3 punctures as depicted here:
$$\raisebox{-1cm}{\includegraphics{braidedmonogon1-eps-converted-to.pdf}}\hookrightarrow\raisebox{-1cm}{\includegraphics{braidedmonogon2-eps-converted-to.pdf}}  \hookrightarrow\raisebox{-1cm}{\includegraphics{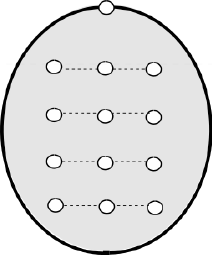}}.$$
The map $\epsilon$ is a morphism of comodule-algebras by its definition; to verify that $(\epsilon\otimes \Id)\circ \Delta=\Id$ observe that if in Figure \ref{fig:monogonsbraided} we fill the left punctures then we obtain the initial $\M_n$. 
The last fact to verify is that $\Delta$ is a morphism of comodules, i.e. denoting $\bD:\cSs(\M_n)\to \cSs(\M_n)\otimes \cSs(\B)$ the right comodule structure, that $$(\Delta\otimes \Id)\circ \bD=(\Id\otimes \Id\otimes m)(\Id\otimes \fl\otimes\Id)\circ(\bD\otimes \bD)\circ\Delta,$$ where $m:\cSs(\B)\otimes \cSs(\B)\to \cSs(\B)$ is the multiplication and $\fl$ is the flip. 
Since $\Delta$ is a morphism of algebras, it is sufficient to check this on generators of $\M_n$. For this we refer to Figure \ref{fig:deltamonogon}, where we depict the case $n=1$ but the proof is similar for other $n$. Letting $\alpha(x,y)\in B(\M_n,\ori)$ be the generator represented by a horizontal arc whose states are $y,x\in \{\pm\}$ depicted in the figure, the equality in the figure shows that we have: $$\Delta(\alpha(x,y))=q^{\frac{1}{2}}\alpha(+,y)\uot \alpha(x,-)-q^{\frac{5}{2}}\alpha(-,y)\uot\alpha(x,+).$$ Also, from the definition of $\bD$ one computes $\bD(\alpha(x,y))=\sum_{\epsilon,\eta} \alpha(\eta,\epsilon)\otimes (\alpha_{\eta x}\alpha_{\eta y})$ where $\alpha_{xy}$ denote the standard generators of $\cSs(\B)$.
Now the verification is a straightforward computation.
\begin{figure}
$\raisebox{-1.0cm}{\put(57,55){$x$}\put(-7,55){$y$}\includegraphics[width=2cm]{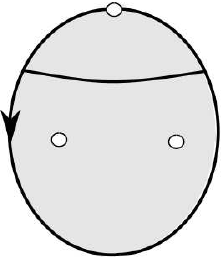}}=\ \ \ \ -q^{\frac{5}{2}}\raisebox{-1.0cm}{\put(57,55){$x$}\put(-7,55){$y$}\put(15,-6){$-$}\put(35,-6){$+$}\includegraphics[width=2cm]{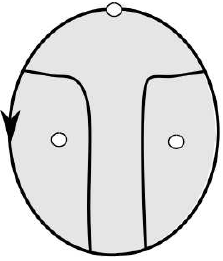}}+ q^{\frac{1}{2}}\raisebox{-1.0cm}{\put(57,55){$x$}\put(-7,55){$y$}\put(15,-6){$+$}\put(35,-6){$-$}\includegraphics[width=2cm]{productbraidedcoproduct2-eps-converted-to.pdf}}.$
\caption{Re-expressing $\Delta(\alpha(x,y))$ as a braided tensor product. The equality is a consequence of \eqref{eq.order}.}\label{fig:deltamonogon}
\end{figure}

\def\uA{{\underline A}}
\def\udot{{\overset \cdot -}}
\def\uS{\underline S}
(b) As $\M_n$ is obtained by attaching an ideal triangle to $\B_{n-1}$, the claim follows from Theorem \ref{teo:braidedtensor}.
The isomorphism of $\cSs(\M_1)$ with $BSL_q(2)$ then follows from Example \ref{ex.trans}. Let us make explicit the isomorphism.

Let $T:\cSs(\B)\to \cSs(\M_1)$ be the $\cR$-linear isomorphism obtained on a skein $a$ by embedding $\rot_*(\overline{\inv}_{e_l}^{-1}(a))$ (see Subsection \ref{sec.invers}) 
in the monogon through the embedding depicted in Figure \ref{fig:monogons} (a) and finally extending the strands of $\rot_*(\overline{\inv}_{e_l}^{-1}(a))$ until they hit the boundary of $\M_1$ (i.e. by applying the map $f$ defined in the proof of Theorem \ref{teo:braidedtensor}). We claim that pulling back through $T$ the coaction of $\cSs(\M_1)$ we get the right adjoint coaction on $\OSL=\cSs(\B)$ and pulling back the product  $*$ on $\cSs(\M_1)$ we get the product $\uast$ on $BSL_q(2)$ defined in Example \ref{ex.trans}:
$$\Delta^{coad}=(T^{-1}\otimes Id)\circ \bD\circ T \qquad {\rm and} \qquad
\uast=T^{-1}\circ(\cdot *\cdot)\circ (T\otimes T).$$  

\begin{figure}
\raisebox{1cm}{(a)}\includegraphics[width=2cm]{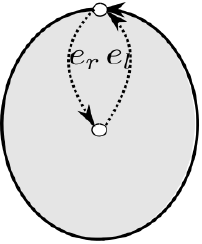}\raisebox{1cm}{\ \ (b)}\raisebox{1cm}{\ $\bD\Bigg($}\includegraphics[width=2cm]{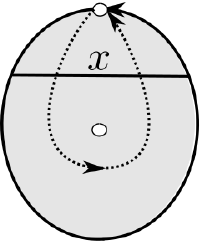}\raisebox{1cm}{$\Bigg)=\ $}\includegraphics[width=2cm]{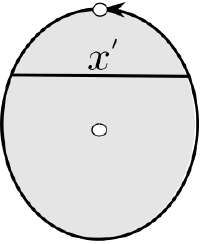}\raisebox{1cm}{$\ \otimes\ $}\includegraphics[width=1cm]{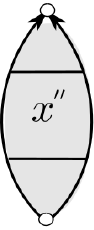} 
\caption{(a) The $\cR$-linear map $T$ embeds $\B$ into $\M_1$ as shown, after applying the map $\overline{\inv}_{e_l}^{-1}$ and rotating by $\pi$. (b) The right coaction of $\cSs(\B)$ on $\cSs(\M_1)$ is obtained by cutting on the dotted arc; in the figure, a skein $x=T(a)$ is cut in three parts by the dotted arc, the mid one is $T(a_{(2)})$ while the rightmost and leftmost are respectively $S(a_{(1)})$ and $a_{(3)}$ so that their product in the bigon cut out by the dotted curve is $S(a_{(1)})a_{(3)}$. This shows that the pullback of $\bD$ is the adjoint coaction.}\label{fig:monogons}
\end{figure}
Using the definitions of $T$, of the antipode on $\cSs(\B)$ \eqref{eq.antipode} and of $\bD$, one verifies directly that $(T^{-1}\otimes Id)\circ \bD\circ T(x)=x''\otimes S(x')x'''$ for any $x\in \OSL=\cSs(\B)$ (see Figure \ref{fig:monogons} (b)). 
Then since $*$ and $\bD$ are compatible it is sufficient to check that the pullback of $*$ equals $\uast$ on the generators of $\OSL$; this is a straightforward computation. 
A graphical explanation is as follows. Observe that if $x=T(a),y=T(b)$, then $x*y=(T(a*b))_{(1)}\epsilon((T(a*b))_{(2)})$ (this holds in general comodule algebras), so that we have:
\begin{center}
\includegraphics[width=2cm]{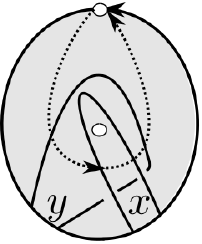}\raisebox{1cm}{$\ =\ $}\includegraphics[width=2cm]{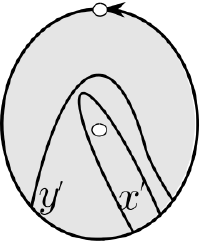}\raisebox{1cm}{$\cdot \epsilon\Bigg($}\includegraphics[width=1cm]{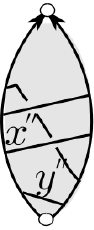} \raisebox{1cm}{$\Bigg)$}
\end{center}
and that the right hand side equals $$T(a_{(2)}\cdot b_{(2)}) \rho' (S(a_{(1)}) a_{(3)} \ot S(b_{(1)}))\epsilon(b_{(3)})=T(a_{(2)}\cdot b_{(2)}) \rho' (S(a_{(1)}) a_{(3)} \ot S(b_{(1)}))$$ where $\rho'$ was defined in \eqref{eq.coR2}. This proves that $\uast=T^{-1}\circ(\cdot *\cdot)\circ (T\otimes T)$.
\end{proof}

\end{example}
\section{A lift of the Reshetikhin-Turaev operator invariant} \label{sec.WT}
In this section we show that a Reshetikhin-Turaev operator invariant of tangles can be lifted to an invariant with values in $\OSL$. In this section $\cR=\BZ[q^{\pm 1/2}]$.

\def\tV{\tilde V}
\def\dr{\partial_r}
\def\dl{\partial _l}

\subsection{Category of non-directed ribbon graphs} We will present the category of {\em non-directed ribbon graphs} \cite{Tu:Book}, also known as {\em framed tangles} \cite{Oh}, in the form convenient for us.

 The bigon is canonically isomorphic (in the category of punctured bordered surface) to the square $S= [0,1] \times (0,1)$.
Under the isomorphism $\el$ and  $\er$ are mapped respectively to $\{0\}\times (0,1)$ and $\{1\} \times (0,1)$, and abusing notation we also denote $\{0\}\times (0,1)$ and $\{1\} \times (0,1)$ respectively $\el$ and $\er$. We identify $S$ with $S \times \{0\}$ in $M:=S \times (-1,1)$. We have $\partial M = \partial S \times (-1,1) = (\el \cup \er) \times (-1,1)$.

\FIGc{tangle}{Left: Square $S=[0,1] \times (0,1)$, with edges $\el$ and $e_r$. Middle: tensor product $\beta\otimes \beta'$. Right: composition $\beta \circ \beta'$, which can be defined only when $|\dr\beta|=|\dl\beta'|$ .}{2.5cm}

Recall that in the definition of a $\partial M$-tangle we require the boundary points over \red{any} boundary edge have distinct heights (see Subsection \ref{sub:tangle}).
If we change this requirement to: all boundary points are in $\partial S$ (in particular they all have the same height) we get the notion of a {\em $\partial S$-tangle}. Formally, a {\em $\partial S$-tangle} is a framed compact 1-dimensional unoriented manifold $\beta$ properly embedded in $M=S \times (-1,1)$ such that $\partial \beta$ has height 0, i.e.  $\partial \beta \subset \partial S =\el \cup \er$, and the framing at every boundary point of $\beta$ is vertical. Let $\dr \beta= \beta\cap \er$ and $\dl\beta = \beta \cap \el$.
Two $\partial S$-tangles are {\em $\partial S$-isotopic} if they are isotopic in the class of $\partial S$-tangles. If $|\dr\beta|=k$ and $|\dl\beta|=l$, then our notion of a  $\partial S$-tangle is the notion of  a non-directed ribbon $(k,l)$-graph without coupons in \cite{Tu:Book}.

After an isotopy we can bring $\beta$ to a generic position (with respect to the projection from $S \times (-1,1)$ onto $S$) and make the framing vertical everywhere\red{. The} projection of $\beta$ together with the over/under information at every crossing, is called a $\partial S$-tangle diagram of $\beta$. The isotopy class of $\beta$ is totally determined by any of its diagrams. 

The {\em non-directed ribbon graph category} is the category whose set of objects is $\BN$ and a morphism from $k$ to $l$ is an isotopy class of {\em $\partial S$-tangle} $\beta$ such that $|\dr\beta|=k$ and $|\dl\beta|= l$, with the usual composition (see Figure \ref{fig:tangle}). If the tangles are oriented, then one would get the usual ribbon tangle category.

If $\beta, \beta'$ are two $\partial S$-tangles, define their tensor product $ \beta \otimes \beta'$ as the result of putting $\beta$ above $\beta'$ as in Figure \ref{fig:tangle}.
Under the tensor product and the composition, morphisms of the non-directed ribbon tangle category are generated by the five {\em elementary $\partial S$-tangles} depicted in Figure \ref{fig:eletangles}. 

\FIGc{eletangles}{Five elementary tangles}{1.5cm}

\def\capp{  \raisebox{-7pt}{\incl{.7 cm}{cap}} }
\def\cupp{  \raisebox{-7pt}{\incl{.7 cm}{cup}} }
\def\Xp{  \raisebox{-7pt}{\incl{.7 cm}{Xp}} }
\def\Xn{  \raisebox{-7pt}{\incl{.7 cm}{Xn}} }

 From the ribbon category of finite-dimensional modules over the quantum group $\USL$ we get the Reshetikhin-Turaev operator invariant of $\partial S$-tangles, see \cite{Tu:Book}.
  Let us describe this operator invariant in a special case. Let $V$ be the free $\cR$-module with basis $g_+, g_-$. The above mentioned operator invariant is the unique 
 functor $Z$ from the non-directed ribbon tangle category to the category of $\cR$-modules preserving the tensor product such that $Z(n) = V^{\ot n}$ and the values of the elementary tangles are given by
\begin{align}
Z \left(\capp  \right) &: V^{\otimes 2} \to R, \   g_+ \ot g_-  \to  q^{-\frac 12}, \ g_- \ot g_+ \to  -q^{-\frac 52}, \ g_+ \ot g_+ \to 0, \  g_- \ot g_- \to 0  \label{eq.cap}\\
Z \left(\cupp  \right) &: R\to V^{\otimes 2},    1 \to  -q^{\frac 52} \, (g_+ \ot g_- )+ q^{\frac 12}\,  (g_- \ot g_+)  \label{eq.cup}\\
Z \left(\Xp  \right)&: V^{\otimes 2} \to V^{\otimes 2},\   Z \left(\Xp  \right) = q\, \id + q^{-1} \left(  Z \left(\cupp  \right) \circ Z \left(\capp  \right)\right),  \label{eq.Xp}\\
Z \left(\Xn  \right)&: V^{\otimes 2} \to V^{\otimes 2},\   Z \left(\Xp  \right) = q^{-1}\, \id + q \left(  Z \left(\cupp  \right) \circ Z \left(\capp  \right)\right), \label{eq.Xn}
\end{align}
see \cite{Co}.  Here our $g_\pm$ are related to the basis vectors  $g_{\frac 12}$ in \cite{Co} by
\begin{align*}
 g_+= -\sqrt{-1} \,  q^{-3/2} g_{\frac 12}, \quad 
g_-= g_{-\frac 12}.
\end{align*}

\def\bon{{\vec \nu}}
\def\bom{{\vec \mu}}

Thus if $\beta$ is a $\partial S$-tangle with $|\dl\beta|=l$ and $|\dr\beta|=k$ then $Z(\beta)$ is an $\cR$-linear map  $V^{\ot k} \to V^{\ot l}$ which depends only on the isotopy class of $\beta$. 
 
For $\bon=(\nu_1,\dots,\nu_l)\in \{\pm \}^l$ and $\bom =(\mu_1,\dots,\mu_k) \in \{\pm \}^k$ we can define the matrix entry $ _\bon Z(\beta) _\bom\in R$ such that
\be 
Z(\beta) (g_{\mu_1} \ot \dots \ot g_{\mu_k} )= \sum_{\bon \in \{\pm \}^l}(\null  _\bon Z(\beta) _\bom)\,  g_{\nu_1} \ot \dots \ot g_{\nu_l}.
\ee

\begin{remark} In fact $V$ and all its tensor powers are  modules over the quantum group $\USL$, and all the operators $Z(\beta)$ are $\USL$-morphisms. But we don't need the structure of $\USL$-modules here. 
When $k=l=0$, we have $Z(\beta)\in R$, which is equal to the Kauffman bracket polynomial of $\beta$.

\end{remark}

\subsection{From $\partial M$-tangles to $\partial S$-tangles} Suppose $\gamma$ is a $\partial M$-tangle. We can  $\partial M$-isotope $\gamma$ so that its diagram $D$   has the height order on $\el$ and $\er$ determined by the arrows in Figure \ref{fig:tangle2}. This diagram determines a unique class of $\partial S$-tangle, denoted by $\bar \gamma$. Note that the arrows of $\er$, $\el$ are irrelevant for $\bar \gamma$. It is easy to see that the map $\gamma\to \bar \gamma$ is a bijection from the set of $\partial M$-isotopy classes of $\partial M$-tangles to the set of $\partial S$-isotopy classes of $\partial S$-tangles.

\FIGc{tangle2}{Direction of boundary edges, used to determine the height order}{2cm}

Suppose $|\gamma\cap \el|=l$ and $|\gamma\cap \er|=k$, and  $\bon=(\nu_1,\dots,\nu_l)\in \{\pm \}^l$ and $\bom =(\mu_1,\dots,\mu_k) \in \{\pm \}^k$. Let $_\bon \gamma _\bom$ be the stated $\partial M$-tangle \red{ whose underlying tangle} is $\gamma$ and whose states on $\gamma \cap \el$ (respectively on $\gamma \cap \er$) from top to bottom by the height order are $\nu_1,\dots, \nu_l$ (respectively $\mu_1,\dots, \mu_k$).
\begin{theorem} \label{teo:cat}
Assume the above notation. Consider $_\bon \gamma _\bom$ as an element of $\Ss(\cB)$. Then
\be  \epsilon \left ( _\bon \gamma _\bom   \right)\  =\ _\bon Z(\bar \gamma) _\bom. 
\label{eq.cat}\ee
\end{theorem}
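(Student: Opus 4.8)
The statement $\epsilon(\null_\bon\gamma_\bom) = \null_\bon Z(\bar\gamma)_\bom$ asserts that two functors on the non-directed ribbon tangle category agree. The plan is to verify this by the universal property of that category: since morphisms are generated under composition and tensor product by the five elementary tangles of Figure \ref{fig:eletangles}, it suffices to (i) check that both sides behave functorially, i.e. that $\epsilon(\null_\bon(\cdot)_\bom)$ assembles into a monoidal functor $\cB\mapsto$ (matrices over $\cR$) in the same way $Z$ does, and (ii) check the two sides agree on each of the five elementary tangles $\capp, \cupp, \Xp, \Xn$ and the identity strand.

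First I would set up the functoriality. For a $\partial S$-tangle $\beta$ with $|\dl\beta|=l$, $|\dr\beta|=k$, define $W(\beta): V^{\ot k}\to V^{\ot l}$ by $\null_\bon W(\beta)_\bom := \epsilon(\null_\bon\gamma_\bom)$ where $\gamma$ is the $\partial M$-tangle corresponding to $\beta$ (with the height convention of Figure \ref{fig:tangle2}). Compatibility of $W$ with tensor product (stacking $\beta$ above $\beta'$) is immediate because $\epsilon$ is an algebra homomorphism: $\null_\bon\gamma_\bom \cdot \null_{\bon'}\gamma'_{\bom'}$ represents the stacked tangle, and $\epsilon$ of a product is the product of $\epsilon$'s, which is exactly matrix tensor-product on entries (one must be slightly careful about the height ordering on $e_l, e_r$ when stacking, but relation \eqref{eq.order}, Lemma \ref{r.refl}, together with the fact that $\epsilon$ is already defined via $\inv_{e_r}$ and $\iota_*$, handle the reordering). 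Compatibility of $W$ with composition $\beta\circ\beta'$ is the key structural point: composing along $e_r$ of $\beta'$ with $e_l$ of $\beta$ is exactly the splitting/gluing operation along an ideal arc, so $\epsilon$ of the composite equals $\sum_{\boeta}\epsilon(\null_\bon\gamma_\boeta)\,\epsilon(\null_\boeta\gamma'_\bom)$; here one uses the identity $\epsilon(u)=\sum\epsilon(u')\epsilon(u'')$ on $\Delta(u)=\sum u'\ot u''$ (the same trick used in the proof of Theorem \ref{teo:cobraid}) together with the geometric description of $\Delta=\Delta_{e_r}$ via splitting the bigon. So $W$ is a monoidal functor. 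The same properties hold for $Z$ by construction (Reshetikhin--Turaev).

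Next I would check the two functors agree on generators. On the identity strand both give the identity of $V$ (for $W$: a single arc from $e_l$ to $e_r$ stated $(\nu,\mu)$ has $\epsilon = \delta_{\nu\mu}$ by \eqref{eq.counit1}, matching $Z$). For $\capp$ and $\cupp$, the values of $\epsilon$ on the "cap"/"cup" arcs (arcs with both endpoints on the same edge, after sliding through the counit) are computed from \eqref{eq.arcs}, \eqref{eq.arcs1}, \eqref{eq.arcs2}, i.e. from the matrix $C^\nu_{\nu'}$ of \eqref{eq.Cve}; comparing with \eqref{eq.cap}, \eqref{eq.cup} — where the constants $q^{-1/2}, -q^{-5/2}$ appear — one sees they match on the nose (this is precisely why the normalization of $g_\pm$ relative to Co's $g_{\pm 1/2}$ was chosen). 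For $\Xp$ and $\Xn$: apply the Kauffman skein relation \eqref{eq.skein} to the crossing, which resolves $\Xp$ into $q\cdot\id + q^{-1}\cdot(\cupp\circ\capp)$ at the level of stated skeins; applying $\epsilon$ and using functoriality already established gives exactly \eqref{eq.Xp}, and similarly \eqref{eq.Xn}. Finally, since every $\partial S$-tangle is a composite of tensor products of these five, functoriality of both $W$ and $Z$ upgrades agreement on generators to agreement on all $\beta$, which is \eqref{eq.cat}.

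\textbf{Main obstacle.} The real work is the bookkeeping in proving $W$ is genuinely a monoidal functor on the non-directed ribbon category — specifically, matching the height-order conventions on $e_l$ and $e_r$ (Figure \ref{fig:tangle2}) against the "increasing order" conventions used to define $\epsilon$ and the splitting homomorphism, so that composition along an edge really produces $\sum_\boeta(\cdots)(\cdots)$ with no stray powers of $q$. I expect this to reduce to careful application of Lemma \ref{r.refl} and relation \eqref{eq.order}; one should double-check that the directions chosen in Figure \ref{fig:tangle2} are exactly those making $\Delta=\Delta_{e_r}$ the coproduct dual to composition, so that the counit-trick $\epsilon(u)=\sum\epsilon(u')\epsilon(u'')$ applies without a twist. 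Everything else is a short direct check on five generators.
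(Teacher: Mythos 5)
Your proposal is correct and follows essentially the same route as the paper: reduce to the five elementary tangles by showing the identity is preserved under stacking (because $\epsilon$ is an algebra homomorphism and $\overline{\gamma_1\gamma_2}=\bar\gamma_1\otimes\bar\gamma_2$) and under composition (via the splitting homomorphism $\Delta$ and the counit identity $\epsilon(u)=\sum\epsilon(u')\epsilon(u'')$), then verify the generators against \eqref{eq.cap}--\eqref{eq.Xn}. The paper's proof is exactly this induction, phrased without the explicit "monoidal functor $W$" packaging.
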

Thus we see that the tangle invariant of  $_\bon \gamma _\bom$ with values in $\Ss(\cB)$ is stronger than the Reshetikhin-Turaev operator invariant. 

\begin{proof} Suppose $\gamma_1, \gamma_2$ are $\partial M$-tangles. 
Since $\overline{\gamma_1 \gamma_2}= \bar \gamma_1 \otimes \bar \gamma_2$, if \eqref{eq.cat} is true for $\gamma = \gamma_1$ and $\gamma=\gamma_2$, it is true for $\gamma= \gamma_1 \gamma_2$.

\def\boe{{\vec \eta}}

Now suppose $\gamma_1, \gamma_2$ are obtained by splitting a $\partial M$-tangle $\beta$  along an ideal edge. By the splitting formula \eqref{eq.coproduct1} and the definition of $\D$, 
$$ \Delta(\null  _\bon \beta _\bom) = \sum_{\boe}   \null _\bon (\gamma_1)_\boe \, \null _\boe (\gamma_2) _\bom. $$
Applying $\epsilon \ot \id$ to the above, we get
$$ \null  _\bon \beta _\bom = \sum_{\boe}  \epsilon( \null _\bon (\gamma_1)_\boe) \, \null _\boe (\gamma_2) _\bom.$$
Applying $\epsilon$ to the above, we get
$$ \epsilon(\null  _\bon \beta _\bom ) = \sum_{\boe}  \epsilon( \null _\bon (\gamma_1)_\boe) \, \epsilon(\null _\boe (\gamma_2) _\bom),$$ 
which shows that if \eqref{eq.cat} holds for $\gamma=\gamma_1$ and $\gamma=\gamma_2$ then it holds for $\beta=\gamma_1 \circ \gamma_2$.

Thus it is enough to check \eqref{eq.cat} for the elementary tangles, for which  \eqref{eq.cat} follows from the explicit formulas \eqref{eq.cap}--\eqref{eq.Xn}.
\end{proof}

\subsection{A $1+1$-TQFT}

\def\CatBim{\mathsf{SetABim}}
\def\NSCatBim{\mathsf{U_{q^2}(\mathfrak{sl}_2)\!-\!finBim}}

Let $\Cob_{1,1}$ be the symmetric monoidal category whose:

\noindent{\bf Objects} are numbered disjoint unions of open unoriented segments.\\
{\bf Morphisms} are diffeomorphism classes of punctured bordered surfaces $\fS$ with indexed boundary. Explicitly if $\partial \fS=e_1^L,\cdots ,e^L_m, e^R_1,\cdots, e^R_n$ then $\fS\in Mor(e_1^L\sqcup \cdots \sqcup e^L_m,e^R_1\sqcup \cdots \sqcup e^R_n)$ and the composition of morphisms is given by the glueing of marked surfaces explained above (associativity of compositions is ensured by the fact that we consider diffeomorphism classes of surfaces). In particular the identity morphism of $e_1\sqcup\cdots \sqcup e_n$ is a disjoint union of $n$ copies of $\B$.\\
{\bf Tensor product} is the disjoint union, where the components of $(e_1\sqcup \cdots \sqcup e_n)\sqcup (e'_1\sqcup \cdots \sqcup e'_m)$ are ordered as $e_1\sqcup \cdots e_n\sqcup e'_1=e_{n+1}\sqcup e'_m=e_{m+n}$.

In order to define the target category of our TQFT functor, let us fix some notation. 
Given a finite set $C$, we will then denote by $\USL^{\otimes C}$ the algebra obtained as the tensor product $\bigotimes_{c\in C} \USL$ where each copy of $\USL$ in the tensor product is indexed by a distinct element of $C$.

\begin{definition}[$\NSCatBim$]\label{def:NSCatBim}
Let $\NSCatBim$ be the category whose objects are pairs $(C,[M])$ where $C$ is a finite set, $M$ is a right module over $\USL^{\otimes C}$ which is a direct sum of finite dimensional modules and $[M]$ is its isomorphism class. A morphism from $(C,[M])$ to $(C',[M'])$ in $\NSCatBim$ is the \emph{isomorphism class} of a bimodule $B$ over $(\USL^{\otimes C},\USL^{\otimes C'})$ which is a direct sum of finite dimensional bimodules and such that $[M\otimes_{\USL^{\otimes C}} B]=[M']$. The composition of $[B]:(C,[M])\to (C',[M'])$ and $[B']:(C',[M'])\to (C'',[M''])$ is $[B\otimes_{\USL^{\otimes C'}}B']$ (the composition is associative as we consider bimodules up to isomorphisms).  The monoidal structure on $\NSCatBim$ is given by $(C,[M])\otimes (C',[M']):=(C\cup C',[M\otimes_{\cR} M'])$ and its symmetry is given by exchanging $(C,[M])$ and $(C',[M'])$. 
\end{definition}

Then let $\cSs:\Cob_{1,1}\to \NSCatBim$ be defined as $$\cSs(e_1\sqcup \cdots \sqcup e_n)=(C=\{e_1,\ldots e_n\},[\Qq\otimes_\cR\cSs(\B)^{\otimes C}])$$ and for a punctured bordered surface $\fS$ whose boundary is \red{a union of} $C=\{e_1^L,\ldots ,e^L_n\}$ and $C'=\{e_1^R,\ldots ,e^R_m\}$ let $\cSs(\fS)$ be the isomorphism class of the $(\USL^{\otimes C},\USL^{\otimes C'})$-bimodule $\Qq\otimes_{\cR}\cSs(\fS)$.
\begin{theorem}[Skein algebra as a TQFT]\label{teo:skeintqft}
The functor $\cSs$ is a symmetric monoidal functor into $\NSCatBim$.
\end{theorem}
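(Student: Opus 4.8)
The plan is to verify the two defining properties of a symmetric monoidal functor: (1) that $\cSs$ respects composition, i.e. sends glueing of surfaces to the composition of bimodules (tensor product over the appropriate copies of $\USL$), and sends identity morphisms to identity bimodules; and (2) that $\cSs$ respects the monoidal structure and its symmetry. Well-definedness of $\cSs$ on objects and morphisms is built into Definition \ref{def:NSCatBim} together with Theorem \ref{teo:module}: part (b) of that theorem guarantees that $\Qq\otimes_\cR\cSs(\fS)$ is a direct sum of finite-dimensional bimodules over $(\USL^{\otimes C},\USL^{\otimes C'})$, so it really is an object/morphism of $\NSCatBim$, and it depends only on the diffeomorphism class of $\fS$.

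\textbf{Identity morphisms.} First I would check that the identity morphism $e_1\sqcup\cdots\sqcup e_n$ of $\Cob_{1,1}$, which is a disjoint union of $n$ bigons, is sent to an identity bimodule. By Theorem \ref{teo:SL2}, $\Qq\otimes_\cR\cSs(\B)\cong\Qq\otimes_\cR\OSL$ as a $(\USL,\USL)$-bimodule where the left action comes from $e_r$ and the right action from $e_l$. Example \ref{ex:bigonR} (first paragraph, with $c_1=e_i^R$, $c_2=e_j^L$) shows exactly that tensoring a module $M$ over $\USL^{\otimes C}$ with $\bigotimes_C(\Qq\otimes\cSs(\B))$ over $\USL^{\otimes C}$ returns $M$ up to canonical isomorphism. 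Thus $[\,\cSs(\B)^{\otimes C}\,]$ is the identity of the object $(C,[M])$, as required.

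\textbf{Composition.} The crucial point is that glueing two surfaces $\fS_1$, $\fS_2$ along an edge (or self-glueing $\fS$ along two of its edges) corresponds, after $\cSs$, to the tensor product of bimodules over the copy of $\USL$ indexed by that edge. This is precisely the content of Theorem \ref{r.Hoch2} (and its Remark \ref{rem:glueingcommutes} for simultaneous glueings): the splitting homomorphism $\theta_c$ followed by the projection to $HH_0$ is an isomorphism of $\Qq$-vector spaces, and in the disjoint case it identifies $\Qq\otimes_\cR\cSs(\fS)$ with $(\Qq\otimes_\cR\cSs(\fS_1))\otimes_{\USL}(\Qq\otimes_\cR\cSs(\fS_2))$. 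One must check that this isomorphism is compatible with the remaining $\USL$-module structures (on the boundary edges not involved in the glueing) — this is stated in Remark \ref{rem:glueingcommutes}. Functoriality of composition for a multi-edge glueing then follows by iterating one edge at a time, the order being irrelevant again by Remark \ref{rem:glueingcommutes}. Associativity of composition in $\NSCatBim$ is already built in (bimodules up to isomorphism), so no coherence check beyond this is needed.

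\textbf{Monoidality and symmetry.} On objects, $\cSs((e_1\sqcup\cdots\sqcup e_n)\sqcup(e_1'\sqcup\cdots\sqcup e_m'))=(C\cup C',[\Qq\otimes_\cR\cSs(\B)^{\otimes(C\cup C')}])=(C,[\cdots])\otimes(C',[\cdots])$ on the nose, using $\cSs(\B)^{\otimes(C\cup C')}\cong\cSs(\B)^{\otimes C}\otimes_\cR\cSs(\B)^{\otimes C'}$. On morphisms, the natural isomorphism $\cSs(\fS_1\sqcup\fS_2)\cong\cSs(\fS_1)\otimes_\cR\cSs(\fS_2)$ recorded right after the definition of the stated skein algebra provides the required bimodule isomorphism, and it is clearly compatible with glueing and with the symmetry (which on both sides is just the flip of tensor factors / reordering of the index set $C$). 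The coherence (pentagon/hexagon) conditions reduce to the corresponding evident facts for $\otimes_\cR$ and for disjoint union of surfaces. The only mild subtlety worth spelling out is the bookkeeping of the \emph{ordering} of boundary edges in an indexed-boundary surface, since $\NSCatBim$ uses unordered finite sets $C$ while $\Cob_{1,1}$ uses ordered ones; this is harmless because isomorphism classes of bimodules over $\USL^{\otimes C}$ do not see a reordering of the tensor factors, but it should be mentioned so that the symmetry isomorphisms are seen to be well defined.

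\textbf{Main obstacle.} The genuine mathematical content is entirely concentrated in the composition axiom, and there it is Theorem \ref{r.Hoch2} (together with Remark \ref{rem:glueingcommutes}) that does all the work; everything else is formal. So the ``hard part'' has effectively been proved already, and the proof of Theorem \ref{teo:skeintqft} amounts to assembling these ingredients and checking the bookkeeping of indices and of the compatibility of the $HH_0$-isomorphism with the non-glued edges. I would therefore write the proof as: (i) well-definedness via Theorem \ref{teo:module}(b); (ii) identities via Example \ref{ex:bigonR}; (iii) composition via Theorem \ref{r.Hoch2} and Remark \ref{rem:glueingcommutes}, iterated; (iv) monoidality and symmetry via the disjoint-union isomorphism and the symmetry of $\otimes_\cR$.
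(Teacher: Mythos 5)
Your proposal follows essentially the same route as the paper: the identity axiom is checked via the decomposition $\Qq\otimes_\cR\cSs(\B)\cong\bigoplus_i V^L_i\otimes V^R_i$ from Theorem \ref{teo:module}(b), and composition is obtained by repeated application of Theorem \ref{r.Hoch2} together with Remark \ref{rem:glueingcommutes}, the monoidal/symmetry checks being formal. The only small correction: the identity bimodule must act as the identity on an \emph{arbitrary} object $(C,[M])$ of $\NSCatBim$, not just on modules arising from skein algebras, so instead of quoting Example \ref{ex:bigonR} you should decompose $M$ into irreducibles $V^R_j$ and run the Schur-lemma computation $[V^R_j\otimes_{\USL}(\Qq\otimes_\cR\cSs(\B))]=[V^R_j]$ exactly as in the proof of Theorem \ref{r.Hoch2}, which is what the paper does.
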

\begin{proof}
By point b) of Theorem \ref{teo:module} it holds $\Qq\otimes_{\cR}\cSs( \B)=\bigoplus_{i\geq 0} V^L_i\otimes V^R_i$ where $V^L_i$ (resp. $V^R_i$) is the irreducible $i+1$-dimensional left (resp. right) module over $\USL$. 
Then, arguing exactly as in the proof of Theorem \ref{r.Hoch2} one sees that for each $j\geq 0$ it holds $$[V^R_j\otimes_{\USL}(\Qq\otimes_{\cR}\cSs(\B))]=[V^R_j].$$  Then $\Qq\otimes_{\cR}\cSs(\B)$ represents the identity morphism $(\{e\},[M])\to (\{e\},[M])$ (for any edge $e$) if restricted to finite dimensional right $\USL$-modules (which are all direct sums of $V^R_j$'s). 
Let $\fS'$ and $\fS''$  be two bordered punctured surfaces with boundaries indexed so that $\partial^L\fS'
=\{e_1,\ldots ,e_n\}=\partial^R\fS''$ and let $\fS$ be the surface obtained by glueing $\fS'$ and $\fS''$ by identifying the corresponding edges of $\partial^L \fS'$ and $\partial^R\fS''$ via an orientation reversing diffeomorphism. Then $\cSs(\fS')$ (resp. $\cSs(\fS'')$) is a right (resp. left) module over $\USL^{\partial^L\fS'}$ (resp. over $\USL^{\partial^R\fS''=\partial^L\fS'}$). To conclude, a repeated application of Theorem \ref{r.Hoch2} shows that the following holds up to isomorphism:
$$\cSs(\fS)=\cSs(\fS')\otimes_{\USL^{\otimes \partial^L\fS'}} \cSs(\fS).$$  \end{proof}
  
 \begin{remark}
 The previous construction can be improved by passing to the setting of $2$-categories in order to consider objects no longer up to isomorphisms. This requires to consider marked surfaces and bimodules and will be dealt with in another work.  
 \end{remark} 

\def\Bred{\overline{B}}
\def\thetar{\bar\theta}
\def\poS{\partial \mathring {\Sigma}}
\section{A non-symmetric modular operad} \label{sub:cooperad}

In this section we show that stated skein algebras provide an example of ``non symmetric geometric modular operad''. Such objects were defined by Markl (\cite{Mar}) as a generalisation of ``modular operads'' initially defined by Geztler and Kapranov (\cite{GK}). Given a monoidal category $C$, Markl defined a NS modular operad in $C$ as a monoidal functor $NSO:\mathsf{MultiCyc}\to C$ where $\mathsf{MultiCyc}$ is a suitable category of $\mathsf{MultiCyc}$ \red{``multicyclic sets''}. 
In this section we rephrase Markl's definition in the case of a suitable category of punctured bordered surfaces $\NSGraph$; then we define a NS geometric modular operad as a monoidal functor $NSO:\NSGraph\to C$. Finally we re-interpret skein algebras as an example of an NS geometric modular operad with values in $\NSCatBim$ (see Definition \ref{def:NSCatBim}). 

\subsection{The category of topological multicyclic sets $\NSGraph$.} 
In this section all surfaces will be oriented and all homeomorphisms will preserve the orientation. 

A \emph{cutting system} in a bordered punctured surface $\fS$ is a finite linearly ordered set ${\bf \alpha}$ of pairwise disjoint ideal oriented arcs $\alpha_1,\cdots ,\alpha_k\subset \fS$ (see Subsection \ref{sub:puncturedsurfaces}); a homeomorphism of cutting systems $\alpha$ and $\beta$ in $\fS$ is a homeomorphism $\phi:\fS\to \fS$ such that $\phi({\bf \alpha})={\bf \beta}$ so that it preserves the ordering and the orientations of the arcs. 
Cutting along all the arcs of a cutting system ${\bf \alpha}$ produces a bordered punctured surface $\cut_{{\bf \alpha}}(\fS)$ whose homeomorphism class depends only on the homeomorphism class of ${\bf \alpha}$. We will say that a cutting system ${\bf \alpha}$ is \emph{disconnecting}  if each arc in ${\bf \alpha}$ disconnects $\fS$. 

If the connected components of $\fS$ are linearly ordered then one can order the connected components of $\cut_{\bf \alpha}(\fS)$ as follows. Since $\cut_{\bf \alpha}(\fS)=\cut_{\alpha_k}\left( \cut_{\alpha_{k-1}}\left( \cdots\cut_{\alpha_1} (\fS)\right)\right),$ it is sufficient to define how to do it for  of the cut along a single ideal arc $\alpha$. 
If $\alpha$ does not disconnect, then there is a natural bijection between the components of $\fS$ and $\cut_{\alpha}(\fS)$ which induces the ordering on those of the latter surface. If $\alpha$ disconnects $\fS$, since both $\alpha$ and $\fS$ are oriented there is a well defined notion of the connected component of $\cut_{\alpha}(\fS)$ ``lying at the left'' and ``at the right of $\alpha$'' we then order them so that left precedes right and they are in the same position in the global ordering of the components of $\fS$ as the component they come from.

\begin{definition}[$\NSGraph,\NSForest$]
Let $\NSGraph$ be the category whose objects are homeomorphism classes of punctured bordered surface whose connected components are linearly ordered, and where a morphism $\fS' \to \fS$ is a homeomorphism class of a cutting system $\alpha$ in $\fS$ such that $\cut_{\alpha}(\fS)$ is homeomorphic to $\fS'$.
The category $\NSForest$ is the subcategory whose objects are disjoint unions of polygons (see Example \ref{polygons}) and whose morphisms are those represented by disconnecting cutting systems. 
\end{definition}
If $\phi:\fS'\to \cut_{\alpha}(\fS)$ and $\psi:\fS\to \cut_{\beta}(\fS'')$ are homeomorphisms, then the composition of the morphisms associated to $\alpha$  and $\beta$ is the homeomorphism class of $\psi(\alpha)\sqcup \beta\subset \fS''$ where the numbering of the arcs of $\psi(\alpha)$ is lower than those of $\beta$. 
The identity morphism is represented by the class of the empty cutting system and it is straightforward to check that the composition is associative, so that the above are indeed categories. 

Both $\NSGraph$ and $\NSForest$ are symmetric monoidal categories. Indeed the tensor product of $\fS'=\fS'_1\sqcup \cdots \sqcup\fS'_k$ and $\fS=\fS_1\sqcup \cdots \sqcup \fS_h$ (where $\fS'_i,\fS_j$ are connected for all $i,j$ and the linear order of the components is increasing from left to right) is defined as $$\fS'\otimes \fS:=\fS'_1 \sqcup \cdots \fS'_k\sqcup \fS_1\sqcup \cdots \sqcup \fS_h.$$
On the level of morphisms, if $\alpha\subset \fS_1$ and $\beta\subset \fS_2$ are two cutting systems then $\alpha\otimes \beta=\alpha\sqcup \beta$ where the linear order of the arcs of $\alpha$ is lower than that of the arcs in $\beta$. 
The symmetry is given by exchanging the components, so with the above notations $s(\fS'\otimes \fS)=\fS\otimes \fS'$ and $s(\alpha\otimes \beta)=\beta\otimes \alpha$.

 The following definition is a reformulation of Markl's \cite{Mar} (Definition 4.1) in the context of punctured bordered surfaces:
\begin{definition}[NS Modular Operads]
Let $C$ be a symmetric monoidal category.
A $NS$ (non symmetric) geometric modular operad in $C$ is a symmetric monoidal functor $$O:\NSGraph\to C.$$
A $NS$ cyclic operad in $C$ is a symmetric monoidal functor $O:\NSForest\to C$.
\end{definition} 

\subsection{NS geometric modular operads from skein algebras}
Recall that if $\B$ is the bigon with one edge of type ``left'' and one of type ``right'', then $\cSs(\B)=\OSL$ as a $(\USL,\USL)$-bimodule.  Let also $\B^R$ be the bigon whose edges are declared to be both of type $R$ (right edges) then $\cSs(\B^R)$ is the left module over $\USL^{\otimes 2}$ whose underlying space is $\OSL$ and on which the action of $x\otimes y\in \USL^{\otimes 2}$ is given by $x\otimes y\cdot b=x\cdot b\cdot r^*(y)$ (see Example \ref{ex:bigonright}).

\begin{theorem}[Skein algebras as non symmetric operads]\label{teo:NSmodoperad}
There is a geometric NS-modular operad $NSO$ in $\NSCatBim$ defined on an object $\fS$ of $\NSGraph$ as $$NSO(\fS)=(\Edge(\fS),[\Qq\otimes_{\cR}\cSs(\fS)])$$ where $\fS$ is the surface whose edges are all indexed to be of type L (left) and where we see $\Qq\otimes_{\cR}\cSs(\fS)$ as a right module over $\USL^{\otimes  \Edge({\bf \fS})}$ as explained in Subsection \ref{sub:modulestructure}.

If $\phi:{\fS'}\to { \fS}$ is a morphism associated to a cutting system $\alpha$, then let $$NSO(\phi)=[\Qq\otimes \cSs(\B)^{\otimes \Edge(\fS)}\otimes \cSs(\B^R)^{\otimes \alpha}]$$
where $\cSs(\B^R)^{\otimes \alpha}$ is the skein algebra of a disjoint union of one copy of $\B^r$ per arc $\alpha_i\in \alpha$ whose boundary edges correspond to the edges of $\partial \fS'$ lying respectively at the left and at the right of $\alpha_i$. 
\end{theorem}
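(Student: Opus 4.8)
The plan is to verify, in turn, that $NSO$ sends objects of $\NSGraph$ to objects of $\NSCatBim$, sends morphisms to morphisms (the substantive point), is compatible with composition and identities, and is symmetric monoidal; all the geometric input has already been isolated in Theorems~\ref{teo:module} and~\ref{r.Hoch2}, so the remaining task is to recast these in operadic language. \textbf{Objects.} Part~(b) of Theorem~\ref{teo:module}, read with every boundary edge of $\fS$ indexed of type $L$, says exactly that $\Qq\otimes_{\cR}\cSs(\fS)$ is a direct sum of finite dimensional right $\USL^{\otimes\Edge(\fS)}$-modules, so $NSO(\fS)$ is a genuine object of $\NSCatBim$. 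Applying the same statement to the bigons $\B$ and $\B^R$ shows that $\Qq\otimes\cSs(\B)^{\otimes\Edge(\fS)}\otimes\cSs(\B^R)^{\otimes\alpha}$ is a direct sum of finite dimensional bimodules, and it depends only on the homeomorphism class of the cutting system, since it records only $\Edge(\fS)$ together with, for each arc, the two edges of $\cut_{\alpha}(\fS)$ lying to its left and right.

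\textbf{Morphisms.} For a cutting system $\alpha$ in $\fS$ with $\cut_{\alpha}(\fS)\cong\fS'$, write $M=\Qq\otimes\cSs(\fS)$, $M'=\Qq\otimes\cSs(\fS')$, and $B=\Qq\otimes\cSs(\B)^{\otimes\Edge(\fS)}\otimes\cSs(\B^R)^{\otimes\alpha}$, a $(\USL^{\otimes\Edge(\fS')},\USL^{\otimes\Edge(\fS)})$-bimodule; one must show $[M'\otimes_{\USL^{\otimes\Edge(\fS')}}B]=[M]$. Since the arcs of $\alpha$ are disjoint and the associated gluings commute (Remark~\ref{rem:glueingcommutes}), I would induct on $|\alpha|$ and treat a single arc $c$, whose cutting produces two edges $a,b$ of $\fS'$, so $\Edge(\fS')=\Edge(\fS)\sqcup\{a,b\}$. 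The tensor factor $\cSs(\B)^{\otimes\Edge(\fS)}$ acts as the identity bimodule on the $\Edge(\fS)$-variables, exactly as in the proof of Theorem~\ref{teo:skeintqft} (each copy of $\cSs(\B)$ glued along an edge changes neither the surface nor, by Example~\ref{ex:bigonR}, the skein module), so $M'\otimes_{\USL^{\otimes\Edge(\fS)}}\cSs(\B)^{\otimes\Edge(\fS)}\cong M'$. It then remains to identify $M'\otimes_{\USL^{\otimes\{a,b\}}}\cSs(\B^R)$ with $M$. Topologically, gluing the bigon $\B^R$ to $\fS'$ along its two boundary edges, identified with $a$ and $b$, simply recovers $\fS=\fS'/(a=b)$ (the bigon being a collar of the gluing arc); and under the Hopf pairing and the Peter--Weyl decomposition the left $\USL^{\otimes 2}$-module $\cSs(\B^R)$, converted into the appropriate bimodule via the antiautomorphism $r^{*}$ of Lemma~\ref{lem:LRfunctors}, is precisely the one computing $HH_{0}$. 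Hence $M'\otimes_{\USL^{\otimes\{a,b\}}}\cSs(\B^R)\cong HH_{0}(M')\cong M$ by Theorem~\ref{r.Hoch2}. This proves $[M'\otimes_{\USL^{\otimes\Edge(\fS')}}B]=[M]$, and since $M$ is again a direct sum of finite dimensional modules (Theorem~\ref{teo:module}), $NSO(\phi)$ is a legitimate morphism of $\NSCatBim$.

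\textbf{Functoriality and monoidality.} The identity morphism of $\fS$ is the empty cutting system, which $NSO$ sends to $[\Qq\otimes\cSs(\B)^{\otimes\Edge(\fS)}]$, the identity bimodule by the previous step. For composable $\fS''\to\fS'\to\fS$ arising from cutting systems $\beta$ and $\alpha$, the composite is the union $\psi(\alpha)\sqcup\beta$, and the equality $NSO(\psi(\alpha)\sqcup\beta)=NSO(\beta)\otimes_{\USL^{\otimes\Edge(\fS')}}NSO(\alpha)$ follows from associativity of the tensor product of bimodules together with Remark~\ref{rem:glueingcommutes}, which guarantees that gluings along disjoint arcs commute and that the identifications are independent of the order in which they are performed. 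Finally $\cSs(\fS_1\sqcup\fS_2)=\cSs(\fS_1)\otimes_{\cR}\cSs(\fS_2)$ and $\Edge(\fS_1\sqcup\fS_2)=\Edge(\fS_1)\sqcup\Edge(\fS_2)$, while disjoint union with the evident symmetry is the monoidal structure of $\NSGraph$ and the monoidal structure of $\NSCatBim$ is $(C,[M])\otimes(C',[M'])=(C\cup C',[M\otimes_{\cR}M'])$; hence $NSO$ is symmetric monoidal.

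\textbf{Main difficulty.} The real work lies in the morphism step: although its mathematical content is entirely contained in Theorems~\ref{teo:module} and~\ref{r.Hoch2}, making the identification precise requires careful bookkeeping of which tensor factor of $\USL^{\otimes\Edge(\fS')}$ is contributed by each boundary edge and by each cut arc, and of the left-versus-right conversions effected by $r^{*}$ (the intermediate surfaces being homeomorphic but differently indexed). This organizational care, rather than any new idea, is what I expect to be the obstacle.
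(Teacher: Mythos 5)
Your proposal is correct and follows essentially the same route as the paper: the $\cSs(\B)^{\otimes\Edge(\fS)}$ factors are shown to act as identities via the Peter--Weyl decomposition of $\cSs(\B)$ (the argument of Theorems \ref{teo:module} and \ref{r.Hoch2}, as in Theorem \ref{teo:skeintqft}), and each $\cSs(\B^R)$ factor implements the gluing along the corresponding pair of cut edges via Example \ref{ex:bigonR} and the $HH_0$ identification of Theorem \ref{r.Hoch2}, iterated over disjoint arcs using Remark \ref{rem:glueingcommutes}. Your additional explicit checks of composition and monoidality are consistent with what the paper leaves implicit.
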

\begin{proof}
First of all observe that the functor is well defined as all surfaces are seen up to orientation preserving diffeomorphism and all modules and bimodules in $\NSCatBim$ are seen up to isomorphism. 
Then we observe that the skein algebra of a disjoint union of $n$ bigons $$\Qq\otimes_\cR \cSs(\sqcup_{j=1}^n \B_j)=\Qq\otimes_\cR\cSs(\B)^{\otimes n}=\Qq\otimes_\cR\OSL^{\otimes n}$$ is the identity of $(\{1,2,\cdots ,n\},[M])$ (where $i$ is the left edge of the $i^{th}$-bigon) for any right module $M$ which is a direct sum of finite dimensional modules over $\USL^{\otimes n}$. Indeed $M$ is a direct sum of modules of the form $W\otimes V^R_{j}$ where $V^R_j$ is the $j+1$-dimensional irrep of $\USL$ and $W$ is a right module over $\USL^{\otimes n-1}$ which is itself a tensor product of finite dimensional modules. As proved in Theorem \ref{teo:module} $\cSs(\B)=\OSL=\bigoplus_{i} V^L_i\otimes V^R_i$ so that by the same arguments as in the proof of Theorem \ref{r.Hoch2} it holds:
$$[W\otimes V^R_{j}  \otimes_{U_{q^2}(\mathfrak{sl}_2)_{n}} \left( \bigoplus_i V^L_i\otimes V^R_i\right)]=\bigoplus_i[W\otimes \left(V^R_{j}  \otimes_{\USL} V^L_{i}\right)\otimes V^R_{i} ]=[W\otimes V^R_{j} ].$$
This shows that tensoring over $\USL$ with a single copy of $\Qq\otimes_\cR \cSs(\B)$ provides the identity morphism; by Remark \ref{rem:glueingcommutes} repeating this along all the boundary edges one gets that tensoring with $\Qq\otimes_\cR\OSL^{\otimes n}$ is the identity of $(\{1,2,\cdots ,n\},[M])$ for any $M$ decomposing into a direct sum of finite dimensional modules.

Now we prove that if $i,j$ are two distinct boundary edges of a (possibly disconnected) surface $\fS'$, then
\begin{equation}\label{eq:glueingbigon}
[\cSs(\fS')\otimes_{\USL^{\{i,j\}}}\cSs(\cB^R)]=[\cSs(\fS)]
\end{equation}
where $\cSs(\cB^R)$ is seen as left module over $\USL^{\{i,j\}}$ and $\fS$ is the surface obtained by glueing the edges $i,j$ by an orientation reversing homeomorphism. 
Indeed by Remark \ref{rem:glueingcommutes} and Example \ref{ex:bigonR} to glue $\B^R$ along $i$ and $j$, one can first glue $\fS'$ and $\B^R$ along $i$ thus obtaining the surface $\fS'$ whose edge $i$ has been changed to type $R$ (see Example \ref{ex:bigonR}) and then operating a self-glueing along $i$ and $j$ on this surface. 
By Theorem \ref{r.Hoch2} the overall result is $\Qq\otimes_{\cR}\cSs(\fS)$.
Then if $\alpha$ is a cutting system given by $c$ arcs, by  Remark \ref{rem:glueingcommutes} applying $c$ times \eqref{eq:glueingbigon} we get that tensoring with $\Qq\otimes_{\cR}\cSs(\B^R)^{\otimes \alpha}$ is performing the glueing inverting the cut associated to the cutting system $\alpha$.   

\end{proof}

\section{Reduced skein algebra}\label{sec:reduced} We show that the stated skein algebra $\SS$ has a nice quotient $\bSS$, called the {\em reduced stated skein algebra}, which can be embedded in a quantum torus. This quotient is still big enough to contain the ordinary skein algebra and the Muller skein algebra. Unlike the case of the full fledged version $\SS$, when $\fS$ is an ideal triangle, the reduced version $\bSS$ is a quantum torus. The construction of the quantum trace map follows immediately from the splitting theorem for the reduced stated skein algebra. 

Throughout we fix a  punctured bordered  surface $\fS = \bfS \setminus \cP$ and we will denote $\Ss=\Ss(\fS)$.

\subsection{Definition}

A non-trivial arc $\al\subset \fS$, which is the closed interval $[0,1]$ properly embedded in $\fS$ \red{ not homotopic relative its endpoints to a subset of the boundary $\pfS$}, is called a {\em corner arc} 
 if  it is as that depicted in Figure \ref{fig:badarc3}(a), i.e. it cuts off from $\fS$ a triangle with one ideal vertex. Such an ideal vertex is said to be surrounded by the corner arc $\al$. 
 
 A {\em bad arc} is a stated corner arc whose states 
  are as in the figure Figure \ref{fig:badarc3}, i.e. they are $-$ followed by $+$ if we go along the arc counterclockwise around a surrounded vertex.
\FIGc{badarc3}{(a) a bad arc (b) the splitting of a bad arc}{2.5cm} 
The reduced stated skein algebra $\Ssr$ is defined to be  the quotient of \red{$\Ss(\fS)$} by the 2-sided  ideal $\Ibad$ generated by bad arcs.

 \def\bth{{\bar \theta}}
 \subsection{Basis}
Let  $\ori_+$ be the orientation of $\partial \fS$ induced by that of $\fS$, i.e. every boundary edge has positive orientation.
Then $B:=B(\fS;\ori_+)$ is an $\cR$-basis of $\SS$.  Let $\Bred=\Bred(\fS)\subset B$  be the subset consisting of all elements in $B$ which contain no bad arc.
 \begin{theorem} \label{teo:redbasis}
  The set $\Bred$ is a free $\cR$-basis of the $\cR$-module $\Ssr$.
 \end{theorem}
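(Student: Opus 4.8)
The plan is to show two things: first, that $\Bred$ spans $\Ssr$ over $\cR$, and second, that the classes of $\Bred$ elements are $\cR$-linearly independent in $\Ssr$. The spanning part is the easy direction. Since $B=B(\fS;\ori_+)$ spans $\Ss(\fS)$ by Theorem \ref{thm.basis1a}, its image spans $\Ssr=\Ss(\fS)/\Ibad$; so it suffices to show that if $\beta\in B$ contains a bad arc then $\beta\in \Ibad$. A priori $\beta\in\Ibad$ only says $\beta$ is an $\cR$-linear combination of elements of the form $x\cdot(\text{bad arc})\cdot y$, and one must see this is compatible with the basis expansion. Here I would argue geometrically: a basis element $\beta\in B$ which contains a bad arc literally \emph{is} a product (stacking) of the form $\beta_1\cdot b\cdot \beta_2$ where $b$ is a bad arc skein near the surrounded vertex and $\beta_1,\beta_2$ are the rest of $\beta$; more carefully, since the bad arc is a corner arc cutting off a triangle with one ideal vertex, it can be isotoped to have its two endpoints at the very top (or the very bottom) of their boundary edge among all endpoints of $\beta$, and then $\beta$ factors through $b$ in the product of $\Ss(\fS)$. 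Hence $\beta\in\Ibad$, and the images of $\Bred$ span $\Ssr$.

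The linear independence is the real content. The strategy I would use is to construct an explicit $\cR$-linear retraction, i.e. a splitting $\cR$-module map $\Ss(\fS)\to \cR\cdot\Bred$ (or, equivalently, a direct sum decomposition $\Ss(\fS)=\cR\cdot\Bred\oplus \Ibad$ as $\cR$-modules) which kills $\Ibad$. Concretely: $B=\Bred\sqcup(B\setminus\Bred)$, and $B\setminus\Bred$ consists of basis elements containing at least one bad arc; I would let $P:\Ss(\fS)\to \Ss(\fS)$ be the $\cR$-linear projection onto $\cR\cdot\Bred$ along $\cR\cdot(B\setminus\Bred)$, and the goal is to prove $P(\Ibad)=0$, equivalently $\Ibad\subseteq \cR\cdot(B\setminus\Bred)$. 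Since $\Ibad$ is generated as a two-sided ideal by bad arcs, it is enough to show that for any bad arc $b$ and any two stated tangle diagrams $x,y$, the product $x\cdot b\cdot y$, when expanded in the basis $B$ via the defining relations \eqref{eq.skein}--\eqref{eq.order}, involves only basis elements containing a bad arc. This reduces to a local statement about how a bad arc interacts with strands passing near the surrounded vertex.

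The main obstacle, and where I expect the bulk of the work to go, is precisely this local analysis: when one resolves the crossings and applies the reordering/arc relations to bring $x\cdot b\cdot y$ into the standard form of Theorem \ref{thm.basis1a}, one must verify that a bad arc (a corner arc stated $-$ then $+$ counterclockwise) is ``absorbing'' — every term in the reduction still contains such an arc. The key sub-lemmas I would isolate are: (i) a bad arc can be freely isotoped past any strand crossing the corner triangle it cuts off (using the skein relation \eqref{eq.skein} to resolve, and checking via \eqref{eq.arcs}, \eqref{eq.arcs1}, \eqref{eq.arcs2} that the resulting terms each contain either the same bad arc or a new bad arc, up to units), together with the observation that a trivial arc appearing on a boundary edge with states making it a ``bad arc pattern'' forces the bad-arc condition to propagate; (ii) the height-reordering moves of Lemma \ref{r.refl} and relation \eqref{eq.order}, applied to the endpoints of a bad arc, only produce terms still containing a bad arc (the $q^2$-term keeps the arc, and the $q^{-1/2}$-term creates a trivial arc which, combined with the neighbouring structure, is again bad or vanishes). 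Once these local facts are in hand, a straightforward induction on the number of crossings and the complexity of the reordering needed shows $x\cdot b\cdot y\in \cR\cdot(B\setminus\Bred)$, hence $P$ descends to $\Ssr$ and restricts to the identity on $\cR\cdot\Bred$; combined with the spanning statement, this proves $\Bred$ is a free $\cR$-basis of $\Ssr$. I would also remark that this is consistent with, and in fact refines, the analogous statements for $\ooS(\fS)$ and $\SMuller$, since bad arcs involve a $-$ state and thus the Muller subalgebra maps into $\Ssr$ injectively.
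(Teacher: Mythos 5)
Your overall skeleton --- decomposing $\SS=\cR\cdot\Bred\oplus\cR\cdot(B\setminus\Bred)$ and proving that $\Ibad$ is exactly the second summand --- is the same as the paper's, but both halves of your execution have problems, and the second is a genuine gap. For the spanning half: a bad arc cannot be ``isotoped'' so that its endpoints become extremal in the height order, because isotopies of $\pM$-tangles preserve the height order; any reordering must go through the boundary relations, which in general produce extra terms, and then $\beta$ would no longer visibly factor through a bad arc. The paper's fix is to note first that if some corner arc of a basis element is bad then (by positivity of the order and increasingness of the states) all arcs nearer the vertex are bad as well, so one may work with an \emph{innermost} bad arc, and then the unit-multiple exchange moves of Lemma \ref{r.refl} (Figure \ref{fig:rel5}) slide its negatively-stated endpoint past the other endpoints at the cost of a power of $q$ only, giving $\gamma\bue\al\beta$ with $\al$ a bad arc, hence $\gamma\in\Ibad$.

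The serious gap is in your independence step. Your inductive invariant --- that resolving crossings of strands with the bad arc (your sub-lemma (i)), or applying \eqref{eq.order} at its endpoints (your sub-lemma (ii)), produces ``only terms still containing a bad arc'' --- is false term by term: the smoothings of \eqref{eq.skein} and the second term of \eqref{eq.order} splice the bad arc into the neighbouring strand, and the resulting diagram in general contains no bad arc at all and need not vanish; only the full linear combination lies in $\Ibad$. Indeed, the assertion that every basis element occurring in the expansion of $x b y$ contains a bad arc is essentially the theorem itself, so an induction on crossings carrying this invariant never gets started. The paper sidesteps all skein resolutions involving the bad arc: combining with the spanning step, it suffices to treat the products $\al\beta$ and $\beta\al$ with $\al$ a bad arc and $\beta\in B$, where $\al$ sits at the very top (resp.\ bottom) of the height order; then only the unit exchange moves of Figure \ref{fig:rel5} are needed to push $\al$ into the corner (Figures \ref{fig:move1} and \ref{fig:move2}), so each such product is, up to a power of $q$, a \emph{single} basis element containing a bad arc. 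This gives $\Ibad\subseteq\cR\cdot(B\setminus\Bred)$ without ever confronting the uncontrolled terms in your reduction, and together with $\cR\cdot(B\setminus\Bred)\subseteq\Ibad$ and Theorem \ref{thm.basis1a} it proves the statement.
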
 

\begin{proof}  Let $A\subset \Ss$ be the $\cR$-span of $\Bred$ and $A'\subset \Ss$ be the $\cR$-span of $B \setminus \Bred$. One has $\Ss = A \oplus A'$. Let us prove that the ideal $\Ibad$ is equal to $A'$. 

 Proof that $A'\subset \Ibad$. Let $\gamma\in (B \setminus \Bred)$, i.e. $\gamma$ contains a bad arc. We have to show that $\gamma\in \Ibad$. If an arc in $\gamma$ (at some corner) is bad, then the positive orientation and  increasing  states imply that all the arcs closer to the vertex of that corner are bad, see Figure~\ref{fig:badarc2}. 
\FIGc{badarc2}{If the outer arc is bad, then all inner arcs are bad, too.}{1.5cm}

Thus we assume that $\gamma$ has a bad arc which is an inner most arc, see Figure~\ref{fig:move1}(b).

\FIGc{move1}{(a) The product $\al\beta$, here $\al$ (in red) is a bad arc, (b) an element $\gamma\in B$ which has a bad  innermost arc (in red). }{3cm}

We have the relations in Figure \ref{fig:rel5}, which are part of Lemma~\ref{r.refl}.
\FIGc{rel5}{Moving endpoint with negative state (left) and positive state (right)}{2.3cm}
The first relation allows us to move the end of the red arc with state $-$ (in $\gamma$) up until we get the diagram in Figure~\ref{fig:move1}(a), which is of the form $\al\beta$, where $\al$ is a bad arc. The result is that $\gamma \bue\al \beta$. Thus, $\gamma \in \Ibad$.

 Proof that $\Ibad \subset A'$. We have to show that  $\al \beta, \beta \al \in A'$ for any bad arc $\al$ and any $\beta \in B$. 

The product $\al\beta$: In this case, $\al \beta$ is presented as in Figure \ref{fig:move1}(a). We already saw that $\al \beta \bue \gamma$, where $\gamma$ is as in Figure \ref{fig:move1}(b). Since $\gamma\in A'$, we see that $\al\beta \in A'$.

\FIGc{move2}{(a) Product $\beta\al$, where $\al$ is a bad arc (in red), (b) the diagram $\gamma$}{2.5cm}

The product $\beta \al$: In this case, $\beta \al$ is presented as in Figure \ref{fig:move2}(a). Using the 2nd relation in Figure \ref{fig:rel5}, we get that $\beta\al \bue  \gamma$, where $\gamma$ is as in Figure \ref{fig:move2}(b). Since $\gamma\in A'$, we see that $\al\beta \in A'$.

Thus, $\Ibad =A'$. Hence as $\cR$-modules, $\Ssr=\Ss/J \cong A$, which has $\Bred$ as an $\cR$-basis. 
\end{proof}            
 \def\bB{\bar B}
\begin{remark} Positive order is used substantially in the proof. For other orientation of $\pfS$, the set similar to $\bB$ might not be the basis of $\bSS$.
\end{remark}

\begin{corollary} The ordinary skein algebra $\ooS(\fS)$ and the Muller skein algebra $\SMuller$ embed naturally into the reduced skein algebra $\bSS$.
\end{corollary}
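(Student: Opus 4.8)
The corollary follows once we know, from Theorem \ref{teo:redbasis}, that $\bSS$ has the $\cR$-basis $\Bred$ consisting of those elements of $B(\fS;\ori_+)$ containing no bad arc. The plan is to identify the images of the natural inclusions of $\ooS(\fS)$ and $\SMuller$ inside $\bSS$ with $\cR$-subspans of $\Bred$, and thereby conclude injectivity purely from basis considerations.

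First I would recall from Subsection \ref{sec.basis0} that $\oB(\fS;\ori_+)$, the subset of $B(\fS;\ori_+)$ consisting of diagrams with no arc components, is an $\cR$-basis of $\ooS(\fS)$, and that $B^+(\fS;\ori_+)$, the subset of diagrams all of whose states are $+$, is an $\cR$-basis of $\SMuller$. The key observation is that neither kind of diagram can contain a bad arc: a bad arc is in particular an arc component (so diagrams in $\oB(\fS;\ori_+)$ have none), and a bad arc carries a state $-$ somewhere (so diagrams in $B^+(\fS;\ori_+)$ have none). Hence $\oB(\fS;\ori_+)\subset \Bred$ and $B^+(\fS;\ori_+)\subset \Bred$ as subsets of $B(\fS;\ori_+)$.

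Next I would trace through the quotient map $\pi:\SS\to \bSS$. In the proof of Theorem \ref{teo:redbasis} it is shown that $\Ibad=A'$, the $\cR$-span of $B(\fS;\ori_+)\setminus\Bred$, so that $\pi$ restricted to the $\cR$-span $A$ of $\Bred$ is an $\cR$-module isomorphism onto $\bSS$, sending each basis element $\gamma\in\Bred$ to the corresponding basis element of $\bSS$. Since $\oB(\fS;\ori_+)$ and $B^+(\fS;\ori_+)$ are subsets of $\Bred$, their $\cR$-spans inside $\SS$ map isomorphically (as $\cR$-modules) onto their $\cR$-spans inside $\bSS$; in particular the composition $\ooS(\fS)\hookrightarrow\SS\overset{\pi}{\to}\bSS$ and $\SMuller\hookrightarrow\SS\overset{\pi}{\to}\bSS$ are injective on these $\cR$-bases, hence injective. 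Because these inclusions are already algebra homomorphisms (the inclusions $\ooS(\fS)\hookrightarrow\SS$ and $\SMuller\hookrightarrow\SS$ are algebra maps by the Corollary following Theorem \ref{thm.basis1a}, and $\pi$ is an algebra homomorphism), the composites are injective algebra homomorphisms, which is exactly the claimed embedding.

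The only point requiring a little care — and the main (minor) obstacle — is the combinatorial claim that a bad arc is never a subdiagram of an element of $\oB(\fS;\ori_+)$ or of $B^+(\fS;\ori_+)$; this is immediate from the description of a bad arc in Figure \ref{fig:badarc3} as an arc component with states $(-,+)$, so no real difficulty arises. One should also note that the argument shows slightly more: $\ooS(\fS)$ and $\SMuller$ are identified with the $\cR$-subalgebras of $\bSS$ spanned respectively by $\oB(\fS;\ori_+)$ and $B^+(\fS;\ori_+)$, so the embeddings are "the same" reduced diagrams one started with.
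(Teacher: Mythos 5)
Your argument is correct and is essentially the paper's own proof: both rest on the observation that the standard bases of $\ooS(\fS)$ (diagrams with no arc components) and of $\SMuller$ (diagrams with only $+$ states) contain no bad arcs and are therefore subsets of the basis $\Bred$ of $\bSS$ from Theorem \ref{teo:redbasis}, which forces the composites $\ooS(\fS)\hookrightarrow\SS\to\bSS$ and $\SMuller\hookrightarrow\SS\to\bSS$ to be injective algebra maps. Your extra bookkeeping via $\Ibad=A'$ is just a more explicit unwinding of the same basis argument.
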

\begin{proof} Clearly the standard basis of the ordinary skein algebra and the standard basis of the Muller skein algebra (where all the states are $+$) are subsets of the basis $\bB$ of $\bSS$.
\end{proof}

\subsection{Corner elements} 
\begin{proposition}\label{r.corner} Let $u$ be a stated corner arc with both states positive  and $v$ be the same arc  with both  states negative. Then $uv=vu=1$ in $\bSS$.
\end{proposition}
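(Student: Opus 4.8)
The statement concerns a stated corner arc $u$ with both endpoints stated $+$, and the same underlying arc $v$ stated with both endpoints $-$. We want $uv = vu = 1$ in the reduced stated skein algebra $\bSS$. The plan is to compute the products $uv$ and $vu$ directly in $\Ss(\fS)$ using the defining relations, and then observe that the ``error terms'' that appear are bad arcs, hence vanish in the quotient $\bSS = \Ss/\Ibad$.

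First I would set up the picture. Both $u$ and $v$ are isotopic as unstated tangles to a corner arc cutting off a triangle around an ideal vertex $p$; choose $u$ to sit above $v$ in the height direction so that $uv$ is represented by two nested (parallel, non-crossing) corner arcs around $p$ with $u$ outer-or-above. Since the two arcs are parallel and disjoint, near the boundary edge the four endpoints lie consecutively on that edge, and one can slide $u$ down past $v$ to make the two arcs ``cancel'': the key local move is to bring the arcs together so that they form, near the edge, a configuration to which relations \eqref{eq.arcs}, \eqref{eq.arcs1}, \eqref{eq.arcs2} and the reordering relation \eqref{eq.order} (via Lemma \ref{r.refl}) apply. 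Concretely, sliding the two parallel strands together creates either a trivial arc (removable by \eqref{eq.arcs}, giving the scalar $1$ after accounting for the $C^\nu_{\nu'}$ coefficients) or, when the states do not match the ``good'' pattern, a bad arc. The careful bookkeeping: $u$ has states $(+,+)$ and $v$ has states $(-,-)$; when we resolve, the only surviving term with matching states contributes the trivial-arc scalar, which by \eqref{eq.arcs1}–\eqref{eq.arcs2} and the values \eqref{eq.Cve} works out to $1$, while any cross-term that would produce a state pattern $-$ followed by $+$ (counterclockwise around $p$) is precisely a bad arc and is therefore $0$ in $\bSS$.

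For the reverse product $vu$ (i.e. $v$ above $u$), I would run the same local analysis; here the height order is reversed, so the reordering relation \eqref{eq.order} / the height-exchange Lemma \ref{r.refl} must be invoked to bring the configuration to the standard form, introducing at worst a power of $q$ and bad-arc correction terms. Again the bad-arc terms die in $\bSS$ and the remaining scalar is $1$. Since the statement is about $\bSS$, it suffices to have equality modulo $\Ibad$, which is exactly what this computation gives.

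The main obstacle I anticipate is the precise bookkeeping of the scalar coefficients coming from the trivial-arc relations \eqref{eq.arcs1}–\eqref{eq.arcs2} and any $q$-powers introduced by height exchanges in Lemma \ref{r.refl}: one must check that, after resolving the crossing (if any) via \eqref{eq.skein} and discarding bad-arc terms, the surviving coefficient is \emph{exactly} $1$ and not merely $\overset{\bullet}{=} 1$ (a power of $q$ times $1$). This is where the specific values $C^+_- = q^{-1/2}$, $C^-_+ = -q^{-5/2}$, and the factor $-q^3$ in \eqref{eq.arcs2} conspire: the product of the two normalizing scalars for the $(+,+)$ and $(-,-)$ arcs should cancel. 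I would verify this by an explicit small diagram chase rather than an abstract argument, since the sign/power cancellation is delicate but entirely mechanical. A secondary, minor point is to confirm that the nested-arcs picture I use genuinely represents $uv$ and $vu$ in the stated skein algebra (i.e. that $u$ and $v$, as stated tangles, can be isotoped to the standard nested position without crossing), which follows from the fact that a corner arc and its parallel copy bound an annulus in $\fS\times(0,1)$.
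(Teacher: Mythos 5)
Your plan is correct and is essentially the paper's own proof: one applies the height-exchange relation \eqref{eq.order} to the two consecutive endpoints of the nested corner arcs, the $q^{\pm2}$-term contains a bad arc and dies in $\bSS$, and the surviving turnback term is evaluated by \eqref{eq.arcs2}. The coefficient check you defer is exactly the short computation in the paper, where $q^{-1/2}\cdot(-q^{3}C^-_+)=1$ and $(-q^{-5/2})\cdot(-q^{3}C^+_-)=1$, so the product is exactly $1$ and not merely a power of $q$.
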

\begin{proof} In $\bSS$ we have 
$$ vu=  \corone \ = \ q^2 \, \cortwo  + q^{-1/2}\,  \corthree \ = \ q^{-1/2}\,  \corthree  \, = 1,$$
where the second identity follows from \eqref{eq.order} and  the last follows from \eqref{eq.arcs2}. Similarly,
$$ uv =  \coronep \ = \ q^{-2} \, \cortwop  - q^{-5/2}\,  \corthreep \ = \ q^{-5/2}\,  \corthreep  \, = 1,$$
where the second identity follows from \eqref{eq.order} and  the last follows  from \eqref{eq.arcs2}.
\end{proof}
\subsection{Filtration}

For a finite collection $\fA$ of ideal arcs or simple closed loops let $F^\fA_n(\bSS)$ be the $\cR$-submodule of $\bSS$ spanned by stated tangle diagrams $\al$ such that $\sum_{a\in \fA} I(a, \al) \le n$. Then $(F^\fA_n(\bSS))_{n=0}^\infty$ is a filtration of $\bSS$ compatible with the algebra 
structure. Denote by $\Gr^\fA(\bSS)$ the associated graded algebra:
$$ \Gr^\fA(\bSS) = \bigoplus_{n=0}^\infty \Gr^\fA_n(\bSS), \ \red{ \text{where}\ } \Gr^\fA_n(\bSS) = F^\fA_n(\bSS)/F^\fA_{n-1}(\bSS).$$

From Theorem \ref{teo:redbasis} we have the following analog of Proposition \ref{r.basis2}.
\begin{proposition}\label{r.basis2a} Suppose $\fA$ is a collection of boundary edges of $\fS$.

(a) The set $\{\al \in \bB \mid \sum_{a\in \fA} I(\al, a) \le n\}$ is an $\cR$-basis of $F^\fA_n(\bSS)$.

(b) The set $\{\al \in \bB \mid \sum_{a\in \fA} I(\al, a) = n\}$ is an $\cR$-basis of $\Gr^\fA_n(\bSS)$.
\end{proposition}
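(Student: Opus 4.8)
The statement asserts that, for a collection $\fA$ of boundary edges, the sets described in (a) and (b) are $\cR$-bases of $F^\fA_n(\bSS)$ and $\Gr^\fA_n(\bSS)$ respectively. The plan is to deduce everything from Theorem~\ref{teo:redbasis}, which says $\Bred$ is a free $\cR$-basis of $\bSS$, together with the fact that the reduced basis $\Bred$ interacts well with the geometric intersection index, exactly as in the non-reduced case handled by Proposition~\ref{r.basis2}. The key point is that each $\al\in\Bred$ is a simple, increasingly stated, positively ordered $\pM$-tangle diagram containing no bad arc, and for such a diagram the number $|\al\cap a|$ for a boundary edge $a$ already realizes the geometric intersection index $I(\al,a)$; this is the same observation used in the proof of Proposition~\ref{r.basis2} and it survives passing to the quotient because $\Bred\subset B(\fS;\ori_+)$.

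First I would recall that for $\al\in B(\fS;\ori_+)$ simple and a boundary edge $a$, one has $I(\al,a)=|\al\cap a|$: a simple diagram meets a boundary edge minimally, since any bigon between $\al$ and $a$ could be removed by isotopy, contradicting simplicity (this is precisely the content behind Proposition~\ref{r.basis2}(a) for the non-reduced algebra). Hence for a collection $\fA=\{a_1,\dots,a_k\}$ of boundary edges and $\al\in\Bred$, the quantity $\sum_{i} I(a_i,\al)$ equals $\sum_i |\al\cap a_i|$, which I will denote $\delta(\al)$. By definition $F^\fA_n(\bSS)$ is spanned by the images of stated tangle diagrams $\beta$ with $\sum_{a\in\fA} I(a,\beta)\le n$; reducing $\beta$ to the basis $\Bred$ using the defining relations and the relations of Section~\ref{sec:reduced} only decreases or preserves the intersection numbers with boundary edges (the relations \eqref{eq.skein}, \eqref{eq.loop}, \eqref{eq.arcs}, \eqref{eq.order}, and the height-exchange moves are all supported in small disks and do not increase $|\cdot\cap a|$ for a boundary edge $a$), so $F^\fA_n(\bSS)$ is spanned by $\{\al\in\Bred\mid \delta(\al)\le n\}$. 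Since this is a subset of the $\cR$-basis $\Bred$, it is itself $\cR$-linearly independent, proving (a).

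For part (b), I would note that the filtration $\{F^\fA_n(\bSS)\}$ is, by part (a), a filtration by free $\cR$-submodules each admitting a basis which is a subset of $\Bred$, and these subsets are nested. Therefore $\Gr^\fA_n(\bSS)=F^\fA_n(\bSS)/F^\fA_{n-1}(\bSS)$ is free with basis the image of $\{\al\in\Bred\mid \delta(\al)=n\}=\{\al\in\Bred\mid \delta(\al)\le n\}\setminus\{\al\in\Bred\mid\delta(\al)\le n-1\}$, which is exactly the set in the statement; this gives (b).

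I do not expect a serious obstacle here: the whole argument is the reduced analogue of Proposition~\ref{r.basis2}, and the only thing to check carefully is that passing to the quotient $\bSS=\Ss/\Ibad$ does not destroy the compatibility between $\Bred$ and the intersection filtration. That compatibility is immediate because $\Bred$ is literally a subset of $B(\fS;\ori_+)$ and the bad-arc ideal $\Ibad$ is spanned (as an $\cR$-module) by $B(\fS;\ori_+)\setminus\Bred$, as established inside the proof of Theorem~\ref{teo:redbasis}; in particular the filtration level of a basis element is unchanged by the quotient map. So the mild ``hard part'', if any, is simply making the reduction-to-basis step precise enough to see that no relation used in rewriting a diagram in terms of $\Bred$ raises $\sum_{a\in\fA}|\cdot\cap a|$ — a routine check once one observes that all defining relations and height-exchange moves are local moves away from, or within a small neighborhood of, the boundary edges, affecting at most the diagram's crossings and its ordering/states on a single edge, never creating new intersection points with the edges in $\fA$.
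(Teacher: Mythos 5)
Your proof is correct and follows exactly the route the paper intends: the proposition is stated there as an immediate analog of Proposition~\ref{r.basis2}, deduced from Theorem~\ref{teo:redbasis} via the observation that $\Bred\subset B(\fS;\ori_+)$ and that $\Ibad$ is spanned by $B(\fS;\ori_+)\setminus\Bred$, so the filtration by boundary-edge intersection numbers passes to the quotient with the expected bases. (If anything, your spanning step can be shortened by citing Proposition~\ref{r.basis2}(a) for $\SS$ and then projecting, noting that for a boundary edge $a$ the count $|\al\cap a|$ is isotopy-invariant, but the argument as written is sound.)
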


\subsection{Splitting theorem}
 \begin{theorem} \label{teo:rsplit}
Suppose  $\fS'$ is the result of splitting $\fS$ along an interior ideal arc $a$. The splitting algebra embedding
$\theta_a: \SS \embed \cS(\fS')$
descends to an algebra embedding
\be  \bar\theta_a: \Ssr \to \bcS(\fS').
\label{eq.bth}
\ee

Besides, if $a$ and $b$ are two disjoint ideal arcs in the interior of $\fS$, then 
\be \bth_a \circ \bth_{b} = \bth_{b} \circ \bth_a 
\label{eq.com7}
\ee
\end{theorem}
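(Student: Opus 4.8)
The statement has two parts: (i) that the splitting embedding $\theta_a \colon \cS(\fS) \embed \cS(\fS')$ descends to a map $\bar\theta_a \colon \bSS \to \bcS(\fS')$, and moreover is an \emph{embedding}; and (ii) that two such descended maps commute. I would organize the argument as follows.

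\emph{Step 1: $\theta_a$ sends bad arcs into $\Ibad(\fS')$.} By Theorem~\ref{thm.basis1a} the two-sided ideal $\Ibad(\fS)$ is spanned over $\cR$ by products $\gamma \beta$ with $\gamma$ a bad arc and $\beta \in B(\fS;\ori_+)$; but in fact, by the proof of Theorem~\ref{teo:redbasis}, $\Ibad(\fS)$ is exactly $A'$, the $\cR$-span of $B(\fS;\ori_+) \setminus \Bred(\fS)$, i.e.\ the span of basis skeins containing a bad arc. So it suffices to check that for every $\gamma \in B(\fS;\ori_+)$ containing a bad arc, $\theta_a(\gamma) \in \Ibad(\fS')$. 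Here one has to consider how the arc $a$ meets the corner region containing the bad arc of $\gamma$. After isotoping $\gamma$ to be taut with respect to $a$, the splitting formula \eqref{eq.slit} expresses $\theta_a(\gamma)$ as a sum of lifts $\tilde\gamma$ with all possible states on $a \cap \gamma$. The key geometric observation is that a bad corner arc of $\gamma$ either lies entirely in one of the two pieces (and then survives as a bad arc in each summand, so each lift is in $\Ibad(\fS')$), or it is cut by $a$. In the latter case one uses the computation depicted in Figure~\ref{fig:badarc3}(b): splitting a bad arc produces, in each lift, a configuration that still contains a bad arc (the state constraints $-$ then $+$ along the counterclockwise direction are preserved by the state-sum because the two new endpoints on each side of $a$ get equal states, and exactly the ``mixed'' term is the bad one, while the others vanish or are again bad). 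Thus $\theta_a(\gamma) \in \Ibad(\fS')$ for all such $\gamma$, so $\theta_a(\Ibad(\fS)) \subseteq \Ibad(\fS')$ and $\bar\theta_a$ is well defined.

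\emph{Step 2: injectivity of $\bar\theta_a$.} This is the main obstacle: injectivity does not follow formally from injectivity of $\theta_a$, since a quotient of an injection need not be injective. The cleanest route is to use bases and a filtration argument. Pick the positive orientation $\ori_+$ of $\pfS$; lift it together with an orientation of $a$ to an orientation $\ori'_+$ of $\pfS'$ in the canonical way. By Theorem~\ref{teo:redbasis}, $\Bred(\fS)$ is a basis of $\bSS$ and $\Bred(\fS')$ a basis of $\bcS(\fS')$. I would filter by $\fA = \{a\}$ (or, on the $\fS'$ side, by the two new edges), pass to the associated graded algebras, and use Proposition~\ref{r.grbinom} together with Proposition~\ref{r.basis2a}(b): in $\Gr^{\{a',a''\}}_{2m}$ the image $\theta_a(D)$ of a taut reduced basis element $D$ with $|D \cap a| = m$ equals $\sum_{i=0}^m \binom{m}{i}_{q^4} (\tilde D, s_i)$, and one checks that none of the terms $(\tilde D, s_i)$ contains a bad arc when $D$ is reduced (a bad arc in $(\tilde D, s_i)$ would, by the corner analysis of Step~1 run in reverse, force a bad arc in $D$). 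Hence $\Gr(\bar\theta_a)$ sends the basis $\Bred(\fS)$ (graded by intersection with $a$) to $\cR$-linear combinations of distinct reduced basis elements of $\bcS(\fS')$ whose ``leading'' coefficient is a unit (the $i=0$ term has coefficient $1$). Distinct $D$'s give terms supported on disjoint sets of basis elements (they differ away from $a$), so $\Gr(\bar\theta_a)$ is injective, whence $\bar\theta_a$ is injective.

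\emph{Step 3: commutativity \eqref{eq.com7}.} This is immediate from Theorem~\ref{thm.1a}(c), which gives $\theta_a \circ \theta_b = \theta_b \circ \theta_a$ on $\cS(\fS)$ before passing to the quotient, together with the compatibility just established: both composites descend to $\bSS \to \bcS(\fS'')$ (where $\fS''$ is $\fS$ cut along $a \sqcup b$), and they agree because their lifts to the non-reduced algebras agree. One only needs to note that the order of cutting does not matter for the reduced quotients either, which follows since $\Ibad$ is preserved at each stage by Step~1. I expect Step~2 to require the most care, specifically the bookkeeping that ensures the state-sum of a reduced skein never acquires a bad arc after splitting; this is exactly the ``non-trivial fact'' flagged in the paper after the theorem statement.
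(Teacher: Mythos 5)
Your Steps 1 and 3 are fine (and Step 1 can be shortened: since $\Ibad$ is generated as a two-sided ideal by bad arcs and $\theta_a$ is an algebra map, it suffices to check the single bad arc, for which $I(\al,a)\in\{0,1\}$ and Figure~\ref{fig:badarc3}(b) do the job — this is what the paper does). The genuine gap is in Step 2, at exactly the point you wave at: the claim that ``none of the terms $(\tilde D,s_i)$ contains a bad arc when $D$ is reduced, since a bad arc in $(\tilde D,s_i)$ would force a bad arc in $D$'' is false. Bad arcs can be \emph{created} by the splitting: the endpoints of $a$ give rise to four new corners of $\fS'$ (at the edges $a',a''$), and a component of $D$ that merely crosses $a$ near one of its endpoints becomes a corner arc of $\tilde D$ at one of these corners, with one old state and one new state supplied by the state sum. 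For suitable old states this new corner arc is bad. Worse, it can happen that \emph{every} increasingly-stated lift $s_j(\tal^+)$ of a bad-arc-free diagram $\al$ contains a bad arc — e.g.\ when $\al$ contributes a negatively-stated top-left corner arc and a positively-stated bottom-left corner arc in $\fS'$ (the case $\TL_-(\al)>0$, $\BL_+(\al)>0$ of Lemma~\ref{r.lu77}). In that situation the entire image of the top filtration level vanishes in $\Gr^{\{a',a''\}}_{2k}(\bcS(\fS'))$, your ``$i=0$ term has coefficient $1$'' leading-term argument has nothing to bite on, and the graded map you propose is simply not injective on the nose.

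This is why the paper's proof of injectivity is not a routine leading-term argument: after disposing of the easy case (your Case, where some $P(\bth_a(\beta))\neq0$, handled as you describe via Propositions~\ref{r.grbinom} and~\ref{r.orichange}), it must treat the degenerate case where $P(\bth_a(\al))=0$ for all $\al$ in the top level. There the argument uses the dichotomy of Lemma~\ref{r.lu77} to split the top level into $\SL\sqcup\SR$, Lemma~\ref{r.lu9} to localize which graded piece dies, and then a new device: right-multiplication by $u^d$, where $u$ is the positively-stated corner arc (invertible in the reduced algebra by Proposition~\ref{r.corner}) and $d$ is the minimal number of negative top-left corner arcs, followed by a tailored projection $P_+$; Lemma~\ref{r.lu1} then shows this recovers distinct reduced basis elements $\ttal$ with unit coefficients, whence $\bth_a(x)u^d\neq0$. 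To repair your proof you would need to supply an argument of this kind (or an equivalent one); as written, Step 2 does not establish injectivity, which is precisely the non-trivial content of the theorem.
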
     
    
 \def\aL{{a'}}
 \def\aR{{a''}} 
 \def\TL{\mathrm{TL}}
\def\BL{\mathrm{BL}}
\def\TR{\mathrm{TR}}
\def\BR{\mathrm{BR}}
\def\SL{{S'_L}}
\def\SR{{S'_R}}
\def\ttal{\tilde{\tal}}
\def\ttbeta{\tilde{\tbeta}}
\def\DL{D_L}
\def\PR{P_R}
\def\PL{P_L}  
   
    \begin{proof}
    Suppose $\al\subset \fS$ is a bad arc. The geometric intersection $I(\al,a)$ is 0 or 1. In the first case $\theta_a(\al)=\al$ is also a bad arc in $\fS'$.  In the second case the splitting of $\al$, given in Figure~\ref{fig:badarc3}(b), has a bad arc for both values of  $\nu\in \{\pm\}$. It follows that $\theta_a(\Ibad) \subset \Ibad$. Hence $\theta_a$ descends to an algebra homomorphism $\bth_a:  \bSS \to \bcS(\fS')$ and we also have~\eqref{eq.com7}.

It remains to show that $\bth_a$ is injective. Let $0\neq x\in \Ssr $. We have to show that $\bth_a(x) \neq0$. 
Since $\bB(\fS)$ is an $\cR$-basis, there is a non-empty finite set $S \subset \bB(\fS)$ such that
\be 
x = \sum_{\al\in S} c_\al \al, \quad 0 \neq c_\al \in \cR.
\ee 
Let $k=\max_{\al\in S} I(\al, a)$. Then $S':=\{ \al\in S \mid I(\al,a)=k\}$ is non-empty.

Let $\pr:\fS'\to \fS$ be the projection and  $\aL,\aR \subset \fS'$ be the boundary edges which are  $\pr^{-1}(a)$. To simplify the notations we write $F^\fA_n$ and $\Gr^\fA_n$ for respectively $F^\fA_n(\bcS(\fS')) $ and $\Gr^\fA_n(\bcS(\fS'))$.
From the formula of the splitting homomorphism, for every $\al\in S$,
$$ \bth_a(\al) \in F^\aL_k \cap F^\aR_k \subset  F_{2k}^{\{\aL,\aR\}}.$$
Let $ P:  F_{2k}^{\{\aL,\aR\}} \onto\Gr_{2k}^{\{\aL,\aR\}}$ be the canonical projection.
Clearly if $ \al\in S\setminus S'$ then $P(\al)=0$. We consider  Case 1 and Case 2 below.
\\
{\bf Case 1:} {\em   There exists $\beta\in S$ such that $P(\bth_a(\beta))\neq0$.}

\FIGc{split7}{The split surface $\fS'$, with orientations $\ori'$ on $\pfS'$. The top left, top right, bottom left, and bottom right corners are marked respectively $TL, TR, BL, BR$.} {2.5cm}

Choose an orientation of $a$ such that the induced orientation on $\aR$ is positive. Then the induced orientation on $\aL$ is negative, see Figure \ref{fig:split7}. Let $\ori'$ be the orientation of $\pfS'$ which is positive everywhere except for the edge $a'$ where it is negative. For $\al\in S'$ its lift $\tal=\pr^{-1}(\al)$ is a {\em partially stated tangle diagram}: it is stated everywhere except for endpoints on $\aL\cap \aR$, and the endpoints on each of $\aL$ and $\aR$ are ordered by $\ori'$. Let $\tal^+$ be the same $\tal$ except that the order on $\aL$ (and hence on all edges) is given by the positive orientation.

For $ 0\le j\le k$ let
$s_j(\tal)$ (respectively $s_j(\tal^+))$ be the stated tangle diagram  which is  $\tal$ (respectively $\tal^+$) where  the states on each of $\aL$ and $\aR$ are increasing and having exactly $j$ minus signs. Then $s_j(\tal^+)$ is either equal to $0$ in $\bcS(\fS')$ or belongs to the basis set $\bB(\fS')$.
\def\tbeta{\tilde{\beta}}

By Proposition \ref{r.grbinom} and then Proposition \ref{r.orichange} we have, for some $f(\al,j) \in \BZ$, 
\be 
P(\bth_a(\al)) = \sum_{j=0}^k  \binom kj_{q^4} \, s_j (\tal) =  \sum_{j=0}^k  \binom kj_{q^4} q^{f(\al,j)}s_j(\tal^+). \label{eq.lu1}
\ee
Since  $P(\bth_a(\beta))\neq0$,  there is $l$ such that $s_l(\tbeta^+)\neq 0$ in $\bcS(\fS')$ and hence $s_l(\tbeta^+)\in  \bB(\fS')$.

Using \eqref{eq.lu1} we have
\be 
P (\bth_a((x))= \sum_{\al\in S'} \sum_{j=0}^k \binom kj_{q^4} q^{f(\al,j)} \, c_\al\, s_j(\tal^+).
\label{eq.lu2}
\ee

As  $\al \in S'$ can be recovered from $\tal$, if $\al\neq \beta$  then the two partially stated diagrams $\tal$ and $\tbeta$  are not isotopic.
 It follows that  $s_l(\tbeta^+)\neq s_{j}(\tal^+)$ for all $j$ and all $\al \neq \beta$. It is also clear that $s_j(\btheta^+) \neq s_l(\btheta^+)$ for $j \neq l$.
 Hence the right hand side of \eqref{eq.lu2} is not 0, since the basis element $s_l(\tbeta^+)$ has non-zero coefficient, and all other elements $s_j(\tal^+)$ is either 0 or a basis element different from $s_l(\tbeta^+)$. Thus $P (\bth_a((x))\neq 0$ and consequently $\bth_a(x)\neq 0$. This completes the proof in Case 1.

\noindent {\bf Case 2:} {\em  For all $\al\in S$ we have $P(\bth_a(\al))=0$.} Identity \eqref{eq.lu1} shows that $s_j(\tal^+)=0$ for all $0\le j\le k$ and all $\al\in S'$.

Incident with $\aL$ there are two corners, the top left corner and the bottom left corner. 
 Similarly, incident with $\aR$ there are the top right corner and the bottom right corner, see Figure \ref{fig:split7}. A corner arc of $\tal$ at one of these four corners has  one end stated and one end not stated, and it is called a {\em negative (respectively positive)} corner arc if this only state is negative (respectively positive).

For $\al\in S'$ and $\nu\in \{\pm\}$ let  $\TL_\nu(\al)$  be the number of top left corner arcs  whose only state  is $\nu$. Define $\TR_\pm(\al), \BL_\pm(\al), \BR\pm(\al)$ similarly.

\begin{lemma} Suppose $\al\in S'$. One of the following two mutually exclusive cases happens:\\
 (i)  $\TL_-(\al) >0$ and $\BL_+(\al) >0$, or \\
 (ii) $\BR_-(\al) >0$ and $\BR_+(\al) >0$. \label{r.lu77}
\end{lemma}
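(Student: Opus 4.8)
The statement is a dichotomy about the four-corner behaviour of a basis element $\al\in S'$ under the hypothesis of Case 2, namely that $s_j(\tal^+)=0$ in $\bcS(\fS')$ for all $0\le j\le k$. The key point is that $s_j(\tal^+)$, being an $\ori_+$-ordered increasingly stated simple tangle diagram on $\fS'$, lies in the basis set $\bB(\fS')$ \emph{unless} it contains a bad arc. So the hypothesis ``$s_j(\tal^+)=0$ for all $j$'' forces $s_j(\tal^+)$ to contain a bad arc for every choice of $j\in\{0,\dots,k\}$. A bad arc is a corner arc stated $-$ then $+$ counterclockwise around a surrounded vertex (Figure \ref{fig:badarc3}); since $\al$ itself (hence $\tal$, away from $\aL\cup\aR$) contains no bad arc, the bad arc in $s_j(\tal^+)$ must be created by the states we impose on $\aL$ and $\aR$, i.e. it must be one of the corner arcs of $\tal$ incident to one of the four corners $TL,TR,BL,BR$ of Figure \ref{fig:split7}.

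\textbf{Main steps.} First I would analyze, corner by corner, which states produce a bad arc there. With the orientations of Figure \ref{fig:split7} ($\ori_+$ positive on $\aR$, and after passing to $\tal^+$ the order on $\aL$ is also positive), going counterclockwise around the puncture at a given corner translates into a specific sign pattern ``first $-$, then $+$'' on the two ends of a corner arc. Since in $s_j(\tal^+)$ the states on each of $\aL$ and $\aR$ are increasing with exactly $j$ minus signs at the bottom, the corner arcs near the top of $\aL$ are stated $+$ and those near the bottom are stated $-$ (and symmetrically on $\aR$); one checks from the local picture that a bad arc at $TL$ requires a top-left corner arc whose unique state is $-$, a bad arc at $BL$ requires a bottom-left corner arc whose unique state is $+$, and likewise a bad arc at $BR$ (respectively $TR$) requires the appropriate sign. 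The precise correspondence is exactly why the lemma is phrased in terms of $\TL_-,\BL_+,\BR_-,\BR_+$. Second, I would run the argument ``for every $j$ there is a bad arc'': take $j=0$ (all states $+$ on $\aL$ and $\aR$); the only way a bad arc can appear is at a corner whose relevant arc is forced to be stated $+$ on both ``ends fed from the edges'' — but corner arcs at $TL$ and $BR$ are each adjacent to only one of $\aL,\aR$, so the $+$ from the edge is not enough; I must consult which corners can be bad when all edge-states are $+$. The cleanest way is: as $j$ ranges over $0,\dots,k$, the sign patterns on $\aL\cup\aR$ sweep out all increasing patterns; for the family $\{s_j(\tal^+)\}_j$ to \emph{all} contain bad arcs, $\tal$ must have both a corner arc at one of the ``left'' corners and a corner arc at the ``paired'' corner, arranged so that no matter where the $-/+$ boundary sits among the $j$ choices, some corner arc gets the bad pattern. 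Working this out shows that this happens precisely when either ($TL$ has a negative corner arc and $BL$ has a positive one) or ($BR$ has both a negative and a positive corner arc) — the two cases of the lemma. Mutual exclusivity follows because in case (i) one sees, tracking the arcs across $\fS'$, that there are no corner arcs left at $BR$ of both signs, and conversely.

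\textbf{Expected obstacle.} The genuinely fiddly part is the bookkeeping of orientations and of which of the four corners an inner-most versus outer corner arc occupies after the lift $\al\mapsto\tal^+$, together with the sign reversal on $\aL$ (the arc $a'$ where $\ori'$ is negative) that has to be undone when passing from $\tal$ to $\tal^+$. I would handle this by drawing the local model near each of $TL,TR,BL,BR$ once and for all (a small quadrilateral with the edge $\aL$ or $\aR$ on one side and the puncture at the opposite ideal vertex), reading off from Figure \ref{fig:badarc3} the unique bad sign pattern, and then phrasing everything as a combinatorial statement about increasing $\pm$-sequences: a simple induction / case-split on whether $\tal$ has a corner arc touching both $\aL$ and $\aR$ (a ``through'' corner arc) or only one of them. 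The case $k=0$ (no arc meets $a$) is vacuous since then $S'=S$ and $s_0(\tal^+)=\al\neq 0$, contradicting the Case 2 hypothesis, so we may assume $k\ge 1$, which guarantees at least one corner arc is present and makes the dichotomy non-empty. Once the four local pictures are fixed, verifying (i) and (ii) are the only possibilities, and that they exclude one another, is a short finite check.
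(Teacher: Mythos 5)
There is a genuine gap at the heart of your argument. Your plan is to deduce the dichotomy purely from the vanishing of all the $s_j(\tal^+)$: "for the family $\{s_j(\tal^+)\}_j$ to all contain bad arcs ... working this out shows that this happens precisely when either (i) or (ii)." That equivalence is not true as a combinatorial statement about the corner data of $\tal$. For instance (take $k=1$), a single strand crossing $c$ whose left half is a top-left corner arc with state $-$ and whose right half is a top-right corner arc with state $+$ gives $s_0(\tal^+)=0$ (bad arc at $TL$) and $s_1(\tal^+)=0$ (bad arc at $TR$), yet $\TL_->0,\ \TR_+>0$ and $\BL_+=\BR_-=\BR_+=0$, so neither (i) nor (ii) holds. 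What excludes such "top--top" (and the symmetric "bottom--bottom") configurations is not the vanishing of the $s_j$'s but the hypothesis that $\al$ itself lies in the reduced basis, i.e.\ has no bad arc in $\fS$: regluing $\aL$ to $\aR$, a negative $TL$ corner arc paired with a positive $TR$ corner arc would reassemble into a bad arc of $\al$ at the top endpoint of $c$, and similarly $\BR_->0$ together with $\BL_+>0$ would create one at the bottom endpoint. This is exactly how the paper argues: from $s_0(\tal^+)=0$ one gets $\TL_->0$ or $\BR_->0$; if $\TL_->0$ then absence of bad arcs in $\al$ forces the opposite top corner count to vanish, so the vanishing of $s_k(\tal^+)$ forces $\BL_+>0$, giving (i); the same regluing argument at the bottom corner kills case (ii), and symmetrically for $\BR_->0$. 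You invoke "no bad arc in $\al$" only to localize bad arcs of $s_j(\tal^+)$ to the four corners, and you gesture at "tracking the arcs across $\fS'$" only for mutual exclusivity; but this regluing step is the essential mechanism for the dichotomy itself, and without it your claimed equivalence fails.

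Two smaller points. First, only $j=0$ and $j=k$ are needed; sweeping over all intermediate $j$ adds nothing and forces you into the delicate bookkeeping (position of the minus signs on $\aL$, $\aR$ for intermediate states) that you flag as the main obstacle. Second, when you do carry out the corner-by-corner sign analysis you should also use that the corner arcs at a given corner are nested with their ends on $\aL$ (resp.\ $\aR$) closest to that ideal vertex, and that the increasing states of $\al$ order their pre-existing states monotonically toward the vertex; this is what makes statements like "if some edge state is $-$ then a bad arc appears at $BL$ because $\BL_+>0$" valid, and it is used again in Lemma \ref{r.lu9}.
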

\begin{proof} Since $s_0(\tal^+)$ is 0 in $\bcS(\fS')$, it has a bad arc. This implies either $\TL_-(\al) >0$ or $\BR_-(\al) >0$.

Assume $\TL_-(\al) >0$. Since $\al$ does not have a bad arc, we conclude that $\TR_+(\al)=0$. Then from $s_k(\tal^+)=0$ we see that $\BR_+(\al) >0$. Again since $\al$ does not have a bad arc, we conclude that $\BR_-(\al) =0$. Thus we have case (i) but not case (ii).

Assume $\BR_-(\al) >0$. Since $\al$ does not have a bad arc, we conclude that $\BL_+(\al)=0$. Then from $s_0(\tal^+)=0$ we see that $\TR_+(\al) >0$. Again since $\al$ does not have a bad arc, we conclude that $\BL_-(\al) =0$. Thus we have case (ii) but not case (i).
\end{proof}
The cases (i) and (ii) of Lemma \ref{r.lu77} partition  $S'= \SL \sqcup \SR$, where
$$ \SL=\{\al\in S' \mid \TL_-(\al)\, \BL_+(\al)  >0 \}, \quad  \SR=\{\al\in S' \mid \BR_-(\al)\, \BR_+(\al) >0 \}.$$

\begin{lemma} If $\al\in \SL$ then $\bth_a(\al)=0$ in $\Gr^\aL_k$. Similarly, if $\al\in \SR$ then $\bth_a(\al)=0$ in  $\Gr^\aR_k$.
\label{r.lu9}
\end{lemma}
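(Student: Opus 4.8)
The plan is to prove Lemma \ref{r.lu9} by a direct skein-theoretic computation, exploiting the fact that we are working in the associated graded algebra $\Gr^{\aL}_k$ (resp. $\Gr^{\aR}_k$), where reorderings of heights and states on a boundary edge only contribute powers of $q$ (Propositions \ref{r.orichange} and \ref{r.basis2a}). First I would treat the case $\al \in \SL$, so that $\TL_-(\al) > 0$ and $\BL_+(\al) > 0$. The lift $\tal = \pr^{-1}(\al)$ has, near the edge $\aL$, at least one top-left corner arc with state $-$ and at least one bottom-left corner arc with state $+$. The key point is that $\bth_a(\al) = \theta_a(\al)$ is a sum over all lifts $\beta$ of $\al$, i.e. over all ways of assigning matching states to the points of $\pr^{-1}(\partial_a \al)$ on $\aL$ and $\aR$; but in $\Gr^{\aL}_k$ we may freely reorder the heights on $\aL$ at the cost of powers of $q$, so $\bth_a(\al)$ equals (up to an overall nonzero scalar and modulo $F^{\aL}_{k-1}$) a sum of terms in which the states on $\aL$ are arranged increasingly.

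The main step is to observe that the orientation $\ori'$ on $\aL$ is \emph{negative} (see Figure \ref{fig:split7}), so the bad-arc configuration (a $-$ below a $+$ when read counterclockwise around the surrounded vertex, Figure \ref{fig:badarc3}) is exactly what appears when we have a top-left corner arc stated $-$ and a bottom-left corner arc stated $+$ on $\aL$ with the heights in the wrong order. Concretely: using the height-exchange relation \eqref{eq.reor1}–\eqref{eq.reor2} inside $\Gr^{\aL}_k$ (where the lower-order correction term in \eqref{eq.reor2} is killed), I would slide the endpoint of the top-left $-$-arc and the endpoint of the bottom-left $+$-arc until they become consecutive on $\aL$ in the order that produces a bad arc — at which point relation \eqref{eq.arcs} (the vanishing of $\leftupPP$ / $\leftupNN$, equivalently the ideal $\Ibad$) forces that term to be zero in $\bcS(\fS')$. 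Since this happens in every lift $\beta$ of $\al$ — each lift has its states on $\pr^{-1}(\partial_a\al)$ on $\aL$ determined, but the top-left and bottom-left corner arcs of $\tal$ itself are already stated by $-$ and $+$ respectively and are not among the newly-created points — every summand of $\bth_a(\al)$ vanishes in $\Gr^{\aL}_k$, giving $\bth_a(\al) = 0$ there. The case $\al \in \SR$ is symmetric, using the edge $\aR$ (whose orientation $\ori'$ is positive) and the two bottom-right corner arcs stated $-$ and $+$; here the bad arc appears at the bottom-right corner and the same argument with \eqref{eq.reor1} and \eqref{eq.arcs} applies in $\Gr^{\aR}_k$.

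The hard part will be bookkeeping: making precise that the corner arcs of $\al$ at the four corners of $\fS'$ survive intact in $\tal$ and are genuinely stated (rather than carrying the undetermined states at $\pr^{-1}(\partial_a\al)$), and carefully tracking which points on $\aL$ (resp. $\aR$) one is allowed to reorder without leaving the graded piece $\Gr^{\aL}_k$ (resp. $\Gr^{\aR}_k$) — this uses that all the relevant diagrams are taut with respect to $a$, so $I(\tal,\aL) = I(\tal,\aR) = k$, and that reordering on $\aL$ preserves $\delta_{\aL}$ hence stays within the filtration degree. I do not anticipate any genuinely new idea being needed beyond assembling Propositions \ref{r.grbinom}, \ref{r.orichange}, the height-exchange Lemma \ref{r.refl}, and the definition of $\Ibad$; the content is that the combinatorial conditions defining $\SL$ and $\SR$ in Lemma \ref{r.lu77} are precisely engineered so that a bad arc is unavoidable after reordering.
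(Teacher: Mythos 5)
Your overall scaffolding (summing over lifts, working in $\Gr^{\aL}_k$ so that reorderings only cost powers of $q$, and reducing the $\SR$ case to the $\SL$ case by the $180^\circ$ rotation of Figure \ref{fig:split7}) is the same as the paper's, but the vanishing mechanism you describe is not correct, and that is where the content of the lemma lies. A bad arc is a \emph{single} corner arc, with one endpoint on each of the two boundary edges adjacent to an ideal vertex and states $-,+$ in the prescribed cyclic order; it is not a configuration obtained by sliding endpoints of \emph{two distinct} arcs (your top-left $-$-arc and bottom-left $+$-arc) until they become consecutive on $\aL$. No relation kills such a pair: relation \eqref{eq.arcs}, which you invoke and call ``equivalently the ideal $\Ibad$'', concerns one arc with both endpoints consecutive on the \emph{same} edge carrying equal states, and it is a defining relation of $\cS(\fS')$, not the bad-arc ideal; the two are genuinely different, and neither applies to the configuration you produce. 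Your step ``at which point \eqref{eq.arcs} forces that term to be zero'' therefore has no justification, and the proof as written does not go through.

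The misstep is traceable to your claim that the corner arcs of $\tal$ at the four corners ``are already stated by $-$ and $+$ respectively and are not among the newly-created points.'' This is false: by construction each top-left or bottom-left corner arc of $\tal$ has exactly one stated endpoint (on the old boundary of $\fS$) and one endpoint \emph{on} $\aL$, i.e.\ among the newly created points, whose state varies with the lift $\bnu$. That is precisely what the paper exploits: using Proposition \ref{r.orichange} one replaces each lift $(\tal,\bnu)$ by the diagram with \emph{increasing} states on $\aL$ (up to a power of $q$ in $\Gr^{\aL}_k$); then, because the topmost point of $\tal\cap\aL$ belongs to a top-left corner arc and the bottommost to a bottom-left corner arc, the conditions $\TL_-(\al)>0$ and $\BL_+(\al)>0$ force a bad arc at the top-left corner whenever $\bnu$ contains a $+$, and at the bottom-left corner whenever $\bnu$ contains a $-$. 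Since every $\bnu\in\{\pm\}^k$ contains at least one sign, \emph{each reordered lift is already zero in $\bcS(\fS')$} (not merely in the graded piece), which yields $\bth_a(\al)=0$ in $\Gr^{\aL}_k$. Your proof would be repaired by replacing the two-arc sliding argument with this observation about the newly assigned states at the $\aL$-ends of the corner arcs.
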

\begin{proof} Suppose $\al\in \SL$. For  $\bnu=(\nu_1,\dots, \nu_k)$ the stated tangle diagram $(\tal,\bnu)$ is defined to be $\tal$ with states on both $\aL$ and $\aR$ are sequence $\bnu$ listed from top to bottom.
By definition,
\be 
\bth_a (\al) = \sum_{\bnu\in \{\pm \}^k} (\tal, \bnu)\label{eq.lu3a}.
\ee
Let $(\tal^+, \bnu)_L$ be 
$\tal^+$ whose states on $\aR$ is given by $\bnu$ but whose state on $\aL$ is given by a permutation of $\bnu$ such that the states are increasing on $\aL$. 
By Proposition \ref{r.orichange} 
\be (\tal, \bnu)\bue  (\tal^+, \bnu)_L  \quad \text{in } \Gr^\aL_k \label{eq.lu6a}.
\ee
If $\bnu$ has at least one negative sign then $(\tal^+, \bnu)_L$ has a bad arc in the bottom left corner (because $BL_+(\al) >0$) and hence is equal to 0 in $\bcS(\fS')$. If $\bnu$ has at least one positive sign  then $(\tal^+, \bnu)_L$ has a bad arc in the top left corner (because $TL_-(\al) >0$) and hence is equal to 0 in $\bcS(\fS')$. Thus we always have $(\tal^+, \bnu)_L=0$ in $\bcS(\fS')$. From \eqref{eq.lu6a} and \eqref{eq.lu3a} we conclude that $\bth_a (\al)=0$ in $\Gr^\aL_k$.

The other case follows from the above case by noticing that if one rotates the Figure~\ref{fig:split7} by $180^\circ$, then the top left corner becomes the bottom right corner.
\end{proof}

As $\emptyset \neq S'= \SL \sqcup \SR$, one of $\SL$ and $\SR$ is non-empty. Without loss of generality we can assume that $\SL$ is not empty.

Let $d = \min \{ \TL_-(\al) \mid {\al \in \SL}\}$ and $S'' =\{ \al \in \SL \mid \TL_-(\al) =d \}$. Then $S''\neq \emptyset$.
 
 Let $P_+: \bcS(\fS') \to \bcS(\fS')$ be the $\cR$-linear map defined on basis elements $\gamma\in \bB(\fS')$ by 
 $$ P_+(\gamma) =   \begin{cases}  \gamma \quad & \text{if} \ I(\gamma,\aL)=d, I(\gamma, \aR)=k,\  \text{all states on $\aL$ are $+$}  \\
0 & \text{otherwise.}\end{cases} 
 $$
 
For $\al\in S''$ let $\ttal$ be the stated tangle diagram obtained from $\tal^+$ by first removing the $d$ negative top left corner arcs  then providing states on $\aL$ and $\aR$ so that all states on $\aL$ are $+$ and the states on $\aR$ are increasing and having exactly $d$ negative signs. Since $\BR_-(\al)=0$, we see that $\ttal$ is an element of the basis $\bB(\fS')$. As $\al$ can be recovered from $\ttal$, the map $\al \to \ttal$ from $S''$ to $\bB(\fS')$ is injective.

Let $u$ be a top left corner arc whose both states are $+$.
\begin{lemma} For $\al\in S$ one has
\be P_+(\bth_a(\al)\, u^d) = \begin{cases} q^{-(k-d)(k-d-1)/2 }\,   \ttal \quad &\text{if} \ \al \in S''\\
0 & \text {if} \ \al \not\in S''.\end{cases}  \label{eq.lu5}
\ee 
\label{r.lu1}
\end{lemma}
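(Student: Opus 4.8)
\textbf{Proof plan for Lemma \ref{r.lu1}.}

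The plan is to compute $P_+(\bth_a(\al)\, u^d)$ directly from the splitting formula and the basis description, tracking carefully which terms survive the projection $P_+$. First I would write, exactly as in \eqref{eq.lu3a}, $\bth_a(\al) = \sum_{\bnu\in \{\pm\}^k} (\tal,\bnu)$, so that $\bth_a(\al)\,u^d = \sum_{\bnu} (\tal,\bnu)\, u^d$. Each term $(\tal,\bnu)\,u^d$ is a stated tangle diagram with $k$ endpoints on $\aR$ (stated by $\bnu$) and $k+d$ endpoints on $\aL$: the $k$ coming from $\tal$ (stated by $\bnu$) together with the $d$ extra pairs of $+$-states coming from the $d$ copies of $u$ stacked in the top left corner. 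Since $P_+$ only keeps basis elements $\gamma$ with $I(\gamma,\aL)=d$, $I(\gamma,\aR)=k$ and all states on $\aL$ positive, the first task is to see that after reducing $(\tal,\bnu)\,u^d$ to the basis, the contribution to $P_+$ comes precisely from those $\bnu$ that allow the $d$ negative top left corner arcs of $\tal^+$ to be ``cancelled'' against the $d$ positive corner arcs coming from $u^d$. This is where Proposition \ref{r.corner} enters: a $+$-corner arc stacked against a $-$-corner arc at the same corner multiplies to $1$ (up to nothing — it is genuinely $1$), so $d$ of the $\aL$-strands disappear, dropping $I(\cdot,\aL)$ from $k+d$ down to $k$; but $P_+$ wants $I(\cdot,\aL)=d$, so we need an additional reduction, namely that among the remaining $k$ strands on $\aL$, exactly $k-d$ more of them must also become trivial corner arcs.

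Next I would argue that this forces $\al\in S''$: if $\al\notin \SL$ the arc $\al$ lies in $\SR$ and by Lemma \ref{r.lu9} already $\bth_a(\al)=0$ in $\Gr^\aR_k$, so $P_+(\bth_a(\al)u^d)=0$ trivially (here one uses that $P_+$ factors through $\Gr^\aR_k$ since it fixes $I(\cdot,\aR)=k$ and kills lower intersection); and if $\al\in\SL$ but $\TL_-(\al)>d$, then even after cancelling $d$ negative corner arcs against $u^d$ there remain negative top left corner arcs, and since $\al$ has no bad arc these sit above some $+$-corner arcs only if... — more precisely, I would use the minimality of $d$ together with the structure of $\SL$ (where $\TL_-(\al)\,\BL_+(\al)>0$) and Lemma \ref{r.lu77} to conclude that for $\al\in\SL\setminus S''$ every surviving term has either a bad arc or too many strands on $\aL$, hence dies under $P_+$. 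For $\al\in S''$, the unique surviving $\bnu$ is the increasing state on $\aR$ with exactly $d$ minus signs (so that the minus signs propagate correctly through the corner to match $u^d$), and after the cancellations one lands on $\ttal$, with all states on $\aL$ positive and exactly $d$ minus signs on $\aR$, which by construction is a basis element of $\bB(\fS')$ since $\BR_-(\al)=0$.

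The remaining point is to pin down the exact power of $q$: the claimed coefficient $q^{-(k-d)(k-d-1)/2}$. This should come from two sources combined. One: reordering the states on $\aL$ and $\aR$ to make them increasing — by Proposition \ref{r.orichange} each such reordering costs a power of $q$, and in $\Gr^\aL_k$ one tracks these via \eqref{eq.reor1}; but since $P_+$ forces all $\aL$-states to be $+$, the only reorderings that matter are on $\aR$, where we move $d$ minus signs into increasing position. Two: the $k-d$ trivial corner arcs that must be created from the $\aR$-strands via \eqref{eq.arcs2} / Proposition \ref{r.corner}, each applied with the specific $q$-coefficient in \eqref{eq.arcs2}, whose product over the $\binom{k-d}{2}$-type nesting gives exactly $q^{-(k-d)(k-d-1)/2}$. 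I would verify this by induction on $k-d$, peeling off one innermost corner arc at a time and invoking Lemma \ref{r.refl} for the height-exchange moves needed to bring it into position. The main obstacle I anticipate is precisely this bookkeeping of $q$-powers: one must be scrupulous about whether a given move is being applied in $\bcS(\fS')$ on the nose or only in the associated graded $\Gr^\aL_k$ or $\Gr^\aR_k$, since in the graded algebra several coefficients simplify (e.g.\ the $q^4$-binomials of Proposition \ref{r.grbinom} degenerate), whereas the statement of the lemma is an honest identity in $\bcS(\fS')$ — so I expect the cleanest route is to first establish it modulo lower-order terms and then observe that $P_+$ composed with passing to $\Gr^\aL_k$ loses no information, because $\ttal$ sits in the top graded piece $\Gr^\aL_d$ of the relevant filtration after the $u^d$ multiplication.
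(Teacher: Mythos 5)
Your reduction step for $\al\in S''$ is where the argument breaks. After writing $\bth_a(\al)u^d=\sum_{\bnu}(\tal,\bnu)u^d$, the only term that survives is $\bnu=\bnu_d$ (the $d$ entries sitting on the negative top-left corner arcs must be $-$), and for that term one has $(\tal,\bnu_d)=\ttal'\,v^d$, where $v$ is the corner arc with two negative states and $\ttal'$ is $\ttal$ with reversed height order on $\aL$; then $v^du^d=1$ by Proposition \ref{r.corner}, so $(\tal,\bnu_d)u^d=\ttal'$, a diagram meeting $\aL$ in exactly $k-d$ points, all stated $+$, and \emph{nothing further reduces}. Your bookkeeping is off at two points: each cancellation $vu=1$ removes two $\aL$-endpoints (one from $v$, one from $u$), so the count drops from $k+d$ to $k-d$, not to $k$; and the extra step you then build in --- ``exactly $k-d$ more strands must become trivial corner arcs'' --- is both unnecessary and incompatible with the conclusion, since removing those strands could not produce $\ttal$, which itself satisfies $I(\ttal,\aL)=k-d$. (You were partly misled by a slip in the displayed definition of $P_+$: the condition there should read $I(\gamma,\aL)=k-d$, as the weight condition $\delta_{\aL}=k-d$ and the value $I(\ttal,\aL)=k-d$ force.) For the same reason your proposed source of the coefficient is wrong: $q^{-(k-d)(k-d-1)/2}$ does not come from \eqref{eq.arcs2}-type coefficients of corner arcs created out of the $\aR$-strands, but from the $\binom{k-d}{2}$ height-exchange moves of \eqref{eq.reor1} among the positively stated points on $\aL$, needed to pass from the $\ori'$-order of $(\tal,\bnu_d)$ (negative on $\aL$) to the positive order carried by $\ttal$.

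The selection of the surviving $\bnu$, and the vanishing for $\al\in\SL\setminus S''$, are only gestured at in your plan (``every surviving term has either a bad arc or too many strands on $\aL$''), but this is the heart of the lemma and needs the two-step argument: (i) if $\bnu$ has $m>d$ minus signs then $\delta_{\aL}\bigl((\tal,\bnu)u^d\bigr)=k-2m+d<k-d$, so every basis element in its expansion is killed by $P_+$; (ii) since the $\TL_-(\al)$ negative top-left corner arcs are the lowest components of $\tal$, any $\bnu$ that does not assign $-$ to all of their positions creates a bad arc, so $(\tal,\bnu)=0$ in $\bcS(\fS')$. Together these force $\TL_-(\al)\le d$, i.e.\ $\al\in S''$, and pin down $\bnu=\bnu_d$. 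You also leave out the case $\al\in S\setminus S'$ (immediate, since then every term has $I(\cdot,\aR)<k$). By contrast, your handling of $\al\in\SR$ via Lemma \ref{r.lu9} together with the compatibility of the $F^{\aR}$-filtration with multiplication by $u^d$ is correct and matches the intended argument.
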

\begin{proof} One has $S = (\SR \sqcup \SL) \sqcup (S \setminus S')$. If $\al\in (S\setminus S')$ then $I(\al, a) <k$ and hence $P^+(\bth_a(\al)) =0$.

If $\al\in \SR$ then by Lemma \ref{r.lu9} one has $\bth_a(\al)=0$ in $\Gr^\aR_k$ which means $\bth_a(\al)$ is a linear combination of elements $\gamma\in \bB$ with $I(\gamma,\aR) <k$. It follows that $P_+(\bth_a(\al))=0$.

It remains to consider the case $\al\in \SL= S'' \sqcup (\SL \setminus S'')$.
From \eqref{eq.lu3a},
\be
P_+(\theta_a (\al) u^d ) = \sum_{\bnu\in \{\pm \}^k} P_+((\tal, \bnu)\, u^d).  \label{eq.lu3b}
\ee
Recall that for $\beta\in \bB(\fS')$ one defines $\delta_\aL(\beta)$ as the sum of all the states of $\beta \cap \aL$. From the definition, if $\delta_\aL(\beta) \neq k-d$ then $P^+(\beta)=0$.
If $\bnu$ has $m$ negative signs where $m>d$ then 
$$ \delta_\aL((\tal, \bnu) \, u^d)=  k -2m +d < k-d,$$ 
and hence $P_+((\tal, \bnu) \, u^d)=0$. Thus we can assume that in the sum in \eqref{eq.lu3b}, the number of negative signs in $\bnu$ is $\le d$.

 Assume that $\TL_-(\al)=m$. Note that the $m$ negative top left corner arcs of $\tal$ are below any other components of $\tal$.  Hence the number of the first $m$ components of $\bnu$ must be negative since otherwise one of the $m$ top left corner arcs is bad and $(\tal,\bnu)= 0$ in $\bcS(\fS')$.
We conclude that if  $TL_-(\al) >d$ (that is, if $\al \in \SL\setminus S''$), then
\be P_+(\theta_a (\al) u^d ) =0 . \label{eq.lu7}
\ee
Moreover, if $TL_-(\al)=d$ (that is, $\al \in S''$), then 
\be
P_+(\theta_a (\al) u^d ) = P_+((\tal, \bnu_d)\, u^d), \label{eq.lu4}
\ee
where $\bnu_d\in \{\pm \}^k$ is the sequence whose first $d$ components are $-$ and all other components are $+$. The $d$ negative top left  corner arcs of $(\tal,\bnu_d)$ are all $v$, the corner edge with negative states on both ends.  Hence we have
$ (\tal, \bnu)= \ttal' v^d$, where $\ttal'$ is the same as $\ttal$ except that the order on $\aL$ is negative. Using the height exchange move between positive states, see Equation \eqref{eq.reor1},  and relation $vu=1$ we get that
$$P_+((\tal, \bnu_d)\, u^d) =  q^{-(k-d)(k-d-1)/2 }\ttal, $$
which proves \eqref{eq.lu5} and completes the proof of the lemma.
\end{proof}
Let us continue the proof of the theorem for Case 2. From Lemma \ref{r.lu1} we have
$$ P_+(\bth_a(x) u^d ) = \sum_{\al\in S''} c_\al q^{-(k-d)(k-d-1)/2 } \ttal,$$
which is non-zero since $\{\ttal\}$ are distinct elements of the basis $\bB(\fS')$. The theorem is proved.
    \end{proof}

\subsection{The bigon} The elements $\al_{\mu\nu}\in\cS(\cB)$ are defined in Section \ref{sec:bigon}.
 \begin{proposition} \label{r.rbigon} Let $\cB$ be the bigon.  There is an algebra isomorphism 
 $\bcS(\cB)\cong \cR[x^{\pm 1}]$ given by
 $\al_{++}\to  x , \ \al_{--} \to x ^{-1}, \ \al_{+-} \to 0, \  \al_{-+}\to 0$.

\end{proposition}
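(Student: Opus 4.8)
The plan is to identify $\bcS(\cB)$ explicitly using the basis description of Theorem \ref{teo:redbasis} together with the presentation of $\cS(\cB)$ from Lemma \ref{lem:relations}. First I would recall that by Theorem \ref{teo:redbasis}, $\bcS(\cB)$ has $\cR$-basis $\Bred(\cB)$, the subset of $B(\cB;\ori_+)$ consisting of elements containing no bad arc. Now an element of $B(\cB;\ori_+)$ is a collection of parallel horizontal arcs in the bigon with increasing states on each of $\el$ and $\er$; since there are no crossings and no closed components available, such an element is simply $\al_{\boeta\bmu}$ for $\boeta,\bmu$ increasing sequences of the same length $n$. A horizontal arc with left state $\nu$ and right state $\mu$ is a corner arc surrounding either boundary vertex; reading its states counterclockwise around the top vertex, it is bad precisely when $(\nu,\mu)=(-,+)$ in the appropriate order, i.e. precisely the arc $\al_{-+}$. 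Hence $\al_{\boeta\bmu}$ contains a bad arc unless every individual arc avoids the bad pattern, and because the states are increasing on both edges, the only surviving diagrams are $\al_{++}^{\,h}$, $\al_{--}^{\,l}$, and (more precisely) the diagram with $h$ arcs of type $(+,+)$ stacked above $l$ arcs of type $(-,-)$ — but $(+,+)$ above $(-,-)$ with increasing order on each edge forces $h=0$ or $l=0$ after inspection. So $\Bred(\cB) = \{\al_{++}^{\,h} \mid h\ge 0\} \cup \{\al_{--}^{\,l} \mid l \ge 1\}$, a free $\cR$-module on these monomials.

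Next I would read off the algebra structure. In $\cS(\cB)$ one has $\al_{++}\al_{--} = q^{-2}\al_{+-}\al_{-+} + (\text{terms})$ from relation \eqref{eq.rel1a} written out as \eqref{eq.relations1}; more directly, from Theorem \ref{teo:SL2} the generators satisfy the $\OSL$ relations, in particular $ad - q^{-2}bc = 1$ and $da - q^2 cb = 1$. Setting $\al_{+-} = \al_{-+} = 0$ (which is legitimate once we check these lie in $\Ibad$) gives $\al_{++}\al_{--} = \al_{--}\al_{++} = 1$. So I must verify that $\al_{+-}$ and $\al_{-+}$ both belong to the bad-arc ideal $\Ibad$: the arc $\al_{-+}$ is literally a bad arc by definition, hence is $0$ in $\bcS(\cB)$; and $\al_{+-}$, being a corner arc with states $(+,-)$, equals $q^{k}\al_{-+}$ modulo lower terms by the height/state exchange relations \eqref{eq.reor1}, \eqref{eq.reor2}, so it too vanishes — alternatively one simply observes $\al_{+-}\notin \Bred(\cB)$ and it is a single basis element of $B\setminus\Bred$, hence in $\Ibad$ by Theorem \ref{teo:redbasis}. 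Therefore in $\bcS(\cB)$ the generators $\al_{+-},\al_{-+}$ map to $0$, and $\al_{++},\al_{--}$ map to mutually inverse elements $x, x^{-1}$.

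Finally I would assemble the isomorphism. Define $\psi\colon \cR[x^{\pm1}] \to \bcS(\cB)$ by $x \mapsto \al_{++}$, $x^{-1}\mapsto \al_{--}$; this is a well-defined algebra homomorphism since $\al_{++}\al_{--}=\al_{--}\al_{++}=1$ in $\bcS(\cB)$ and $\cR[x^{\pm1}]$ is the free $\cR$-algebra on one invertible generator. Conversely, using Lemma \ref{lem:relations}, $\cS(\cB)$ is generated by the $\al_{\nu\mu}$; modding out by $\Ibad$ kills $\al_{+-},\al_{-+}$ and imposes $\al_{++}\al_{--}=1$, so $\bcS(\cB)$ is generated by $\al_{++}^{\pm1}$, giving a surjection $\cR[x^{\pm1}]\twoheadrightarrow \bcS(\cB)$, namely $\psi$. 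Injectivity of $\psi$ follows from comparing $\cR$-bases: $\{x^m\}_{m\in\BZ}$ is an $\cR$-basis of $\cR[x^{\pm1}]$, and $\psi(x^m)$ equals $\al_{++}^{\,m}$ for $m\ge 0$ and $\al_{--}^{\,-m}$ for $m<0$, which are exactly the elements of the basis $\Bred(\cB)$ from Theorem \ref{teo:redbasis}; so $\psi$ sends a basis bijectively to a basis and is an isomorphism. The main obstacle, and the only step requiring real care, is the basis identification $\Bred(\cB)=\{\al_{++}^{\,h}\}\cup\{\al_{--}^{\,l}\}_{l\ge1}$ — i.e.\ correctly determining which stacked-arc diagrams contain a bad arc, using that increasing order on both edges of the bigon severely constrains the allowed state patterns; everything after that is a short formal argument.
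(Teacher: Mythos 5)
There is a genuine gap at the one point you yourself flagged as needing care: the determination of the bad arcs in $\cB$. A horizontal arc in the bigon is a corner arc for \emph{both} ideal vertices, and you must test badness at each of them. Reading counterclockwise around the top vertex rules out $\al_{-+}$, as you say; but reading counterclockwise around the \emph{bottom} vertex one traverses the arc in the opposite direction, and the pattern ``$-$ followed by $+$'' is then realized by $\al_{+-}$. So both $\al_{-+}$ and $\al_{+-}$ are literally bad arcs (this is exactly what the paper records: the only bad arcs in $\cB$ are $b=\al_{+-}$ and $c=\al_{-+}$). Your two substitute arguments for $\al_{+-}\in\Ibad$ do not work. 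The height/state exchange relations \eqref{eq.reor1}--\eqref{eq.reor2} only reorder endpoints lying on the \emph{same} boundary edge; $\al_{+-}$ and $\al_{-+}$ have a single endpoint on each edge, so no exchange applies, and indeed in $\cS(\cB)\cong\OSL$ they are the independent basis elements $b$ and $c$, never proportional modulo lower terms. The fallback ``$\al_{+-}\notin\Bred(\cB)$, hence $\al_{+-}\in\Ibad$ by Theorem \ref{teo:redbasis}'' is circular: with your (incomplete) list of bad arcs, $\al_{+-}$ contains no bad arc and therefore \emph{would} lie in $\Bred(\cB)$, and more generally all the elements $\al_{++}^h\al_{+-}^k\al_{--}^l$, $k\ge 1$, of the basis \eqref{eq.basis} would survive, so your claimed description $\Bred(\cB)=\{\al_{++}^h\}\cup\{\al_{--}^l\}$ would fail and the quotient would be strictly larger than $\cR[x^{\pm1}]$. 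Since the proposition sends $\al_{+-}$ to $0$, the statement itself hinges on $\al_{+-}\in\Ibad$, so this step cannot be waved through.

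Once you correct the badness check at the second vertex, the rest of your argument goes through and is close to the paper's: the paper works directly from the presentation of $\cS(\cB)\cong\OSL$ (Theorem \ref{teo:SL2}), kills $b=c=0$, extracts $ad=1$ from the quantum determinant relation in \eqref{eq.rel2}, and then observes that $b=c=0$ and $ad=1$ imply all remaining relations, which yields both surjectivity and injectivity at once. Your variant replaces that last verification by a comparison of $\cR$-bases via Theorem \ref{teo:redbasis}, which is a perfectly good alternative (with the minor caveat that $\al_{++}^m$ and $\al_{--}^m$ agree with the basis diagrams of $\Bred(\cB)$ only up to powers of $q$, which does not affect the linear-independence argument).
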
     
\begin{proof} A presentation of the algebra $\Ss(\B)\cong \OSL$ is given by Theorem \ref{teo:SL2},  with generators $a=\alpha_{++}, b=\al_{--}, c= \al_{-+}, d= \al_{--}$ and relations \eqref{eq.rel1} and \eqref{eq.rel2}.  
The only bad arcs in $\cB$ are $\al_{-+}=c$ and $\al_{+-}=b$. Thus $\bcS(\cB)= \cS(\cB) /\Ibad$ has a presentation like that of $\OSL$, with additional relations $b=c=0$. From the quantum determinant relation in \eqref{eq.rel2} we get $ad=1$ in $\bcS(\cB)$. 

On the other hand it is easy to check that the relations $b=c=0$ and $ad=1$ imply all other relations in \eqref{eq.rel1} and \eqref{eq.rel2}. Hence
$$ \bcS(\cB) \cong \cR\la a,b,c,d\ra /(ad=1, b=c=0) \cong \cR[a^{\pm1}].$$
\end{proof}

\subsection{The triangle} 
Let $\P_3$ be the ideal triangle, with boundary edges $a,b,c$ as in Figure~\ref{fig:triangle}. 
Let $\al, \beta,\gamma$ be the corner arcs which are opposite respectively to  $a, b$ and $c$. For $\mu,\nu\in \{\pm \}$ and $\xi\in \{\al, \beta,\gamma\}$ let $\xi(\mu\nu)$ be the arc $\xi$ with states $\mu$ and $\nu$ on the end points such that $\nu$ follows $\mu$ along $\xi$ counter-clockwise (with respect to the vertex surrounded by $\xi$). 

\FIGc{triangle}{Edges $a,b,c$ opposite to corner arcs $\al, \beta,\gamma$}{2.5cm}

For an anti-symmetric $n\times n$ matrix $A = (a_{ij})_{i,j=1}^n$ the {\em quantum torus} associated to $A$ is the algebra with presentation
$$  \cR \la x_i^{\pm 1}, i= 1, \dots n\ra  /( x_i x_j = q^{a_{ij}} x_j x_i).$$
For basic properties of quantum tori see for example \cite[Section 2]{Le:QT}.

To the triangle $\P_3$ we associate the quantum torus $\bT$ with presentation
$$\bT := \cR \la \al^{\pm 1}, \beta^{\pm 1}, \gamma^{\pm 1} \ra /( q\al \beta  =  \beta \al, q\beta \gamma =  \gamma\beta, q\gamma \al =  \gamma \al).
$$

\def\SPt{\cS(\P_3)}
\def\bSPt{\bcS(\P_3)}
The cyclic group $\BZ/3= \la \tau \mid \tau^3=1\ra$ acts by algebra automorphisms on each of the algebras $\cS(\P_3)$, $\bcS(\P_3)$, and $\bT$ as follows. In short  $\tau$ is rotation $\tau$  by $2\pi/3$ counterclockwise about the center of the triangle. This rotation induces the algebra automorphism $\tau$ of $\SPt$; it also induces the algebra automorphism $\tau$ of $\bSPt$. On $\bT$ and  $\tau$ is given by
\begin{align*}
\tau(\al) =\beta,\ \tau(\beta) =\gamma, \  \tau(\gamma) = \al.
\end{align*}

\begin{theorem} \label{teo:redtri}
The reduced skein algebra $\bcS(\P_3)$ of the ideal  triangle is isomorphic to the quantum torus $\bT$. The isomorphism is $\BZ/3$-equivariant and given by 
\be  \al(++) \to \al, \quad \al(+-) \to q^{-\frac{1}{2}}\gamma \beta, \quad \al(-+) \to 0, \quad \al(--) \to \al^{-1}.
\label{eq.iso66}
\ee
\end{theorem}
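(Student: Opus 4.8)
The plan is to exploit what we already know: by Corollary~\ref{cor:polygons} we have a presentation of $\cS(\P_3)\cong\OSL\uot_{\OSL}\OSL$, and the reduced algebra $\bcS(\P_3)$ is the quotient by the bad-arc ideal $\Ibad$. By Theorem~\ref{teo:redbasis}, $\bcS(\P_3)$ has the free $\cR$-basis $\Bred$ consisting of those elements of $B(\P_3;\ori_+)$ without a bad arc. In the triangle, once all trivial loops and returns are discarded, a simple $\partial\P_3$-tangle diagram is just a disjoint union of corner arcs, and a quick count using $\Bred$ shows that the increasingly-stated bad-arc-free diagrams are precisely the products of the corner arcs $\al(++),\al(--),\gamma\beta$-type combinations; concretely one should check that $\Bred$ is the set $\{\al(++)^h\,\al(--)^k\,(\text{a single corner-arc type})^l\}$-style monomials so that $\bcS(\P_3)$ is a free rank-one-per-lattice-point module on the three-variable lattice $\BZ^3$. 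This is exactly the underlying module of the quantum torus $\bT$.

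The core of the argument is then to define the map $\Phi\colon\bT\to\bcS(\P_3)$ by \eqref{eq.iso66} (extended $\BZ/3$-equivariantly via $\tau$, so $\beta\mapsto\beta(++)$, $\gamma\mapsto\gamma(++)$, with the inverses going to the all-minus corner arcs) and check it is a well-defined algebra homomorphism. For well-definedness I must verify the three $q$-commutation relations $q\al\beta=\beta\al$, etc., among the images; by $\BZ/3$-symmetry only one needs to be checked. Each such relation is a height-exchange computation: two corner arcs at adjacent corners of the triangle can be isotoped so that their feet interleave on a common boundary edge, and Lemma~\ref{r.refl} (specifically the height-exchange moves \eqref{eq.reor1}) together with relations \eqref{eq.arcs2} for the trivial arcs that appear gives the scalar $q^{\pm1}$. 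One also needs $\al(++)$ and $\al(--)$ to be mutually inverse and likewise for the other corners — this is exactly Proposition~\ref{r.corner}. Finally, the image generates: $\bcS(\P_3)$ is generated by stated corner arcs, and using \eqref{eq.order} and the relations already established every stated corner arc reduces to an $\cR$-linear combination of monomials in $\al(\pm\pm),\beta(\pm\pm),\gamma(\pm\pm)$, with the ``mixed'' ones like $\al(+-)$ expressed through $\gamma\beta$ (this is the content of the second equation in \eqref{eq.iso66}, which must be derived from \eqref{eq.order} applied to the two corner arcs $\beta,\gamma$ meeting along edge $a$) and $\al(-+)=0$ because it is a bad arc.

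Surjectivity is then immediate, and for injectivity I would compare bases: $\Phi$ sends the monomial basis of $\bT$ indexed by $\BZ^3$ onto a set of elements of $\bcS(\P_3)$ which, after the change of variables $\al(+-)=q^{-1/2}\gamma\beta$ and using the filtration of Proposition~\ref{r.basis2a} by geometric intersection with the boundary edges, has leading terms equal (up to invertible scalars $q^k$) to the basis $\Bred$ in the associated graded $\Gr^{\{a,b,c\}}\bcS(\P_3)$. Since a filtered map inducing a bijection on a graded basis is itself bijective, $\Phi$ is an isomorphism. $\BZ/3$-equivariance holds by construction since $\tau$ was built into the definition. The main obstacle I anticipate is the bookkeeping in the height-exchange computation establishing the $q$-commutation relation and, relatedly, pinning down the exact scalar $q^{-1/2}$ in $\al(+-)\mapsto q^{-1/2}\gamma\beta$: one must carefully isotope the two adjacent corner arcs to a standard position, track which of the relations \eqref{eq.reor1}, \eqref{eq.reor2}, \eqref{eq.order}, \eqref{eq.arcs2} applies at each step, and confirm the powers of $q^{1/2}$ cancel to give precisely $q^{\pm1}$ and $q^{-1/2}$; everything else is either a direct appeal to Proposition~\ref{r.corner}, Theorem~\ref{teo:redbasis}, or routine.
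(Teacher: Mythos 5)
Your overall route---defining the homomorphism between $\bT$ and $\bcS(\P_3)$ on the corner generators, verifying the $q$-commutation and inverse relations via Proposition \ref{r.corner} and the height-exchange identity \eqref{eq.reor1}, and obtaining surjectivity by writing each mixed-state corner arc, up to a power of $q^{1/2}$, as a product of an all-plus and an all-minus corner arc while $\al(-+)$ and its rotates die as bad arcs---is exactly the paper's geometric proof. Where you diverge is injectivity, and there your argument has a genuine gap. You claim that the images of the monomials $\al^k\beta^m\gamma^n$, $(k,m,n)\in\BZ^3$, have leading terms in $\Gr^{\{a,b,c\}}(\bcS(\P_3))$ equal, up to units, to distinct elements of $\Bred$, and then invoke the filtered-to-graded principle. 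This fails for mixed-sign monomials, and your own surjectivity identity shows why: the product of an all-plus corner arc at one corner with an all-minus corner arc at an adjacent corner---which is, up to a unit, the image of a monomial with one exponent $+1$ and a neighbouring exponent $-1$---equals in $\bcS(\P_3)$ a unit times a single mixed corner arc, i.e.\ an element with two boundary points rather than four. Concretely, on the shared edge the product diagram is not increasingly stated, and passing to the basis via Proposition \ref{r.orichange} (equivalently \eqref{eq.order}) redistributes the states so that the reordered term acquires bad arcs and vanishes in the reduced algebra; only the lower-degree ``join'' term survives. Hence the image of such a monomial sits in filtration level $2$, not $2(|k|+|m|+|n|)$, the monomial basis of $\bT$ is not carried to a graded basis, and bijectivity on the associated graded cannot be concluded as stated. (For the same reason the bijection between $\Bred$ and $\BZ^3$ is not the shape-by-shape one your preliminary ``quick count'' suggests: the basis element $\al(+-)$ corresponds to a monomial involving $\beta$ and $\gamma$, not to a power of $\al$.)

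The paper sidesteps this with a positivity reduction: given a vanishing combination $f\bigl(\sum c_{k,m,n}\,\al^k\beta^m\gamma^n\bigr)=0$, multiply on the left by $f(\al^{k'}\beta^{m'}\gamma^{n'})$ with $k',m',n'$ large; using the $q$-commutations in $\bT$ one may therefore assume all exponents are positive. In that case only $+$ states occur, no bad arcs or correction terms can arise, and $f(\al^k\beta^m\gamma^n)$ is exactly $q^{g(k,m,n)}$ times the basis diagram $z(k,m,n)\in\Bred$ made of $k$, $m$, $n$ parallel positively stated corner arcs; distinctness of these basis elements forces all coefficients to vanish. To salvage your approach you would either need this reduction to the positive octant before comparing with $\Bred$, or a genuinely finer analysis of the leading terms of mixed-sign monomials; as written, the injectivity step does not go through.
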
   
\begin{proof}
By \cite[Theorem 4.6]{Le:TDEC} the algebra  $\Ss(\P_3)$ is generated by $$X=\{ \al({\nu,\nu')},\beta({\nu,\nu'}), \gamma({\nu,\nu'})  \mid \nu, \nu'\in \{\pm \}\}$$ subject to the following relations and their images under $\tau$ and $\tau^2$:
\begin{align}
\label{rel1} \beta(\mu,\nu)\, \al(\mu',\nu')& = q \al(\nu, \nu') \,  \beta(\mu,\mu') -q^2 C^{\nu}_{\mu'} \, \gamma(\nu', \mu)
\\
\label{rel2} \al(-,\nu)\, \al(+,\nu')& = q^2 \al(+,\nu) \,\al(-,\nu') - q^{5/2} C^\nu_{\nu'}
\\
\label{rel2b} \al(\nu,-)\, \al(\nu',+)& = q^2 \al(\nu,+) \,\al(\nu',-) - q^{5/2} C^\nu_{\nu'}
\\
\label{rel3} \al(-,\nu)\, \beta(\nu',+)& = q^2 \al(+,\nu)\, \beta(\nu',-) - q^{5/2} \gamma(\nu,\nu')
\\
\label{rel4} \al(\nu,-)\, \gamma(+, \nu')& = q^2 \al(\nu,+)\, \gamma(-,\nu') + q^{-1/2} \beta(\nu',\nu).
\end{align}
As the only bad arcs are $\alpha(-,+), \beta(-,+),\gamma(-,+)$, the quotient $\bSPt$ is obtained by adding the relations $\alpha(-,+)= \beta(-,+)=\gamma(-,+)=0$, and from this presentation one can check that the map given by \eqref{eq.iso66} and its images under the action of $\BZ/3$ is an isomorphism.

Here is an alternative, more geometric proof. 
First in $\bSPt$  we have 
\be  \al(++) \, \al(--) =1 , \quad  \beta(++)\, \al(++) = q\,  \al(++)\,  \beta(++), \label{eq.rel55} \ee
and all its images under $\BZ/3$. In fact the first identity follows from Proposition \ref{r.corner} and the second follows from the height exchange identity \eqref{eq.reor1}.
It follows that the $\BZ/3$-equivariant map $f: \bT \to \bSPt$ given by
$$ f(\al)= \al(++),\ f(\al^{-1})= \al(--), \ \text{and images under $\BZ/3$},$$
gives a well-defined algebra homomorphism, as all the defining relations of $\bT$ are preserved under $f$. In $\bSPt$ we have
\be 
\gamma(+-) = q^{-1/2}\, \beta(++)\,  \gamma(--),
\ee
which follows from the identity in Figure \ref{fig:brten6-new} (where the left hand arc is stated to become $\al(+-)$). Thus all elements in the generator set  $X$ are in the image of $f$. This shows that $f$ is surjective. 

Let us show $f$ is injective. The set $\{ \al^k \beta^m \gamma^n \mid k,m,n,\in \BZ\}$ is an $\cR$-basis of $\bT$. Assume that there is a finite set $S\subset \BZ^3$ such that
\be  f\left (\sum_{(k,m,n) \in S} c_{k,m,n} \al^k \beta^m \gamma^n\right) =0, \ c_{k,m,n} \in \cR.
\label{eq.lu20}
\ee
 Multiplying the identity \eqref{eq.lu20} on the left by $f(\al^{k'} \beta^{m'} \gamma^{n'})$ with large $k', m', n'$ and using the $q$-commutations between $\al, \beta,\gamma$ we can assume that $k,m,n > 0$ in \eqref{eq.lu20}. For each $(k,m,n)\in \BN^3$ let $z(k,m,n)$ be the stated  simple tangle diagram consisting of $k$ arcs parallel to $\al$, $m$ arcs parallel to $\beta$, and $n$ arcs parallel to $\gamma$, with all state positive. Note that $z(k,m,n) \in \bB(\P_3)$.
Clearly the map $z:\BN^3 \to \bB(\P_3)$ is injective. 
As the diagram of $f(\al^k \beta^m \gamma^n)$ can be obtained from $z(k,m,n)$ by a sequence of height change moves of positively stated endpoints, the first identity of \eqref{eq.reor1} shows that 
$$ f(\al^k \beta^m \gamma^n)= q^{g(k,m,n)}\,  z(k,m,n)$$
for some $g(k,m,n) \in \BZ$. From \eqref{eq.lu2} we get
$$ \sum_{(k,m,n) \in S} c_{k,m,n} q ^{g(k,m,n)}\,  z(k,m,n) =0.$$
As $z(k,m,n)$ are distinct elements of the basis $\bB(\P_3)$, this forces all $c_{k,m,n}=0$. Hence $f$ is injective.

\end{proof}                                                                                                                      
\def\faces{\mathrm{\mathcal F(\cE)}} 
                                      
\subsection{The quantum trace map}  \label{sec.rTrace} Assume that $\fS$ is {\em triangulable}, i.e. $\fS$   is  not one of the following: a monogon, a bigon, a sphere with one or two punctures. 
{\em A triangulation} $\cE$ of $\fS$ is a collection consisting of all boundary edges and several ideal arcs in the interior of $\fS$ such that\\
(i) no two arcs in $\cE$ intersect and no two are isotopic, and\\
(ii) if $a$ is an ideal arc not intersecting any ideal arc in $\cE$ then $a$ is isotopic to one in $\cE$.

It is known that if $\fS$ is triangulable, then by splitting $\fS$ along all interior ideal arcs in $\cE$ we get a collection $\faces$ of ideal triangles.
By the splitting theorem, we get an algebra embedding of $\Ssr$ into a quantum torus
$$ \Theta: \Ssr \to \bigotimes_{\faces} \bT.$$

In addition to the quantum torus $\bT$ we associate the quantum torus $\bT'$ to the standard ideal triangle $\P_3$:
\begin{align}
 \bT' : = \cR \la a^{\pm 1}, b^{\pm 1}, c^{\pm 1} \ra /( qab  =  ba, qbc =  cb, qca =   ac).
\end{align}
One should think of $a,b,c$ as the edges opposite to $\al,\beta,\gamma$, see Figure \ref{fig:triangle}. 

\def\SPt{\cS(\P_3)}
\def\bSPt{\bcS(\P_3)}
The cyclic group $\BZ/3= \la \tau \mid \tau^3=1\ra$ acts by algebra automorphisms on  $\bT'$  by
\begin{align*}
\tau(a) =b, \ \tau(b) =c, \ \tau(c) = a.
\end{align*}

There is a $\BZ/3$-equivariant algebra embedding 
$\bT \embed \bT'$, defined by
$$ \al \to q^{1/2} bc , \quad \beta \embed q^{1/2} ca , \quad \gamma \to q^{1/2} ab.$$

Consider  the composition $\tr_q$ given by
$$ \tr_q: \Ssr \overset \Theta \embed  \bigotimes_{{\faces}} \bT \embed \bigotimes_{{\faces}} \bT'.$$

On the collection of all edges of all the triangles in $\faces$ define the equivalence relation \red{such that} $a' \cong a''$ if \red{$a'$ and $a''$} are glued together in the triangulation. Then the set of equivalence classes is canonically isomorphic to $\cE$.
Let $\cY(\cE)$ be the subalgebra of $\bigotimes_{\faces} \bT'$ generated by all $a'\otimes a''$ with $a\cong a'$ and all boundary edges (each boundary edge is equivalent only to itself). It is easy to see that the image of $\tr_q$ is in $\cY(\cE)$. Thus, $\tr_q$ restricts to
$$ \tr_q: \Ssr \to \cY(\cE).$$
 The algebra  $\cY(\cE)$ is a quantum torus, known as the Chekhov-Fock algebra associated to a triangulation $\cE$ of $\fS$, see \cite{BW,CF,Le:TDEC}.
 \red{ 
 The quantum trace map of Bonahon and Wong is an algebra homomorphism $\widehat \tr_q: \widehat \cS(\fS)\to \cY(\cE)$, where $\widehat \cS(\fS)$ is the coarser version of $\cS(\fS)$ defined using only \eqref{eq.skein} and \eqref{eq.loop}, see Subsection \ref{sub:statedskeinalgebra}.

\begin{theorem}\label{thm.qtrace} 
If $\cE$ is a triangulation of $\fS$ then the algebra embedding 
$ \tr_q: \Ssr \embed \cY(\cE)$ is a refinement of the 
the quantum trace map of Bonahon and Wong  in the sense that $\widehat \tr_q$ is the composition $ \widehat \cS(\fS) \onto \Ssr \overset {\tr_q} \embed \cY.$
\end{theorem}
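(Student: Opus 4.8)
The plan is to compare the two constructions of the quantum trace map by using the defining geometric data on each ideal triangle. Recall that Bonahon--Wong's $\widehat{\tr}_q$ is characterized by two properties: it is natural with respect to the splitting along ideal arcs (the ``State Sum Property''), and on an ideal triangle it is given by the explicit formula expressing a skein in terms of the three edge-coordinates $a,b,c$ of the Chekhov--Fock algebra $\cY(\P_3)$. Our map $\widehat{\cS}(\fS)\onto\Ssr\overset{\tr_q}\embed\cY(\cE)$ is built from the splitting theorem for reduced skein algebras (Theorem~\ref{teo:rsplit}), the identification $\bcS(\P_3)\cong\bT$ of Theorem~\ref{teo:redtri}, and the change of variables $\bT\embed\bT'$ sending $\al\to q^{1/2}bc$, etc. So the strategy has two halves: first verify that $\widehat{\cS}(\fS)\onto\Ssr$ is well-defined and that $\tr_q$ descends correctly along it; second, check the two maps agree, which by naturality under splitting reduces to the single case $\fS=\P_3$.

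First I would make precise the surjection $\widehat{\cS}(\fS)\onto\Ssr$. By definition $\widehat{\cS}(\fS)$ uses only relations \eqref{eq.skein} and \eqref{eq.loop}, so there is a canonical surjection $\widehat{\cS}(\fS)\onto\cS(\fS)$ (imposing the boundary relations \eqref{eq.arcs}, \eqref{eq.order}), and composing with $\cS(\fS)\onto\Ssr=\cS(\fS)/\Ibad$ gives $\widehat{\cS}(\fS)\onto\Ssr$. The key compatibility is that Bonahon--Wong's $\widehat{\tr}_q$ already kills this kernel: the relations \eqref{eq.arcs} are explicitly among Bonahon--Wong's relations (Remark~\ref{rem.hS}), and relation \eqref{eq.order} together with the bad-arc elements are sent to $0$ by $\widehat{\tr}_q$ because in $\cY(\cE)$ the corresponding edge-monomial identities hold — this is exactly what the change of variables $\bT\embed\bT'$ encodes, and it is visible already on one triangle. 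Thus $\widehat{\tr}_q$ factors as $\widehat{\cS}(\fS)\onto\Ssr\to\cY(\cE)$, and what remains is to identify this induced map $\Ssr\to\cY(\cE)$ with $\tr_q$.

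Next, both $\tr_q$ and the induced map are compatible with splitting along any ideal arc of $\cE$: for $\tr_q$ this is built into its construction via $\bar\theta_a$ (Theorem~\ref{teo:rsplit}) and the fact that $\Theta$ is assembled triangle-by-triangle, while for $\widehat{\tr}_q$ it is the State Sum Property of Bonahon--Wong. Since the triangles of $\faces$ are obtained by splitting $\fS$ along all interior arcs of $\cE$, and both maps land in the same subalgebra $\cY(\cE)\subset\bigotimes_\faces\bT'$, it suffices to check that on each triangle $T\in\faces$ the composite $\bcS(T)\cong\bT\embed\bT'$ coincides with the restriction of Bonahon--Wong's triangle formula. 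Concretely: under the isomorphism \eqref{eq.iso66} the corner arc $\al(++)$ goes to $\al\in\bT$, hence to $q^{1/2}bc\in\bT'$; I would verify this matches the image of the corresponding skein under $\widehat{\tr}_q$ on $\P_3$, and similarly for a single edge-parallel arc and for the corner arcs with mixed states, using that the bad corner arcs go to $0$ on both sides. Because the corner arcs (with all positive states) and their inverses generate $\bcS(\P_3)$, and $\tr_q$, $\widehat{\tr}_q$ are algebra homomorphisms, agreement on these generators suffices.

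The main obstacle I anticipate is the bookkeeping of normalization constants: the various powers of $q$ and $q^{1/2}$ in \eqref{eq.iso66}, in the definition of the change of variables $\bT\embed\bT'$, and in Bonahon--Wong's original conventions must be reconciled, and sign/framing conventions for the non-oriented arcs of stated skein theory differ from the oriented setup of \cite{BW}. I would handle this by pinning down $\widehat{\tr}_q$ on the three ``edge'' skeins and the three corner skeins of $\P_3$ explicitly — these are small finite computations in the quantum torus $\cY(\P_3)\cong\bT'$ — and matching them against the images of $\al,\beta,\gamma,a,b,c$ under our maps. Once the triangle case is nailed down, the general case is purely formal: naturality under splitting plus the fact that a triangulated surface is recovered from its triangles by the gluing encoded in $\cY(\cE)$, exactly as in the construction preceding the theorem. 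Uniqueness of Bonahon--Wong's map (it is determined by its values on triangles together with the State Sum Property) then forces $\widehat{\tr}_q=\tr_q\circ(\widehat{\cS}(\fS)\onto\Ssr)$.
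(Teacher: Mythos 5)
Your overall skeleton (factor $\widehat \tr_q$ through $\Ssr$, then compare with $\tr_q$ triangle by triangle using compatibility with splitting) is a reasonable way to prove the statement from scratch, but the pivotal step is not justified and, as written, is circular. You claim that $\widehat \tr_q$ kills relation \eqref{eq.order} and the bad arcs ``because in $\cY(\cE)$ the corresponding edge-monomial identities hold --- this is exactly what the change of variables $\bT\embed\bT'$ encodes.'' The change of variables $\bT\embed\bT'$ is part of the \emph{definition} of $\tr_q$; it says nothing, a priori, about Bonahon--Wong's map $\widehat \tr_q$, which is defined by its own state-sum construction. Showing that $\widehat \tr_q$ annihilates \eqref{eq.arcs}, \eqref{eq.order} and $\Ibad$ --- equivalently, that its state sum is literally the splitting homomorphism of stated skein theory with exactly the same coefficients on lifts --- is the entire content of the comparison; it is precisely what was proved in \cite{Le:TDEC}, where an algebra map $\varkappa_\cE:\cS(\fS)\to\cY(\cE)$ is built triangle by triangle through $\cS(\P_3)\to\bT'$ and shown to satisfy $\widehat \tr_q=\varkappa_\cE\circ(\widehat \cS(\fS)\onto\cS(\fS))$. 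Your route would amount to re-proving that theorem, and the places where the real work happens (matching Bonahon--Wong's triangle values, including the mixed-state corner arcs, against \eqref{eq.iso66}, and checking that their State Sum Property carries no extra weights relative to $\theta_c$) are exactly the steps you defer as ``finite computations.''

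By contrast the paper's proof is a short citation argument: it recalls from \cite{Le:TDEC} that $\varkappa_\cE$ is the composition $\cS(\fS)\embed\bigotimes_{\faces}\cS(\P_3)\embed\bigotimes_{\faces}\bT'$ and that $\widehat \tr_q=\varkappa_\cE\circ(\widehat\cS(\fS)\onto\cS(\fS))$; the only new observation needed is that the triangle map $\cS(\P_3)\to\bT'$ factors through $\bcS(\P_3)$ (it kills bad arcs, since \eqref{eq.iso66} sends $\al(-+)\mapsto 0$), whence $\varkappa_\cE=\tr_q\circ(\cS(\fS)\onto\Ssr)$ and the theorem follows. If you want to avoid citing \cite{Le:TDEC}, your plan can be completed, but you must actually (i) write down Bonahon--Wong's formulas on $\P_3$ and verify agreement with $\bcS(\P_3)\cong\bT\embed\bT'$ on a generating set, and (ii) prove that their State Sum Property coincides coefficient-for-coefficient with the splitting homomorphism used to define $\Theta$ in Section \ref{sec.rTrace}; neither is done in your write-up, and the second is where the specific constants in \eqref{eq.arcs} and \eqref{eq.order} enter.
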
 
\begin{proof}  In \cite{Le:TDEC} an algebra homomorphism $\varkappa_\cE: \cS(\fS) \to \cY(\cE)$ is defined as the composition
$$ \SS \embed  \bigotimes_{{\faces}} \cS( \P_3 )  \embed \bigotimes_{{\faces}} \bT',$$
where the map from $\cS(\P_3)$ to $\bT'$  is exactly the composition $\cS(\P_3) \to \overline{\cS}(\P_3)\to \bT'$. It follows that $\varkappa_\cE$ is the composition $\cS(\fS) \onto \Ssr \overset {\tr_q} \embed \cY$. In \cite{Le:TDEC} it is proved that $\widehat \tr_q$ is the composition $ \widehat \cS(\fS) \onto \cS(\fS)  \overset {\varkappa_\cE} \to \cY$. Hence  $\widehat \tr_q$ is also the composition $ \widehat \cS(\fS) \onto \Ssr  \overset {\tr_q} \embed \cY$.
\end{proof}

Besides giving another proof of the existence of the quantum trace map, Theorem \ref{thm.qtrace} shows that the kernel of $\widehat\tr_q$ is the ideal generated by relations \eqref{eq.arcs}, \eqref{eq.order}, and the ideal $\CI^{\mathrm{bad}}$.
}

\subsection{Co/module structure for $\Ssr$}     The Hopf algebra structure of $\cS(\cB)$ descends to a Hopf algebra of $\bcS(\cB)$. We identify $\bcS(\cB)\equiv \cR[x^{\pm 1}]$ using the isomorphism of  Proposition \ref{r.rbigon}.  Then  $\Delta(x) = x \otimes x$ and $\epsilon (x)=1$.
                    
Arguing exactly as in Subsection \ref{sub:comodule}, one sees that for each surface $\fS$ and each edge $e$ of $\fS$, the algebra $\Ssr$ has both a left and a right $\cR[x^{\pm1}]$-comodule algebra structure (which is equivalent to a $\mathbb{Z}$-valued grading counting the number of $+$ and $-$ states of each skein along $e$): 
\begin{proposition}\label{prop:reducedcomodule} (a) The map $\D_e: \Ssr \to  \Ssr \otimes \bcS(\cB)$ gives $\Ssr$ a right comodule-algebra structure over the Hopf algebra $\cR[x^{\pm1}]$. Similarly $\eD$ gives gives $\Ssr$ a left comodule-algebra structure over the Hopf algebra $\cR[x^{\pm1}]$. 
 
(b)  If $e_1,e_2$ are two distinct boundary edges, the coactions on the two edges commute, i.e.  for instance $$(\Delta_{e_2}\otimes \id)\circ \Delta_{e_1}=(\Delta_{e_1}\otimes \id)\circ \Delta_{e_2}.$$
\end{proposition}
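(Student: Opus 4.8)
The statement is the "reduced" analogue of Proposition \ref{prop:comodule}, so the plan is to follow the same argument and then check that everything descends to the quotient by $\Ibad$. First I would recall that for the full stated skein algebra, Theorem \ref{thm.1a}(b) together with the identification $\cSs(\cB)\cong\OSL$ gives $\SS$ a right $\cSs(\cB)$-comodule-algebra structure via $\D_e$ and a left one via $\eD$, and by Proposition \ref{prop:comodule}(c) the coactions attached to distinct edges commute. So the only new content here is that these structures pass to $\bSS=\SS/\Ibad$ with $\bcS(\cB)$ in place of $\cSs(\cB)$, and that under the identification $\bcS(\cB)\cong\cR[x^{\pm1}]$ of Proposition \ref{r.rbigon} one gets $\Delta(x)=x\ot x$, $\epsilon(x)=1$.

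The key step is the following compatibility: the splitting homomorphism $\theta_{c}$ attached to an ideal arc $c$ parallel to $e$ (which defines $\D_e$) sends $\Ibad(\fS)$ into $\Ibad\bigl(\fS\setminus\mathrm{int}(\cB)\bigr)\otimes\cSs(\cB)+\cSs(\fS)\otimes\Ibad(\cB)$. This is essentially the same computation already carried out in the proof of Theorem \ref{teo:rsplit}: a bad arc $\al$ in $\fS$ meets $c$ in $0$ or $1$ point, and in either case every lift of $\al$ under the splitting is again a bad arc (on one side or the other), see Figure \ref{fig:badarc3}(b). Hence $\theta_c(\Ibad)\subset \widetilde\Ibad$, where $\widetilde\Ibad$ is the ideal in $\cSs(\fS)\otimes\cSs(\cB)$ generated by bad arcs on either tensor factor; since $\cSs(\fS)\otimes\cSs(\cB)\big/\widetilde\Ibad\cong \bSS\otimes\bcS(\cB)$, this means $\D_e$ descends to an algebra homomorphism $\bSS\to\bSS\otimes\bcS(\cB)$. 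The counit axiom $(\id\otimes\epsilon)\circ\D_e=\id$ and coassociativity are inherited from the corresponding identities in $\cSs(\fS)$ (the former from \eqref{eq.counit1}, now read as $\epsilon(x)=1$, the latter from Theorem \ref{thm.1a}(c)); the fact that $\D_e$ is an algebra map is inherited from Proposition \ref{prop:comodule}(a). The same reasoning handles $\eD$, giving part (a).

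For part (b), commutativity of the coactions on two distinct edges $e_1,e_2$ follows immediately by reducing Proposition \ref{prop:comodule}(c) modulo $\Ibad$: the identity $(\Delta_{e_2}\otimes\id)\circ\Delta_{e_1}=(\fl\otimes\id)\circ(\Delta_{e_1}\otimes\id)\circ\Delta_{e_2}$ holds already in $\cSs(\fS)\otimes\cSs(\cB)\otimes\cSs(\cB)$, and the flip becomes trivial after reduction because $\bcS(\cB)=\cR[x^{\pm1}]$ is commutative and the two copies of $\bcS(\cB)$ can be interchanged freely — more precisely $\fl$ descends to the identity on $\bcS(\cB)\otimes\bcS(\cB)$ up to the canonical swap of the two factors, which is harmless here since the target is symmetric. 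The parenthetical remark that this comodule structure is the same as a $\BZ$-grading is just the observation that an $\cR[x^{\pm1}]$-comodule is the same as a $\BZ$-graded $\cR$-module, and that $\D_e(\al)=\al\otimes x^{\,\delta_e(\al)}$ on a basis skein $\al$, which is read off directly from \eqref{eq.coact} together with the identification $\al_{\boeta\bmu}\mapsto x^{\sum\mu_i}$ (all off-diagonal $\al_{\boeta\bmu}$ being bad arcs, hence zero in $\bcS(\cB)$).

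\textbf{Main obstacle.} There is no serious obstacle; the one point requiring care is to make sure the quotient $\bigl(\cSs(\fS)\otimes\cSs(\cB)\bigr)/\widetilde\Ibad$ is genuinely $\bSS\otimes\bcS(\cB)$ and not something larger — i.e. that no unexpected bad-arc relations are created by the tensor product — but this is immediate since $\cSs(\cB)$ and $\cSs(\fS)$ are free $\cR$-modules and $\Ibad$ is spanned (by Theorem \ref{teo:redbasis}) by a subset of a basis, so the tensor-product ideal splits as claimed. Thus the whole proposition reduces to quoting Proposition \ref{prop:comodule}, the bad-arc computation from the proof of Theorem \ref{teo:rsplit}, and Proposition \ref{r.rbigon}.
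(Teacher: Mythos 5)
Your part (a) follows the paper's (implicit) proof exactly: descend $\D_e$ using the bad-arc stability established in the first paragraph of the proof of Theorem \ref{teo:rsplit}, inherit coassociativity, counit and the algebra-map property from Proposition \ref{prop:comodule}, and identify $\bcS(\cB)$ with $\cR[x^{\pm1}]$ via Proposition \ref{r.rbigon}; the paper itself gives no more detail than ``arguing exactly as in Subsection 4.1''. Two small caveats. First, the off-diagonal $\al_{\boeta\bmu}$ are not themselves bad arcs: they are products of their strands (with the height order of Figure \ref{fig:bigon0} the parallel diagram is a stacked product), and any strand with $\eta_i\neq\mu_i$ is $\al_{+-}$ or $\al_{-+}$; that is why they vanish in $\bcS(\cB)$, and your grading formula $\D_e(\bar\al)=\bar\al\otimes x^{\delta_e(\al)}$ is then correct. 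Second, the claim that a bad arc meets the cutting arc $c$ in at most one point fails when $e$ lies on a boundary component with a single puncture $v$: both ends of $c$ then enter the unique corner at $v$, and a bad arc there meets $c$ exactly twice; one checks directly that for each of the four choices of states at the two new points at least one of the three resulting corner arcs is bad, so $\theta_c(\Ibad)$ still lands in the bad ideal — but this case is not covered by the ``$0$ or $1$ intersection'' argument you (and the paper, in Theorem \ref{teo:rsplit}) invoke.

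The genuine flaw is in your treatment of (b). Commutativity of $\cR[x^{\pm1}]$ does not make the flip trivial: $\fl$ sends $x^m\otimes x^n$ to $x^n\otimes x^m$, which is a different element of $\cR[x^{\pm1}]\otimes\cR[x^{\pm1}]$ when $m\neq n$. Indeed, by your own diagonal formula the two composites send $\bar\al$ to $\bar\al\otimes x^{\delta_{e_2}(\al)}\otimes x^{\delta_{e_1}(\al)}$ and to $\bar\al\otimes x^{\delta_{e_1}(\al)}\otimes x^{\delta_{e_2}(\al)}$, which differ whenever $\delta_{e_1}(\al)\neq\delta_{e_2}(\al)$. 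So the correct assertion is the verbatim reduction of Proposition \ref{prop:comodule}(c), with the two bigon factors interchanged by the flip (equivalently, with the two $\cR[x^{\pm1}]$ factors labelled by the edges and matched accordingly); it follows by passing that identity to the quotient, using part (a) twice, and no argument about the flip ``becoming the identity'' is available or needed. Your sentence claiming the flip descends to the identity should be removed and replaced by this.
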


In the reduced setting, though, Theorem \ref{teo:cotensor} does no longer hold:
indeed, with the notation in there, if $\beta\in \Bred(\fS)$ is a basis element intersecting a cutting edge exactly once, then its image $\theta(\beta)$ under the cutting morphism is $\theta(\beta)=\beta'_{++}+\beta'_{--}$ where $\beta'_{++},\beta'_{--}\in \Bred(\fS')$ are identical except for their states on $\beta'\cap (c_1\cup c_2)$. But it is not difficult to check that $\beta'_{++}$ is balanced: $$\Delta_{c_1}(\beta'_{++})=\beta'_{++}\otimes \alpha_{++}+\beta'_{+-}\otimes \alpha_{-+}=\beta'_{++}\otimes \alpha_{++}={}_{c_2}\Delta(\beta'_{++})$$ because the class of $\alpha_{-+}=\alpha_{+-}=0\in \red{\Ss(\B)}$, still $\beta'_{++}$ is not in the image of $\theta$.

  \def\OO{\mathbb O}
\def\OOe{\sqrt{\OO_e}}                                                      
\section{The classical case: twisted bundles}
In this section we will suppose that $\fS$ is a connected, oriented surface with a non-empty set of boundary edges and let $\ori$ be the positive orientation of $\partial \fS$ i.e. that induced by the orientation of $\fS$. We will prove that if $q^{\frac{1}{2}}=1$ then $\cSs(\fS)$ is isomorphic to the algebra of regular functions on the affine variety of ``twisted bundles'' on $\fS$. \red{A} similar result for the case when $\partial \fS=\emptyset$ is well known (see for instance \cite{Thurston}). 

Fix an arbitrary  Riemannian metric and let $U\fS$ be the unit tangent bundle over $\fS$, with the canonical projection
 $\pi:U\fS\to \fS$.
A point in $U\fS$ is  a pair $(p,v)$,  where $p\in \fS$, $v\in T_p\fS, \|v\|=1$. For each immersion $\alpha:[0,1]\to \fS$ its {\em canonical lift} is \red{the} path $(\alpha(t),\frac{\dot{\alpha}(t)}{\| \dot{\alpha}(t))\|})$ in $U\fS$. In particular, since each edge $e$ of $\partial \fS$ is oriented by $\ori$, it has a canonical lift $\widetilde{e}\subset \partial U\fS$; we will denote $\widetilde{\partial \fS}:=\cup_{e\subset \partial \fS} \tilde{e}$.  If we let $-e$ be the edge oriented in the opposite way, then we get a different lift which we will denote $(-e)^\sim $. Let  $-\widetilde{\partial \fS}=\cup_{e\subset \partial \fS} (-e)^\sim$ and $\pm \widetilde{\partial \fS}=\widetilde{\partial \fS}\cup -\widetilde{\partial \fS}$.

 For a point $x\in \fS$ the fiber $\OO=\pi^{-1} (x)$ is a circle, and we will orient it  according to the orientation of $\fS$. It is clear that the \red{free} homotopy class of $\OO$ does not depend on $x$. 
 
 For each boundary edge $e$ choose a point $x\in e$.  
 Let $v\in T_x(e)$ be the unit tangent vector with orientation $\ori$. Then both $(x,v)$ and $(x,-v)$ are in $\pi^{-1}(x)$, and the half circle of $\pi^{-1}(x)$ going from $(x,v)$ to $(x,-v)$ in the positive direction is denoted by $\OOe$. The exact position of $x$ on $e$ will not be important in what follows. 

\begin{definition}[Fundamental Groupoids]
Let $X$ be a path connected topological space and $\{E_i\}_{i\in I}$ disjoint contractible subspaces of $X$.  The fundamental groupoid $\pi_1(X,\{E_i\}_{i\in I})$ is the groupoid (i.e. a category with invertible morphisms) whose objects are $\{E_i, i\in I\}$ and whose morphisms are the homotopy classes of oriented paths in $X$ with endpoints in $\cup_{i\in I} E_i$.
A morphism of groupoids if a functor of the corresponding categories. 
\end{definition}
Recall that a group is a groupoid with only one object.
\begin{lemma}[Extension of morphisms]\label{lem:extend}
With the above notation, let $E\subset X$ be a contractible subspace disjoint from $\cup_{i\in I} E_i$. Then given a morphism $\rho:\pi_1(X,\{E_i\}_{i\in I})\to G$ for some group $G$, an oriented path $\gamma$ connecting some $E_i$ to $E$, and an arbitrary $g\in G$ there is a unique extension $\rho':\pi_1(X,\{E_i\}_{i\in I}\cup \{E\})\to G$ of $\rho$ such that  $\rho'(\gamma)=g$ and  $\rho'(\alpha)=\rho(\alpha)$ for all $\alpha\in Mor(E_i,E_j)$ for some $i,j$. 
\end{lemma}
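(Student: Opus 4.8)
\textbf{Proof plan for Lemma \ref{lem:extend}.}

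The plan is to reduce the statement to the standard van Kampen / pushout description of fundamental groupoids. First I would observe that adding one new contractible object $E$ to the groupoid $\pi_1(X,\{E_i\}_{i\in I})$, together with the morphisms to and from $E$, produces a groupoid which is freely generated over $\pi_1(X,\{E_i\}_{i\in I})$ by a single new isomorphism. Concretely, fix the given path $\gamma$ from, say, $E_{i_0}$ to $E$; then every morphism of $\pi_1(X,\{E_i\}_{i\in I}\cup\{E\})$ can be written uniquely after composing with $\gamma^{\pm 1}$ at the appropriate ends: a morphism $E_i\to E$ equals $\gamma\circ\delta$ for a unique $\delta\in\mathrm{Mor}(E_i,E_{i_0})$, a morphism $E\to E_j$ equals $\delta'\circ\gamma^{-1}$, a morphism $E\to E$ equals $\gamma\circ\delta''\circ\gamma^{-1}$, and morphisms $E_i\to E_j$ are unchanged. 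This normal form is exactly the statement that $\pi_1(X,\{E_i\}_{i\in I}\cup\{E\})$ is the pushout of $\pi_1(X,\{E_i\}_{i\in I})$ with the "interval groupoid" (the contractible groupoid on the two objects $E_{i_0},E$) amalgamated over the trivial groupoid on $E_{i_0}$; this follows from the groupoid van Kampen theorem applied to the open cover of $X$ by $X$ itself and a contractible neighbourhood of $E$ meeting $\cup_i E_i$ only along a contractible neighbourhood of a point of $E_{i_0}$, or one can simply prove the normal form directly by noting that $E$ and each $E_i$ are contractible so homotopy classes of paths only depend on endpoints up to the chosen $\gamma$.

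Given the normal form, I would \emph{define} $\rho'$ by the only possible formulas: $\rho'(\alpha)=\rho(\alpha)$ on $\mathrm{Mor}(E_i,E_j)$, $\rho'(\gamma)=g$, and then $\rho'(\gamma\circ\delta)=g\,\rho(\delta)$, $\rho'(\delta'\circ\gamma^{-1})=\rho(\delta')\,g^{-1}$, $\rho'(\gamma\circ\delta''\circ\gamma^{-1})=g\,\rho(\delta'')\,g^{-1}$. Well-definedness is immediate because the normal form is unique; functoriality (compatibility with composition and inverses) is then a short case check that in each case the product in $G$ telescopes correctly, using that $\rho$ is already a functor. Uniqueness is built in: any extension $\rho''$ must agree with $\rho$ on the old morphisms by hypothesis, must send $\gamma$ to $g$, and is then forced on all of $\pi_1(X,\{E_i\}_{i\in I}\cup\{E\})$ by multiplicativity applied to the normal form.

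The only real subtlety — and the step I expect to be the main obstacle in writing this carefully — is justifying the normal form, i.e. that every morphism with an endpoint on $E$ is obtained from a morphism among the $E_i$ by post/pre-composing with $\gamma^{\pm1}$, and that this representation is unique. Uniqueness amounts to: if $\gamma\circ\delta_1$ and $\gamma\circ\delta_2$ are homotopic rel endpoints in $X$, then $\delta_1$ and $\delta_2$ are homotopic; this is clear since $\gamma^{-1}\circ(\gamma\circ\delta_i)$ is homotopic to $\delta_i$ inside $X$ (the homotopy takes place in $X$, no general position needed). Existence amounts to: any path ending on the contractible set $E$ can be homotoped, rel its initial endpoint and moving its terminal endpoint within $E$, to one that first runs along $\gamma^{-1}$ to $E_{i_0}$ — which again follows from contractibility of $E$ together with the fact that $\gamma$ already connects $E$ to $E_{i_0}$. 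Once this bookkeeping is set up the rest is routine, so I would present the normal form as a short lemma and then derive existence and uniqueness of $\rho'$ in a few lines.
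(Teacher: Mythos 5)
Your proposal is correct and follows essentially the same route as the paper's own (sketched) proof: factor every morphism involving $E$ through $\gamma^{\pm 1}$, define $\rho'$ by the forced formulas $\rho'(\gamma\circ\delta)=g\,\rho(\delta)$, $\rho'(\delta'\circ\gamma^{-1})=\rho(\delta')\,g^{-1}$, $\rho'(\gamma\circ\delta''\circ\gamma^{-1})=g\,\rho(\delta'')\,g^{-1}$, and observe that functoriality and uniqueness follow. The van Kampen/pushout framing you add is just extra scaffolding around this same normal-form argument, not a different method.
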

\red{
\begin{proof}
We sketch a proof. Let $\beta$ be a homotopy class of an oriented path connecting $E$ to some $E_j$; write $\beta=(\beta\circ \gamma)\circ \gamma^{-1}$. Observe that $\beta\circ \gamma\in Mor(E_i,E_j)$ thus $\rho$ is defined on it; hence define $\rho'(\beta)=\rho(\beta\circ \gamma)\cdot g^{-1}$. Similarly if $\beta$ is a path connecting $E_j$ to $E$ define $\rho'(\beta)=g\cdot \rho(\gamma^{-1}\circ \beta)$. Finally if $\beta$ is an endomorphism of $E$ define $\rho'(\beta)=g\cdot \rho(\gamma\circ \beta\circ \gamma^{-1})\cdot g^{-1}$. We leave to the reader to verify that this is indeed a functor with the required properties. 
\end{proof}
}

We shall be interested in two particular groupoids:  $\pi_1(\fS,\partial \fS)$ and $\pi_1(U\fS, \widetilde{\partial \fS})$. Note that $\pi:(U\fS,\widetilde{\partial \fS})\to (\fS,\partial \fS)$ induces a surjective morphism $\pi_*$ of groupoids. 

\begin{definition}[\red{Flat twisted $\mathrm{SL}_2(\mathbb{C})$-bundle}]\label{def:twbundle}
A \red{flat twisted $\mathrm{SL}_2(\mathbb{C})$-bundle} \red{(``twisted bundle'' in what follows to keep notation short)} on $\fS$ is a morphism $\rho:\pi_1(U\fS;\widetilde{\partial \fS})\to \mathrm{SL}_2(\mathbb{C})$ such that $\rho(\OO)=-Id$.
\end{definition}
By Lemma \ref{lem:extend} we extend $\rho$ to a morphism (with the same notation) $\rho:\pi_1(U\fS;\pm \widetilde{\partial \fS})\to \mathrm{SL}_2(\mathbb{C})$ such that for every boundary edge $e$,
\begin{equation}\label{eq:OO}
\rho(\OOe)=\left(\begin{array}{cc}
0 & -1\\
1 & 0\end{array}\right).
\end{equation}
Since $\fS$ is not a closed surface, its fundamental group $\pi_1(\fS)$ is a free group.
\begin{lemma} \label{lem:twistedbundles}
Suppose that $\partial \fS\neq \emptyset$. Then the set $tw(\fS)$ of twisted bundles on $\fS$ is the affine algebraic variety $\mathrm{SL}_2(\mathbb{C})^{n+k}$ where $$n=-1+\#\{e\subset \partial \fS\}\ {\rm and}\ k=\mathrm{rank}(\pi_1(\fS)).$$ 
In particular the algebra $\chi(\fS)$ of its regular functions is generated by the matrix entries of each of the copies of $\mathrm{SL}_2(\mathbb{C})$.  
\end{lemma}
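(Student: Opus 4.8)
\textbf{Proof proposal for Lemma \ref{lem:twistedbundles}.}

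The plan is to build up a twisted bundle by extending a morphism over a convenient set of generators of the groupoid $\pi_1(U\fS;\widetilde{\partial\fS})$, keeping careful track of the one constraint $\rho(\OO)=-\Id$. First I would choose a basepoint edge $e_0\subset\partial\fS$ and regard every other object of the groupoid as connected to $e_0$ by a fixed path; this is legitimate because $U\fS$ is path-connected. By a spanning-tree argument (the groupoid version of ``a connected groupoid is equivalent to its vertex group''), a morphism $\rho:\pi_1(U\fS;\widetilde{\partial\fS})\to\mathrm{SL}_2(\mathbb C)$ is determined by (i) its restriction to the vertex group $\pi_1(U\fS,\tilde e_0)$, and (ii) the values $\rho(\gamma_e)\in\mathrm{SL}_2(\mathbb C)$ on the chosen path $\gamma_e$ from $\tilde e_0$ to $\tilde e$, one for each of the remaining $n=\#\{e\subset\partial\fS\}-1$ boundary edges; moreover these data can be prescribed arbitrarily and independently, by repeated application of Lemma \ref{lem:extend}. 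So the content of the lemma reduces to identifying the set of morphisms $\pi_1(U\fS,\tilde e_0)\to\mathrm{SL}_2(\mathbb C)$ sending $\OO$ to $-\Id$ with $\mathrm{SL}_2(\mathbb C)^{k}$, where $k=\mathrm{rank}(\pi_1(\fS))$.

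For that step I would use the structure of $U\fS$ as a circle bundle over $\fS$. Since $\fS$ has non-empty boundary it is homotopy equivalent to a wedge of $k$ circles, so the bundle $\pi:U\fS\to\fS$ is trivial; fixing a trivialization $U\fS\cong\fS\times S^1$ gives a split short exact sequence of groups
\be
1\longrightarrow \mathbb Z\langle\OO\rangle \longrightarrow \pi_1(U\fS,\tilde e_0)\longrightarrow \pi_1(\fS,e_0)\longrightarrow 1,
\ee
and the trivialization provides a section $\sigma:\pi_1(\fS)\to\pi_1(U\fS)$. Because $\OO$ is central (it generates the fiber of a trivial bundle, which commutes with a global section) the group $\pi_1(U\fS,\tilde e_0)$ is isomorphic to $\pi_1(\fS)\times\mathbb Z$. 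A morphism to $\mathrm{SL}_2(\mathbb C)$ sending $\OO\mapsto-\Id$ then amounts to a morphism $\pi_1(\fS)\to\mathrm{SL}_2(\mathbb C)$ together with the already-fixed value $-\Id$ on the $\mathbb Z$ factor — the latter is consistent precisely because $-\Id$ is central in $\mathrm{SL}_2(\mathbb C)$, so no compatibility obstruction arises. Since $\pi_1(\fS)$ is free of rank $k$, the set of such morphisms is $\mathrm{SL}_2(\mathbb C)^{k}$, choosing images of a free basis freely.

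Assembling the two counts gives $tw(\fS)\cong\mathrm{SL}_2(\mathbb C)^{n}\times\mathrm{SL}_2(\mathbb C)^{k}=\mathrm{SL}_2(\mathbb C)^{n+k}$ as a set, and this identification is manifestly an isomorphism of affine varieties because each coordinate is the assignment of a matrix in $\mathrm{SL}_2(\mathbb C)$ with no relations imposed beyond $\det=1$; hence the algebra $\chi(\fS)$ of regular functions is the tensor product of $n+k$ copies of $\mathcal O_{q^2=1}(\mathrm{SL}(2))=\mathcal O(\mathrm{SL}_2(\mathbb C))$, generated by the matrix entries as claimed. The extension to $\pm\widetilde{\partial\fS}$ with the normalization \eqref{eq:OO} is then automatic from Lemma \ref{lem:extend}: the half-fiber $\OOe$ squares (after going around the full fiber) to $\OO$, so $\rho(\OOe)^2=\rho(\OO)=-\Id$ forces $\rho(\OOe)$ to be conjugate to $\begin{pmatrix}0&-1\\1&0\end{pmatrix}$, and one checks the canonical choice of lift pins down exactly this matrix; this adds no new parameters.

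The main obstacle I anticipate is the bookkeeping in the first paragraph: making the ``spanning tree / extend over generators'' argument genuinely rigorous for a groupoid with several objects, and in particular verifying that the prescriptions of $\rho$ on the connecting paths $\gamma_e$ really are free and do not secretly interact with the central element $\OO$ or with each other through loops based at different edges. Lemma \ref{lem:extend} handles each single extension, but one must be careful to apply it in an order (objects added one at a time, each joined by one new path to the already-built part) so that at every stage the hypotheses hold; I would phrase this as an induction on the number of boundary edges. The circle-bundle triviality and the centrality of $\OO$ are the other points to state cleanly, but those are standard.
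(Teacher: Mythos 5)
Your argument is correct and is essentially the paper's proof, just repackaged: the paper trivializes $U\fS$ by a vector field positively tangent to $\pfS$ and transports everything to $\pi_1(\fS;\partial\fS)$, then reads off the $n$ connecting-path values and the $k$ images of a free basis of $\pi_1(\fS)$, while you first split off the vertex group via Lemma \ref{lem:extend} and then use the product decomposition $\pi_1(U\fS)\cong\pi_1(\fS)\times\mathbb Z$ with $\OO$ central — the same two ingredients (triviality of $U\fS$ and freeness of $\pi_1(\fS)$) in a slightly different order. One caveat on your closing remark, which is extraneous to the lemma: the value $\rho(\OOe)$ is \emph{not} forced up to conjugacy, since $\OOe$ is a morphism between the two distinct objects $\tilde e$ and $(-e)^\sim$ and cannot be composed with itself, so ``$\rho(\OOe)^2=\rho(\OO)$'' does not make sense; rather, $\rho(\OOe)$ is a free choice in the extension of Lemma \ref{lem:extend}, normalized by the convention \eqref{eq:OO} (and only the value on the complementary half-fiber is then determined, as $-\rho(\OOe)^{-1}$).
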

\begin{proof}
\red{Since $U\fS$ is trivial, the fundamental groupoids $\pi_1(\fS;\partial \fS)$ and $\pi_1(U\fS,\widetilde{\partial \fS})$ are isomorphic. More explicitly,} we claim that there are non canonical injective morphisms of fundamental groupoids $s_*:\pi_1(\fS;\partial \fS)\to \pi_1(U\fS,\widetilde{\partial \fS})$.
To build one, pick any \red{non-zero} vector field on $\fS$ which is positively tangent to the edges of $\partial \fS$: it exists because we are not prescribing its behavior near the (non compact) cusps. This trivializes $U\fS$ as $\fS\times S^{1}$; let $s:\fS\to \fS\times\{1\}$ be a section of $\pi:U\fS\to \fS$. 

\red{The above isomorphism allows to provide an isomorphism from the set of twisted bundles to morphisms from $\pi_1(\fS;\partial \fS) \to \mathrm{SL}_2(\BC)$. Indeed} to each twisted bundle $\rho:\pi_1(U\fS;\widetilde{\partial \fS})\to \mathrm{SL}_2(\BC)$ we associate $\rho': \pi_1(\fS;\partial \fS) \to \mathrm{SL}_2(\BC)$ defined as $\rho'=\rho\circ s_*$ . Reciprocally given $\rho':\pi_1(\fS;\partial \fS) \to \mathrm{SL}_2(\BC)$ we extend it to $\rho:\pi_1(U\fS;\widetilde{\partial \fS})\to \mathrm{SL}_2(\BC)$ by setting $\rho(\OO)=-Id$ and $\rho|_{\pi_1(\fS\times\{1\})}=\rho'.$

\red{To conclude we now argue that the set of morphisms $\rho': \pi_1(\fS;\partial \fS) \to \mathrm{SL}_2(\BC)$ is in bijection with $\mathrm{SL}_2(\mathbb{C})^{n+k}$. Indeed} fix a set of immersed smooth paths $\alpha_1,\ldots ,\alpha_n\subset \fS$ connecting a fixed edge $e_0\subset \partial \fS$ to each other edge of $\partial \fS$ as well as a set of paths whose endpoints are in $e_0$ representing generators $g_1,\ldots , g_k$ of $\pi_1(\fS;e_0)$ (which is free because $\partial \fS\neq \emptyset)$. \red{Since the fundamental group of $\fS$ is free, the list of values $(\rho'(\alpha_1),\ldots, \rho'(\alpha_n),\rho'(g_1)\ldots, \rho'(g_k))\in \mathrm{SL}_2(\mathbb{C})^{n+k}$} provides the sought non canonical \red{bijection.}
\end{proof}
\begin{example}\label{ex:twpolygon}
Let $\P_n$ be the $n$-polygon with vertices numbered in the orientation sense from $0$ to $n-1$; then $tw(\P_n)=\mathrm{SL}_2(\mathbb{C})^{n-1}$ where the $n-1$ matrices are given by the holonomies of the diagonals connecting the edge $v_0v_1$ to each other edge. Then $\chi(\P_n)= O(SL_2)^{\otimes n-1}$ and in particular $\chi(\B)=O(SL_2)$ and $\chi(\P_3)=O(SL_2)\otimes O(SL_2)$ (where by ``equal'' we mean ``non-canonically isomorphic to'').
\end{example}
\begin{remark} The notion of \red{flat twisted $\mathrm{SL}_2(\mathbb{C})$-bundle} is closely related to the one considered in \cite{Thurston}.
\end{remark}
\subsection{Trace functions for non oriented curves} We will identify the states of a stated tangles with \red{vectors} in $\BC^2$ as follows:
 $$+:=\left(\begin{array}{c}1\\0\end{array}\right), -:=\left(\begin{array}{c}0\\1\end{array}\right).$$
 If $\vec{x},\vec{y}\in \mathbb{C}^2$ let $\det(\vec{x}|\vec{y})$ denote the determinant of the matrix whose first column is $\vec{x}$ and second is $\vec{y}$. 

We will say an immersion  $a:[0,1]\to \fS$ is in {\em good position} if $a(0), a(1) \in\partial \fS$ 
and the tangent vectors $\dot a(0), \dot a(1)$ \red{are positively tangent to $\partial \fS$.}

An immersion  $\al:[0,1]\to \fS$ is {\em transversal} if $\al(0), \al(1) \in \pfS$ and $\al$ is transversal to $\pfS$ at $0$ and $1$. One can bring such a transversal 
  $\al$ to an arc  $a$ in good position by an isotopy (relative $0$ and $1$) in a small neighborhood of $\al(0)$ and $\al(1)$.  The canonical lift of $a$ will be denoted by $\hat \al$ and is called the {\em good lift} of $\al$. Note that the homotopy class of $\hat \al$ is uniquely determined by $\al$, and we will consider $\hat \al$ as an element of $\pi_1(U\fS;\widetilde{\partial \fS})$.
Note that the good lift of the inverse path  $\alpha^{-1}$, defined by $\al^{-1}(t)=\alpha(1-t)$,  is not the inverse of $\hat \al$, since before    lifting one has to isotope $\alpha^{-1}$ to \red{a} good position. 
\def\sign{\mathrm{sign}}

A {\em stated transversal immersion} is a transversal immersion whose end points are stated~$\{\pm\}$.
\begin{definition}[Trace]\label{def:traces} Let $\rho$ be a twisted bundle on $\fS$. 

Assume $\alpha:[0,1]\to \fS$ is a stated transversal immersion with state  $\ve$ at $\al(0)$ and $\eta$ at $\al(1)$.  Define the trace of $\alpha$ by
 $$\tr(\alpha):=\det(\eta| \rho(\hat {\alpha})\cdot \epsilon).$$
 
Assume $\beta:[0,1]\to \fS$ is an immersed closed curve (i.e. $\beta(0)=\beta(1)$ and $\beta$ has the same tangent at $0$ and $1$). Define the trace  of $\beta$ by
$$   \tr(\beta)=\tr(\rho(\widetilde {\beta'})),$$
where $\beta'$ is any smooth closed curve isotopic to $\beta$ such that $\beta'(0)\in \pfS$.
\end{definition}
In the first case if $\alpha'$ is homotopic to $\alpha$ through stated transversal immersions and has the same states as $\alpha$ then $\tr(\alpha')=\tr(\alpha)$ (indeed the homotopy lifts to a homotopy of $\widehat{\alpha}$ and $\widehat{\alpha'}$). 
In the second case it is easy to see that $\tr(\beta)$ does not depend on the choice of $\beta'$, as the images under $\rho$ of any two such $\beta'$ are conjugate in $\mathrm{SL}_2(\BC)$ and hence have the same trace.
\begin{example}\label{ex:traces}
Let $\alpha:[0,1]\to \fS$ be a stated transversal immersion with state  $\ve$ at $\al(0)$ and $\eta$ at $\al(1)$.
$${\rm If}\ \rho(\hat{\alpha})=\left(\begin{array}{cc}a & b \\ c & d\end{array}\right), {\rm then} \tr(\al)=\det(\eta,\rho(\alpha)\cdot\epsilon) =
\begin{tabular}{c|cc}
$\eta\backslash \epsilon $& + & -\\
\hline
+ & c & d\\
- & -a & -b\\
\end{tabular}.$$
\red{We} remark that the matrix on the right, expressing the values of the traces for an immersed transverse stated arc $\alpha$ is $\rho(\sqrt{\OO}^{-1})\rho(\hat{\alpha})$ (see Equation \eqref{eq:OO}). 
\end{example}
\begin{remark}  The notion of trace here is similar to the one introduced in \cite{MS}, where trace is defined only for oriented arcs. The novelty here is the good lift, which is used to define traces for unoriented arcs and the use of twisted bundles as representations of fundamental groupoids.
\end{remark}

When $\al$ is stated, we provide $\al^{-1}$ with states so that the state of  $\al^{-1}(t)$ is equal to the state of $\al(1-t)$ for $t=0,1$.
\begin{lemma}\label{lem:lifts} Suppose $\rho$ is a twisted bundle on $\fS$.
(a) Let $\al$ be a stated transversal immersion.  One has
$$\rho (\widehat{\alpha^{-1}})=- \rho (\hat{\alpha})^{-1}.$$ 
 As a consequence, $\tr(\alpha)=\tr(\alpha^{-1})$.
 
 (b) Let $\beta:[0,1]\to \fS$ be an immersed closed curve such that $\beta(0)\in \pfS$. Then
  $$\rho (\widetilde{\beta})^{-1}=\rho (\widetilde{\beta^{-1}}).$$ 
  
 As a consequence, if $\gamma$ is any immersed closed curve then $\tr(\gamma) = \tr(\gamma^{-1})$.
\end{lemma}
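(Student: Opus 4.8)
The statement has two parts, (a) for a stated transversal immersion $\al$ and (b) for an immersed closed curve $\beta$. Both rest on a single geometric observation: when one reverses the orientation of a path and brings it back into good position, the canonical lift picks up exactly one half-turn of the fiber circle, and that half-turn is sent by $\rho$ to the matrix in \eqref{eq:OO}. The plan is to make this precise for arcs first, then deduce the closed-curve case, then extract the trace identities as algebraic consequences.

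\emph{Part (a).} First I would compare the good lift $\widehat{\al^{-1}}$ with the inverse path $\hat\al^{-1}$ of the good lift of $\al$. The path $\hat\al^{-1}$ is the canonical lift of $\al$ traversed backwards; its tangent vectors at the endpoints are $-\dot a(1)$ and $-\dot a(0)$, which point \emph{against} the boundary orientation $\ori$. To bring $\al^{-1}$ into good position one rotates the tangent vector at each endpoint by a half-turn in the positive fiber direction, i.e. one pre-composes with (a lift of) $\OOe$ at $\al(1)=\al^{-1}(0)$ and post-composes with $\OOe$ at $\al(0)=\al^{-1}(1)$. Tracking this in the trivialization, one gets in $\pi_1(U\fS;\pm\widetilde{\partial\fS})$ the identity $\widehat{\al^{-1}} = \OOe \circ \hat\al^{-1} \circ (\OO_{e'})^{?}$ — I would write out the composition carefully, keeping track of which fiber half-circle gets used — and the key point is that the total fiber contribution is one full loop $\OO$ up to the two boundary half-circles, so that applying $\rho$ and using $\rho(\OO)=-\Id$ together with \eqref{eq:OO} collapses everything to $\rho(\widehat{\al^{-1}}) = -\rho(\hat\al)^{-1}$. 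The consequence $\tr(\al)=\tr(\al^{-1})$ then follows by the determinant computation: with $\rho(\hat\al)=M$ and states $\ve$ at $\al(0)$, $\eta$ at $\al(1)$, one has $\tr(\al)=\det(\eta\mid M\ve)$ and $\tr(\al^{-1})=\det(\ve\mid \rho(\widehat{\al^{-1}})\eta)=\det(\ve\mid -M^{-1}\eta)$; since $M\in\mathrm{SL}_2$, $\det(\ve\mid -M^{-1}\eta) = -\det(M^{-1})^{-1}\det(M\ve\mid \eta)\cdot$(sign bookkeeping) $=\det(\eta\mid M\ve)$, using $\det(\vec x\mid\vec y)=-\det(\vec y\mid\vec x)$ and $\det(A\vec x\mid A\vec y)=\det(A)\det(\vec x\mid\vec y)$. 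I would spell out the two sign flips so they visibly cancel.

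\emph{Part (b).} For a closed curve $\beta$ with $\beta(0)\in\pfS$, the same half-turn phenomenon is simpler because there is no boundary-state bookkeeping: $\widetilde{\beta^{-1}}$ is the canonical lift of $\beta$ run backwards, and since $\beta$ is a genuine closed loop (same tangent at $0$ and $1$) no re-isotopy into good position is needed, so $\widetilde{\beta^{-1}}$ is literally the inverse path of $\widetilde\beta$ in $\pi_1(U\fS)$; hence $\rho(\widetilde{\beta^{-1}}) = \rho(\widetilde\beta)^{-1}$. For the trace consequence, for an arbitrary immersed closed $\gamma$ pick $\gamma'$ isotopic to $\gamma$ with $\gamma'(0)\in\pfS$; then $\tr(\gamma)=\tr(\rho(\widetilde{\gamma'}))$ and $\tr(\gamma^{-1})=\tr(\rho(\widetilde{(\gamma')^{-1}})) = \tr(\rho(\widetilde{\gamma'})^{-1})$, and for any $A\in\mathrm{SL}_2(\BC)$ one has $\tr(A)=\tr(A^{-1})$ (both equal the sum of the two eigenvalues, which are $\lambda,\lambda^{-1}$). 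This closes (b).

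\emph{Main obstacle.} The routine parts are the $\mathrm{SL}_2$ determinant/trace identities; the real work is the bookkeeping in Part (a) of \emph{exactly how many} fiber half-turns appear and \emph{at which endpoint}, because the good-position convention treats $\al(0)$ and $\al(1)$ asymmetrically and the inverse path swaps them. I expect to need a short lemma-style computation in the local trivialization near each boundary edge, using \eqref{eq:OO}, to confirm that the net effect is precisely one factor of $\rho(\OO)=-\Id$ and no leftover $\pm$ ambiguity — getting the sign right (i.e.\ obtaining $-\rho(\hat\al)^{-1}$ rather than $+\rho(\hat\al)^{-1}$) is the crux, and it is forced by the twisting condition, which is why the hypothesis $\rho(\OO)=-\Id$ enters decisively here rather than cosmetically. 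Once that sign is pinned down, everything else is forced.
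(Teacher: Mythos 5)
Your overall strategy---reduce everything to a count of fiber classes and use $\rho(\OO)=-\Id$---is the same as the paper's, and your algebraic endgame is fine (the determinant manipulation in (a), and $\tr(A)=\tr(A^{-1})$ for $A\in\mathrm{SL}_2(\BC)$ in (b)). But there is a genuine gap, concentrated exactly where you located the ``crux'' and then deferred or asserted it. In (b) your key claim is false as stated: the canonical lift uses the \emph{forward} unit tangent, so reversing a curve replaces the framing by its fiberwise antipode; hence $\widetilde{\beta^{-1}}$ is \emph{not} literally the inverse path of $\widetilde{\beta}$ --- the two differ by the antipodal map in every fiber, and this has nothing to do with whether a good-positioning isotopy is needed. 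The identity $\rho(\widetilde{\beta^{-1}})=\rho(\widetilde{\beta})^{-1}$ is true but needs an argument: the paper replaces $\beta^{-1}$ by a regularly homotopic curve $\beta'$ with $\dot\beta'(0)=\dot\beta(0)$ such that $\beta'\circ\beta$ is regularly homotopic to a figure-eight contained in a disc; since the figure-eight has turning number zero, its canonical lift is null-homotopic in $U\fS$, so $\rho(\widetilde{\beta'})\,\rho(\widetilde{\beta})=\Id$ (right of Figure \ref{fig:lift}). As written, your argument gives no mechanism explaining why closed curves acquire no sign while arcs acquire $-1$; that distinction is precisely a turning-number parity count, which is the content you skipped.

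The same antipodal issue undermines the bookkeeping you postponed in (a). After the half-turn corrections at the endpoints, the middle path in your schematic identity $\widehat{\al^{-1}}=\OOe\circ\hat\al^{-1}\circ(\cdots)$ is the fiberwise antipode of $\hat\al^{-1}$, not $\hat\al^{-1}$ itself; and if you implement the two boundary half-circles by the matrix of \eqref{eq:OO} at both ends, you get $\rho(\OOe)\,\rho(\hat\al)^{-1}\rho(\OOe)=-\rho(\hat\al)^{T}$, which is not $-\rho(\hat\al)^{-1}$ in general. So a literal execution of your plan produces the wrong matrix unless the antipodal correction is also tracked, at which point the half-circles recombine into a single full fiber loop and \eqref{eq:OO} never enters. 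The paper's proof sidesteps all of this: one checks directly that the concatenation $\widehat{\al^{-1}}\circ\hat\al$ is a loop whose projection to $\fS$ is null-homotopic and whose tangent direction makes exactly one full positive turn (a half U-turn at each endpoint), hence it is homotopic to $\OO$ (left of Figure \ref{fig:lift}); then $\rho(\widehat{\al^{-1}})\,\rho(\hat\al)=\rho(\OO)=-\Id$ and the formula follows, with the only input being $\rho(\OO)=-\Id$. I recommend redoing (a) via this closed-loop turning-number computation and (b) via the figure-eight regular homotopy; your trace deductions can then stand as written.
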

\begin{proof} (a). A direct inspection shows that the homotopy class of the closed simple loop in $U\fS$ given by the concatenation $\widehat{\alpha^{-1}}\circ \hat{\alpha}$ is $\OO$ (see the left hand side of Figure \ref{fig:lift}). 
The first equality follows as by definition $\rho$ is a functor such that $\rho(\OO)=-Id$. 

To prove that $\tr(\alpha)=\tr(\alpha^{-1})$, 

we compute the traces using the notation and content of Example \ref{ex:traces}:
$${\rm If}\ \rho(\widehat{\alpha^{-1}})=-\rho(\hat{\al})^{-1}=\left(\begin{array}{cc}-d & b \\ c & -a\end{array}\right), {\rm then} \tr(\al)=\det(\eta',\rho(\alpha)\cdot\epsilon') =
\begin{tabular}{c|cc}
$\eta'\backslash \epsilon' $& + & -\\
\hline
+ & c & -a\\
- & d & -b\\
\end{tabular}.$$
But since the state at $\alpha^{-1}(0)$ is $\eta$ and that at $\alpha^{-1}(1)$ is $\epsilon$, we get the claim in this case by directly comparing with the transpose of the matrix of values provided in Example \ref{ex:traces}. 

(b). 
Observe that if $\beta$ is the black curve depicted in the r.h.s. of Figure \ref{fig:lift}, then $\beta^{-1}$ is regularly homotopic to the dotted curve $\beta'$ in the same picture. By construction $\dot{\beta'}(0)=\dot{\beta}(0)$ and $\beta'\circ \beta$ is regularly homotopic to an eight-shaped immersed curve in a disc. 
Then $\widetilde{\beta^{-1}}$ is homotopic to $\tilde{\beta'}$ in $U\fS$ and it holds $\rho(\tilde{\beta'}\circ \tilde{\beta})=Id$, thus $\rho(\widetilde{\beta^{-1}})=\rho(\tilde{\beta})^{-1}$.
The last statement now follows because $\rho(\tilde{\beta})\in \mathrm{SL}_2(\BC)$ so that $\tr(\rho(\tilde{\beta}))=\tr(\rho(\tilde{\beta})^{-1})$. 

\end{proof}
\begin{figure}
\includegraphics[width=4.2cm]{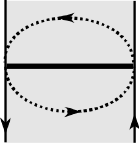}\qquad \includegraphics[width=4cm]{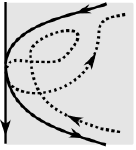}
\caption{On the left the unoriented horizontal arc can be lifted to $U\fS$ in two different ways (dotted), depending on the choice of an orientation; their composition is homotopic in $U\fS$ to $\OO$.  On the right we exhibit a smooth oriented curve $\beta$ (solid) and a curve $\beta'$ (dotted) which is regularly homotopic to $\beta^{-1}$ . The composition of the two is a nullhomotopic $8$-shaped loop in $U\fS$.}\label{fig:lift}
\end{figure}
Suppose $\alpha=\cup \alpha_i$, where each $\al_i$ is either a stated transversal non-oriented arc or a non-oriented immersed closed curved. Define
$$\tr(\alpha):=\prod_{i} \tr(\alpha_i).$$

\begin{lemma}\label{lem:classicalbigon}
If $q^{\frac{1}{2}}=1$ the map $\tr:\cSs(\B)\to \chi(\B)$ sending a stated skein to its trace is an isomorphism of algebras. The same  holds for $\tr:\cSs(\P_n)\to \chi(\P_n)$ for every $n\geq 2$. 
\end{lemma}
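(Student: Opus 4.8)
\textbf{Proof strategy for Lemma \ref{lem:classicalbigon}.}

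The plan is to verify the statement directly on the bigon $\cB$ and then leverage the braided tensor product decomposition of polygons to deduce it for all $\P_n$. For the bigon, recall from Theorem \ref{teo:SL2} that $\cSs(\cB)$ is generated as an algebra by $\al_{\nu\mu}$, $\nu,\mu\in\{\pm\}$, with the matrix relations \eqref{eq.rel1a}--\eqref{eq.rel2a}, and that at $q^{1/2}=1$ these become the defining relations of $\cO(\mathrm{SL}_2)$; on the other side, by Example \ref{ex:twpolygon} we have $\chi(\cB)=\cO(\mathrm{SL}_2)$, the matrix entries of the single holonomy $\rho(g)$ for $g$ the core diagonal of the bigon. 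So first I would compute $\tr(\al_{\nu\mu})$ explicitly. The skein $\al_{\nu\mu}$ is a single horizontal arc from $e_l$ to $e_r$, whose good lift $\hat\al$ represents (up to the fixed choices) the generator $g\in\pi_1(U\fS;\widetilde{\partial\fS})$; by Definition \ref{def:traces} and the computation in Example \ref{ex:traces}, if $\rho(\hat\al)=\left(\begin{smallmatrix}a&b\\c&d\end{smallmatrix}\right)$ then $\tr(\al_{++})=c$, $\tr(\al_{+-})=d$, $\tr(\al_{-+})=-a$, $\tr(\al_{--})=-b$ (with an appropriate bookkeeping of which endpoint is $e_l$ versus $e_r$ and the orientation conventions fixed in Figure \ref{fig:bigon0}). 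Thus $\tr$ sends the four generators to $\pm$ the four matrix entries, hence is surjective onto $\chi(\cB)=\cO(\mathrm{SL}_2)$.

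Next I would check that $\tr$ is an algebra homomorphism and that it respects the defining relations. That it is multiplicative on disjoint/stacked skeins is built into the definition $\tr(\cup\al_i)=\prod\tr(\al_i)$ together with the fact that at $q^{1/2}=1$ the skein relations \eqref{eq.skein}, \eqref{eq.loop}, \eqref{eq.arcs}, \eqref{eq.order} are exactly the classical identities satisfied by traces of $\mathrm{SL}_2(\mathbb C)$-matrices: \eqref{eq.skein} is the Cayley--Hamilton/``fusion'' identity $\sum_\eta \ve_{i\eta}\ve_{\eta j}$-type relation that at $q=1$ reads as the standard $2$-dimensional Temperley--Lieb relation, \eqref{eq.loop} is $\mathrm{tr}(-Id)=-2$ at $q=1$, i.e. $-q^2-q^{-2}=-2$, and the boundary relations \eqref{eq.arcs}, \eqref{eq.order} at $q=1$ follow from the matrix $\rho(\OOe)=\left(\begin{smallmatrix}0&-1\\1&0\end{smallmatrix}\right)$ of \eqref{eq:OO} applied at the endpoints (Example \ref{ex:traces} already records that the trace values are governed by $\rho(\sqrt{\OO}^{-1})\rho(\hat\al)$, whose structure is precisely what produces the coefficients $q^{\pm1/2}\mapsto 1$ in \eqref{eq.arcs}). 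So $\tr$ descends to a well-defined algebra map $\cSs(\cB)\to\chi(\cB)$. For injectivity: $\cSs(\cB)$ at $q^{1/2}=1$ has, by Theorem \ref{teo:SL2} and the presentation of $\cO(\mathrm{SL}_2)$, the explicit monomial basis $B$ of \eqref{eq.basis}, while $\chi(\cB)=\cO(\mathrm{SL}_2)$ has the same basis in $a,b,c,d$; since $\tr$ matches the generators up to signs, it carries one basis to the other (up to sign), hence is bijective.

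For $\P_n$ with $n\ge 2$: Corollary \ref{cor:polygons} gives $\cSs(\P_n)\cong\cO(\mathrm{SL}_2)^{\underline\ot(n-1)}$ as the iterated braided tensor product, and at $q^{1/2}=1$ the co-$R$-matrix $\rho$ of \eqref{eq.cobraid0} degenerates to the trivial one (all entries become $1$ or $0$ with $q-q^{-3}\to 0$), so the braided product becomes the ordinary tensor product $\cO(\mathrm{SL}_2)^{\ot(n-1)}$. On the geometric side Example \ref{ex:twpolygon} gives $\chi(\P_n)=\cO(\mathrm{SL}_2)^{\ot(n-1)}$, with the $n-1$ factors indexed by the diagonals from the edge $v_0v_1$ to the other edges. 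I would then argue that the trace map is compatible with attaching an ideal triangle: gluing a bigon onto $\fS$ along a triangle corresponds on the bundle side to composing holonomies along the relevant diagonals, and $\tr$ intertwines the splitting homomorphism $\theta_c$ with restriction of regular functions (a ``van Kampen for groupoid representations'' statement, as announced after Theorem \ref{thm.classic}). Concretely, choosing the diagonals of $\P_n$ as the cutting arcs realizing the braided-tensor decomposition, $\tr$ factors as the $(n-1)$-fold tensor power of the bigon isomorphism already established, hence is an isomorphism $\cSs(\P_n)\xrightarrow{\sim}\chi(\P_n)$.

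\textbf{Main obstacle.} The delicate point is the bookkeeping of orientations and good lifts: the trace of an unoriented arc depends on the good lift $\hat\al$, and Lemma \ref{lem:lifts} shows that $\widehat{\al^{-1}}$ is \emph{not} the inverse of $\hat\al$ but differs by the fiber class $\OO$, producing exactly the signs $-q^2,-q^{-2}$ seen in the antipode $S(\al_{+-})=-q^2\al_{+-}$, etc. Verifying that these sign/framing conventions make $\tr$ respect \eqref{eq.order} and \eqref{eq.arcs} \emph{on the nose} (not merely up to sign), and that the identification of generators with matrix entries is the one compatible with the decomposition in the $\P_n$ step, is where the real care is needed; everything else is a routine comparison of presentations and bases.
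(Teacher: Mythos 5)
Your bigon argument is essentially the paper's: it rests on Theorem \ref{teo:SL2} at $q^{1/2}=1$, on Example \ref{ex:twpolygon} identifying $\chi(\B)$ with $O(SL_2)$, and on Example \ref{ex:traces}, which says that the trace values of a stated generating arc are the entries of $\rho(\sqrt{\OO}^{-1})\rho(\hat{\alpha})$, so generators go to generators up to an invertible linear change of matrix entries; your explicit check of \eqref{eq.skein}--\eqref{eq.order} at $q=1$ is the part the paper leaves implicit, and the orientation bookkeeping you flag is exactly where Lemma \ref{lem:lifts} enters. So far, so good.

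The polygon step, however, has a genuine gap. You claim that by ``choosing the diagonals of $\P_n$ as the cutting arcs realizing the braided-tensor decomposition, $\tr$ factors as the $(n-1)$-fold tensor power of the bigon isomorphism.'' Cutting $\P_n$ along diagonals produces ideal triangles, and a triangle can never be decomposed into bigons (gluing bigons along single arcs only ever yields bigons), so no sequence of cuts reduces $\P_n$, $n\ge 3$, to the bigon case; the decomposition of Corollary \ref{cor:polygons} is obtained by \emph{attaching} triangles (Theorem \ref{teo:braidedtensor}), and the compatibility of $\tr$ with that operation is neither available at this point of the paper nor proved in your proposal. Invoking the splitting compatibility for traces would also be a forward reference (Proposition \ref{prop:cuttrace} comes after this lemma), and even granting it, it only handles cutting, which by the above cannot avoid the triangle case you are trying to establish; moreover your relation check gives well-definedness of $\tr$ only inside a bigon, not on $\cSs(\P_n)$. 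The paper sidesteps all of this by arguing directly on generators: by the proof of Corollary \ref{cor:polygons}, the stated arcs joining a fixed edge $e_0$ to the other edges generate $\cSs(\P_n)\cong O(SL_2)^{\otimes(n-1)}$ at $q^{1/2}=1$; Example \ref{ex:traces} identifies their traces with the matrix entries of the holonomies of the corresponding diagonals, which by Lemma \ref{lem:twistedbundles} and Example \ref{ex:twpolygon} are coordinates generating $\chi(\P_n)\cong O(SL_2)^{\otimes(n-1)}$, and this matching of presentations gives the isomorphism. To repair your argument, replace the cutting reduction by this direct comparison on generators (or first prove the compatibility of $\tr$ with attaching a triangle, which amounts to the same computation).
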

\begin{proof}
Applying Theorem \ref{teo:SL2} at $q^{\frac{1}{2}}=1$ we get an explicit algebra isomorphism $\phi:\cSs(\B)\to O(SL_2)$; by Example \ref{ex:twpolygon}, we know that $\chi(\B)$ is isomorphic to $O(SL_2)$. Furthermore, by  Example \ref{ex:traces}, the map $\tr$ is an algebra isomorphism. 
One argues similarly for $\P_n$: applying Corollary \ref{cor:polygons} at $q^{\frac{1}{2}}=1$ we get an explicit algebra isomorphism $\phi:\cSs(\P_n)\to O(SL_2)^{\otimes n-1}$; by the proof of Corollary \ref{cor:polygons} a system of algebra generators of $\cSs(\P_n)$ is easily seen to be the arcs connecting the edge $e_0$ to each other edge and stated arbitrarily. By Example \ref{ex:traces}, the map $\tr$ on these generators provides a system of generators of $O(SL_2)^{\otimes n-1}$ which by Example \ref{ex:twpolygon} is isomorphic to $\chi(\P_n)$. 
\end{proof}
\subsection{Splitting theorem for trace functions}

In all this subsection, let $c\subset \fS$ be an ideal arc oriented arbitrarily, let $\fS'$ be the result of cutting $\fS$ along $c$ and let $\pr:\fS'\to \fS$ be the projection and $\tpr:U\fS'\to U\fS$ the projection induced on the unit tangent bundles.  Let $\pr^{-1}(c)=c_1\cup c_2\subset \partial \fS'$ so that $c_1$ has the positive orientation and $c_2$ the negative one with respect to the orientation induced by that of $\fS'$ on the boundary. 
For each $c_i$ let $\widetilde{c_i}\subset U\fS'$ be its canonical lift and $(-c_i)^{\sim}$ the canonical lift of $-c_i$. Similarly let $\widetilde{c}$ be the canonical lift of $c$ in $U\fS$ and $(-c)^{\sim}$ be the canonical lift of $-c$.

\begin{lemma}\label{lem:decomposition}
Each $[\alpha]\in \pi_1(U\fS;\widetilde{\partial \fS})$ can be written as a composition $[\alpha_k]\circ [\alpha_{k-1}]\circ \cdots \circ [\alpha_1]$ of homotopy classes of immersed paths $\alpha_i:[0,1]\to U\fS$ such that $\alpha_i(\{0,1\})\subset \widetilde{\partial \fS}\cup \tilde{c}$ and $\alpha_i\cap \pi^{-1}(c)=\partial \alpha_i\cap \widetilde{c}$. 
Such a decomposition is unique up to insertion/deletions of compositions  $[\alpha']\circ [\alpha'^{-1}]$ for $[\alpha']\in \pi_1(U\fS;\widetilde{c})$ and replacement of $[\alpha_{j}]$ by $[\alpha'''_{j}]\circ [\alpha''_j]\circ[\alpha'_j]$ for some $[\alpha'_j],[\alpha'''_j]\in \pi_1(U\fS;\widetilde{\partial \fS}\cup \widetilde{c})$ and $[\alpha_j'']\in  \pi_1(U\fS;\widetilde{c})$ such that $[\alpha_j]=[\alpha'''_{j}]\circ [\alpha''_j]\circ[\alpha'_j]$ (or reciprocally).
\end{lemma}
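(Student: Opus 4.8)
\textbf{Proof strategy for Lemma \ref{lem:decomposition}.}

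The plan is to realize the lemma as a manifestation of the van Kampen theorem for fundamental groupoids, applied to the decomposition of $U\fS$ induced by cutting $\fS$ along $c$. First I would set up the geometry carefully. Let $N$ be a small open bicollar neighborhood of $c$ in $\fS$, so that $\pi^{-1}(N)$ is a neighborhood of $\pi^{-1}(c)$ in $U\fS$, and let $V = U\fS \setminus \pi^{-1}(c)$. Then $\tpr$ identifies $V$ (as a space with the subspaces $\widetilde{\partial \fS}$) with $U\fS' \setminus (\widetilde{c_1}\cup \widetilde{c_2})$, and $\pi^{-1}(N)$ deformation retracts onto $\pi^{-1}(c)$, which is an $S^1$-bundle over an interval, hence homotopy equivalent to $\widetilde{c}$ together with the fiber circle $\OO$. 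The key point is that $\widetilde{c}$, being the canonical lift of an embedded arc, is a contractible subspace of $U\fS$ disjoint from $\widetilde{\partial \fS}$, so it is a legitimate ``base object'' to adjoin to the groupoid $\pi_1(U\fS;\widetilde{\partial \fS})$.

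Next I would invoke the groupoid van Kampen theorem (in the form due to R. Brown, or its elementary normal-form consequence) for the cover $U\fS = V \cup \pi^{-1}(N)$ with intersection $V \cap \pi^{-1}(N)$ a neighborhood of $\pi^{-1}(c)\setminus \pi^{-1}(c) $, i.e. two parallel copies of the cut locus. Concretely: any path $\alpha$ representing a morphism in $\pi_1(U\fS;\widetilde{\partial\fS}\cup\widetilde c)$ can, after a homotopy rel endpoints, be made transverse to $\pi^{-1}(c)$, meeting it in finitely many points; by a further small homotopy pushing each such transverse intersection along the fiber direction onto $\widetilde c$ (using that $\widetilde c$ is a section of $\pi^{-1}(c)\to c$), one arranges that $\alpha$ crosses $\pi^{-1}(c)$ only at points of $\widetilde c$. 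Subdividing $\alpha$ at these crossing points yields the decomposition $\alpha = \alpha_k\circ\cdots\circ\alpha_1$ with each $\alpha_i$ having endpoints in $\widetilde{\partial\fS}\cup\widetilde c$ and interior meeting $\pi^{-1}(c)$ only possibly at its endpoints; this is the existence statement. For the uniqueness statement, the ambiguity in making $\alpha$ transverse and then normalizing the crossings to lie on $\widetilde c$ is precisely generated by (i) creating/cancelling a pair of adjacent crossings of opposite sign, which corresponds to inserting/deleting $[\alpha']\circ[\alpha'^{-1}]$ with $[\alpha']\in\pi_1(U\fS;\widetilde c)$, and (ii) sliding a crossing along $\widetilde c$ without passing any other crossing, which replaces a factor $[\alpha_j]$ by a refined composition $[\alpha_j''']\circ[\alpha_j'']\circ[\alpha_j']$ with the middle factor living in $\pi_1(U\fS;\widetilde c)$. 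A standard general-position / isotopy-extension argument shows these are the only moves relating two such decompositions of homotopic paths.

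The main obstacle I anticipate is the \emph{uniqueness} clause: making precise the claim that two transverse representatives of the same homotopy class differ by the listed moves. The cleanest way is probably to phrase everything in terms of the pushout presentation of the groupoid $\pi_1(U\fS;\widetilde{\partial\fS}\cup\widetilde c)$ as the amalgam of $\pi_1(V;\widetilde{\partial\fS}\cup(\widetilde{c_1}\sqcup\widetilde{c_2}))$ and $\pi_1(\pi^{-1}(N);\widetilde c)$ over $\pi_1(V\cap\pi^{-1}(N);\ \cdot)$ — the van Kampen square — and then read off both the existence of the normal form (every element is a product of generators coming from the two pieces) and the relations among normal forms (the amalgamation identifications) directly from the pushout. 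One must be slightly careful that $\pi_1(\pi^{-1}(N);\widetilde c)$ is not trivial: it is infinite cyclic, generated by the fiber loop $\OO$; but since in a twisted bundle $\rho(\OO)=-\Id$ is central, this extra generator will not obstruct the intended application (computing trace functions), and for the purely topological statement of the lemma it simply contributes the freedom to absorb powers of $\OO$, which can be folded into the ``$[\alpha']\in\pi_1(U\fS;\widetilde c)$'' factors. I would isolate this as a short sublemma identifying $\pi_1(\pi^{-1}(N);\widetilde c)\cong\BZ\langle\OO\rangle$ before assembling the final argument.
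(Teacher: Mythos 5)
Your existence argument is the same as the paper's: both make a representative transverse to the annulus $A=\pi^{-1}(c)$, push the finitely many intersection points onto $\widetilde{c}$, and subdivide there. Where you genuinely diverge is on the uniqueness clause, and there the paper does not invoke van Kampen at all: it chooses a smooth homotopy $h$ between two representatives meeting $A$ only along $\widetilde{c}$, makes $h$ transverse to $A$, and analyses $h^{-1}(A)$ as a properly embedded $1$-manifold in the square, slicing at levels separating the finitely many maxima and minima of the height function; a slice crossing a maximum or minimum produces exactly the refinement move $[\alpha_j]\leftrightarrow[\alpha'''_j]\circ[\alpha''_j]\circ[\alpha'_j]$, and a critical-point-free strip produces the insertions $[\alpha']\circ[\alpha'^{-1}]$. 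Your pushout route, if completed, would buy something more structural (a presentation of $\pi_1(U\fS;\widetilde{\partial \fS}\cup\widetilde{c})$ as an amalgam, of which the lemma is the normal-form theorem), whereas the paper's argument is elementary and self-contained.

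As written, though, your uniqueness plan has two concrete gaps. First, $\widetilde{c}$ lies inside $\pi^{-1}(c)$, so it meets neither $V=U\fS\setminus\pi^{-1}(c)$ nor $V\cap\pi^{-1}(N)$; the van Kampen square you wrote is therefore not yet licit. You need auxiliary base objects in the two components of $V\cap\pi^{-1}(N)$ (push-offs of $\widetilde{c}$ to either side) together with short fiber arcs joining them to $\widetilde{c}$, and you must check that transporting along these arcs only perturbs factors by elements of $\pi_1(U\fS;\widetilde{c})$, which the lemma's moves can absorb. (Also, $V$ is identified with $U\fS'$ minus the \emph{full} circle-bundle preimages of $c_1\cup c_2$, not merely minus $\widetilde{c_1}\cup\widetilde{c_2}$.) Second, ``reading off'' uniqueness of decompositions from the pushout is precisely a normal-form theorem for an amalgamated product of groupoids; it does hold here because each component of $V\cap\pi^{-1}(N)$ is homotopy equivalent to the fiber circle, whose class $\OO$ has infinite order in both $\pi_1(V)$ and $\pi_1(\pi^{-1}(N))$, so the amalgamating maps are injective, but you must prove or cite that normal form and then translate its elementary moves into the two moves of the statement. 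Your fallback, the ``standard general-position/isotopy-extension argument,'' is not a shortcut: carried out in detail it is exactly the paper's transverse-homotopy analysis, so it cannot be left as an assertion.
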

\begin{proof}
Observe that $\pi^{-1}(c)\subset U\fS$ is homeomorphic to an annulus $A=\mathbb{R}\times S^1$ so that $\widetilde{c}=\mathbb{R}\times\{1\}.$ Represent the class $[\alpha]$ by a smooth curve $\alpha:[0,1]\to U\fS$ so that it is transverse to $A$; then homotope it so that it intersects $A$ exactly along $\widetilde{c}$: this provides an instance of the claimed splitting. If $\alpha':([0,1],\{0,1\})\to (U\fS,\widetilde{\partial \fS})$ is another smooth representative of the same class intersecting $A$ exactly along $\widetilde{c}$, let $h(t,s):[0,1]\times [0,1]\to U\fS$ be a smooth homotopy between $\alpha$ and $\alpha'$ which is transverse to $A$. Then $h^{-1}(A)$ is a disjoint union of arcs and circles embedded in $[0,1]\times [0,1]$ with boundary in $[0,1]\times \{0,1\}$ containing a finite number of maxima and minima with respect to the height function given by the second coordinate $s$. Pick a finite number of heights $s_0=0<s_1<\ldots <s_n=1$ so that each strip $[0,1]\times [s_i,s_{i+1}]$ contains at most one maximum or minimum of the diagram of $h^{-1}(A)$.  
Each immersed path $\alpha_{s_i}(t):=h(t,s_i):[0,1]\to U\fS$ intersects $A$ transversally a finite number of times and we can then modify $h$ locally around $h^{-1}(A)\cap ([0,1]\times \{s_i\})$ without inserting new maxima and minima so that $\alpha_{s_i}(t)$ intersects $A$ only along $\widetilde{c}$. 
Then the homotopies $h|_{[s_i,s_{i+1}]}$ transform the immersed path $\alpha_{s_i}$ into $\alpha_{s_{i+1}}$ by the moves described in the thesis: passing through a minimum replaces a smooth curve $\alpha$ with a composition $\alpha'\circ \alpha''\circ \alpha'''$ were all of $\alpha,\alpha',\alpha'',\alpha'''$ intersect $A$ only along $\widetilde{c}$ and in their boundary; passing through a maximum has the converse effect. Finally a strip containing no maxima and minima corresponds to a finite number of moves consisting in rewriting $\alpha\circ \alpha'$ with $\alpha\circ \beta\circ \beta^{-1}\circ \alpha'$ where $\beta\in \pi_1(U\fS;\widetilde{c})$ is the homotopy class represented by the restriction of $h$ to a ``vertical arc'' of $h^{-1}(A)\cap [0,1]\times [s_i,s_{i+1}]$ (i.e. an arc joining $[0,1]\times \{s_i\}$ and $[0,1]\times \{s_{i+1}\}$). 
\end{proof}

If $\rho':\pi_1(U\fS';\widetilde{\partial \fS'})\to \mathrm{SL}_2(\BC)$ is a twisted bundle, then by Lemma \ref{lem:extend} we can extend it to a twisted bundle $\rho'':\pi_1(U\fS';\widetilde{\partial \fS'}\cup (-c_2)^{\sim})\to \mathrm{SL}_2(\BC)$ by setting $\rho''(\sqrt{\OO_{c_2}})=\left(\begin{array}{cc} 0 & -1\\ 1 & 0\end{array}\right)$ where $\sqrt{\OO_{c_2}}$ is the path connecting $\widetilde{c_2}$ and $(-c_2)^{\sim}$ by following in the positive direction the fiber $\pi^{-1}(x)$ for some $x\in c_2$. 
\begin{proposition} \label{prop:algebrainjection}
There is a surjective map $i^*:tw(\fS')\to tw(\fS)$ defined as follows. 
Given $\alpha\in \pi_1(U\fS;\widetilde{\partial \fS})$, decompose it as $\alpha=\alpha_k\circ \alpha_{k-1}\circ \cdots \circ \alpha_1$ where each $\alpha_i\in \pi_1(U\fS;\widetilde{\partial \fS}\cup \widetilde{c})$ intersects $\pi^{-1}(c)$ at most in its endpoints and exactly along $\widetilde{c}$ (such a decomposition exists by Lemma \ref{lem:decomposition}). 
Then for each $\rho'\in tw(\fS')$ let 
$$i^*(\rho')(\alpha)=\rho''(\alpha'_k)\rho''(\alpha'_{k-1})\cdots \rho''(\alpha'_1)$$ where $\alpha'_i=\tpr^{-1}(\alpha_i)$ is the lift of $\alpha_i$ to $\pi_1(U\fS';\widetilde{\partial \fS'}\cup (-c_2)^{\sim})$.
Passing to the algebras $\chi(\fS)$ and $\chi(\fS')$ of regular functions on the algebraic varieties $tw(\fS)$ and $tw(\fS')$, $i^*$ induces an injective algebra morphism $i:\chi(\fS)\hookrightarrow \chi(\fS')$ which we will call the ``cutting morphism'' associated to $c$.
\end{proposition}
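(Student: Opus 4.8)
The statement has three parts to verify: (1) that the formula for $i^*(\rho')$ is well-defined, i.e.\ independent of the choice of decomposition $\alpha = \alpha_k \circ \cdots \circ \alpha_1$ allowed by Lemma \ref{lem:decomposition}; (2) that $i^*$ is surjective onto $tw(\fS)$; and (3) that the induced algebra map $i$ on rings of regular functions is injective. The plan is to handle these in order, deducing everything from Lemmas \ref{lem:extend}, \ref{lem:twistedbundles} and \ref{lem:decomposition} together with the fact that $U\fS$ and $U\fS'$ are trivial bundles.

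\textbf{Well-definedness.} First I would check that $i^*(\rho')$ is a genuine functor $\pi_1(U\fS;\widetilde{\partial\fS}) \to \mathrm{SL}_2(\BC)$ with $i^*(\rho')(\OO) = -\mathrm{Id}$. For this one uses the ambiguity of decompositions described in Lemma \ref{lem:decomposition}: an insertion/deletion of $[\alpha'] \circ [\alpha'^{-1}]$ with $[\alpha'] \in \pi_1(U\fS;\widetilde c)$ lifts, via $\tpr^{-1}$, to an insertion of $\rho''(\widetilde{\alpha'})\,\rho''(\widetilde{\alpha'})^{-1} = \mathrm{Id}$ (here the lift of $\alpha'$ is a path in $U\fS'$ with both endpoints on $\widetilde{c_1}$ or on $(-c_2)^\sim$, so $\rho''$ is defined on it), hence does not change the product; a replacement $[\alpha_j] \rightsquigarrow [\alpha_j'''] \circ [\alpha_j''] \circ [\alpha_j']$ with $[\alpha_j''] \in \pi_1(U\fS;\widetilde c)$ lifts compatibly since $\tpr^{-1}$ is a functor, so again the product is unchanged. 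One must be a little careful that the lift $\tpr^{-1}(\alpha_i)$ of a path meeting $\pi^{-1}(c)$ only in its endpoints along $\widetilde c$ is unambiguous and has endpoints on $\widetilde{\partial\fS'} \cup \widetilde{c_1} \cup (-c_2)^\sim$, which is where $\rho''$ is defined --- this is where the extension of $\rho'$ to $\rho''$ (prescribing the value on $\sqrt{\OO_{c_2}}$) enters, matching the twisting convention \eqref{eq:OO} so that the gluing is compatible on both sides of $c$. That $i^*(\rho')(\OO) = -\mathrm{Id}$ is immediate since $\OO$ can be taken disjoint from $\pi^{-1}(c)$, so it is a single $\alpha_1$ whose lift is a fiber circle in $U\fS'$.

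\textbf{Surjectivity and injectivity of $i$.} For surjectivity of $i^*$: given $\rho \in tw(\fS)$, by Lemma \ref{lem:twistedbundles} applied to both $\fS$ and $\fS'$ it is enough to note that a twisted bundle is determined by, and can be freely prescribed on, the values on a free generating set of the relevant fundamental groupoid; one chooses a collection of arcs/loops in $\fS'$ whose $\tpr$-images, together with an arc crossing $c$ once, form such a system in $\fS$, and reads off the required values of $\rho'$ from $\rho$, invoking Lemma \ref{lem:extend} to extend over the remaining edge $c_2$ of $\partial\fS'$. This shows $i^* \colon tw(\fS') \to tw(\fS)$ is a surjective morphism of affine varieties (polynomiality of $i^*$ is clear since each $\rho''(\alpha_i')$ is, by construction, a word in the matrix-entry coordinates on the $\mathrm{SL}_2(\BC)$-factors of $tw(\fS')$). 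A dominant (here: surjective) morphism of affine varieties induces an \emph{injective} map on coordinate rings, $i = (i^*)^\sharp \colon \chi(\fS) \hookrightarrow \chi(\fS')$, which is the cutting morphism; alternatively, since in the trivialized picture $i^*$ is visibly a projection $\mathrm{SL}_2(\BC)^{N'} \twoheadrightarrow \mathrm{SL}_2(\BC)^{N}$ onto a coordinate subproduct followed by an isomorphism, injectivity of $i$ on rings is transparent.

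\textbf{Main obstacle.} The genuinely delicate point is part (1): verifying that the lifting construction respects \emph{all} the indeterminacies of Lemma \ref{lem:decomposition} simultaneously --- in particular making sure that when a path $\alpha_i$ is cut at $\widetilde c$ the two lifts (to $\widetilde{c_1}$ versus $(-c_2)^\sim$) are glued with exactly the correction factor $\rho''(\sqrt{\OO_{c_2}}) = \bigl(\begin{smallmatrix}0&-1\\1&0\end{smallmatrix}\bigr)$ dictated by \eqref{eq:OO}, so that the product telescopes correctly across $c$ and the fiber element $\OO$ maps to $-\mathrm{Id}$ consistently on both sides. Once the bookkeeping of orientations of $c_1, c_2$ and of the canonical lifts $\widetilde{c_i}, (-c_i)^\sim$ is pinned down, the rest is routine. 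I would therefore spend the bulk of the written proof on this compatibility check and treat surjectivity/injectivity as short consequences of Lemma \ref{lem:twistedbundles}.
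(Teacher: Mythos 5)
Your proposal is correct and, for two of the three points, follows the same route as the paper: the well-definedness check is exactly the paper's (invariance under the two moves of Lemma \ref{lem:decomposition}, which both you and the paper treat at the same, rather terse, level of detail — your explicit remark that $i^*(\rho')(\OO)=-\mathrm{Id}$ is a welcome addition the paper omits), and the deduction of injectivity of $i$ from surjectivity of $i^*$ is the paper's argument verbatim. Where you genuinely diverge is surjectivity. The paper does not choose generators at all: given $\rho\in tw(\fS)$ it extends $\rho$ by Lemma \ref{lem:extend} over $\widetilde{c}\cup(-c)^{\sim}$ with $\rho(\sqrt{\OO_c})=\bigl(\begin{smallmatrix}0&-1\\1&0\end{smallmatrix}\bigr)$, defines $\rho'$ on $U\fS'$ simply as the pullback $\rho\circ\tpr_*$, and checks $i^*(\overline{\rho}')=\rho$ by construction; this is a two-line, choice-free preimage that automatically matches the twisting convention \eqref{eq:OO}. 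Your route instead parametrizes $tw(\fS)$ and $tw(\fS')$ via free generating systems (Lemma \ref{lem:twistedbundles}) adapted to $c$ and solves for the values of $\rho'$. That can be made to work, but it costs you nontrivial bookkeeping the proposal leaves unverified: in the parametrization of Lemma \ref{lem:twistedbundles} several generators must cross $c$ (one arc per boundary edge on the far side when $c$ separates, and loops passing through the far side cross $c$ twice), so "together with an arc crossing $c$ once" is an oversimplification, and you would have to argue the existence of an adapted generating system and that the resulting system of matrix equations (with the $\sqrt{\OO_{c_2}}$-corrections built into $\rho''$) is solvable. One further small imprecision: the lift of a path in $\pi_1(U\fS;\widetilde{c})$ need not have both endpoints on $\widetilde{c_1}$ or both on $(-c_2)^{\sim}$ — it may connect the two — but this is harmless, since $\rho''$ is defined on the whole groupoid $\pi_1(U\fS';\widetilde{\partial \fS'}\cup(-c_2)^{\sim})$ and $\widetilde{c_1}\subset\widetilde{\partial\fS'}$. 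In short: your argument is sound, but the paper's pullback construction of the preimage is cleaner and avoids exactly the choices your surjectivity sketch would still need to justify.
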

\begin{proof}
By Lemma \ref{lem:decomposition} to check that $i^*$ is well defined it is sufficient to check that for each $\alpha$ the choice of the decomposition does not affect the result of $i^*(\rho')(\alpha)$. But this is evident if we make an exchange $\alpha_2\circ \alpha_1\leftrightarrow \alpha_2\circ \alpha'\circ \alpha'^{-1}\circ \alpha_1$ or $\alpha \leftrightarrow \alpha'\circ \alpha''\circ \alpha'''$ as in the statement of Lemma \ref{lem:decomposition} because $\rho'$ is a functor.   

To prove surjectivity observe that by Lemma \ref{lem:extend} we can extend any morphism $\rho:\pi_1(U\fS;\widetilde{\partial \fS})\to \mathrm{SL}_2(\mathbb{C})$ to $\rho:\pi_1(U\fS;\widetilde{\partial \fS}\cup \widetilde{c}\cup (-c)^{\sim})\to \mathrm{SL}_2(\mathbb{C})$ setting in particular $$\rho(\sqrt{\OO_c})=\left(\begin{array}{cc} 0 & -1 \\ 1 & 0\end{array}\right).$$
Then if $\alpha'\in \pi_1(U\fS';\widetilde{\partial \fS'}\cup (-c_2)^{\sim})$ define $\rho': \pi_1(U\fS';\widetilde{\partial \fS'}\cup (-c_2)^{\sim})\to \mathrm{SL}_2(\BC)$ by $\rho'(\alpha')=\rho(\tpr_*(\alpha'))$ where  $\tpr_*:\pi_1(U\fS';\widetilde{\partial \fS'}\cup (-c_2)^{\sim})\to \pi_1(U\fS;\widetilde{\partial \fS}\cup (-c)^{\sim})$ is the morphism induced by the continuous map $\tpr:U\fS'\to U\fS$. Then letting $\overline{\rho}'$ be the restriction of $\rho'$ to $\pi_1(U\fS';\widetilde{\partial \fS'})$ we have that by construction $\rho=i^*(\overline{\rho}')$. 
Surjectivity of $i^*$ implies the injectivity of $i$.
\end{proof}

The following proposition tells us that the trace functions behave exactly as the skeins under cutting along an ideal arc (see Theorem \ref{thm.1a}).
 Suppose $\al$ is a stated transverse smooth simple curve intersecting transversally $c$. Then $\al':=\pr^{-1}(\al)$ is a transverse smooth simple curve 
 which is stated at every boundary point except for newly created boundary points, which are points in $\pr^{-1} (c)\cap \al'=(c_1\cup c_2)\cap \al'$.
  {\em A lift in $\fS'$} of $\al$ is a stated transverse smooth simple curve $\beta$ in $\fS'$ which is $\al'$ equipped with states on $\pr^{-1} (c\cap \al)$ such that if $x,y\in \pr^{-1} (c\cap \al)$ with $\pr(x)=\pr(y)$ then $x$ and $y$ have the same state. If $|c\cap \al|=k$, then  $\al$ has $2^k$ lifts in $\fS'$.

\begin{proposition}[Cutting trace functions]\label{prop:cuttrace}
Let $\alpha$ be a stated transverse smooth simple curve intersecting transversally $c$. Then
\begin{equation}\label{eq:cuttrace}
i(\tr(\alpha))=\sum \tr(\beta)
\end{equation}
 where the sum is taken on all the lifts in $\fS'$ of $\alpha$ (i.e. as in Theorem \ref{thm.1a}). Furthermore if $c'\subset \fS$ is another ideal arc disjoint from $c$ and $i'$ is the associated cutting morphism, it holds $i'\circ i=i\circ i'$.
\end{proposition}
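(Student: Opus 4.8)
The statement splits into two parts: the state-sum formula \eqref{eq:cuttrace} and the commutation of cutting morphisms along disjoint arcs. Both should be proved by reducing to the behaviour of the relevant trace functions on generators of $\chi(\fS)$ and tracking how a curve decomposes under the projection $\pr\colon\fS'\to\fS$; the key technical device is already in place, namely the decomposition of Lemma \ref{lem:decomposition} together with the explicit formula for $i^*$ from Proposition \ref{prop:algebrainjection} and the matrix of trace values from Example \ref{ex:traces}.

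For \eqref{eq:cuttrace}, I would first isotope $\alpha$ so that it meets $c$ transversally in exactly $|c\cap\alpha|=k$ points and, near each intersection point, is in good position relative to $c$ (so that its good lift $\hat\alpha$ meets $\pi^{-1}(c)$ exactly along $\tilde c$ and $(-c)^\sim$). Then I apply the decomposition of Lemma \ref{lem:decomposition}: write $\hat\alpha = \hat\alpha_k\circ\cdots\circ\hat\alpha_1$ where each $\hat\alpha_i\in\pi_1(U\fS;\widetilde{\partial\fS}\cup\tilde c)$ meets $\pi^{-1}(c)$ only at its endpoints. By definition of $i^*$, $i(\tr(\alpha))$ evaluated on $\rho'\in tw(\fS')$ equals the trace-function built from $\rho''(\hat\alpha_k')\cdots\rho''(\hat\alpha_1')$. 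The crucial point is the insertion of a resolution of the identity at each point of $\alpha\cap c$: between two consecutive segments $\hat\alpha_i$ and $\hat\alpha_{i+1}$ joined at a point $x\in c$, one writes $\mathrm{Id} = \sum_{\nu\in\{\pm\}}\,\nu\cdot\nu^{t}$ (the completeness relation for the basis $+,-$ of $\BC^2$), and combining this with the definition of the trace of a concatenation $\tr(\alpha_{i+1}\circ\alpha_i) = \sum_\nu \tr\bigl((\alpha_{i+1})_\nu\bigr)\tr\bigl((\alpha_i)_\nu\bigr)$ — which is exactly the determinant/row-column identity underlying Example \ref{ex:traces} — one sees that the product $\rho''(\hat\alpha_k')\cdots\rho''(\hat\alpha_1')$ expands, after cutting, into $\sum_\beta\prod_i\tr(\beta_i)$ where $\beta$ ranges over the $2^k$ lifts of $\alpha$ to $\fS'$, since $\pr^{-1}$ of each intersection point produces two boundary points on $c_1\cup c_2$ that must be given the same state. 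The orientation conventions (positive on $c_1$, negative on $c_2$, and the value \eqref{eq:OO} of $\rho$ on $\sqrt{\OO_e}$) have to be matched carefully with the good-lift conventions so that the sign factors $\rho(\sqrt{\OO}^{-1})$ appearing in Example \ref{ex:traces} cancel correctly; this is where I expect the bookkeeping to be delicate. The case where $\alpha$ is a closed curve crossing $c$ is handled by the same argument after cutting the loop at a point of $\alpha\cap c$ so that it becomes (a product of) arcs, exactly as in the proof of Theorem \ref{thm.1a}.

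For the commutation $i'\circ i = i\circ i'$ when $c$ and $c'$ are disjoint ideal arcs, I would argue that both composites coincide with the single cutting morphism associated to the disjoint union $c\sqcup c'$, which cuts $\fS$ into the surface $\fS''$ obtained by splitting along both arcs simultaneously. Since $c$ and $c'$ are disjoint, a curve $\alpha$ can be put in a position transverse to both, and the decomposition of Lemma \ref{lem:decomposition} can be performed with respect to $\pi^{-1}(c)\cup\pi^{-1}(c')$ at once; the resulting state sum over lifts to $\fS''$ does not depend on the order in which one resolves the intersections with $c$ and with $c'$, because the resolution-of-identity insertions at points of $\alpha\cap c$ and at points of $\alpha\cap c'$ are independent. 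Concretely, it suffices to check the identity on a set of algebra generators of $\chi(\fS)$ — by Lemma \ref{lem:twistedbundles} the matrix entries of the holonomies of a fixed finite family of arcs and loops — and for each such generator both $i'\circ i$ and $i\circ i'$ produce the same finite sum of products of trace functions indexed by the lifts to $\fS''$. This mirrors part (c) of Theorem \ref{thm.1a} and, once \eqref{eq:cuttrace} is established, is essentially formal.

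\textbf{Main obstacle.} The substantive difficulty is entirely in \eqref{eq:cuttrace}, and within it, in reconciling the three sign/orientation conventions simultaneously: the good-lift convention for unoriented arcs (Definition \ref{def:traces} and Lemma \ref{lem:lifts}), the choice $\rho(\sqrt{\OO_e}) = \left(\begin{smallmatrix}0&-1\\1&0\end{smallmatrix}\right)$ with $c_1$ positively and $c_2$ negatively oriented, and the $-\mathrm{Id}$ twisting around fibers which is what makes the naive functoriality fail for inverse paths. One must verify that when a single intersection point of $\alpha$ with $c$ is resolved, the extra $\rho''(\sqrt{\OO_{c_2}})$-type factors introduced on the $\fS'$ side by passing from $\tilde c$ to $(-c_2)^\sim$ combine with the $\det(\,\cdot\,|\,\cdot\,)$ pairing exactly so as to reproduce the plain sum $\sum_\nu$ over states with no spurious signs — i.e. that the formula $i^*(\rho')(\alpha) = \rho''(\hat\alpha_k')\cdots\rho''(\hat\alpha_1')$ of Proposition \ref{prop:algebrainjection}, which is stated with a specific normalization, is compatible with the trace normalization of Definition \ref{def:traces}. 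Once this single-crossing computation is done correctly, iterating it over all $k$ crossings and taking the product over components of $\alpha$ gives the full formula, and the disjoint-arcs commutation follows.
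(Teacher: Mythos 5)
Your mechanism is essentially the paper's: the state sum over lifts is nothing but matrix multiplication of the matrices of stated trace values from Example \ref{ex:traces} (each of which is $\rho(\sqrt{\OO})^{-1}\rho(\widehat{\,\cdot\,})$), and the whole content of \eqref{eq:cuttrace} is the regular-homotopy statement that the good lift $\widehat{\alpha}$ equals, in $\pi_1(U\fS;\pm\widetilde{\partial\fS}\cup\widetilde{c}\,)$, the concatenation of the good lifts of the pieces with one factor $\sqrt{\OO_c}^{-1}$ interposed at each crossing. The organizational difference is that the paper never runs your induction over $k$ crossings and never needs closed components: since $i$ is already known to be an injective algebra morphism (Proposition \ref{prop:algebrainjection}) and, by the proof of Lemma \ref{lem:twistedbundles}, $\chi(\fS)$ is generated by stated traces of arcs meeting $c$ at most twice, it suffices to check the cases $|\alpha\cap c|=1,2$; these are settled by the two pictures of Figure \ref{fig:cuttrace}, giving $\widehat\alpha\simeq \tpr(\widehat{\alpha_2})\circ\sqrt{\OO_c}^{-1}\circ\tpr(\widehat{\alpha_1})$ (resp.\ the analogue with two insertions), whence $A=A_2A_1$ (resp.\ $A=A_3A_2A_1$) with no residual signs. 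Your route buys generality (arbitrary $k$, closed curves via one more trace identity of the same kind) at the price of carrying Lemma \ref{lem:decomposition} and the bookkeeping through the general case; the paper's reduction buys a two-case check. Your argument for $i'\circ i=i\circ i'$ is fine (the paper leaves this point implicit), and indeed becomes formal once \eqref{eq:cuttrace} is known.

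The one caveat: what you call the ``main obstacle'' is not a side verification but the entire substance of the proposition, and your write-up defers it rather than performing it. In particular, the ``resolution of the identity'' identity $\tr(\alpha_{i+1}\circ\alpha_i)=\sum_\nu\tr((\alpha_{i+1})_\nu)\,\tr((\alpha_i)_\nu)$ is not an a priori property of the trace of Definition \ref{def:traces}: it holds exactly because the good lift of the whole arc picks up one $\sqrt{\OO_c}^{-1}$ per crossing and because the extension $\rho''$ on $\fS'$ is normalized by \eqref{eq:OO} on $\sqrt{\OO_{c_2}}$, so that the $\rho(\sqrt{\OO})^{-1}$ prefactors of the trace matrices absorb these insertions and the state sum is literally the matrix product. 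Until that single-crossing homotopy and sign check is written out (as in Figure \ref{fig:cuttrace}), the proposal is a correct plan whose decisive step remains unproved; once it is, iterating over crossings and components completes your version of the argument.
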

\begin{proof}
Since by Proposition \ref{prop:algebrainjection} we already know that $i$ is a well defined injective algebra morphism, it is sufficient to check the statement for a system of stated transverse smooth curves $\{\gamma_i\in I\}$ which generate $\chi(\fS)$ as an algebra. By the proof of Lemma \ref{lem:twistedbundles}, we can choose a finite system of such $\gamma_i$ such that $|\gamma_i\cap c|\leq 2, \forall i$. 
Let $\alpha\in \{\gamma_i, i\in I\}$ be represented by a smooth immersion $\alpha:[0,1]\to \fS$ intersecting transversally $c$ with states $st(\alpha(0))=\epsilon, st(\alpha(1))=\eta$; if $|\alpha\cap c|=0$ the statement is true. 
If $|\alpha\cap c|=1$ then $\alpha=\alpha_2\circ \alpha_1$ where $\alpha_i$ are transverse smooth simple curves with $\alpha_1(1)=\alpha_2(0)\in c$ and are partially stated by $st(\alpha_1(0))=\epsilon, st(\alpha_2(1))=\eta$. Furthermore, up to switching $\alpha$ to $\alpha^{-1}$ we can suppose $(\dot{\alpha}_1(1),\dot{c})$ form a positive basis of $\fS$.   
\begin{figure}
\includegraphics[width=5cm]{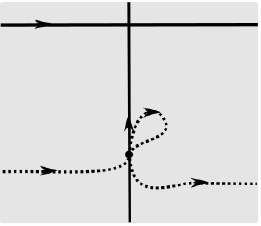}\qquad \includegraphics[width=5cm]{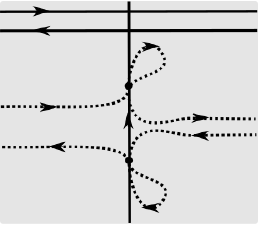}
\caption{On the l.h.s. the curve $\alpha$ (horizontal, solid) intersects once $c$ (vertical, solid). The dotted curve is regularly homotopic to $\alpha$ and is cut by $c$ into $\alpha_2\circ  \sqrt{\OO}^{-1}$ and $\alpha_1$ where $\alpha_i$ are in good position in $\fS'$.   On the r.h.s. $\alpha$ intersects twice $c$ with opposite orientations. The dotted curve is regularly homotopic to $\alpha$ and is cut by $c$ in $\alpha_3$, $\sqrt{\OO}^{-1}\circ \alpha_2\circ \sqrt{\OO}^{-1}$ and $\alpha_1$, where $\alpha_i$ are in good position in $U\fS'$.  }\label{fig:cuttrace}
\end{figure}

Let then $A_i$ (resp. $A$) be the $2\times 2$ matrix expressing the values of $\tr(\alpha_i)$ (resp. $\tr(\alpha)$) with states in $\{\pm\}$ as in Example \ref{ex:traces}; then, as remarked in the example $A_i=\rho(\sqrt{\OO_c})^{-1}\rho(\widehat{\alpha_i})$ (resp. $A=\rho(\sqrt{\OO_c})^{-1}\rho(\widehat{\alpha})$) so that Equation \eqref{eq:cuttrace} rewrites in this case as $A=A_2\cdot A_1$.

Now since the orientation induced by $\pr^{-1}(c)$ is negative on $c_2$ then in $U\fS$ the good lift $\hat{\alpha}$ of $\alpha$ is homotopic to $\tpr(\widehat{\alpha_2})\circ \sqrt{\OO_c}^{-1}\circ \tpr(\widehat{\alpha_1})$ where $\alpha_i$ are depicted in the left hand side of Figure \ref{fig:cuttrace}, therefore $$A=\rho(\sqrt{\OO_c})^{-1}\rho(\widehat{\alpha})=\rho(\sqrt{\OO_c})^{-1}\rho(\widehat{\alpha_2})\circ \rho(\sqrt{\OO_c})^{-1}\circ \rho(\widehat{\alpha_1})=A_2\cdot A_1$$
and the claim is proved. 

Suppose now that $|\alpha\cap c|=2$ where $\alpha$ is a stated smooth immersion transverse to $c$; by the proof of Lemma \ref{lem:twistedbundles} we can suppose that the sign of the intersections of $\alpha$ and $c$ is opposite and we can split $\alpha$ as $\alpha_3\circ \alpha_2\circ \alpha_1$ where $\alpha_i$ are transverse smooth immersions with $\alpha_1(1)=\alpha_2(0),\alpha_2(1)=\alpha_3(0)$ and partially stated so that $st(\alpha_1(0))=\epsilon$ and $st(\alpha_3(1))=\eta$. Furthermore, up to switching $\alpha$ and $\alpha^{-1}$ we can suppose that $(\dot{\alpha}_1(1),\dot{c})$ form a positive basis of $\fS$.

As above let $A_i=\rho(\OO_c)^{-1}\cdot \rho(\widehat{\alpha_i})$ and $A=\rho(\OO_c)^{-1}\cdot \rho(\widehat{\alpha})$ and Equation \eqref{eq:cuttrace} is equivalent to $A=A_3\cdot A_2\cdot A_1$. 
Then again, as shown in the right hand side of Figure \ref{fig:cuttrace}, $\alpha$ is regularly homotopic in $U\fS$ to $\tpr(\widehat{\alpha_3})\circ \sqrt{\OO_c}^{-1}\circ \tpr(\widehat{\alpha_2})\circ \sqrt{\OO_c}^{-1}\circ \tpr(\widehat{\alpha_1})$. Therefore we have:
$$A=\rho(\OO_c)^{-1}\cdot \rho(\widehat{\alpha})=\rho(\OO_c)^{-1}\cdot \rho(\widehat{\alpha_3})\cdot \rho(\OO_c)^{-1}\cdot \rho(\widehat{\alpha_2})\cdot \rho(\OO_c)^{-1}\cdot \rho(\widehat{\alpha_1})=A_3\cdot A_2\cdot A_1$$
and the thesis follows. 
\end{proof}

\subsection{The classical limit of stated skein algebras}

\begin{theorem}\label{teo:classicaliso}  Suppose $q^{1/2}=1$. The map sending a skein to its trace induces an algebra isomorphism $$\tr:\cSs(\fS)\to \chi(\fS).$$
\end{theorem}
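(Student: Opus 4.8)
The plan is to build the isomorphism $\tr:\cSs(\fS)\to\chi(\fS)$ out of the pieces we have already assembled, using the splitting theorems on both sides to reduce to the triangles and bigons, where everything is explicit. First I would check that $\tr$ is a well-defined algebra homomorphism: given a stated $\partial\fS$-tangle diagram, its trace (a product over components of the quantities of Definition~\ref{def:traces}) is invariant under isotopy of $\partial M$-tangles by the observation following Definition~\ref{def:traces} (homotopies of immersions lift to homotopies of good lifts; closed curves have conjugation-invariant traces). The nontrivial point is invariance under the defining relations \eqref{eq.skein}--\eqref{eq.order} at $q^{1/2}=1$; this is a finite list of local identities in $\mathrm{SL}_2(\BC)$ involving the matrix $\rho(\sqrt{\OO}^{-1})=\left(\begin{smallmatrix}0&-1\\1&0\end{smallmatrix}\right)$ of Example~\ref{ex:traces}, the Cayley--Hamilton identity $A+A^{-1}=\tr(A)\,\mathrm{Id}$ for the Kauffman relation, and direct determinant computations for \eqref{eq.arcs} and \eqref{eq.order}. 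Compatibility with the product is clear since stacking tangles corresponds to concatenating good lifts and $\rho$ is a functor on the relevant groupoid. So $\tr$ is a morphism of $\cR$-algebras; it is also compatible with cutting along an ideal arc, by Proposition~\ref{prop:cuttrace} on the $\chi$-side matching the splitting Theorem~\ref{thm.1a} on the skein side.

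Next I would establish the isomorphism. Since $\fS$ is connected with nonempty boundary, choose an ideal triangulation (or, in the few exceptional small cases, treat $\cB$ and the small polygons directly). Splitting $\fS$ along all interior ideal arcs gives a collection of triangles (or, for non-triangulable cases, a monogon/bigon), and by Theorem~\ref{teo:cotensor} the skein algebra $\cSs(\fS)$ embeds into the tensor product of the $\cSs(T_i)$ as the equalizer $HH^0$ of the two comodule structures attached to each pair of glued edges. By Proposition~\ref{prop:cuttrace} applied iteratively, $\chi(\fS)$ likewise embeds into $\bigotimes_i\chi(T_i)$, and one checks the image is the same equalizer: cutting is inverse to gluing on both sides, so the $HH^0$-description transports across $\tr$. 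Thus it suffices to prove $\tr:\cSs(T)\to\chi(T)$ is an isomorphism for $T$ a triangle (and for $\cB$, and for the monogon where both sides are trivial); this is exactly the content of Lemma~\ref{lem:classicalbigon}, which handles $\P_n$ for all $n\geq2$ via Corollary~\ref{cor:polygons} and the explicit generators-and-traces computation of Example~\ref{ex:traces}. The commuting square (splitting on skeins, cutting on traces, $\tr$ vertically, all identified through Theorem~\ref{teo:SL2} at $q^{1/2}=1$) then forces $\tr$ to be an isomorphism on $\fS$, being an isomorphism between two identical sub-equalizers of isomorphic ambient tensor products.

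A cleaner way to phrase the last step, avoiding case analysis on whether $\fS$ is triangulable, is to induct on the number of interior ideal arcs: attach one ideal triangle at a time. Using Theorem~\ref{teo:braidedtensor}, $\cSs(\underline{\fS})$ is the (self-)braided tensor product of $\cSs(\fS)$, and on the classical side one verifies that $\chi(\underline{\fS})$ is obtained from $\chi(\fS)$ by the corresponding (self-)braided tensor product with $\chi(\P_3)$ over $\chi(\cB)$ — here at $q^{1/2}=1$ the co-$R$-matrix $\rho'$ degenerates to the trivial symmetric braiding on $\cO(\mathrm{SL}_2)$-comodules, so ``braided tensor product'' is just the ordinary tensor product of comodule-algebras, which is precisely the fibered product of group varieties describing how holonomies of diagonals compose. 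The base cases $\cB$, $\P_3$ are Lemma~\ref{lem:classicalbigon}, and functoriality of $\tr$ with respect to gluing a triangle (again Proposition~\ref{prop:cuttrace}) propagates the isomorphism. I expect the main obstacle to be the bookkeeping in this last step: namely, checking carefully that the trace functions on $\chi(\underline{\fS})$ generated by arcs crossing the newly glued triangle are expressed through the arcs on $\fS$ and on $\P_3$ by exactly the same formula \eqref{eq:selfproduct}/\eqref{eq.brprod} that governs $\uot\cSs(\fS)$ — i.e. verifying the classical-limit analogue of the key reordering identity of Figure~\ref{fig:brten6-new} at the level of good lifts in $U\fS$, where the $\left(\begin{smallmatrix}0&-1\\1&0\end{smallmatrix}\right)$ twists must be tracked precisely. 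Everything else is a routine transport of the already-proven splitting/gluing structure across the isomorphism $\tr$ on triangles.
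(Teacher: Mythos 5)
Your main route (first two paragraphs) is essentially the paper's proof: establish that $\tr$ is an algebra homomorphism, then triangulate and compare the splitting embedding $\cSs(\fS)\hookrightarrow\bigotimes_i\cSs(T_i)$ (Theorem \ref{thm.1a}) with the classical cutting embedding $\chi(\fS)\hookrightarrow\bigotimes_i\chi(T_i)$ (Propositions \ref{prop:algebrainjection} and \ref{prop:cuttrace}), using Lemma \ref{lem:classicalbigon} on the pieces. Two differences are worth noting. First, for well-definedness the paper does not check \eqref{eq.skein}--\eqref{eq.order} by hand: it localizes each relation inside a bigon, where Lemma \ref{lem:classicalbigon} makes the verification automatic, precisely because the classical cutting morphism is an injective algebra map. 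Your direct verification is viable, but be aware it is not a purely local matrix identity: resolving a crossing reconnects strands globally (arcs versus loops, same or different components) and changes regular homotopy classes, so the $\sqrt{\OO}$-factors in the good lifts must be tracked case by case --- exactly the bookkeeping the cut-to-bigon trick avoids. Second, for surjectivity you invoke an identification of images as equalizers; what you actually need is only the easy half (that the image of the classical cutting map lands in the equalizer, together with Theorem \ref{teo:cotensor} at $q=1$, cf.\ Remark \ref{rem:cotensor}, and the fact that $\tr$ intertwines the coactions), and even this can be bypassed: by Lemma \ref{lem:twistedbundles} the algebra $\chi(\fS)$ is generated by matrix entries of holonomies of finitely many arcs, and by Example \ref{ex:traces} each such entry is the trace of a stated arc, so surjectivity is immediate. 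As phrased, ``one checks the image is the same equalizer'' asserts the hard inclusion on the classical side, which is neither needed nor proved.

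The ``cleaner'' alternative in your last paragraph is the weak point. It rests on classical analogues that are not in the paper and are not free: you would have to prove that $\chi(\underline{\fS})$ is the (self-)tensor product of $\chi(\fS)$ over $\chi(\cB)$ and that $\tr$ matches the identification of Theorem \ref{teo:braidedtensor}, i.e.\ redo the reordering identity of Figure \ref{fig:brten6-new} at the level of good lifts --- the very computation you defer. Moreover Theorem \ref{teo:braidedtensor} only ever attaches a new triangle along two of its edges; it never glues two boundary edges of the already-built surface to each other, and it is not at all obvious (and would itself require an argument, keeping careful track of which ideal vertices become interior punctures) that every connected punctured bordered surface with nonempty boundary arises from bigons by such attachments. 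Non-separating identifications are exactly what the splitting/cotensor machinery handles, so this induction cannot simply replace the triangulation argument; keep your second paragraph as the actual proof.
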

\begin{proof} We first claim that the relations $\eqref{eq.skein},\eqref{eq.loop},\eqref{eq.arcs},\eqref{eq.order}$ with $q^{\frac{1}{2}}=1$ are satisfied by the trace functions.
By Lemma \ref{lem:classicalbigon} the claim is true for bigons. 
But by Proposition \ref{prop:algebrainjection} cutting induces an injective algebra map, thus to verify local relations we can verify them in a bigon containing the disc where the relations are depicted: this proves the claim in general. 

The algebra isomorphism is proved as follows: 
pick an ideal triangulation of $\fS$ and apply to each edge of the triangulation Proposition \ref{prop:cuttrace} on the side of $\chi(\fS)$ and Theorem \ref{thm.1a} on $\cSs(\fS)$. We get the following diagram of algebra morphisms of which the horizontal lines are injective and which is commutative by Proposition \ref{prop:cuttrace} and Theorem \ref{thm.1a}: 
\be
\label{eq.dia3}
 \begin{CD} \cSs(\fS)  @> \hookrightarrow  >>  \bigotimes_{i}\cSs(T_i)\\
 @V
 \tr    VV  @V V \tr   V    \\
  \chi(\fS)    @> \hookrightarrow  >>   \bigotimes_i \chi(T_i) .
\end{CD}
\ee

Since by Lemma \ref{lem:classicalbigon} the right vertical arrow is an isomorphism we conclude. 

\end{proof}

\end{document}